\theoremstyle{plain}
\newtheorem{theorem}{Theorem}[section]
\newtheorem{proposition}[theorem]{Proposition}
\newtheorem{lemma}[theorem]{Lemma}
\newtheorem{corollary}[theorem]{Corollary}
\theoremstyle{definition}
\newtheorem{definition}[theorem]{Definition}
\newtheorem{example}[theorem]{Example}
\theoremstyle{remark}
\newtheorem{remark}[theorem]{Remark}
\newcommand{\C}{\mathbb{C}}
\newcommand{\Q}{\mathbb{Q}}
\newcommand{\R}{\mathbb{R}}
\newcommand{\Z}{\mathbb{Z}}
\newcommand{\p}{\mathbb{C}P}
\newcommand{\pp}{\mathbb{P}}
\newcommand{\mcal}{\mathcal}
\def\mcal{\mathcal}
\def\frak{\mathfrak}
\newcommand{\ds}{\displaystyle}
\newcommand{\vs}{\vspace}
\newcommand{\hs}{\hspace}
\numberwithin{equation}{section} \numberwithin{table}{section}
\begin{document}                                                                          

\title{Classification of six dimensional monotone symplectic manifolds admitting semifree circle actions III}
\author{Yunhyung Cho}
\address{Department of Mathematics Education, Sungkyunkwan University, Seoul, Republic of Korea. }
\email{yunhyung@skku.edu}

\begin{abstract}
	In this paper, we complete the classification of six-dimensional closed monotone symplectic manifolds admitting semifree Hamiltonian $S^1$-actions. 
	We also show that every such manifold is $S^1$-equivariantly symplectomorphic to some K\"{a}hler Fano manifold with a certain holomorphic Hamiltonian circle action. 
\end{abstract}
\maketitle
\setcounter{tocdepth}{1} 
\tableofcontents

\section{Introduction}
\label{secIntroduction}

In a series of papers \cite{Cho1, Cho2}, the author studied the existence of K\"{a}hler structure on a six-dimensional closed symplectic manifold $(M,\omega)$ admitting a Hamiltonian circle action. 
More precisely, the author proved that any six-dimensional closed {\em monotone} {\em semifree}\footnote{We call an $S^1$-action {\em semifree} 
if the action is free outside the fixed point set.}$(M,\omega)$ Hamiltonian $S^1$-manifold $(M,\omega)$ 
is $S^1$-equivariantly symplectomorphic to some K\"{a}hler Fano manifold with a certain holomorphic Hamiltonian circle action in the case where 
\begin{itemize}
	\item (at least) one of extremal fixed components is an isolated point, or 
	\item any extremal fixed component is two dimensional.
\end{itemize}
Moreover, it turned out that there are 18 types and 21 types of such manifolds up to $S^1$-equivariant symplectomorphism, respectively.
In this paper (sequel to \cite{Cho1, Cho2}), we complete the classification and prove the following.

\begin{theorem}\label{theorem_main}
	Let $(M,\omega)$ be a six-dimensional closed monotone symplectic manifold equipped with a semifree Hamiltonian circle action. 
	Then $(M,\omega)$ is $S^1$-equivariantly symplectomorphic to some K\"{a}hler Fano manifold with a certain holomorphic Hamiltonian circle action. 
	Indeed, there are (18 + 21 + 56) types of such manifolds up to $S^1$-equivariant symplectomorphism.
\end{theorem}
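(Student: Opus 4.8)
The plan is to settle the single family of configurations left open by \cite{Cho1, Cho2}. Those papers handled the cases in which some extremal fixed component of the moment map $\mu$ is an isolated point, or in which both extremal components are two-dimensional; what remains is the case in which exactly one extremal component is four-dimensional. Replacing $\mu$ by $-\mu$ if necessary, I would assume $\dim Z_{\min}=4$, where $Z_{\min}:=\mu^{-1}(\min\mu)$, so that $Z_{\max}:=\mu^{-1}(\max\mu)$ has dimension $2$ or $4$.

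First I would record, for any such action, its combinatorial skeleton: the connected fixed components $Z_0=Z_{\min},Z_1,\dots,Z_\ell=Z_{\max}$ listed by increasing value of $\mu$, their dimensions, and the $S^1$-weights on their normal bundles, which are all $\pm1$ by semifreeness. Once $\mu$ is normalized by $\min\mu=0$, monotonicity ($c_1(M)=[\omega]$) confines the critical values of $\mu$ to $\tfrac12\Z$, bounds $\max\mu$ by an explicit constant, and imposes a linear relation between $\mu(Z_i)$ and the weights at $Z_i$; since $\mu$ is moreover a perfect Morse--Bott function whose Betti data must match that of a closed $6$-manifold, only finitely many skeletons are a priori admissible. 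I would enumerate them and prune the list by feeding $1$, $c_1$ and $c_1^2$ into Atiyah--Bott--Berline--Vergne localization and by inspecting the Duistermaat--Heckman polynomial (piecewise of degree $\le 2$), discarding the numerically impossible ones exactly as in \cite{Cho1, Cho2}.

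Next, for each surviving skeleton I would reconstruct $(M,\omega)$ from its reduced-space movie. For regular $t$ the symplectic quotient $M_t:=\mu^{-1}(t)/S^1$ is a closed symplectic $4$-manifold; it is born at $t=0$ as $Z_{\min}$ itself, since $\mu^{-1}(\varepsilon)$ is the unit normal circle bundle of $Z_{\min}$; across each critical level it undergoes the explicit semifree wall-crossing --- a point blow-up or blow-down when $Z_i$ is isolated, and a $\pp^1$-bundle surgery along an embedded symplectic curve when $Z_i$ is a surface --- and it dies at $Z_{\max}$. Monotonicity of $(M,\omega)$ forces each $M_t$ to be a rational surface whose reduced symplectic class varies affinely with $t$ and stays within a bounded monotone-type region, which pins down the diffeomorphism type of every $M_t$ and sharply limits which wall-crossings can be chained together. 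Organizing this wall-crossing bookkeeping --- certifying that each admissible skeleton is realized by exactly one $S^1$-manifold, and discarding skeletons that survive the numerical tests but cannot be assembled into a consistent movie --- is where I expect the bulk of the work, and the main obstacle, to lie.

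Finally, for each consistent skeleton I would exhibit a concrete K\"{a}hler Fano threefold --- a projective bundle over a del Pezzo surface, a toric Fano, or a blow-up of such --- carrying a holomorphic Hamiltonian $S^1$-action whose fixed-point data reproduces the skeleton, and then invoke the equivariant rigidity already used in \cite{Cho1, Cho2} (an adaptation of the Karshon--McDuff uniqueness results to the monotone six-dimensional setting) to conclude that $(M,\omega)$ is $S^1$-equivariantly symplectomorphic to that model. Tallying the models, and removing both the repetitions and the configurations already counted in \cite{Cho1, Cho2}, I would then verify that exactly $56$ genuinely new types remain, yielding the total $18+21+56$.
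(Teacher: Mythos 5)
Your overall strategy---enumerate the admissible fixed-point configurations via a normalized moment map, ABBV localization and Duistermaat--Heckman, track the reduced-space ``movie'' through the semifree wall-crossings, exhibit a K\"{a}hler Fano model for each surviving configuration, and finish with a uniqueness theorem---is the same as the paper's. The genuine gap is in your final step. You propose to ``invoke the equivariant rigidity already used in \cite{Cho1, Cho2}'', i.e.\ the Gonzalez-type uniqueness (Theorem \ref{theorem_G}), which requires \emph{every} reduced space to be symplectically rigid. Precisely in the cases this paper must handle, that hypothesis fails: when $\dim Z_{\min}=\dim Z_{\max}=4$ with no interior fixed points and vanishing Euler class, the reduced space is $X_k$ for any $2\le k\le 8$ (type {\bf (II-1-4.k)} in Table \ref{table_II_1}), and symplectic rigidity---in particular path-connectedness of the homologically trivial symplectomorphism group---is only known for $\p^2$, $S^2\times S^2$ and $X_k$ with $k\le 4$ (Corollary \ref{corollary_symplectically_rigid}). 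These configurations do occur (they are realized by $X_k\times S^2$ with the circle rotating the second factor), so for $k=5,\dots,8$ your plan has no uniqueness statement to invoke. The paper closes exactly this gap by a separate argument: in type {\bf (II-1-4.k)} the Euler class $e(P_{-1}^+)$ vanishes and there are no non-extremal fixed points, so two manifolds with the same data give cohomologous paths of reduced forms on $X_k$; McDuff's deformation-implies-isotopy theorem for blow-ups of rational surfaces (Theorem \ref{theorem_uniqueness}) together with a Moser-type two-parameter argument then yields an $S^1$-equivariant symplectomorphism after symplectic cutting. Without some such substitute your proof is incomplete precisely in the new cases at stake here.

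A smaller omission: even where rigidity holds, the fixed point data records actual symplectic embeddings of the interior fixed surfaces in the reduced spaces, while your enumeration only produces their homology classes. Passing from the topological data to the genuine fixed point data requires the symplectic isotopy results for curves in rational surfaces (Theorems \ref{theorem_ST} and \ref{theorem_Z}, and \cite[Proposition 0.3]{ST} for tuples of points), as in Proposition \ref{proposition_TFD_FD_1}. You fold this into ``certifying that each admissible skeleton is realized by exactly one $S^1$-manifold'', but the tool needs to be named: without it, homologous but a priori non-isotopic interior fixed curves could yield distinct equivariant symplectomorphism types and the count of $56$ would not be justified.
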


We note that, without the monotonicity condition, one can easily construct infinitely many six-dimensional non-K\"{a}hler semifree Hamiltonian $S^1$-manifolds such as $KT \times S^2$ where $KT$ denotes the Kodaira-Thurston manifold \cite{Th}. On the other hand, there are infinitely many non-equivariant Hamiltonian $S^1$-actions on a fixed symplectic manifold, even in case of $\p^2$. Thus the conditions ``{\em monotonicity}'' and ``{\em semifreeness}'' are both essential to prove the finiteness of the number of such manifolds in Theorem \ref{theorem_main}.

\subsection{Summary of classification}
\label{ssecSummaryOfClassification}

	\begin{figure}[H]
		\scalebox{0.55}{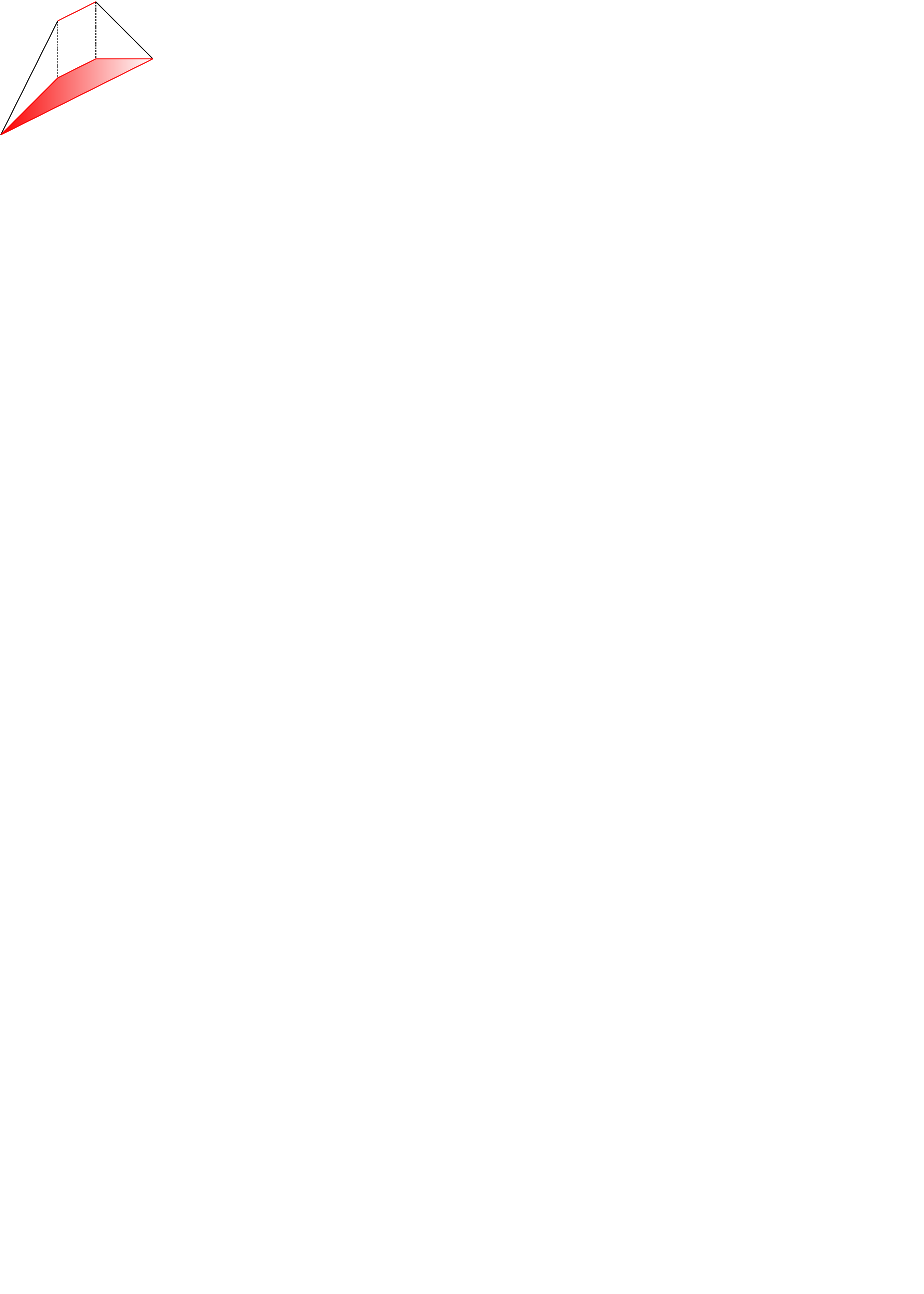}
		\caption{\label{figure_III_total} Semifree $S^1$-Fano 3-folds with $\dim Z_{\min} \geq 2$ and $\dim Z_{\max} = 4$}
	\end{figure}

Figure \ref{figure_III_total} illustrates all smooth Fano 3-folds admitting holomorphic semifree Hamiltonian $S^1$-action such that $\dim Z_{\min} \geq 2$ and $\dim Z_{\max} = 4$
where each of them is labeled and
the corresponding fixed point data can be found in Table \ref{table_list_1} and Table \ref{table_list_2}. Indeed, they are {\em all} six-dimensional closed monotone semifree
Hamiltonian $S^1$-manifold by Theorem \ref{theorem_main}.

\subsection{Sketch of the proof}
\label{ssecSketchOfTheProof}

	The proof of Theorem \ref{theorem_main} consists of three steps : 

	\begin{itemize}
		\item ({\bf Step 1}) Classify all {\em topological fixed point data}\footnote{A {\em topological fixed point data}, or TFD for short,
			is a topological analogue of a fixed point data in the sense that it records ``homology classes'', not embeddings themselves, of fixed components in reduced spaces.
			See Definition \ref{definition_topological_fixed_point_data}.}. 
			In this process, we obtain a complete list of all topological fixed point data as described in Table \ref{table_list_1} and \ref{table_list_2}.
		\item ({\bf Step 2}) Show that each topological fixed point data determines a unique {\em fixed point data}\footnote{See Definition \ref{definition_fixed_point_data}.}.
		\item ({\bf Step 3}) For each topological fixed point data listed in Table \ref{table_list_1} and \ref{table_list_2}, there exists a corresponding smooth Fano variety
			with a holomorphic semifree Hamiltonian $S^1$-action.
	\end{itemize}
	Then Theorem \ref{theorem_main} immediately follows from the Gonzalez's theorem \ref{theorem_G} which states that the fixed point data completely determines a whole manifold
	under the assumption that 
	\[
		\text{``every reduced space is symplectically rigid.''}
	\]
	It is known that the reduced space $M_0$ at level zero inherits the monotone symplectic form $\omega_0$ for the balanced moment map (Lemma \ref{lemma_balanced}). 
	That is, $(M_0, \omega_0)$ becomes a closed monotone symplectic four manifold. Therefore, by the classification of Ohta-Ono \cite{OO2}, $M_0$ is a del Pezzo surface whose 
	diffeomorphism type is one of the followings: 
	\[
		\p^2, S^2 \times S^2, X_k : \text{blow-up of $\p^2$ at $k$ generic points for $1 \leq k\leq 8$}.
	\] 
	Among those manifolds, it is known that $\p^2, S^2 \times S^2,$ and $X_k$ for $1 \leq k \leq 4$ are known to be {\em symplectically rigid} (see Definition \ref{ssecSymplecticRigidity}). 
	We crucially used this fact in the previous works \cite{Cho1, Cho2} where no $X_k$ (with $k > 4$) appears as a reduced space. 
	
	On the other hand, unlike the case of \cite{Cho1, Cho2}, $X_k$ ($k > 4$) appears as a reduced space so that the Gonzalez's theorem cannot be applied directly.
	(See {\bf (II-1-4.k), k=2 $\sim$ 8} in Figure \ref{figure_III_total}.) Fortunately, we can show that in each case {\bf (II-1-4.k)}, the manifold $M$ becomes the product space 
	$X_k \times S^2$ where the $S^1$ acts on the second factor. See Section \ref{secMainTheorem}: Proof of Theorem \ref{theorem_main}.
	
	This paper is organized as follows. In Section \ref{secNotationsAndPreliminaries}, we provide basic materials briefly and build up some notations.
	In Section \ref{secTopologicalFixedPointDataDimZMin2} and \ref{secTopologicalFixedPointDataDimZMax4}, we classify all topological fixed point data and obtain
	Table \ref{table_list_1} and \ref{table_list_2}. And we give the proof of Theorem \ref{theorem_main} in Section \ref{secMainTheorem}.
	
\subsection*{Acknowledgements} 
The author would like to thank Dmitri Panov for bringing the paper \cite{Z} to my attention.
The author would also like to thank Jinhyung Park for helpful comments. 
This work is supported by the National Research Foundation of Korea(NRF) grant funded by the Korea government(MSIP; Ministry of Science, ICT \& Future Planning) (NRF-2017R1C1B5018168).

\section{Notations and Preliminaries}
\label{secNotationsAndPreliminaries}

	In this section, we set up notations and recall some basic materials. For basic equivariant cohomology theory of Hamiltonian circle actions (such as 
	equivariant formality and the ABBV-localization theorem), we refer to \cite[Section 3]{Cho1} 
	and the references therein.

\subsection{Notations}
\label{ssecNotations}
	
	Let $(M,\omega)$ be a closed symplectic manifold admitting a Hamiltonian circle action and let $H : M \rightarrow \R$ be a moment map. We use the following notations.
	\begin{itemize}
		\item $M^{S^1}$ : fixed point set.
		\item $\mathrm{Crit}~H$ : the set of critical values of $H$. 
		\item $\mathrm{Crit}~ \mathring{H}$ : the set of non-extremal critical values of $H$.
		\item $Z_{\min}, Z_{\max}$ : the minimal and maximal fixed component with respect to $H$.
		\item $M_t := H^{-1}(t) / S^1$ : the reduced space at level $t \in [a,b]$
		\item $\omega_t$ : the reduced symplectic form on $M_t$.
		\item $P_c^{\pm}$  : the principal $S^1$-bundle $\pi_{c \pm \epsilon} : H^{-1}(c \pm \epsilon) \rightarrow M_{c \pm \epsilon}$ where $\epsilon > 0$ is sufficiently small.
		\item $e(P_c^{\pm}) \in H^2(M_{c \pm \epsilon}; \Q)$ : the Euler class of $P_c^{\pm}$.
		\item $Z_c$ : the fixed point set lying on the level set $H^{-1}(c)$. That is, $Z_c = M^{S^1} \cap H^{-1}(c)$.
		\item $\R[\lambda]$ : cohomology ring of $H^*(BS^1;\R)$, where $-\lambda$ is the Euler class of the universal Hopf bundle $ES^1 \rightarrow BS^1.$
	\end{itemize}

\subsection{Balanced moment map}
\label{ssecBalancedMomentMap}

	From now on, we assume that $(M,\omega)$ is a six-dimensional closed monotone symplectic manifold such that $c_1(TM) = [\omega]$ and it admits a 
	Hamiltonian $S^1$-action.
	
	\begin{lemma}\cite[Proposition 4.4 and Corollary 4.6]{Cho1}\label{lemma_balanced}
		There exists a unique moment map $H : M \rightarrow \R$, called the {\em balanced moment map}, which satisfies 
		\[
			H(Z) = -\Sigma(Z)
		\]
		for every fixed component $Z \subset M^{S^1}$. Here $\Sigma(Z)$ denotes the sum of all weights of the $S^1$-action at $Z$. 
	\end{lemma}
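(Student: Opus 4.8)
The plan is to promote the monotonicity identity $c_1(TM)=[\omega]$ from $H^2(M;\mathbb{R})$ to an identity in $S^1$-equivariant cohomology, and then read off the normalization by restricting to the fixed components. Recall first that on a compact connected symplectic manifold a moment map for a Hamiltonian $S^1$-action exists and is unique up to an additive constant, and that $M^{S^1}\neq\emptyset$ since the maximum of any moment map is a fixed point. Hence it suffices to show that, for one (equivalently, any) moment map $H_0$, the number $H_0(Z)+\Sigma(Z)$ is the same for every fixed component $Z$: subtracting that common value from $H_0$ yields the balanced moment map, and uniqueness is immediate, as two moment maps both satisfying $H(Z)=-\Sigma(Z)$ differ by a constant that vanishes on $M^{S^1}$.

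The main step is to compare two natural classes in $H^2_{S^1}(M;\mathbb{R})$: the equivariant first Chern class $c_1^{S^1}(TM)$, and the equivariant symplectic class $[\omega+H_0\lambda]$, i.e.\ the equivariantly closed extension of $[\omega]$ determined by $H_0$ (with the sign convention of Section~\ref{ssecNotations}, so that its restriction to a fixed component $Z$ is $[\omega]|_Z+H_0(Z)\lambda$). Since the action is Hamiltonian and $M$ is compact, the action is equivariantly formal; in particular $H^2_{S^1}(M;\mathbb{R})\to H^2(M;\mathbb{R})$ is surjective with kernel $\mathbb{R}\lambda=\lambda\cdot H^0_{S^1}(M;\mathbb{R})$. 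Under this restriction map $c_1^{S^1}(TM)\mapsto c_1(TM)$ and $[\omega+H_0\lambda]\mapsto[\omega]$, and these agree by monotonicity; therefore
\[
	c_1^{S^1}(TM)-[\omega+H_0\lambda]=c\lambda
\]
for a unique $c\in\mathbb{R}$.

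To identify $c$, restrict to a fixed component $Z$. Splitting $TM|_Z=TZ\oplus\nu_Z$ as $S^1$-bundles and applying the splitting principle to the weight decomposition of the normal bundle $\nu_Z$, one obtains $c_1^{S^1}(TM)|_Z=c_1(TM)|_Z-\Sigma(Z)\lambda$ in $H^2_{S^1}(Z;\mathbb{R})=H^2(Z;\mathbb{R})\oplus\mathbb{R}\lambda$, where the sign is fixed by the convention that $-\lambda$ is the equivariant Euler class of a weight-one line (as in Section~\ref{ssecNotations}). Since $c_1(TM)|_Z=[\omega]|_Z$ and $[\omega+H_0\lambda]|_Z=[\omega]|_Z+H_0(Z)\lambda$, subtracting gives $c_1^{S^1}(TM)|_Z-[\omega+H_0\lambda]|_Z=-(\Sigma(Z)+H_0(Z))\lambda$, which must equal $c\lambda$. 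Hence $\Sigma(Z)+H_0(Z)\equiv-c$ is independent of $Z$, and $H:=H_0+c$ is the desired balanced moment map.

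The one genuinely delicate point is sign bookkeeping: which generator $\pm\lambda$ of $H^2(BS^1;\mathbb{R})$ one uses, the sign in the moment-map equation $\iota_{X}\omega=\pm\,dH$ for the fundamental vector field $X$, and the induced sign of the weights must all be tracked so that the final normalization is $H(Z)=-\Sigma(Z)$ rather than $H(Z)=+\Sigma(Z)$; these have to be matched to the conventions of Section~\ref{ssecNotations} and of \cite{Cho1}. Apart from this, the only non-elementary ingredient is equivariant formality of Hamiltonian $S^1$-actions on compact symplectic manifolds, which is classical; the rest is a direct computation of restrictions to the fixed set.
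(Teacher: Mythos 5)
Your argument is correct and is essentially the proof that this paper delegates to \cite{Cho1}: the lemma is only quoted here, and the balanced moment map is obtained in the cited source exactly as you do, by promoting the monotonicity relation $c_1(TM)=[\omega]$ to $H^2_{S^1}(M;\R)$, noting that the two equivariant extensions differ by a multiple of $\lambda$, and reading off the $\lambda$-coefficients at the fixed components (with uniqueness coming from $M^{S^1}\neq\emptyset$). The only point to settle, which you already flag, is the sign bookkeeping: with the conventions actually used in this paper (e.g.\ $c_1^{S^1}(TM)|_{Z}=c_1(TM)|_{Z}+\Sigma(Z)\lambda$, as in the localization computation of Lemma \ref{lemma_I_Chern_number}), the comparison must be normalized so that the outcome is $H(Z)=-\Sigma(Z)$ rather than $H(Z)=+\Sigma(Z)$.
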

	
	Recall that an $S^1$-action on $M$ is called {\em semifree} if the action is free outside the fixed point set. It is equivalent to saying that every weight at any fixed point in $M^{S^1}$
	is either $\pm 1$ or $0$. Thus we have the following. 

	\begin{lemma}\label{lemma_possible_critical_values}\cite[Lemma 5.9]{Cho1} 
	Let $(M,\omega)$ be a six-dimensional closed monotone semifree $S^1$-manifold with the balanced moment map $H$.
	Then all possible critical values of $H$ are $\pm 3, \pm 2, \pm 1$, and $0$. Moreover, any connected component $Z$ of $M^{S^1}$ satisfies one of the followings : 
	\begin{table}[H]
		\begin{tabular}{|c|c|c|c|}
		\hline
		    $H(Z)$ & $\dim Z$ & $\mathrm{ind}(Z)$ & $\mathrm{Remark}$ \\ \hline 
		    $3$ &  $0$ & $6$ & $Z = Z_{\max} = \mathrm{point}$ \\ \hline
		    $2$ &  $2$ & $4$ & $Z = Z_{\max} \cong S^2$ \\ \hline
		    $1$ &  $4$ & $2$ & $Z = Z_{\max}$ \\ \hline
		    $1$ &  $0$ & $4$ & $Z = \mathrm{pt}$ \\ \hline
		    $0$ &  $2$ & $2$ & \\ \hline
		    $-1$ &  $0$ & $2$ & $Z = \mathrm{pt}$ \\ \hline
		    $-1$ &  $4$ & $0$ & $Z = Z_{\min}$ \\ \hline
		    $-2$ &  $2$ & $0$ & $Z = Z_{\min} \cong S^2$ \\ \hline
		    $-3$ &  $0$ & $0$ & $Z = Z_{\min} = \mathrm{point}$ \\ \hline
		\end{tabular}
		\vs{0.2cm}
		\caption{\label{table_fixed} List of possible fixed components}
		\end{table}
	\end{lemma}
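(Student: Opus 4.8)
The plan is to read Table~\ref{table_fixed} directly off two ingredients: the $S^1$-equivariant normal form at a fixed component, and the balanced normalization of Lemma~\ref{lemma_balanced}. Once those are in place the whole statement reduces to a finite enumeration, and the only genuinely non-formal point is the identification of two-dimensional extremal components with $S^2$.

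First I would fix a connected component $Z$ of $M^{S^1}$ and record its local invariants. The normal bundle $\nu_Z$ carries no zero weights (else $Z$ would not be a whole component of the fixed set), and semifreeness forces every weight of the isotropy representation on $\nu_Z$ to be $+1$ or $-1$; write $n^+(Z), n^-(Z)$ for the multiplicities of $+1$ and $-1$. Since $\dim_\R M = 6$ we get $n^+(Z) + n^-(Z) = 3 - \tfrac12\dim Z$. By the equivariant Darboux theorem $H$ is Morse--Bott near $Z$ with negative normal bundle the weight-$(-1)$ summand of $\nu_Z$, so $\mathrm{ind}(Z) = 2\,n^-(Z)$; in particular $n^-(Z)=0$ iff $\mathrm{ind}(Z)=0$ and $n^+(Z)=0$ iff $\mathrm{ind}(Z)=6-\dim Z$ is maximal. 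Next, since all indices and coindices of $H$ are even, $H$ (a moment map on the connected manifold $M$) has a unique local minimum, namely $Z_{\min}$, and a unique local maximum, namely $Z_{\max}$; hence a fixed component with $n^-(Z)=0$ is $Z_{\min}$, one with $n^+(Z)=0$ is $Z_{\max}$, and any other component has $n^+(Z)\ge 1$ and $n^-(Z)\ge 1$. Finally, Lemma~\ref{lemma_balanced} applied to the balanced moment map gives
\[
  H(Z) = -\Sigma(Z) = n^-(Z) - n^+(Z),
\]
and this is the only place where monotonicity is used.

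With these three facts the statement becomes pure bookkeeping. Put $d = \tfrac12\dim Z$; since the action is nontrivial, $Z\neq M$ and so $d\in\{0,1,2\}$, with $d + n^+(Z) + n^-(Z) = 3$. Running over the finitely many triples $(d,n^+(Z),n^-(Z))$ satisfying this, splitting into the three cases $n^-=0$ (so $Z=Z_{\min}$), $n^+=0$ (so $Z=Z_{\max}$), and $n^+,n^-\ge 1$ (non-extremal), and reading off $\mathrm{ind}(Z)=2n^-(Z)$ and $H(Z)=n^-(Z)-n^+(Z)$, reproduces exactly the nine rows of Table~\ref{table_fixed}. In particular there is no non-extremal four-dimensional component (that would need $n^++n^-=1$ with both positive) and no isolated fixed point at level $0$; and since $\mathrm{Crit}~H = \{H(Z): Z\subseteq M^{S^1}\}$, every critical value lies in $\{0,\pm1,\pm2,\pm3\}$.

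The remaining point, and the one I expect to require real work, is that a two-dimensional $Z_{\min}$ (resp.\ $Z_{\max}$) is diffeomorphic to $S^2$. In that case $\nu_Z$ has complex rank $2$ and carries a single weight, so for small $\epsilon>0$ the level set $H^{-1}(c+\epsilon)$ with $c=H(Z_{\min})$ (resp.\ $H^{-1}(c-\epsilon)$ with $c=H(Z_{\max})$) is the unit sphere bundle of a rank-two complex bundle over $Z$, and the corresponding reduced space is an $S^2$-bundle (the projectivization of $\nu_Z$) over $Z$. Using the structure of the reduced spaces of a monotone semifree $S^1$-manifold in dimension six — the reduced space at level zero is a del Pezzo surface, and the reduced spaces at other regular levels differ from it by blow-ups and blow-downs, hence are all rational and in particular simply connected — the homotopy exact sequence of this $S^2$-fibration forces $\pi_1(Z)=0$, so $Z\cong S^2$. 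Everything else in the argument is routine once the equivariant normal form, the evenness of the Morse indices, and Lemma~\ref{lemma_balanced} are available.
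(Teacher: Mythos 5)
Your proposal is correct and is essentially the argument behind the cited source: the nine rows are read off from the semifree weight count together with Lemma \ref{lemma_balanced}, which gives $H(Z)=-\Sigma(Z)=n^-(Z)-n^+(Z)$, exactly as in \cite{Cho1}. The only substantive extra step, that a two-dimensional extremal component has genus zero, is also handled the standard way: the adjacent reduced space is the projectivization $\pp(\nu_Z)$, and its simple connectivity follows (non-circularly, since the enumeration is established first) from Lemma \ref{lemma_monotonicity} and Ohta--Ono at level zero together with Proposition \ref{proposition_GS} across the isolated index/coindex-two critical points at level $\mp 1$, so the fibration over $Z$ forces $\pi_1(Z)=0$.
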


	One important feature of the balanced moment map is that the reduced space $M_0$ at level zero inherits the monotonicity of $\omega$. 
	
	\begin{lemma}\cite[Proposition 4.8]{Cho1}\label{lemma_monotonicity}
		Let $H$ be the balanced moment map on $(M,\omega)$. Then the reduced space $(M_0, \omega_0)$ becomes a closed monotone symplectic four manifold such that 
		$c_1(TM_0) = [\omega_0]$. 
	\end{lemma}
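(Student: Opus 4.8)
The plan is to compress the monotonicity of $(M,\omega)$ into a single identity in $S^1$-equivariant cohomology and then transport it to $M_0$ via the Kirwan map; the zero level is singled out precisely so that transporting the symplectic class costs nothing.

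\textbf{Step 1 (an equivariant identity).} Since the action is Hamiltonian, $(M,\omega)$ is equivariantly formal, so the restriction map $H^*_{S^1}(M;\R)\to H^*_{S^1}(M^{S^1};\R)$ is injective. I would compare $c_1^{S^1}(TM)$ with the equivariant symplectic class $[\omega-H\lambda]\in H^2_{S^1}(M;\R)$, where $H$ is the balanced moment map, on each fixed component $Z$. Writing $TM|_Z=TZ\oplus\nu_Z$ with $\nu_Z$ split into weight subbundles gives $c_1^{S^1}(TM)|_Z=c_1(TM|_Z)+\Sigma(Z)\lambda$, while $[\omega-H\lambda]|_Z=[\omega|_Z]-H(Z)\lambda$. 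The ordinary parts agree because $c_1(TM)=[\omega]$, and the $\lambda$-parts agree because $H$ is balanced, i.e.\ $H(Z)=-\Sigma(Z)$ (Lemma~\ref{lemma_balanced}). Injectivity then gives $c_1^{S^1}(TM)=[\omega-H\lambda]$ in $H^2_{S^1}(M;\R)$. (This identity is essentially the content of the proof of Lemma~\ref{lemma_balanced}, so one may also just quote it.)

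\textbf{Step 2 (reduction and Kirwan map).} I would first record that $M_0=H^{-1}(0)/S^1$ is a closed, connected symplectic $4$-manifold: $0$ lies interior to $H(M)=[\min H,\max H]$ by Table~\ref{table_fixed}, moment-map fibers are connected, and the action on $H^{-1}(0)$ is free away from the at most $2$-dimensional fixed components at level $0$; near such a fixed surface the normal weights are $+1$ and $-1$ by semifreeness, and a short local-model computation shows $H^{-1}(0)/S^1$ is still a smooth manifold carrying a genuine reduced form $\omega_0$. Then I would apply the Kirwan map $\kappa\colon H^2_{S^1}(M;\R)\to H^2(M_0;\R)$. Restricting $TM$ to $H^{-1}(0)$ yields an $S^1$-equivariant splitting $TM|_{H^{-1}(0)}\cong\pi_0^*TM_0\oplus\underline{\C}$, the trivial summand being the orbit and gradient directions on which $S^1$ acts trivially, so $\kappa(c_1^{S^1}(TM))=c_1(TM_0)$; and $\kappa([\omega-H\lambda])=[\omega_0]$ because $H\equiv 0$ on $H^{-1}(0)$ and $\iota_0^*\omega=\pi_0^*\omega_0$. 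Combined with Step~1 this gives $c_1(TM_0)=[\omega_0]$, and since $\omega_0$ is symplectic on a closed $4$-manifold we have $[\omega_0]^2>0$, so $\omega_0$ is monotone.

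\textbf{Expected main obstacle.} Steps~1--2 are formal once one accepts equivariant formality and the Kirwan map; the genuinely delicate point is the case where $0$ is a critical value, i.e.\ a fixed surface $Z$ with $H(Z)=0$ sits inside $H^{-1}(0)$. Then $H^{-1}(0)$ is not a principal $S^1$-bundle, and one must argue directly that $M_0$ is smooth with $\omega_0$ symplectic and that $\kappa$ still behaves as above (alternatively, reduce at $\pm\epsilon$ and control the Duistermaat--Heckman correction, which reintroduces Euler-class terms and is messier). I also expect the sign bookkeeping in Step~1---so that the conclusion is $c_1(TM_0)=+[\omega_0]$ and not its negative---to require a little care, although it is forced by the normalization that defines the balanced moment map.
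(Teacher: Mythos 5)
Your strategy---encode the monotonicity of $(M,\omega)$ as the equivariant identity $c_1^{S^1}(TM)=[\omega-H\lambda]$, which the balanced condition $H(Z)=-\Sigma(Z)$ of Lemma \ref{lemma_balanced} plus Kirwan injectivity forces, and then push it down to the zero level---is exactly the mechanism the balanced moment map is designed for; the present paper does not reprove the lemma but quotes \cite{Cho1}, and your Step 1 is in substance the argument behind that citation. Step 2 is also fine, and complete, whenever $0$ is a regular value of $H$.

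The one genuine gap is the point you flag yourself: when $0$ is a critical value, $H^{-1}(0)$ is not a principal $S^1$-bundle, the splitting $TM|_{H^{-1}(0)}\cong \pi_0^*TM_0\oplus\underline{\C}$ fails along the fixed surfaces in $Z_0$, and $H^*_{S^1}(H^{-1}(0))$ is no longer $H^*(M_0)$; nor can you recover $\kappa(c_1^{S^1}(TM))=c_1(TM_0)$ by restricting to the free locus, since the fixed surfaces have codimension two in $M_0$ and $H^2(M_0)\to H^2(M_0\setminus Z_0)$ need not be injective. So, as written, your proof covers only the regular case. The repair is the route you dismissed as ``messier,'' and it is in fact the clean one: at a nearby regular level $t>0$ your Step 2 gives $c_1(TM_t)=[\omega_t]+t\,e(P_0^+)$ (the Euler-class term being the image of $\lambda$ under the Kirwan map, with the paper's convention that $-\lambda$ is the Euler class of the universal Hopf bundle), while Duistermaat--Heckman \eqref{equation_DH} gives $[\omega_t]=[\omega_0]-t\,e(P_0^+)$; since every fixed component at level $0$ has index and coindex two, Proposition \ref{proposition_GS} shows $M_0$ is smooth and identifies it with $M_t$ (blow-up along a codimension-two submanifold changes nothing), so the two $t$-terms cancel and $c_1(TM_0)=[\omega_0]$, with no limit or local-model analysis needed. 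With that supplement, and noting that the sign bookkeeping you worry about is settled by the same convention on $\lambda$, your proof is complete and agrees with the cited one in approach.
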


	Lemma \ref{lemma_monotonicity}, together with the results of Ohta-Ono \cite{OO2}, implies that $M_0$ is diffeomorphic to a del Pezzo surface, that is, 
	$\p^2$, $S^2 \times S^2$, or $X_k$ for $1 \leq k \leq 8$ where $X_k$ denotes the blow-up of $\p^2$ at $k$ generic points. Moreover, by the uniqueness theorem 
	(of McDuff \cite{McD3}) of a symplectic structure 
	on a rational surface, it follows that $(M_0,\omega_0)$ is symplectomorphic to a K\"{a}hler Fano surface, which is a del Pezzo surface equipped with the 
	K\"{a}hler form representing the anti-canonical class of $M_0$. 
	
\subsection{Topology of reduced spaces}
\label{ssecTopologyOfReducedSpaces}
	
	Due to the result of Guillemin-Sternberg \cite{GS}, any two reduced spaces of a semifree Hamiltonian $S^1$-manifold are related by a sequence of blow-up and down, where the relation is called 
	{\em birational equivalence of reduced spaces}.
	 
	\begin{proposition}\cite{McD2}\cite{GS}\label{proposition_GS}
		Let $(M,\omega)$ be a closed semifree Hamiltonian $S^1$-manifold with a moment map $\mu : M \rightarrow \R$ and $c \in \R$ be a critical value of $\mu$. 
		If $Z_c := \mu^{-1}(c) \cap M^{S^1}$ consists of index-two (co-index two, resp.) fixed components, then $M_c = \mu^{-1}(c) / S^1$ is smooth and is diffeomorphic to 
		$M_{c-\epsilon}$
		($M_{c+\epsilon}$, resp.). Also, $M_{c+\epsilon}$ is the blow-up (blow-down, resp.) of $M_c$ along $Z_c$.	
	\end{proposition}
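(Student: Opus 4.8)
The plan is to localise the question near the critical level $\mu^{-1}(c)$, carry out the symplectic reduction explicitly in an equivariant normal form, and then glue this local picture to the (trivial) behaviour of the reduced spaces on the free part of the action. First I would invoke the equivariant symplectic neighbourhood theorem (the Marle--Guillemin--Sternberg normal form): every connected component $Z\subseteq Z_c$ has an $S^1$-invariant neighbourhood equivariantly symplectomorphic to a neighbourhood of the zero section in the symplectic normal bundle $\nu(Z)=\nu^-(Z)\oplus\nu^+(Z)$, where $S^1$ acts with negative weights on $\nu^-(Z)$ and positive weights on $\nu^+(Z)$; semifreeness forces all these weights to be $\pm1$, and the index-two hypothesis says precisely that $\nu^-(Z)$ is a \emph{line} bundle. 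Thus $\nu^-(Z)=L\to Z$ is a Hermitian line bundle of weight $-1$ and $\nu^+(Z)=W\to Z$ a Hermitian bundle of weight $+1$, and in these coordinates the moment map is $\mu=c-\tfrac12\|v^-\|^2+\tfrac12\|v^+\|^2$ up to a connection-dependent correction that is irrelevant for the diffeomorphism type.

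Next I would reduce this local model directly, using the $S^1$-invariant quantity $u=v^-\otimes v^+\in L\otimes W$ together with the level constraint to solve for $\|v^-\|$ and $\|v^+\|$. At $t=c-\epsilon$ the constraint forces $v^-\neq0$, so the action is free and the quotient is (a neighbourhood of the zero section in) the total space of $L\otimes W\to Z$; the same description holds at $t=c$, so $\mu^{-1}(c)/S^1$ is \emph{smooth} near $Z$ and $M_c\cong M_{c-\epsilon}$ locally. At $t=c+\epsilon$ one instead has $v^+\neq0$, and fibring the quotient over $\pp(W)\to Z$ by $v^+\mapsto[v^+]$ identifies it with the total space of $\pi^*L\otimes\mathcal{O}_{\pp(W)}(-1)$, $\pi\colon\pp(W)\to Z$. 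Since blowing up the total space of a vector bundle $E\to Z$ along its zero section yields exactly $\mathcal{O}_{\pp(E)}(-1)\to\pp(E)$, and $\pp(L\otimes W)\cong\pp(W)$ with $\mathcal{O}_{\pp(L\otimes W)}(-1)$ corresponding to $\pi^*L\otimes\mathcal{O}_{\pp(W)}(-1)$, this shows that the local model of $M_{c+\epsilon}$ is precisely the blow-up of the local model of $M_c$ along $Z$. The co-index-two statement follows by applying all of the above to $-\mu$, which interchanges the roles of $\nu^-$ and $\nu^+$.

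Finally I would globalise. Away from a small invariant tube $U$ around $Z_c$ the action is free over $\mu^{-1}([c-\epsilon,c+\epsilon])$, so following the reparametrised gradient flow of $\mu$ for an invariant metric gives $S^1$-equivariant diffeomorphisms between $\mu^{-1}(c\pm\epsilon)\setminus U$ and $\mu^{-1}(c)\setminus U$, hence identifications of the reduced spaces outside $U/S^1$. On the overlap collar $\{v^-\neq0,\ v^+\neq0\}$ this flow merely rescales $(v^-,v^+)$ within level sets, so it is compatible with the explicit identifications of the previous step; interpolating over the collar then produces a global diffeomorphism $M_c\cong M_{c-\epsilon}$ together with a presentation of $M_{c+\epsilon}$ as the blow-up of $M_c$ along $Z_c$ (and as a \emph{symplectic} blow-up, once one tracks the reduced forms through the symplectic cutting construction).

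The hardest part will be this last step: one must build the globalising diffeomorphisms so that the interior normal-form model and the exterior gradient-flow identification genuinely agree along the collar --- i.e.\ so that the birational modification is performed along $Z_c$ and nowhere else --- while correctly carrying the (possibly non-trivial) bundles $L$ and $W$, which is exactly what forces the exceptional locus to be the projective bundle $\pp(W)$ rather than a product. Step~1 is a citation of the normal-form theorem and Step~2 is a bare-hands reduction computation.
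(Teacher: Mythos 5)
Your proposal is correct and follows essentially the same route as the sources the paper cites for this statement (the paper itself gives no proof of Proposition \ref{proposition_GS}, quoting \cite{GS} and \cite{McD2}): the equivariant normal form with $\nu(Z)=L\oplus W$, the identification of the reduced spaces near $Z$ with neighbourhoods of the zero section of $L\otimes W$ at levels $c-\epsilon$ and $c$ and with $\pi^*L\otimes\mathcal{O}_{\mathbb{P}(W)}(-1)\to\mathbb{P}(W)$ at level $c+\epsilon$, and the gradient-flow globalisation are exactly the Guillemin--Sternberg argument. Note only that when $W$ has complex rank one (the coindex-two surface components at level $0$ in the paper's six-dimensional setting) one has $\mathbb{P}(W)\cong Z$ and the ``blow-up'' is the trivial modification, which is consistent with how the paper uses the proposition.
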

	
	In dimension six, any non-extremal fixed component is of index or co-index two. So, Lemma \ref{lemma_possible_critical_values} and Proposition \ref{proposition_GS} imply that
	\begin{enumerate}
		\item if $-1$ is a non-extremal critical value of $H$, then $M_{-1+\epsilon}$ is the $k$ points blow-up of $M_{-1-\epsilon}$ where $k = |Z_{-1}|$ is the number of fixed points 
		of index two,
		\item if $0$ is a non-extremal critical value of $H$, then $M_{0 - \epsilon}$ and $M_{0 + \epsilon}$ are diffeomorphic, 
		\item if $1$ is a a non-extremal critical value of $H$, then $M_{1-\epsilon}$ is the $k$-points blow-up of $M_{1+\epsilon}$ where $k = |Z_1|$.
	\end{enumerate}	

	Now, we investigate how a reduced symplectic form $\omega_t$ varies along $t$. 	
	The Duistermaat-Heckman theorem \cite{DH} states that if an interval $I$ consists of regular values of $H$, then 
	\begin{equation}\label{equation_DH}
		[\omega_s] - [\omega_t] = (t-s)e \quad \quad \text{for any $t, s \in I$}
	\end{equation}
	where $e$ is the Euler class of the principal $S^1$-bundle $\pi_t : H^{-1}(t) \rightarrow M_t$.  
	The following proposition and the Duistermaat-Heckman theorem then enable us to compute all cohomology classes of reduced symplectic forms. 

	\begin{lemma}\cite[Theorem 13.2]{GS}\label{lemma_Euler_class}
		Suppose that $Z_c = M^{S^1} \cap H^{-1}(c)$ consists of fixed components $Z_1, \cdots, Z_k$ each of which is of index two.   
		Let $e^{\pm}$ be the Euler classes of principal $S^1$-bundles $\pi_{c \pm \epsilon} : H^{-1}(c \pm \epsilon) \rightarrow M_{c \pm \epsilon}$.
		Then 
		\[
			e^+ = \phi^*(e^-) + E \in H^2(M_{c+\epsilon}; \Z)
		\] where $\phi : M_{c+\epsilon} \rightarrow M_{c-\epsilon}$ is the blow-down map and $E$ denotes the Poincar\'{e} dual of the exceptional divisor of $\phi$.
	\end{lemma}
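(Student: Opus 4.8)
The plan is to localize the identity near the fixed set $Z_c = Z_1 \sqcup \cdots \sqcup Z_k$, verify it by an explicit computation in an $S^1$-equivariant symplectic normal form around each $Z_i$, and then patch. For each $i$ write the normal bundle of $Z_i$ in $M$ as $\nu_i = \nu_i^+ \oplus \nu_i^-$, the sum of the weight $+1$ and weight $-1$ subbundles; since $Z_i$ has index two, $\nu_i^-$ is a complex line bundle (the negative normal bundle) and $\nu_i^+$ has real rank $4 - \dim Z_i$. Let $\phi : M_{c+\epsilon} \to M_{c-\epsilon}$ be the blow-down of Proposition \ref{proposition_GS} (it identifies $M_{c-\epsilon}$ with the reduced space $M_c$ at the critical level), let $Z_i' \subset M_{c+\epsilon}$ be the exceptional divisor over $Z_i$ --- a $\pp^1$-bundle over $Z_i$ when $\dim Z_i = 0$, and $Z_i$ itself when $\dim Z_i = 2$ --- and put $E = \sum_i \pd[Z_i']$. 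The first observation is that on $H^{-1}([c-\epsilon, c+\epsilon])$ with small invariant tubes around the $Z_i$ removed, the $S^1$-action is free and the reparametrized gradient flow of $H$ for an invariant compatible metric is an $S^1$-equivariant diffeomorphism between the two ends; passing to $S^1$-quotients, this recovers the restriction of $\phi$ over $M_{c+\epsilon}\setminus\bigcup_i Z_i'$ and carries $P_c^+$ to $P_c^-$. Hence $e^+ = \phi^*(e^-)$ there, while $E$ restricts to $0$ off $\bigcup_i Z_i'$, so $\delta := e^+ - \phi^*(e^-) - E \in H^2(M_{c+\epsilon};\Z)$ is supported in a tubular neighbourhood $D_i$ of each $Z_i'$, and it suffices to prove $\delta|_{D_i} = 0$.

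For this I would run the local computation. By the equivariant symplectic neighbourhood theorem, an invariant neighbourhood of $Z_i$ is modelled on a neighbourhood of the zero section of $\nu_i$ with $H = c + \tfrac12\big(\|\cdot\|^2_{\nu_i^+} - \|\cdot\|^2_{\nu_i^-}\big)$ for suitable invariant Hermitian metrics. For small $t$ the level set $H^{-1}(c+t)$ is then an explicit fibre bundle over $Z_i$ carrying a free $S^1$-action; computing the quotient by hand identifies $M_{c-\epsilon}$ near $Z_i$ with the disk bundle of $\nu_i^+ \otimes \nu_i^-$, identifies $M_{c+\epsilon}$ near $Z_i$ with the blow-up of that disk bundle along its zero section (which is the identity when $\dim Z_i = 2$, the zero section then being a divisor), and exhibits $P_c^{\pm}$ as concrete circle bundles built from $\nu_i^{\pm}$ and, when $\dim Z_i = 0$, the tautological bundle of $\pp(\nu_i^+)$. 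Taking first Chern classes, and using that $Z_i'$ has normal bundle $\nu_i^+\otimes\nu_i^-$ when $\dim Z_i = 2$ and $\mathcal{O}(-1)$ when $\dim Z_i = 0$, the identity $e^+ = \phi^*(e^-) + E$ drops out on each $D_i$.

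It remains to globalize. The previous two steps show $\delta$ restricts to $0$ on every member of the cover $\{M_{c+\epsilon}\setminus\bigcup_i Z_i'\}\cup\{D_i\}_i$ of $M_{c+\epsilon}$, and in fact to $0$ on each $Z_i'$. When $\dim Z_i = 0$, or when $\dim Z_i = 2$ and the component has nonzero self-intersection, this already forces the local component of $\delta$ at $Z_i'$ to vanish, since $H^2$ of the relevant tubular neighbourhood is detected by restriction to $Z_i'$ through the Thom isomorphism. The one remaining case, $\dim Z_i = 2$ with topologically trivial normal bundle, I would settle either in Chern--Weil terms --- representing $e^{\pm}$ by curvatures of invariant connections and $E$ by a Thom form and patching the explicit local primitives from the normal-form computation to realize $\delta$ as exact, hence $0$ since $H^2(M_{c+\epsilon};\Z)$ is torsion free --- or, more efficiently, by the Duistermaat--Heckman theorem \eqref{equation_DH}: continuing the reduced classes past $c$ exhibits $\delta = e^+ - \phi^*(e^-)$ as the ``kink'' of the piecewise-linear family $t\mapsto[\omega_t]$ at $t=c$, which is a scalar multiple of $E$, the scalar being pinned to $1$ by the computation above. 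I expect the real work to lie in the normal-form computation: fixing the orientations of the circle fibres, hence the signs of the Chern classes, and correctly identifying the exceptional divisor and its normal bundle in both the genuine blow-up ($\dim Z_i = 0$) and the degenerate ($\dim Z_i = 2$) regimes. Alternatively, one may simply quote the description of $M_{c+\epsilon}$ as the $S^1$-equivariant symplectic blow-up of $M_{c-\epsilon}$ along $Z_c$, in which the exceptional $S^1$-bundle is built into the cut construction.
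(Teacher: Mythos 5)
The paper gives no argument for this lemma at all: it is quoted directly from Guillemin--Sternberg \cite[Theorem 13.2]{GS}, so there is no in-paper proof to compare against, and what you have written is essentially a reconstruction of the original wall-crossing argument via local normal forms. Its skeleton is sound: the gradient-flow identification of the two ends away from the stable/unstable manifolds, the local model identifying $M_{c\pm\epsilon}$ near $Z_i$ with the disk bundle of $\nu_i^+\otimes\nu_i^-$ (and its blow-up when $\dim Z_i=0$, with exceptional divisor $\pp(\nu_i^+)$ of normal bundle $\mathcal{O}(-1)$), and the reduction of the global identity to pinning the coefficient of each $\mathrm{PD}(Z_i')$ are all correct, and you rightly isolate the only delicate point, namely $\dim Z_i=2$ with $\deg(\nu_i^+\otimes\nu_i^-)=0$, where restriction to $Z_i'$ detects nothing. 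Two caveats on how you close that case. In the Chern--Weil variant, the step ``exact, hence $0$ since $H^2(M_{c+\epsilon};\Z)$ is torsion free'' imports a hypothesis the lemma does not contain; it is true in this paper because every reduced space is a simply connected rational surface, but for the lemma as stated you should instead run your support argument integrally: the explicit isomorphism $P_c^+\cong\phi^*P_c^-$ off the divisors extends across a tube around $Z_i'$ only after twisting by the line bundle with divisor $Z_i'$, the obstruction being a clutching map of winding number one around $Z_i'$ (this is exactly your fibre-disk integral, computed in the local model), which gives $e^+=\phi^*(e^-)+E$ in $H^2(\,\cdot\,;\Z)$ uniformly, with no appeal to torsion-freeness or to the self-intersection of $Z_i'$. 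By contrast, the Duistermaat--Heckman alternative is the weaker of your two options: that the kink of $t\mapsto[\omega_t]$ at $c$ is supported near the divisors only says it is an integral combination of the $\mathrm{PD}(Z_i')$, and ``pinned to $1$ by the computation above'' is precisely what restriction to $Z_i'$ fails to deliver in the degree-zero case, so as written that route is circular exactly where it is needed. With the clutching/winding version of your first route, the proof is complete and is in substance the same local-model argument as the cited source.
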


\subsection{Symplectic rigidity}
\label{ssecSymplecticRigidity}

	A manifold $B$ is called {\em symplectically rigid} if it satisfies the following three conditions:
	\begin{itemize}
		\item (uniqueness) any two cohomologous symplectic forms are diffeomorphic, 
		\item (deformation implies isotopy) every path $\omega_t$ ($t \in [0,1]$) of symplectic forms such that $[\omega_0] = [\omega_1]$ 
		can be homotoped through families of symplectic forms 
		with the fixed endpoints $\omega_0$ and $\omega_1$ to an isotopy, that is, a path $\omega_t'$ such that $[\omega_t']$ is constant in $H^2(B)$. 
		\item For every symplectic form $\omega$ on $B$, the group $\text{Symp}(B,\omega)$ of symplectomorphisms that act trivially on $H_*(B;\Z)$ is path-connected.
	\end{itemize}
	See also \cite[Definition 2.13]{McD2} or \cite[Definition 1.4]{G}. 
	
	\begin{theorem}\cite[Theorem 1.2]{McD4}\label{theorem_uniqueness}
		Let $M$ be the blow-up of a rational or ruled symplectic four manifold. Then any two cohomologous and deformation equivalent\footnote{Two symplectic forms $\omega_0$ and 
		$\omega_1$ are said to be {\em deformation equivalent} if there exists a family of symplectic forms $\{ \omega_t  ~|~  0 \leq t \leq 1 \}$ connecting $\omega_0$ and $\omega_1$.
		 We also say that 
		$\omega_0$ and $\omega_1$ are {\em isotopic} if such a family can be chosen such that $[\omega_t]$ is a constant path in $H^2(M; \Z)$.}
		symplectic forms on $M$ are isotopic.
	\end{theorem}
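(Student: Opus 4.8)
The plan is to reduce the assertion to the already-understood \emph{minimal} case by blowing down a coherently chosen family of exceptional spheres, and then to control the remaining ball-packing data by Gromov--Witten theory and inflation. Write $M$ as a symplectic blow-up of a minimal rational or ruled four-manifold $\bar M$ at $k$ points, let $E_1, \dots, E_k \in H_2(M;\Z)$ be the exceptional classes, and fix a deformation $\{\omega_t\}_{t \in [0,1]}$ between the two cohomologous forms $\omega_0, \omega_1$. Since $M$ is rational or ruled after blow-up we have $b^+(M) = 1$, so Taubes' $\mathrm{SW} = \mathrm{Gr}$ correspondence (in the $b^+ = 1$ / wall-crossing form due to Li--Liu) gives that each $E_i$ has non-zero Gromov invariant for every $\omega_t$. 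First I would use this, for a generic path of compatible almost complex structures $\{J_t\}$, to produce for each $t$ a configuration of $k$ pairwise disjoint embedded $J_t$-holomorphic $(-1)$-spheres in the classes $E_1, \dots, E_k$ --- disjointness and uniqueness being forced by $E_i \cdot E_j = 0$ and $E_i^2 = -1$ via positivity of intersections --- repairing the family by a local perturbation of $\{J_t\}$ at the finitely many parameters where it bubbles. Blowing these spheres down simultaneously yields a path $\{\bar\omega_t\}$ of symplectic forms on $\bar M$ together with a continuously varying collection of $k$ disjoint embedded symplectic balls, whose capacities together with $[\bar\omega_t]$ reconstitute $[\omega_t]$; since $[\omega_0] = [\omega_1]$, after this reduction $[\bar\omega_0] = [\bar\omega_1]$ and the two ball configurations have matching capacities.

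Next I would invoke the minimal classification: when $\bar M$ is $\p^2$, $S^2 \times S^2$, the nontrivial $S^2$-bundle over $S^2$, or a ruled surface over a surface of positive genus, every symplectic form is isotopic to a standard K\"ahler one (McDuff in the rational case, Lalonde--McDuff in the ruled case), so in particular $\bar\omega_0$ and $\bar\omega_1$ are isotopic. Transporting by the resulting ambient isotopy, we may assume $\bar\omega_0 = \bar\omega_1 =: \bar\omega$, and it remains to compare the two configurations of $k$ disjoint $\bar\omega$-balls. The original deformation $\{\omega_t\}$ makes these two configurations deformation equivalent as (unparametrized) symplectic ball embeddings of fixed total capacity, and one then shows --- again by controlling $J$-holomorphic spheres in $\bar M$ and by the inflation technique --- that deformation-equivalent ball configurations in a rational or ruled surface are isotopic through such configurations. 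Blowing up along that isotopy of configurations produces the desired isotopy from $\omega_0$ to $\omega_1$.

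The main obstacle is the family version of the blow-down in the first step: ensuring that the $k$ exceptional spheres can be chosen to vary smoothly in $t$ and to stay embedded and mutually disjoint across the codimension-one locus where Gromov compactness allows bubbling, and then performing the blow-down --- and the later inflation --- in genuine one-parameter families while keeping all cohomology classes under control. This is precisely where positivity of intersections, the deformation-invariance of the Gromov invariant coming from $b^+ = 1$, and Gromov compactness must be used in tandem; by comparison, the two ``endpoints'' of the argument --- the minimal classification and the connectedness statement for ball packings --- are standard inputs once the family has been set up.
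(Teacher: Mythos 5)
A preliminary point: the paper does not prove Theorem \ref{theorem_uniqueness} at all --- it is quoted from McDuff \cite{McD4} and used as a black box (via Corollary \ref{corollary_symplectically_rigid} and in Section \ref{secMainTheorem}), so your proposal has to be measured against McDuff's argument rather than anything in this paper. McDuff's proof does not blow down in families: it works directly on the non-minimal manifold $M$, using Taubes' $\mathrm{SW}=\mathrm{Gr}$ in the $b^+=1$ form (Li--Liu wall crossing) to produce embedded $J$-holomorphic curves of \emph{nonnegative} self-intersection for the almost complex structures taming the given deformation, and then inflates along them to homotope the deformation, rel endpoints, to an isotopy. Your route --- family blow-down to the minimal model, uniqueness of the form on the minimal model, then connectedness of ball configurations --- is a genuinely different decomposition, and it has two real problems.

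The first is the step you flag yourself but then dismiss: for a one-parameter family $\{J_t\}$ the exceptional classes $E_i$ are represented by embedded spheres only away from finitely many parameters, and at those parameters the Gromov representative is a connected nodal curve. This degeneration happens exactly when some auxiliary class of negative square acquires a $J_t$-holomorphic representative, which is a codimension-one condition in the space of almost complex structures; a generic path must be allowed to cross such walls, so it cannot be removed ``by a local perturbation of $\{J_t\}$.'' Consequently the simultaneous family blow-down is not available as stated, and repairing it requires precisely the kind of inflation/cone analysis that McDuff's actual proof substitutes for it. The second problem is near-circularity: the ``standard input'' at the end --- that two configurations of $k$ disjoint symplectic balls of matching capacities in a minimal rational or ruled surface, which are deformation equivalent, are isotopic through such configurations --- is in the literature a consequence of (or is proved by the same inflation argument as) the theorem you are proving; as structured, your reduction pushes essentially all of the content into that final step. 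If you carry that step out honestly by inflation you have in effect reproduced McDuff's proof, at which point the family blow-down machinery of the first step is superfluous; if you simply cite it, the argument is circular.
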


	\begin{theorem}\label{theorem_symplectomorphism_group}
	For any of the following symplectic manifolds, the group of symplectomorphisms  which act trivially on homology is path-connected. 
		\begin{itemize}
			\item $\p^2$ with the Fubini-Study form. \cite[Remark in p.311]{Gr}
			\item $\p^1 \times \p^1$ with any symplectic form. \cite[Theorem 1.1]{AM}
			\item $\p^2 \# ~k~\overline{\p^2}$ with any blow-up symplectic form for $k \leq 4$. \cite[Theorem 1.4]{AM}, \cite{E}, \cite{LaP}, \cite{Pin} \cite{LLW}. 
		\end{itemize}		
	(See also \cite[Lemma 4.2]{G}.)
	\end{theorem}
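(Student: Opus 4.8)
The statement collects several structural results on symplectomorphism groups of rational symplectic four-manifolds, so the plan is to dispatch each of the three families by the relevant known theorem, indicating the mechanism rather than reproving it. The common engine is the following: the space $\mathcal{J}_\omega$ of $\omega$-compatible almost complex structures is contractible, $\mathrm{Symp}(M,\omega)$ acts on it, and one stratifies $\mathcal{J}_\omega$ by recording which homology classes of small self-intersection admit embedded $J$-holomorphic sphere representatives. If one can show that the open dense stratum is connected and that the subgroup $\mathrm{Symp}_0(M,\omega)$ acting trivially on $H_*(M;\Z)$ acts on it with connected stabilizer, then the long exact sequence of the resulting fibration forces $\pi_0(\mathrm{Symp}_0)$ to be trivial.

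First I would handle $\p^2$: every symplectomorphism preserves $[\omega_{FS}]$ and the orientation, hence acts trivially on $H_2(\p^2;\Z)\cong\Z$, so here $\mathrm{Symp}_0=\mathrm{Symp}$ and it suffices to quote Gromov's theorem that the inclusion $PU(3)\hookrightarrow\mathrm{Symp}(\p^2,\omega_{FS})$ is a homotopy equivalence; connectedness of $PU(3)$ then closes the case. Next, for $\p^1\times\p^1$ I would separate the monotone form, where Gromov's argument gives a deformation retraction of $\mathrm{Symp}$ onto $(SO(3)\times SO(3))\rtimes\Z/2$ with connected identity component, from a general symplectic form $\omega_\mu$, where I would invoke the Abreu-McDuff analysis of $\mathrm{Symp}(S^2\times S^2,\omega_\mu)$: one stratifies $\mathcal{J}_{\omega_\mu}$ according to which of the fiber-type classes carry holomorphic spheres, identifies the generic stratum with a connected space, and checks that the stabilizer in $\mathrm{Symp}_0$ is connected.

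For $\p^2\#k\overline{\p^2}$ with $k\le 4$ I would run the same scheme, now bookkeeping the full collection of exceptional classes together with the anticanonical class. For the one- and two-point blow-ups the path-connectedness of $\mathrm{Symp}_0$ follows from the work of Abreu-McDuff, Lalonde-Pinsonnault and Pinsonnault, realizing $\mathrm{Symp}_0$ as the total space of a fibration over a connected configuration space of disjoint exceptional spheres with connected fiber (an iterated ball-swapping argument). For $k=3$ and $k=4$ I would use Evans' theorem in the monotone case and its extension to arbitrary blow-up forms by Li-Li-Wu: one produces a configuration of symplectic $(-1)$- and $(-2)$-spheres realizing the relevant root system, namely $A_1\oplus A_2$ for $k=3$ and $A_4$ for $k=4$, on which $\mathrm{Symp}_0$ acts transitively with connected stabilizer, using that the image of $\mathrm{Symp}$ in the permutation group of the deformation classes of such configurations is exhausted by the Weyl group, i.e.\ exactly the part of the $H_*$-action that one has quotiented out.

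The main obstacle is the $k=4$ case. The combinatorics of the $(-1)$-curves and of the $A_4$ root system is already intricate, and ruling out spurious path components of $\mathrm{Symp}_0$ rests essentially on the finiteness and small size of $W(A_4)\cong S_5$; this is precisely why the statement stops at $k=4$, since for $k\ge 5$ the required path-connectedness of $\mathrm{Symp}_0$ is not available in this generality. Once the three inputs are in hand, deducing the stated theorem is a matter of bookkeeping.
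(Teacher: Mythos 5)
The paper does not prove this statement; it simply records it as a compendium of known results with the citations to Gromov, Abreu--McDuff, Evans, Lalonde--Pinsonnault, Pinsonnault and Li--Li--Wu, which is exactly what you do, and your sketch of the underlying mechanism (stratifying the contractible space of compatible almost complex structures by embedded holomorphic spheres in exceptional and fiber-type classes and feeding the resulting fibrations into $\pi_0$) accurately reflects how those references proceed. So your proposal is correct and takes essentially the same route as the paper, just with more expository detail about the cited proofs.
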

	
	As a consequence of Theorem \ref{theorem_uniqueness}, Theorem \ref{theorem_symplectomorphism_group}, and the uniqueness of the symplectic form of rational surfaces \cite{McD3}, 
	we obtain the following.
	
	\begin{corollary}\label{corollary_symplectically_rigid}
		The following manifolds are symplectically rigid : 
		\[
			\p^2, \quad S^2 \times S^2, \quad X_k ~(1\leq k \leq 4)
		\]
	\end{corollary}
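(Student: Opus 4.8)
\textbf{Proof proposal for Corollary \ref{corollary_symplectically_rigid}.}

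The plan is to verify the three defining conditions of symplectic rigidity (uniqueness, deformation implies isotopy, and path-connectedness of the relevant symplectomorphism group) for each of the manifolds $\p^2$, $S^2 \times S^2$, and $X_k$ for $1 \leq k \leq 4$, by assembling the results already quoted in Theorem \ref{theorem_uniqueness}, Theorem \ref{theorem_symplectomorphism_group}, and the McDuff uniqueness theorem \cite{McD3}. First I would observe that each of these manifolds is either rational (in the cases $\p^2$ and $X_k$) or a rational/ruled surface ($S^2 \times S^2 = \p^1 \times \p^1$), and in particular each is the blow-up of a rational or ruled symplectic four manifold (taking the empty blow-up when necessary). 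Hence Theorem \ref{theorem_uniqueness} applies and yields at once the second condition: any two cohomologous, deformation equivalent symplectic forms on these manifolds are isotopic, which is precisely the ``deformation implies isotopy'' clause in the definition in Section \ref{ssecSymplecticRigidity}.

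Next I would address the uniqueness clause, that any two cohomologous symplectic forms are diffeomorphic. Here I would invoke McDuff's uniqueness theorem for symplectic structures on rational (and ruled) surfaces \cite{McD3}: on $\p^2$, on $S^2 \times S^2$, and on each $X_k$, any two cohomologous symplectic forms are in fact diffeomorphic (indeed any symplectic form is, after rescaling and a diffeomorphism, the standard Kähler one in the appropriate cohomology class). In combination with Theorem \ref{theorem_uniqueness} this also shows that cohomologous forms are isotopic, but for the definition we only need them to be diffeomorphic, so this step is immediate from \cite{McD3}. Then, for the third clause, I would directly quote Theorem \ref{theorem_symplectomorphism_group}: for $\p^2$ with the Fubini--Study form, for $\p^1 \times \p^1$ with any symplectic form, and for $\p^2 \,\#\, k\,\overline{\p^2}$ with any blow-up symplectic form and $k \leq 4$, the group of symplectomorphisms acting trivially on homology is path-connected. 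Since every symplectic form on $X_k$ ($k \leq 4$) is, up to diffeomorphism and scaling, a blow-up form by \cite{McD3}, and diffeomorphic symplectic forms have isomorphic symplectomorphism groups, the path-connectedness passes to all symplectic forms on $X_k$.

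Putting the three clauses together for each manifold on the list gives the corollary. The only genuinely delicate point — and the one I would expect to be the main obstacle if one wanted to write this out in full rather than cite — is the third clause for $X_k$ with $k = 3, 4$: the path-connectedness of $\mathrm{Symp}_h(X_k, \omega)$ is a substantial theorem (due to the combined work of Abreu--McDuff \cite{AM}, Evans \cite{E}, Lalonde--Pinsonnault \cite{LaP}, Pinsonnault \cite{Pin}, and Li--Li--Wu \cite{LLW}) and genuinely fails for $k \geq 5$, which is exactly why the list stops at $k = 4$ and why $X_k$ with $k > 4$ must be handled separately (as products $X_k \times S^2$) in Section \ref{secMainTheorem}. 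Everything else is bookkeeping: checking that each manifold is covered by the hypotheses ``blow-up of a rational or ruled surface'' in Theorem \ref{theorem_uniqueness} and ``rational surface'' in \cite{McD3}, and matching the diffeomorphism types $S^2 \times S^2 \cong \p^1 \times \p^1$ and $X_k \cong \p^2 \,\#\, k\,\overline{\p^2}$.
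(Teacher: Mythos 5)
Your proposal is correct and follows essentially the same route as the paper, which derives the corollary exactly by combining Theorem \ref{theorem_uniqueness} (deformation implies isotopy), McDuff's uniqueness theorem \cite{McD3} (cohomologous forms are diffeomorphic), and Theorem \ref{theorem_symplectomorphism_group} (path-connectedness of the homologically trivial symplectomorphism group), the paper simply stating this as an immediate consequence without spelling out the matching of clauses. Your additional remarks on transporting the statements to arbitrary symplectic forms via \cite{McD3} and on why the list stops at $k=4$ are consistent with the paper's intent.
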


\subsection{Fixed point data}
\label{ssecFixedPointData}
	
	Recall the definition of the {\em fixed point data} of $(M,\omega,H)$ as follows.

	\begin{definition}\cite[Definition 1.2]{G}\label{definition_fixed_point_data} 
		A {\em fixed point data} (or {\em FD} shortly) of $(M,\omega, H)$, denoted by $\frak{F}(M, \omega, H)$, is a collection 
		\[
			 \frak{F} (M, \omega, H) := \left\{(M_{c}, \omega_c, Z_c^1, Z_c^2, \cdots,  Z_c^{k_c}, e(P_{c}^{\pm})) ~|~c \in \mathrm{Crit} ~H \right\}
		\]
		which consists of the information below.
		\begin{itemize}
			\item $(M_c, \omega_c)$\footnote{$M_c$ is smooth manifold under  the assumption that the action is semifree and the dimension of $M$ is six.
				See Proposition \ref{proposition_GS}.} is the symplectic reduction at level $c$.
			\item $k_c$ is the number of fixed components on the level $c$. 
			\item Each $Z_c^i$ is a connected fixed component and hence a symplectic submanifold of $(M_c, \omega_c)$ via the embedding
				\[
					Z_c^i \hookrightarrow H^{-1}(c) \rightarrow H^{-1}(c) / S^1 = M_c.
				\]
				(This information contains a normal bundle of $Z_c^i$ in $M_c$.)
			\item The Euler class $e(P_c^{\pm})$ of principal $S^1$-bundles $H^{-1}(c \pm \epsilon) \rightarrow M_{c \pm \epsilon}$.
		\end{itemize}		
	\end{definition}
	
	Also, we recall the definition of {\em topological fixed point data}, a topological analogue of a fixed point data. 
	
	\begin{definition}\label{definition_topological_fixed_point_data}\cite[Definition 5.7]{Cho1}
		The {\em topological fixed point data} (or {\em TFD} for short) of $(M,\omega, H)$, denoted by $\frak{F}_{\text{top}}(M, \omega, H)$, is defined as a collection 
		\[
			 \frak{F}_{\text{top}}(M, \omega, H) := \left\{(M_{c}, [\omega_c], \mathrm{PD}(Z_c^1), \mathrm{PD}(Z_c^2), \cdots, \mathrm{PD}(Z_c^{k_c}), e(P_c^{\pm}) ) ~|~c \in
			  \mathrm{Crit} ~H \right\}
		\]
		where 
		\begin{itemize}
			\item $(M_c, \omega_c)$ is the reduced symplectic manifold at level $c$, 
			\item $k_c$ is the number of fixed components at level $c$, 
			\item each $Z_c^i$ is a connected fixed component lying on the level $c$ and $\mathrm{PD}(Z_c^i) \in H^*(M_c)$ 
			denotes the Poincar\'{e} dual class of the image of the embedding
				\[
					Z_c^i \hookrightarrow H^{-1}(c) \rightarrow H^{-1}(c) / S^1 = M_c.
				\]
			\item the Euler class $e(P_c^{\pm})$ of principal $S^1$-bundles $H^{-1}(c \pm \epsilon) \rightarrow M_{e \pm \epsilon}$.
		\end{itemize}		
	\end{definition}
	
	Gonzalez proved that the fixed point data is a complete invariant of a six-dimensional 
	semifree Hamiltonian $S^1$-manifold under the assumption that every reduced space
	is symplectically rigid. 
	
	\begin{theorem}\cite[Theorem 1.5]{G}\label{theorem_G}
		Let $(M,\omega)$ be a six-dimensional closed semifree Hamiltonian $S^1$-manifold such that every critical level is simple\footnote{
		A critical level is called {\em simple} if every fixed component in the level set has a common Morse-Bott index.}.
		Suppose further that every reduced space is symplectically rigid.
		Then $(M,\omega)$ is determined by its fixed point data up to $S^1$-equivariant symplectomorphism.
	\end{theorem}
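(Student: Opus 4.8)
The plan is to reconstruct $(M,\omega)$ together with its circle action by cutting $M$ along the regular level sets of the balanced moment map $H$, rebuilding each slice from the fixed point data, and then checking that the gluing data is rigid enough that the assembled space is unique up to $S^1$-equivariant symplectomorphism. \textbf{First} I would slice: let $c_0 < c_1 < \cdots < c_N$ be the critical values of $H$, with $c_0 = \min H$ and $c_N = \max H$, choose regular values $c_i < t_i < c_{i+1}$, and write $M$ as the union of the slabs $H^{-1}([t_{i-1},t_i])$, each containing exactly one critical level $c_i$, glued along the level sets $H^{-1}(t_i)$. It then suffices to reconstruct each slab from $\frak{F}(M,\omega,H)$ and to control the identifications along the $H^{-1}(t_i)$.

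\textbf{Next}, over the regular part of a slab (an interval $(a,b)$ free of critical values) the reduction $\pi_c\colon H^{-1}(c)\to M_c$ is a principal $S^1$-bundle, independent of $c$ up to isomorphism, whose Euler class is prescribed by $e(P_{c_i}^{\pm})$. Choosing an invariant connection, Sternberg's minimal-coupling construction produces an invariant symplectic form on a collar of $H^{-1}(c)\times(a,b)$ with moment map the $(a,b)$-coordinate, whose reductions realize the Duistermaat--Heckman variation \eqref{equation_DH}; two such models sharing the same $(M_c,[\omega_c])$ and the same Euler class are $S^1$-equivariantly symplectomorphic because the uniqueness clause of symplectic rigidity makes the two forms on $M_c$ diffeomorphic, the space of connections is affine, and a Moser argument removes the remaining freedom. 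At the extremal levels I would cap off: at $c_0=\min H$, $Z_{\min}$ is a point, a $2$-sphere, or a $4$-manifold (Lemma \ref{lemma_possible_critical_values}) and all normal weights equal $+1$, so an invariant neighborhood of $Z_{\min}$ is the unit disk bundle of its symplectic normal bundle, which is recorded in the fixed point data; the top is treated symmetrically with all weights $-1$.

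\textbf{Then}, at a non-extremal critical level $c = c_i$ with fixed components $Z_c^1,\dots,Z_c^{k_c}$, all of Morse--Bott index two (co-index two is symmetric, by simplicity of the level this is the only possibility), I would use the equivariant symplectic neighborhood theorem: an invariant neighborhood of $Z_c^j$ in $M$ is determined by its $S^1$-equivariant symplectic normal bundle, which by semifreeness splits into weight $+1$ and weight $-1$ subbundles of ranks fixed by the index — the $+1$-summand being the normal bundle of $Z_c^j$ in $M_{c-\epsilon}=M_c$ and the $-1$-summand its normal bundle in the blow-up $M_{c+\epsilon}$, both part of $\frak{F}(M,\omega,H)$. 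Proposition \ref{proposition_GS} identifies $M_{c+\epsilon}$ with the blow-up of $M_c$ along $\bigsqcup_j Z_c^j$, and Lemma \ref{lemma_Euler_class} pins down $e(P_c^+)$ in terms of $e(P_c^-)$ and the exceptional classes; gluing these fixed-point neighborhoods onto the two adjacent regular pieces via the blow-down map reconstructs the slab, the reconstruction being unique up to equivariant symplectomorphism again by symplectic rigidity of $M_{c\pm\epsilon}$.

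\textbf{Finally} I would assemble $M$ by gluing the slabs along the $H^{-1}(t_i)$: any two admissible identifications differ by an $S^1$-equivariant symplectomorphism of $H^{-1}(t_i)$ covering a symplectomorphism of $(M_{t_i},\omega_{t_i})$, and such maps form a group fibered over $\mathrm{Symp}(M_{t_i},\omega_{t_i})$ with fiber the group of bundle automorphisms of $\pi_{t_i}$ (connected, since $M_{t_i}$ is simply connected). The ``deformation implies isotopy'' and ``$\mathrm{Symp}_0$ is path-connected'' clauses of symplectic rigidity of $M_{t_i}$ make the relevant identity component path-connected, and a parametrized Moser argument then upgrades a path of identifications to an $S^1$-equivariant symplectomorphism between the two assembled manifolds, giving Theorem \ref{theorem_G}. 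The hard part will be precisely this last step: one must control the homotopy type of the equivariant symplectomorphism groups of the regular level sets, equivalently of $\mathrm{Symp}(M_{t_i},\omega_{t_i})$, and it is here — not merely in the uniqueness of symplectic forms on the reduced spaces — that the full strength of the symplectic rigidity hypothesis is indispensable.
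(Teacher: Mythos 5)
This theorem is not proved in the paper at all: it is quoted from Gonzalez \cite[Theorem 1.5]{G}, so the only fair comparison is with Gonzalez's own argument (which builds on McDuff \cite{McD2}). Your overall architecture — slice $M$ along regular levels, reconstruct the regular slabs from $(M_c,[\omega_c])$, $e(P_c^{\pm})$ via Duistermaat--Heckman and a Moser argument, reconstruct invariant neighborhoods of the extrema and of the simple critical levels, then glue and control the ambiguity by the connectedness properties of the symplectomorphism groups of the reduced spaces — is indeed the strategy of \cite{G}, so the skeleton is right.

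There are, however, concrete gaps. First, at a non-extremal simple critical level your identification of the weight splitting is incorrect: for an index-two fixed surface $Z_c$ in a six-manifold with semifree action, the local model is $N^-\oplus N^+$ with weights $\mp 1$, and a neighborhood of $Z_c$ in the singular reduced space $M_c$ is the total space of $N^-\otimes N^+$, not of $N^+$; likewise $N^-$ is not simply "the normal bundle in $M_{c+\epsilon}$". The fixed point data therefore does not hand you $N^{\pm}$ separately in the way you assert; they must be recovered from the embedding $Z_c\subset M_c$ together with the restrictions of $e(P_c^{\pm})$ (cf. Lemma \ref{lemma_Euler_class}), and this recovery has to be argued. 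Second, your final gluing step invokes connectedness of the gauge group of $\pi_{t_i}$ "since $M_{t_i}$ is simply connected"; simple connectivity is not part of the symplectic rigidity hypothesis ($\pi_0$ of the gauge group is $H^1(M_{t_i};\Z)$), so either this must be derived or the argument rerouted as in \cite{G}, where only the three rigidity conditions are used. Third, and most seriously, the uniqueness of the reconstruction across a critical level is exactly the main technical content of Gonzalez's paper: given two manifolds with identical fixed point data one must produce an equivariant symplectomorphism of germs at the critical level, and this requires the "deformation implies isotopy" clause and the path-connectedness of the homologically trivial symplectomorphism group to compare the two families of reduced forms and normalize the identification of $Z_c$ and of the exceptional divisor; saying "the reconstruction is unique by symplectic rigidity of $M_{c\pm\epsilon}$" asserts rather than proves this step. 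So the proposal captures the correct route but leaves the two places where rigidity genuinely does work — the critical-level germ uniqueness and the gluing isotopy — at the level of assertions, and contains one outright wrong identification of the normal-bundle data.
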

	
	It is notable that the classification of fixed point data is extremely difficult since we have to classify symplectic embeddings of each fixed component into a reduced space. 
	On the other hand, the classification of topological fixed point data is relatively easy as it is {\em computable} using various theorems such as the Duistermaat-Heckman theorem
	\eqref{equation_DH}.
	In the rest of the paper,
	we will classify all topological fixed point data and show that each data determines a fixed point data uniquely. So, the topological fixed point data
	becomes a complete invariant of $(M,\omega)$ in our cases.
	
\subsection{Exceptional classes}
\label{ssecExceptionalClasses}	
	
	We finally give the complete list of cohomology classes on $X_k$ (for $1 \leq k \leq 8$) which can be represented by a symplectic sphere with self-intersection $-1$. 
	The following lemma will be frequently used later.
	
	\begin{lemma}\label{lemma_list_exceptional}\cite[Section 2]{McD2}
		Let $X_k$ be the $k$-times simultaneous symplectic blow-up of $\p^2$ with the exceptional divisors $C_1, \cdots, C_k$.
		We denote by $E_i := \mathrm{PD}(C_i)  \in H^2(M_0; \Z)$ the dual classes, called {\em exceptional classes}.
		Then all possible exceptional classes are listed as follows (modulo permutations of indices) : 
			\[
				\begin{array}{l}
					E_1, u - E_{12},  \quad 2u - E_{12345}, \quad 3u - 2E_1 - E_{234567}, \quad 4u - 2E_{123} - E_{45678}  \\ \vs{0.1cm}
					5u - 2E_{123456}  - E_{78},  \quad 6u - 3E_1 - 2E_{2345678}  \\
				\end{array}
			\]
		Here, $u$ is the positive generator of $H^2(\p^2; \Z)$ and $E_{j \cdots n} := \sum_{i=j}^n E_i$. Furthermore, elements involving $E_i$ do not appear in $X_k$ with $k < i$. 
	\end{lemma}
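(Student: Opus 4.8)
\section*{Proof proposal (plan)}

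The plan is to split the statement into an \emph{elementary} part and an \emph{analytic} part. The elementary part characterises the possible exceptional classes by two numerical constraints coming from the adjunction formula, and then enumerates the resulting finite Diophantine problem. The analytic part — the genuine content hidden behind the citation to \cite{McD2} — is that every numerical solution is actually realised by an embedded symplectic $(-1)$-sphere for some blow-up form; this rests on Taubes--Seiberg--Witten theory and McDuff's structure theory for rational symplectic $4$-manifolds, and I would simply quote it.

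\emph{Step 1: the numerical constraints.} Fix the basis $u, E_1,\dots,E_k$ of $H^2(X_k;\Z)$ with $u^2 = 1$, $E_i \cdot E_j = -\delta_{ij}$, $u\cdot E_i = 0$, and recall $c_1(TX_k) = 3u - E_1 - \cdots - E_k$. If $e$ is represented by a symplectically embedded sphere of self-intersection $-1$, then $e\cdot e = -1$ and, by the adjunction formula $c_1(TX_k)\cdot e = e\cdot e + 2 - 2g$ with $g=0$, also $c_1(TX_k)\cdot e = 1$. Writing $e = au - \sum_{i=1}^{k} b_i E_i$, these become
\[
  \sum_{i=1}^{k} b_i = 3a - 1, \qquad \sum_{i=1}^{k} b_i^2 = a^2 + 1 .
\]
For $a=0$ the unique integral solution is $b_i = -1$ for one index, i.e.\ $e = E_i$ (the exceptional divisor class); and for $a \geq 1$ one checks directly that, as long as $k\le 8$, every $b_i$ is automatically $\geq 0$ (a negative coefficient would force too many unit coefficients, needing $k > 8$).

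\emph{Step 2: bounding $a$ and enumerating.} By Cauchy--Schwarz, $(3a-1)^2 = \bigl(\sum b_i\bigr)^2 \le k\sum b_i^2 = k(a^2+1) \le 8(a^2+1)$, which simplifies to $(a-7)(a+1)\le 0$, so $-1\le a\le 7$; the cases $a=-1$ and $a=7$ are then excluded by a one-line check (no integer vector of length $\le 8$ meets both equations), leaving $0\le a\le 6$. For each such $a$ the identity $\sum_i\bigl(b_i - \tfrac{3a-1}{k}\bigr)^2 = (a^2+1) - \tfrac{(3a-1)^2}{k}$ shows that the $b_i$ are nearly equal, and a short case analysis pins down the multiset of nonzero coefficients uniquely: $a=0\!:\,(1)$; $a=1\!:\,(1,1)$; $a=2\!:\,(1^5)$; $a=3\!:\,(2,1^6)$; $a=4\!:\,(2^3,1^5)$; $a=5\!:\,(2^6,1^2)$; $a=6\!:\,(3,2^7)$. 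Up to permutation of indices these are exactly the seven families in the statement; since a family with $t$ nonzero coefficients lives in $H^2(X_k;\Z)$ only when $k\ge t$ (after reordering so that it uses $E_1,\dots,E_t$), the last assertion follows as well.

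\emph{Main obstacle.} Steps~1--2 only produce the list of \emph{candidates}; the real difficulty is the converse — that each of these classes is represented by an embedded symplectic $(-1)$-sphere for a suitable blow-up symplectic form. This is not elementary: it is exactly where McDuff's work (via Taubes--Seiberg--Witten theory) enters. On the monotone del Pezzo form each listed class is represented by an algebraic $(-1)$-curve, and the deformation/uniqueness results already recorded (Theorem~\ref{theorem_uniqueness}, Theorem~\ref{theorem_symplectomorphism_group} and \cite{McD3}) transport the conclusion to the remaining blow-up forms. I would therefore present Steps~1--2 in full and cite \cite{McD2} for this last point.
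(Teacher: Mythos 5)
The paper offers no proof of this lemma at all: it is quoted verbatim from \cite[Section 2]{McD2}, so there is nothing internal to compare your argument against line by line. Your proposal is a sound reconstruction, and its elementary half is correct and complete: writing $e = au - \sum_i b_iE_i$, adjunction plus $e\cdot e=-1$ give $\sum_i b_i = 3a-1$, $\sum_i b_i^2 = a^2+1$; Cauchy--Schwarz with $k\le 8$ gives $(3a-1)^2\le 8(a^2+1)$, i.e.\ $(a-7)(a+1)\le 0$, and $a=-1,7$ die on parity/integrality, while for $a\ge 1$ a coefficient $b_j\le -1$ would force the remaining at most seven coefficients to satisfy $(3a)^2\le 7a^2$, which is impossible — so your nonnegativity claim, stated somewhat heuristically, is in fact correct. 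The resulting multisets $(1)$, $(1,1)$, $(1^5)$, $(2,1^6)$, $(2^3,1^5)$, $(2^6,1^2)$, $(3,2^7)$ are each the unique solution for their value of $a$, which reproduces the paper's list together with the final remark about $k<i$. Where you differ from the paper is simply in how the labour is divided: the paper black-boxes everything into the citation, whereas you prove the ``no classes other than these'' direction by hand and cite \cite{McD2} only for realizability. That is a reasonable and arguably more informative presentation, and you correctly identify realizability as the genuinely non-elementary content; note only that your closing sentence understates it slightly — passing from algebraic $(-1)$-curves on the monotone form to symplectic representability for arbitrary blow-up forms is not a formal consequence of Theorem \ref{theorem_uniqueness} and Theorem \ref{theorem_symplectomorphism_group}, but rests on Taubes--Seiberg--Witten theory (cf.\ \cite{T}, \cite{LL}, \cite{LW}), which is precisely what the cited source invokes, so keeping that step inside the citation is the right call.
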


\section{Topological fixed point data : $\dim Z_{\min} = 2$}
\label{secTopologicalFixedPointDataDimZMin2}

	Let $(M,\omega)$ be a six dimensional closed monotone symplectic manifold with $c_1(TM) = [\omega]$ admitting a semifree Hamiltonian circle action
	and $H : M \rightarrow \R$ be the balanced moment map for the action. 
	In this section we classify all possible topological fixed point data of $(M,\omega)$ when
	$\dim Z_{\min} = 2$ and $\dim Z_{\max} = 4$, or equivalently, $H(Z_{\min}) = -2$ and $H(Z_{\max}) = 1$, see Lemma \ref{lemma_possible_critical_values}.
	
	We first note that $Z_{\min} \cong S^2$ by Lemma \ref{lemma_possible_critical_values} so that the reduced space $M_{-2 + \epsilon}$ 
	near the minimum is an $S^2$-bundle over $S^2$. 
	We denote by $x$ and $y$ the dual classes of the fiber and the base of the bundle $M_{-2 + \epsilon} \rightarrow Z_{\min}$,
	respectively, such that 
	\[
		\begin{cases}
					\langle xy, [M_{-2 + \epsilon}] \rangle = 1, \quad \langle x^2, [M_{-2 + \epsilon}] \rangle = \langle y^2, [M_{-2 + \epsilon}] \rangle = 0 &
						\text{if $M_{-2 + \epsilon} \cong S^2 \times S^2$} \\
					\langle xy, [M_{-2 + \epsilon}] \rangle = 1, \quad \langle x^2, [M_{-2 + \epsilon}] \rangle = 0, \quad \langle y^2, [M_{-2 + \epsilon}] \rangle = -1 & 
						\text{if $M_{-2 + \epsilon} \cong E_{S^2}$} \\		
		\end{cases}
	\]
	where $E_{S^2}$ denotes the non-trivial $S^2$-bundle over $S^2$. (These notations are used in \cite{Li2, Li3} and \cite{Cho1, Cho2}.)
	With these notations, we have 
	\begin{equation}\label{equation_chern_class}
		c_1(T(S^2 \times S^2)) = 2x + 2y, \quad c_1(TE_{S^2}) = 3x+2y.
	\end{equation}
	
	Recall the following relation between the Euler classes of level sets (as principal $S^1$-bundles) near the extremal fixed components 
	$Z_{\min}$ and $Z_{\max}$ of the action and the first Chern numbers of their normal bundles.

	\begin{lemma}\cite[Lemma 6, 7]{Li2}\label{lemma_Euler_extremum}
		Let $b_{\min}$ be the first Chern number of the normal bundle of $Z_{\min}$ in $M$. 
		Then $M_{-2 + \epsilon}$ is a trivial $S^2$-bundle if and only if $b_{\min} = 2k$, and it is diffeomorphic to 
		$E_{S^2}$ if and only if $b_{\min} = 2k+1$ for some $k \in \Z$. In either case, we have 
		\[
			e(P_{-2}^+) = kx - y 
		\]
		where $e(P_t^{\pm})$ denote the Euler class of the principal $S^1$-bundle $\pi_{t \pm \epsilon} :  P_t^{\pm} = H^{-1}(t \pm \epsilon) \rightarrow 
		M_{t \pm \epsilon}$.
		In particular, we have
		\[
			\langle e(P_{-2}^+)^2, [M_{-2 + \epsilon}] \rangle = -b_{\min}.
		\]	
	\end{lemma}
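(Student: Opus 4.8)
The plan is to pass to a local model for the action near $Z_{\min}$. By Lemma~\ref{lemma_possible_critical_values} we have $Z_{\min}\cong S^2$ and $H(Z_{\min})=-2$, so the equivariant normal bundle $N\to Z_{\min}$ is a complex rank-two bundle; since the action is semifree and $Z_{\min}$ is the minimum of $H$, every weight of the fibrewise $S^1$-representation on $N$ equals $+1$, i.e.\ $S^1$ acts on $N$ by fibrewise scalar multiplication. The equivariant symplectic neighbourhood (local normal form) theorem then identifies a neighbourhood of $Z_{\min}$ in $M$ equivariantly with a neighbourhood of the zero section of $N$, carrying a compatible fibrewise Hermitian metric and a moment map of the form $v\mapsto -2+c\,\|v\|^2$ for some $c>0$. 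Hence, for small $\epsilon>0$, the level set $P_{-2}^+=H^{-1}(-2+\epsilon)$ is equivariantly identified with the unit sphere bundle $S(N)$, and the reduced space $M_{-2+\epsilon}=P_{-2}^+/S^1$ with the fibrewise projectivisation $\mathbb{P}(N)$, the quotient map $S(N)\to\mathbb{P}(N)$ being its bundle projection onto $Z_{\min}$.

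For the first assertion I would use that a complex rank-two bundle over $S^2$ is determined up to isomorphism by its first Chern number $b_{\min}$, and that, as a \emph{smooth} $S^2$-bundle over $S^2$, $\mathbb{P}(N)$ depends only on $b_{\min}\bmod 2$: writing $N$ as a sum of two complex line bundles of degrees $a_1,a_2$, the projectivisation is a Hirzebruch-type surface with invariant $|a_1-a_2|$, and $a_1-a_2\equiv a_1+a_2=b_{\min}\pmod 2$. Since the trivial $S^2$-bundle and $E_{S^2}$ are the only $S^2$-bundles over $S^2$ and are not diffeomorphic, this yields: $M_{-2+\epsilon}$ is trivial if and only if $b_{\min}$ is even, and $M_{-2+\epsilon}\cong E_{S^2}$ if and only if $b_{\min}$ is odd.

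For the Euler class, the key observation is that the principal $S^1$-bundle $S(N)\to\mathbb{P}(N)$ is precisely the unit circle bundle of the tautological line subbundle $\mathcal{O}_{\mathbb{P}(N)}(-1)\subset\pi^{*}N$ (a unit vector of $N$ is a unit vector of the line it spans), with residual circle action the scalar action on that line; therefore $e(P_{-2}^+)=c_1\big(\mathcal{O}_{\mathbb{P}(N)}(-1)\big)=-\xi$, where $\xi:=c_1\big(\mathcal{O}_{\mathbb{P}(N)}(1)\big)$. I would then express $-\xi$ in the fixed basis $\{x,y\}$ of $H^2(M_{-2+\epsilon})$: since $\mathcal{O}(1)$ restricts to a fibre of $M_{-2+\epsilon}\to Z_{\min}$ in degree $+1$, one has $\langle\xi\cdot x,[M_{-2+\epsilon}]\rangle=1$, so the $y$-coefficient of $-\xi$ equals $-1$ and $-\xi=kx-y$ with $k$ its $x$-coefficient; and a short computation with the two canonical sections of $\mathbb{P}(N)$ (equivalently, the defining relation of $H^{*}(\mathbb{P}(N))$ together with $c_2(N)=0$, since $Z_{\min}$ is a surface) gives $\langle\xi^{2},[M_{-2+\epsilon}]\rangle=-b_{\min}$. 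Equating $\langle(kx-y)^{2}\rangle=-2k+\langle y^{2}\rangle$ with $\langle(-\xi)^{2}\rangle=\langle\xi^{2}\rangle=-b_{\min}$ and substituting $\langle y^{2}\rangle=0$ (trivial case) or $\langle y^{2}\rangle=-1$ (case $E_{S^2}$) forces $b_{\min}=2k$ or $b_{\min}=2k+1$ respectively, and $e(P_{-2}^+)=kx-y$; the ``in particular'' clause is then immediate from $\langle e(P_{-2}^+)^{2},[M_{-2+\epsilon}]\rangle=\langle\xi^{2},[M_{-2+\epsilon}]\rangle=-b_{\min}$.

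The step I expect to need the most care is keeping all sign conventions mutually consistent: the weights at the minimum (which force $S(N)$ to be the circle bundle of $\mathcal{O}(-1)$, not of $\mathcal{O}(1)$), the convention tying the Euler class of a principal $S^1$-bundle to $c_1$ of its associated complex line bundle (pinned down here by the paper's convention for the universal Hopf bundle), and the sign in the projective-bundle relation used to evaluate $\langle\xi^2\rangle$ --- an error in any one of these replaces $-b_{\min}$ by $+b_{\min}$ and breaks the last identity. A secondary point is to verify that the section class $y$ for which $e(P_{-2}^+)=kx-y$ can be taken with $\langle xy\rangle=1$ and $\langle y^2\rangle\in\{0,-1\}$ according to the bundle type, which is exactly what the displayed computation produces.
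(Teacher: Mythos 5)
Your argument is correct. Note that the paper itself gives no proof of Lemma \ref{lemma_Euler_extremum} — it is quoted from \cite[Lemmas 6, 7]{Li2} — and what you write is essentially the standard local-model argument underlying that source: identify a neighbourhood of $Z_{\min}$ with the normal bundle $N$ via the equivariant Darboux--Weinstein theorem, so that $P_{-2}^+ \cong S(N)$ is the unit circle bundle of the tautological subbundle $\mathcal{O}_{\mathbb{P}(N)}(-1)$ over $M_{-2+\epsilon}\cong\mathbb{P}(N)$, whence $e(P_{-2}^+)=-\xi$ and $\langle\xi^2\rangle=-b_{\min}$ by the projective-bundle relation with $c_2(N)=0$. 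The sign conventions you flag do come out consistently with the paper's convention that $-\lambda$ is the Euler class of the universal Hopf bundle (equivalently, the Euler class of the circle bundle of a line bundle $L$ is $c_1(L)$), as one can confirm on the example $\p^1\times\p^2$ of type {\bf (I-1-2.1)}, where $N$ is trivial and $e(P_{-2}^+)=-y$.
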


	By Lemma \ref{lemma_Euler_extremum}, we obtain the following.

	\begin{corollary}\label{corollary_volume_extremum}\cite[Corollary 4.2]{Cho2}
		Let $(M,\omega)$ be a six-dimensional closed semifree Hamiltonian $S^1$-manifold. Suppose that $c_1(TM) = [\omega]$. 
		If the minimal fixed component $Z_{\min}$ is diffeomorphic to $S^2$, then 
		$
			b_{\min} \geq -1.
		$
	\end{corollary}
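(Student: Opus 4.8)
The plan is to derive the bound directly from the monotonicity identity $c_1(TM)=[\omega]$ by restricting both sides to $Z_{\min}$ and using that $Z_{\min}$ is a symplectic surface.

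Recall first that, since the $S^1$-action is Hamiltonian, every connected component of $M^{S^1}$ is a closed symplectic submanifold of $(M,\omega)$: fixing an $S^1$-invariant $\omega$-compatible almost complex structure $J$, the fixed locus is $J$-invariant and $\omega$ restricts non-degenerately to it. In particular, as $Z_{\min}\cong S^2$ by hypothesis, $\int_{Z_{\min}}\omega>0$. With the same $J$ the ambient tangent bundle splits along $Z_{\min}$ as a direct sum of complex bundles $TM|_{Z_{\min}}\cong TZ_{\min}\oplus\nu_{Z_{\min}}$, where $\nu_{Z_{\min}}$ is the normal bundle; it is a rank-two complex bundle (the fibrewise circle action has all weights $+1$ by semifreeness and $\mathrm{ind}(Z_{\min})=0$, see Lemma \ref{lemma_possible_critical_values}), and $b_{\min}$ is by definition its first Chern number $\langle c_1(\nu_{Z_{\min}}),[Z_{\min}]\rangle$. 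Taking Chern classes and pairing with $[Z_{\min}]$,
\[
	\langle c_1(TM),[Z_{\min}]\rangle=\langle c_1(TZ_{\min}),[Z_{\min}]\rangle+b_{\min}=\chi(S^2)+b_{\min}=2+b_{\min}.
\]
On the other hand $\langle c_1(TM),[Z_{\min}]\rangle=\langle[\omega],[Z_{\min}]\rangle=\int_{Z_{\min}}\omega>0$ by monotonicity. Hence $2+b_{\min}>0$, and since $b_{\min}\in\Z$ this forces $b_{\min}\geq-1$.

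I do not expect a genuine obstacle here: the statement is essentially a one-line consequence of monotonicity once the pieces are assembled, and the only steps meriting a word of justification are the positivity $\int_{Z_{\min}}\omega>0$ (that is, that $Z_{\min}$ is genuinely a symplectic submanifold and not merely a smooth one) and the bookkeeping needed to see that the integer $b_{\min}$ above is the same one appearing in Lemma \ref{lemma_Euler_extremum}. Alternatively, one could run the computation on the reduced side, combining $\langle e(P_{-2}^+)^2,[M_{-2+\epsilon}]\rangle=-b_{\min}$ from Lemma \ref{lemma_Euler_extremum} with the positivity of the reduced symplectic volume of $M_{-2+\epsilon}$ as $\epsilon\to0^+$, but the direct argument above is the shortest.
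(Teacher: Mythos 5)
Your argument is correct and is essentially the intended one: restricting $c_1(TM)=[\omega]$ to $Z_{\min}$, using that the normal bundle is a rank-two complex bundle (both weights $+1$ by semifreeness and minimality) with first Chern number $b_{\min}$, gives $\mathrm{Vol}(Z_{\min})=\langle [\omega],[Z_{\min}]\rangle = 2+b_{\min}>0$ and hence $b_{\min}\geq -1$, which is exactly the identity $\mathrm{Vol}(Z_{\min})=b_{\min}+2$ the paper invokes later (e.g.\ in Examples \ref{example_I_1} and \ref{example_I_2}). The paper itself defers the proof to \cite[Corollary 4.2]{Cho2} and frames it as a consequence of Lemma \ref{lemma_Euler_extremum}, i.e.\ the reduced-space computation you sketch as an alternative; both routes give the same inequality, and your direct version is fine.
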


	Observe that the only possible non-extremal critical values are $\{-1, 0\}$
	and each non-extremal fixed component $Z$ is either 
	\[
		\begin{cases}
			\text{$Z$ = pt} \hspace{1cm} \text{if $H(Z) = - 1$, \quad or} \\
			\text{$\dim Z = 2$} \quad \text{if $H(Z) = 0$.}
		\end{cases}
	\]
	by Lemma  \ref{lemma_possible_critical_values}. 
	
	Let $m = |Z_{-1}|$ be the number of isolated fixed points (of index two). Then $M_0 \cong M_1$ is the $k$ points blow-up of $M_{-2 + \epsilon}$
	by Proposition \ref{proposition_GS}. Moreover, since $(M_0, \omega_0)$ is a monotone symplectic four manifold, $M_0$ is diffeomorphic to 
	a del Pezzo surface by \cite{OO2} and so either $\p^2, S^2 \times S^2, X_k$ for $k \leq 8$ where $X_k$ denotes the blow-up of $\p^2$ at $k$ generic points. Thus we obtain
	\begin{equation}\label{equation_number_of_points}
		0 \leq m \leq 7
	\end{equation}
	Denote by 
	\begin{itemize}
		\item $E_1, \cdots, E_m$ the dual classes of the exceptional divisors on $M_0$, 
		\item $\mathrm{PD}(Z_0) = ax + by + \sum_{i=1}^m c_i E_i$ the dual class of $Z_0$ in $M_0$ for some integers $a,b,c_1,\cdots,c_m$,
		\item $e(P_{-2 + \epsilon}) = kx - y$ (for $k \in \Z$) the Euler class of $P_{-2}^+$.
	\end{itemize}
	Then Corollary \ref{corollary_volume_extremum} implies that
	\begin{equation}\label{equation_I_k}
		\begin{cases}
			k \geq -1 & ~\text{if $b_{\min}$ is odd} \\
			k \geq 0 & ~\text{if $b_{\min}$ is even} 
		\end{cases}
	\end{equation}

	We collect equations in $\{a,b,c_1, \cdots, c_m, k\}$ as follows. Since $c_1(TM_0) = [\omega_0]$ by Lemma \ref{lemma_monotonicity},
	we see that
	\begin{equation}\label{equation_I_symplectic_form}
		[\omega_0] = \begin{cases}
			3x + 2y - \sum_{i=1}^m E_i & ~\text{if $b_{\min}$ is odd,} \\ 
			2x + 2y - \sum_{i=1}^m E_i & ~\text{if $b_{\min}$ is even.} \\ 
		\end{cases}
	\end{equation}
	So, we have
	\begin{equation}\label{equation_I_volume}
		\mathrm{Vol}(Z_0) = \langle [\omega_0], [Z_0] \rangle = \begin{cases}
			2a + b + \sum_{i=1}^m c_i \geq 1 & ~\text{if $b_{\min}$ is odd,} \\ 
			2a + 2b + \sum_{i=1}^m c_i \geq 1& ~\text{if $b_{\min}$ is even.} \\ 
		\end{cases}
	\end{equation}
	Moreover. since $e(P_0^+) = kx - y + \sum_{i=1}^m E_i + \mathrm{PD}(Z_0) = (a+k)x + (b-1)y + \sum_{i=1}^m (c_i + 1)E_i$ by Lemma \ref{lemma_Euler_class}, we obtain
	\begin{equation}\label{equation_I_symplectic_form_level_t}
		[\omega_t] = [\omega_0] - e(P_0^+)t = 
		\begin{cases}
			(3 - (a + k)t)x + (2 + (1-b)t)y - \ds \sum_{i=1}^m (1 + (c_i+1) t)E_i & ~\text{if $b_{\min}$ is odd,} \\
			(2 - (a + k)t)x + (2 + (1-b)t)y - \ds \sum_{i=1}^m (1 + (c_i+1) t)E_i & ~\text{if $b_{\min}$ is even.} 
		\end{cases}
	\end{equation}
	for $t \in [0,1]$.
	In particular, 
	\begin{equation}\label{equation_I_symplectic_form_level_1}
		[\omega_1] = [\omega_0] - e(P_0^+) = 
		\begin{cases}
			(3 - a - k)x + (3-b)y - \ds \sum_{i=1}^m (c_i + 2)E_i & ~\text{if $b_{\min}$ is odd,} \\
			(2 - a - k)x + (3-b)y - \ds \sum_{i=1}^m (c_i + 2)E_i & ~\text{if $b_{\min}$ is even.} 
		\end{cases}
	\end{equation}
	
	Now, we classify all possible TFD of $(M,\omega)$. 
	We begin with the following lemma which enables us to compute the Chern number using TFD. 
		
	\begin{lemma}\label{lemma_I_Chern_number}
		Suppose that $\dim Z_{\min} = 2$ and $\dim Z_{\max} = 4$ and let $m = |Z_{-1}|$. Then
		\begin{equation}\label{equation_I_Chern}
			\int_M c_1(TM)^3 = 24 + 4b_{\min} -m + \langle 3c_1(TM_0)^2 - 3c_1(TM_0)e(P_0^+) + e(P_0^+)^2, [M_0] \rangle.
		\end{equation}
	\end{lemma}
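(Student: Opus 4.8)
\emph{Plan.} The idea is to read off $\int_M c_1(TM)^3$ from the Duistermaat--Heckman density of the balanced moment map, integrated over the moment interval, and to match the three pieces of this integral against the three terms on the right of \eqref{equation_I_Chern}. Since $c_1(TM)=[\omega]$ we have $\int_M c_1(TM)^3=\int_M\omega^3$, and the Duistermaat--Heckman measure $H_*\!\left(\omega^3/3!\right)$ has density $t\mapsto \tfrac12\langle[\omega_t]^2,[M_t]\rangle$ at regular values, so
\begin{equation*}
	\int_M \omega^3 \;=\; 3\int_{-2}^{1} f(t)\,dt, \qquad f(t):=\langle [\omega_t]^2,[M_t]\rangle,
\end{equation*}
the moment image being $[-2,1]$ because $H(Z_{\min})=-2$ and $H(Z_{\max})=1$ (Lemma~\ref{lemma_possible_critical_values}). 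The function $f$ is continuous and is a quadratic polynomial on each of $(-2,-1)$, $(-1,0)$, $(0,1)$, the intervals cut out by the possible non-extremal critical values $\{-1,0\}$; so it suffices to evaluate the three integrals separately and add.

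On $[0,1]$ the reduced space is $M_t\cong M_0$ and, by \eqref{equation_DH} and Lemma~\ref{lemma_monotonicity}, $[\omega_t]=c_1(TM_0)-t\,e(P_0^+)$ as in \eqref{equation_I_symplectic_form_level_t}; integrating the quadratic yields $3\int_0^1 f=\langle 3c_1(TM_0)^2-3c_1(TM_0)e(P_0^+)+e(P_0^+)^2,[M_0]\rangle$, which is exactly the bracketed term of \eqref{equation_I_Chern}. On $[-1,0]$ the reduced space is again $M_0$, and here $[\omega_t]=c_1(TM_0)-t\,e(P_0^-)$ with $e(P_0^-)=e(P_{-2}^+)+\sum_{i=1}^m E_i$ recording only the blow-up at level $-1$ (Lemma~\ref{lemma_Euler_class}); the possible fixed surface $Z_0$ at level $0$ enters only through $e(P_0^+)=e(P_0^-)+\mathrm{PD}(Z_0)$, already packaged into the $[0,1]$ term, so no information about $Z_0$ is needed. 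Integrating gives $3\int_{-1}^{0}f=3\langle c_1(TM_0)^2\rangle+3\langle c_1(TM_0)e(P_0^-)\rangle+\langle e(P_0^-)^2\rangle$. On $[-2,-1]$ the reduced space is the $S^2$-bundle $M_{-2+\epsilon}$ over $Z_{\min}\cong S^2$; as $t\to -2^+$ the reduced form degenerates to $\pi^*\omega_{Z_{\min}}$, so $[\omega_{-2}]=\mathrm{Vol}(Z_{\min})\,x=(2+b_{\min})\,x$ using the adjunction identity $\mathrm{Vol}(Z_{\min})=\langle c_1(TM),[Z_{\min}]\rangle=\chi(Z_{\min})+b_{\min}$, and then $[\omega_t]=(2+b_{\min})\,x-(t+2)\,e(P_{-2}^+)$ by \eqref{equation_DH} and Lemma~\ref{lemma_Euler_extremum}. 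With $\langle x^2\rangle=0$, $\langle x\,e(P_{-2}^+)\rangle=-1$ and $\langle e(P_{-2}^+)^2\rangle=-b_{\min}$ one gets $f(t)=2(2+b_{\min})(t+2)-b_{\min}(t+2)^2$, hence $3\int_{-2}^{-1}f=6+2b_{\min}$.

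It remains to assemble the $[-2,-1]$ and $[-1,0]$ pieces. Using $\langle c_1(TM_0)^2\rangle=8-m$ (from \eqref{equation_chern_class} and \eqref{equation_I_symplectic_form}, since $\langle c_1(TM_{-2+\epsilon})^2\rangle=8$ in both cases of \eqref{equation_chern_class} and each of the $m$ blow-ups drops this by one), together with $e(P_0^-)=e(P_{-2}^+)+\sum E_i$, \eqref{equation_I_symplectic_form} and Lemma~\ref{lemma_Euler_extremum}, a short computation — uniform in the two cases of \eqref{equation_chern_class} — gives $\langle c_1(TM_0)e(P_0^-)\rangle=b_{\min}-2+m$ and $\langle e(P_0^-)^2\rangle=-b_{\min}-m$. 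Therefore
\begin{equation*}
	3\int_{-2}^{-1}f+3\int_{-1}^{0}f=(6+2b_{\min})+\bigl(3(8-m)+3(b_{\min}-2+m)+(-b_{\min}-m)\bigr)=24+4b_{\min}-m,
\end{equation*}
and adding the $[0,1]$ contribution computed above gives \eqref{equation_I_Chern}.

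\emph{Main obstacle.} The only delicate points are (i) pinning down $[\omega_t]$ near the collapsing end $t\to -2^+$, which reduces to the value $\mathrm{Vol}(Z_{\min})=2+b_{\min}$, and (ii) tracking the Euler-class jumps across the levels $-1$ and $0$; both are supplied by Lemma~\ref{lemma_Euler_extremum}, Lemma~\ref{lemma_Euler_class}, and the computation of $e(P_0^+)$ preceding \eqref{equation_I_symplectic_form_level_t}. Everything else is routine integration of quadratics against the intersection form of the rational surface $M_0$, and — pleasantly — the $\mathrm{PD}(Z_0)$ terms never need to be expanded, so the argument is uniform in whether or not $0$ is a critical value.
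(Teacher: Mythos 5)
Your proof is correct, and it takes a genuinely different route from the paper. The paper obtains \eqref{equation_I_Chern} by applying the ABBV localization theorem to $c_1^{S^1}(TM)^3$: the minimum contributes $24+4b_{\min}$, the $m$ isolated index-two points contribute $-m$, the level-zero surfaces contribute $0$, and $Z_{\max}=M_0$ contributes $\int_{M_0}\frac{(c_1(TM_0)-e-\lambda)^3}{-(e+\lambda)}$, which expands to the bracketed term. You instead use monotonicity to write $\int_M c_1(TM)^3=\int_M\omega^3=3\int_{-2}^{1}\langle[\omega_t]^2,[M_t]\rangle\,dt$ and evaluate the piecewise-quadratic Duistermaat--Heckman function on $[-2,-1]$, $[-1,0]$, $[0,1]$; I checked your intermediate values ($3\int_{-2}^{-1}f=6+2b_{\min}$, $\langle c_1(TM_0)^2\rangle=8-m$, $\langle c_1(TM_0)e(P_0^-)\rangle=b_{\min}-2+m$, $\langle e(P_0^-)^2\rangle=-b_{\min}-m$, and the $[0,1]$ piece reproducing $\langle 3c_1^2-3c_1e(P_0^+)+e(P_0^+)^2\rangle$) and they are all right, uniformly in the parity of $b_{\min}$, and consistent with the paper's own sample computation for {\bf (I-1-1.1)}. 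The trade-off: the localization proof never needs the reduced classes below level $0$ nor the degeneration of $[\omega_t]$ at the minimum, while your argument avoids equivariant cohomology and weight bookkeeping entirely, at the cost of the two points you correctly flag — the limit $[\omega_t]\to(2+b_{\min})x$ as $t\to-2^+$ (which follows from monotonicity, $\langle c_1(TM),[Z_{\min}]\rangle=2+b_{\min}$, together with the standard collapse of the fiber class at the extremum, as in Li's work cited for Lemma \ref{lemma_Euler_extremum}) and the Euler-class jumps $e(P_0^-)=e(P_{-2}^+)+\sum_i E_i$, $e(P_0^+)=e(P_0^-)+\mathrm{PD}(Z_0)$ from Lemma \ref{lemma_Euler_class}, which is exactly how the paper derives \eqref{equation_I_symplectic_form_level_t}. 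Both arguments use the same input data, so your version is a legitimate alternative proof rather than a shortcut hiding a gap.
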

	
	\begin{proof}
		Applying the ABBV localization theorem, we have
		\[
			\begin{array}{ccl}\vs{0.3cm}
				\ds \int_M c_1^{S^1}(TM)^3 & = &  \ds  
							\int_{Z_{\min}} \frac{\left(c_1^{S^1}(TM)|_{Z_{\min}}\right)^3}{e_{Z_{\min}}^{S^1}} + m \frac{\overbrace{\lambda^3}^{Z_{-1} ~\text{term}}}
							{-\lambda^3}
							+ \int_{Z_0} \frac{\overbrace{\left(c_1^{S^1}(TM)|_{Z_0}\right)^3}^{= 0}}{e_{Z_0}^{S^1}}
							 + \int_{Z_{\max}} \frac{\left(c_1^{S^1}(TM)|_{Z_{\max}}\right)^3}
							 {e_{Z_{\max}}^{S^1}} \\ \vs{0.2cm}
							 & = & \ds \int_{Z_{\min}} \frac{(2\lambda + (2+b_{\min})u)^3}{\lambda^2 + b_{\min}u\lambda} - m + 
							 \int_{Z_{\max}} \frac{\left(c_1^{S^1}(TM)|_{Z_{\max}}\right)^3} {e_{Z_{\max}}^{S^1}} \\ \vs{0.2cm}
							& = &  (24 + 4b_{\min}) - m + \ds \int_{M_0} \frac{\left(c_1(TM_0) - e - \lambda \right)^3}{-e - \lambda}\quad \quad 
							(c_1(TZ_{\max}) = c_1(TM_0), ~e := e(P_0^+) = e(P_1^-)) \\ \vs{0.2cm}
							& = & 24 + 4b_{\min} - m + \ds \int_{M_0} \frac{\left(c_1(TM_0) - e - 
							\lambda \right)^3(e^2 - e\lambda + \lambda^2)}{-(e+\lambda)(e^2 - e\lambda + \lambda^2)}
							\\ \vs{0.2cm}
							& = & 24 + 4b_{\min} - m + \langle 3c_1(TM_0)^2 - 3c_1(TM_0)e) + e^2, [M_0] \rangle.
			\end{array}			
		\]
	\end{proof}

%
%
%

	We divide into four cases: $\mathrm{Crit} ~H = \underbrace{\{1,-2\}}_{\text{({\bf I-1})}}$ , $\underbrace{\{1,0, -2\}}_{\text{({\bf I-2})}}$, 
	$\underbrace{\{1,-1, -2\}}_{\text{({\bf I-3})}}$, and $\underbrace{\{1,0,-1,-2\}}_{\text{({\bf I-4})}}$.

	\begin{theorem}[Case {\bf I-1}]\label{theorem_I_1}
		Let $(M,\omega)$ be a six-dimensional closed monotone semifree Hamiltonian $S^1$-manifold such that 
		$\mathrm{Crit} H = \{ 1, -2\}$. 
		Then the list of all possible topological fixed point data is given in Table \ref{table_I_1}
		\begin{table}[h]
			\begin{tabular}{|c|c|c|c|c|c|c|}
				\hline
				    & $(M_0, [\omega_0])$ & $e(P_{-2}^+)$ & $Z_{-2}$ & $Z_1$ & $b_2(M)$ & $c_1^3(M)$ \\ \hline \hline
				    {\bf (I-1-1.1)} & $(E_{S^2}, 3x + 2y)$ & $-x-y$  & $S^2$ & 
				    	$E_{S^2}$
					     & $2$ & $54$ \\ \hline    
				    {\bf (I-1-2.1)} & $(S^2 \times S^2, 2x + 2y)$ & $-y$  & $S^2$ & 
				    	$S^2 \times S^2$
					     & $2$ & $54$ \\ \hline    
				    {\bf (I-1-2.2)} & $(S^2 \times S^2, 2x + 2y)$ & $x-y$  & $S^2$ & 
				    	$S^2 \times S^2$
					     & $2$ & $54$ \\ \hline    
			\end{tabular}		
			\vs{0.5cm}			
			\caption{\label{table_I_1} Topological fixed point data for $\mathrm{Crit} H = \{1, -2\}$.}
		\end{table}				   
	\end{theorem}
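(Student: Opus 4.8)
The plan is to exploit that $\mathrm{Crit}\,H=\{1,-2\}$ leaves \emph{no} non-extremal critical value: the reduced spaces $M_t$ for $t\in(-2,1)$ are then mutually symplectomorphic and the Euler class $e(P_t^{\pm})$ is constant on this interval. First I would record that $m=|Z_{-1}|=0$, so by Proposition~\ref{proposition_GS} no blow-up occurs and $M_0\cong M_{-2+\epsilon}$; by Lemma~\ref{lemma_Euler_extremum} this is $E_{S^2}$ when $b_{\min}$ is odd and $S^2\times S^2$ when $b_{\min}$ is even, and in the two cases \eqref{equation_I_symplectic_form} gives $[\omega_0]=3x+2y$ or $2x+2y$ respectively. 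Writing $e:=e(P_0^+)=e(P_{-2}^+)=kx-y$, the bound \eqref{equation_I_k} gives $k\ge-1$ (odd) or $k\ge0$ (even), while $\langle e^2,[M_0]\rangle=-2k-1$ or $-2k$ equals $-b_{\min}$ by Lemma~\ref{lemma_Euler_extremum}, so $k$ determines $b_{\min}$.

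The key step is to bound $k$ from above using the reduced forms. By Duistermaat--Heckman \eqref{equation_I_symplectic_form_level_t}, each $[\omega_t]=[\omega_0]-t\,e$ with $t\in[0,1]$ must be a symplectic class on $M_t\cong M_0$; in particular $[\omega_1]$ from \eqref{equation_I_symplectic_form_level_1} lies in the symplectic (K\"{a}hler) cone of $M_0$. That cone is $\{\alpha x+\beta y:\alpha>0,\ \beta>0\}$ for $S^2\times S^2$ (positivity against the two rulings) and $\{\alpha x+\beta y:\alpha>\beta>0\}$ for $E_{S^2}$ (positivity against the fibre and the $(-1)$-section, in the basis fixed before \eqref{equation_chern_class}). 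Substituting $[\omega_1]=(3-k)x+3y$ in the odd case forces $3-k>3$, hence $k=-1$ and $b_{\min}=-1$; substituting $[\omega_1]=(2-k)x+3y$ in the even case forces $2-k>0$, hence $k\in\{0,1\}$ and $b_{\min}\in\{0,2\}$. For these values one checks directly that $[\omega_t]$ stays in the open symplectic cone for every $t\in(-2,1)$, so these $e$ are exactly the admissible ones, giving the Euler-class entries $-x-y$ on $E_{S^2}$ and $-y$, $x-y$ on $S^2\times S^2$ in Table~\ref{table_I_1}.

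It remains to fill in the other columns. By Lemma~\ref{lemma_possible_critical_values} we have $Z_{-2}=Z_{\min}\cong S^2$, and $Z_1=Z_{\max}$ is diffeomorphic to $M_{1-\epsilon}\cong M_0$, which is $E_{S^2}$ or $S^2\times S^2$ accordingly. Since $H$ is a perfect Morse--Bott function whose critical submanifolds are $Z_{\min}$ (index $0$) and $Z_{\max}$ (index $2$), we obtain $b_2(M)=b_2(Z_{\min})+b_0(Z_{\max})=1+1=2$. Finally Lemma~\ref{lemma_I_Chern_number} with $m=0$ expresses $\int_M c_1(TM)^3$ through $b_{\min}$, $c_1(TM_0)$ (which is $3x+2y$ or $2x+2y$ by \eqref{equation_chern_class}) and $e$; a short intersection computation yields $c_1^3(M)=54$ in each of the three cases, completing Table~\ref{table_I_1}.

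The argument is essentially bookkeeping and I do not expect a serious obstacle. The one point that needs care is the symplectic-cone step: one must use the correct description of the K\"{a}hler cone of each of the two $S^2$-bundles over $S^2$ in the chosen basis $\{x,y\}$, and verify that the single constraint ``$[\omega_1]$ is symplectic'' (together with ``$[\omega_0]$ is symplectic'') already forces the entire ray $t\mapsto[\omega_t]$, $t\in(-2,1)$, to be symplectic, so that no hidden critical value in $(-2,1)$ can slip in and no further case arises.
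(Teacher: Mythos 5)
Your proposal is correct and follows essentially the same route as the paper: split according to the parity of $b_{\min}$ (i.e.\ $M_0\cong E_{S^2}$ or $S^2\times S^2$), use Duistermaat--Heckman to write $[\omega_1]=[\omega_0]-e$, and combine the lower bound \eqref{equation_I_k} with positivity of $[\omega_1]$ to force $k=-1$, resp.\ $k\in\{0,1\}$, then read off $b_2$ and the Chern numbers from Lemma \ref{lemma_I_Chern_number}. Your ``K\"ahler cone'' step is just a repackaging of the paper's constraints (positivity of $[\omega_t]$ on the exceptional class $y$ for $E_{S^2}$, and $\int_{M_1}\omega_1^2>0$ for $S^2\times S^2$), so there is no substantive difference.
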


	\begin{proof}
		 We divide the proof into two cases: $M_0 \cong E_{S^2}$ {\bf (I-1-1)} and $M_0 \cong S^2 \times S^2$ {\bf (I-1-2)}.
		\vs{0.3cm}
		
	\noindent	
	{\bf (I-1-1)}: $M_0 \cong E_{S^2}$. \vs{0.3cm}
	
	\noindent
		In this case, we have $b_{\mathrm{min}} = 2k + 1$ for some $k \in \Z$ by Lemma \ref{lemma_Euler_extremum}.
		Recall that 
		\[
			[\omega_t] = (3 - kt)x + (2 + t)y, \quad t \in [0,1], 
		\]
		see \eqref{equation_I_symplectic_form_level_t}.
		If $k \geq 0$, then 
		\[
			\langle [\omega_t], y \rangle = 1 - (k+1)t
		\]
		so that a blow-down occurs at level $0< t = \frac{1}{k+1} \leq 1$ where the vanishing exceptional divisor corresponds to $y$. 
		Since $M_0 \cong M_t \cong M_1$ by an elementary Morse theory, this cannot be happened, i.e., $k \leq -1$ and hence $k=-1$ 
		by \eqref{equation_I_k}.
		See Table \ref{table_I_1}: {\bf (I-1-1.1)}. The Chern number is computed from Lemma \ref{lemma_I_Chern_number} as follows: 
		\[
			\int_M c_1(TM)^3 = 24 + (-4) + 0 + 3\cdot 8 - 3 (-3) + 1 = 54.
		\]
	\vs{0.3cm}
	
	\noindent
	{\bf (I-1-2)}: $M_0 \cong S^2 \times S^2$. \vs{0.3cm}

	\noindent
		In this case, $b_{\min}$ is even by Lemma \ref{lemma_Euler_extremum} so that $b_{\mathrm{min}} = 2k$ 
		for some $k \in \Z$. Since $[\omega_1] = (2 - k)x + 3y$ by \eqref{equation_I_symplectic_form_level_1}, 
		\[
			\int_{M_1} \omega_1^2 = 6(2-k) > 0. 
		\]
		Also since $k \geq 0$ by \eqref{equation_I_k}, we have $k=0$ or $1$. 
		See {\bf (I-1-2.1)} and {\bf (I-1-2.2)} in Table \ref{table_I_1} for $k=0$ and $k=1$, respectively.
		The Chern numbers for each cases immediately follow from Lemma \ref{lemma_I_Chern_number}.
	\end{proof}

	\begin{example}[Fano varieties of type {\bf (I-1)}]\label{example_I_1} We describe Fano varieties of type {\bf (I-1)} listed in Theorem \ref{theorem_I_1}
	as follows.
		
		\begin{itemize}

			\item {\bf (I-1-1.1)} \cite[No.33 in Section 12.3]{IP} : Let $M$ be the blow-up of $\p^3$ along a $T^3$-invariant line and the circle action generated 
			by $\xi = (0,0,-1) \in \frak{t}^* \cong \R^3$. 
			
				 \begin{figure}[H]
	           	 		\scalebox{0.8}{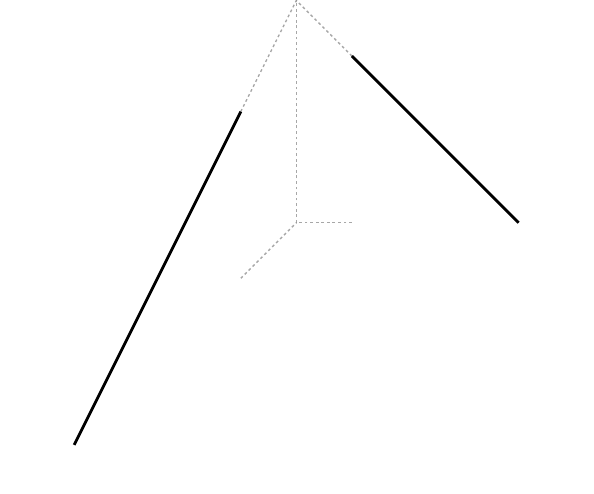}
		           	 	\caption{\label{figure_I_1_2_2} Blow-up of $\p^3$ along a $T^3$-invariant line}
		           	 \end{figure}
			\noindent
			As in Figure \ref{figure_I_1_2_2}, the volume of $Z_{\min} = 1$ (or equivalently $b_{\min} = -1$ by 
			Corollary \ref{corollary_volume_extremum}). Also, the volume of $Z_{\max}$ equals $15 (= 16 - 1)$ and so it coincides with 
			$\langle [\omega_1]^2, [Z_1] \rangle = \rangle (4x + 3y)^2, [Z_1] \rangle = 15$. Thus one can easily check that the topological fixed point data 
			of the action on $M$ coincides with {\bf (I-1-1.1)} in Table \ref{table_I_1}.
			See also \cite[Example 8.8]{Cho1} and \cite[Example 7.2]{Cho2}.\vs{0.3cm}
					           	 
	           	 \item {\bf (I-1-2.1)} \cite[No.34 in Section 12.3]{IP}  : Consider $M = \p^1 \times \p^2$ with the standard Hamiltonian $T^3$-action. Take a circle subgroup 
	           	 of $T^3$ generated by $\xi = (1,1,0)$. Then the fixed point set can be described as in Figure \ref{figure_I_1_1_1} (colored by red). In this case, the volume 
	           	 of $Z_{\min} = 2$ so that $b_{\min} = 0$ (i.e., $k=0$). Thus this example corresponds to {\bf (I-1-2.1)} in Table \ref{table_I_1}.
			See also \cite[Example 7.9]{Cho1}.	           	 
			
	           		 \begin{figure}[H]
	           	 		\scalebox{0.8}{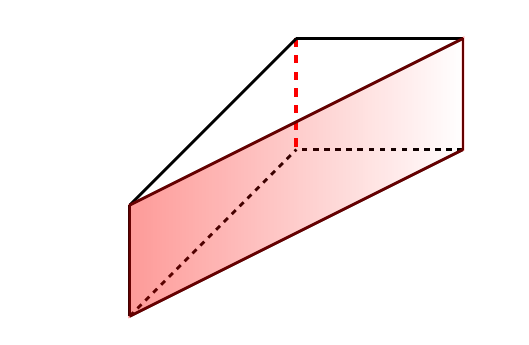}
		           	 	\caption{\label{figure_I_1_1_1} $\p^1 \times \p^2$}
		           	 \end{figure}

			\item {\bf (I-1-2.2)} \cite[No.33 in Section 12.3]{IP} : Let $M$ be the same as {\bf (I-1-1.1)}, that is, the blow-up of $\p^3$ along a $T^3$-invariant line. 
			In this case, we take another circle subgroup of $T^3$ generated by $\xi = (-1-1,0)$. Then the fixed point set can be described as in Figure \ref{figure_I_1_2_1}.
			Also, since the volume of $Z_{\min} = 4$, we see that $b_{\min} = 2$ by Corollary \ref{corollary_volume_extremum}, and hence this is the case where $k=1$. 
			
			See also {\bf (I-1-1.1)}, \cite[Example 8.8]{Cho1} and \cite[Example 7.2]{Cho2}.
	           		 \begin{figure}[H]
	           	 		\scalebox{0.8}{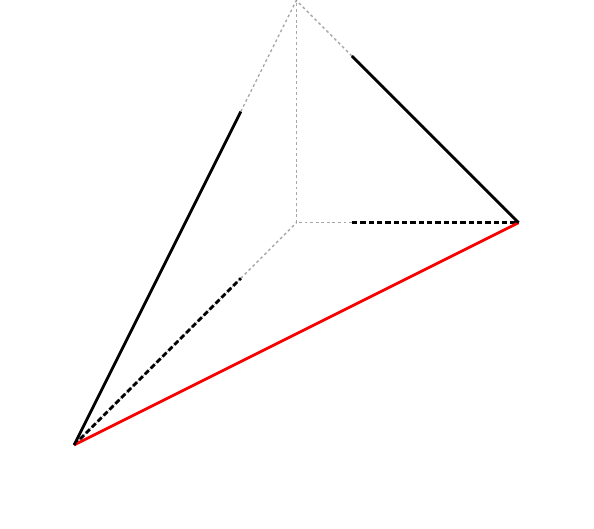}
		           	 	\caption{\label{figure_I_1_2_1} Blow-up of $\p^3$ along a $T^3$-invariant line}
		           	 \end{figure}
			
		\end{itemize}		
	\end{example}

	\begin{theorem}[Case {\bf I-2}]\label{theorem_I_2}
		Let $(M,\omega)$ be a six-dimensional closed monotone semifree Hamiltonian $S^1$-manifold such that $\mathrm{Crit} H = \{ 1, -1, -2\}$. 
		Then there is only one possible topological fixed point data as in Table \ref{table_I_2}
		\begin{table}[h]
			\begin{tabular}{|c|c|c|c|c|c|c|c|}
				\hline
				    & $(M_0, [\omega_0])$ & $e(P_{-2}^+)$ & $Z_{-2}$ & $Z_{-1}$ & $Z_1$ & $b_2(M)$ & $c_1^3(M)$ \\ \hline \hline
				    {\bf (I-2)} & $(E_{S^2}, 3x + 2y - E_1)$ & $-x-y$  & $S^2$ & $\mathrm{pt}$ & 
				    	$E_{S^2} \# ~\overline{\p^2}$
					     & $3$ & $46$ \\ \hline    
			\end{tabular}		
			\vs{0.5cm}			
			\caption{\label{table_I_2} Topological fixed point data for $\mathrm{Crit} H = \{1, -1, -2\}$.}
		\end{table}				   
	\end{theorem}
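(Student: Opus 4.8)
\emph{Overall strategy.} The plan is to run the argument of the proof of Theorem~\ref{theorem_I_1}, now using that $Z_{-1}\neq\emptyset$ while no fixed component lies on the zero level. By Lemma~\ref{lemma_possible_critical_values} the hypothesis $\mathrm{Crit}\,H=\{1,-1,-2\}$ forces $Z_{\min}\cong S^2$ on the level $-2$, a four-dimensional $Z_{\max}$ on the level $1$, and $Z_{-1}$ to consist of $m:=|Z_{-1}|\geq 1$ isolated fixed points of index two. Since $0\notin\mathrm{Crit}\,H$, the reduced-space Euler class is constant across the level $0$, so the formulas of Section~\ref{secTopologicalFixedPointDataDimZMin2} specialized to $\mathrm{PD}(Z_0)=0$ give, with $e:=e(P_0^+)$,
\[
	e=kx-y+\sum_{i=1}^m E_i,\qquad [\omega_t]=[\omega_0]-te\quad(-1\le t\le 1),
\]
where $e(P_{-2}^+)=kx-y$ and $[\omega_0]$ is as in \eqref{equation_I_symplectic_form}. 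By Proposition~\ref{proposition_GS}, $M_0$ (and hence $M_1\cong M_0$) is the blow-up of the $S^2$-bundle $M_{-2+\epsilon}$ over $Z_{\min}$ at the $m$ points of $Z_{-1}$, so $b_2(M_0)=m+2\geq 3$; hence Lemma~\ref{lemma_monotonicity} and \cite{OO2} force $M_0\cong X_{m+1}$ with $1\le m\le 7$, and $x,y,E_1,\dots,E_m$ form a standard basis of $H^2(M_0)$. Exactly as in the proof of Theorem~\ref{theorem_I_1} we shall use: since $M_t\cong M_0$ for all $t\in(-1,1)$ and $M_1\cong M_0$ (the level $1$ carries only the co-index-two maximum), no exceptional class of $M_0$ can have vanishing $[\omega_t]$-area for any $t\in(0,1]$, for otherwise $M_0$ would be blown down at that level.

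\emph{The case $M_{-2+\epsilon}\cong S^2\times S^2$ cannot occur.} Here $b_{\min}$ is even, so $k\ge 0$ by \eqref{equation_I_k}, and $y-E_1$ is an exceptional class of $M_0$ (the proper transform of a square-zero section of $M_{-2+\epsilon}\to Z_{\min}$ blown up at a point of $Z_{-1}$; equivalently it appears in the list of Lemma~\ref{lemma_list_exceptional} after a change of basis). A short computation with the intersection form gives $\langle[\omega_t],y-E_1\rangle=1-(k+1)t$, which vanishes at $t^\ast=1/(k+1)\in(0,1]$, contradicting the above. Hence $M_{-2+\epsilon}\cong E_{S^2}$.

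\emph{The case $M_{-2+\epsilon}\cong E_{S^2}$.} Now $b_{\min}$ is odd and $k\ge -1$. The class $y$ of the $(-1)$-section of $E_{S^2}$ is an exceptional class of $M_0$ with $\langle[\omega_t],y\rangle=1-(k+1)t$, so the same principle forces $k+1\le 0$, i.e. $k=-1$. If in addition $m\ge 2$, then $x+y-E_1-E_2$ is an exceptional class of $M_0$ (a class of type $u-E_{12}$ in Lemma~\ref{lemma_list_exceptional}, as $u=x+y$ here) with $\langle[\omega_t],x+y-E_1-E_2\rangle=1-t$, vanishing at $t=1$, again a contradiction. Hence $m=1$, and every entry is now forced: $(M_0,[\omega_0])=(E_{S^2},\,3x+2y-E_1)$, $e(P_{-2}^+)=-x-y$, $Z_{-2}\cong S^2$, $Z_{-1}$ a single point, and $Z_1=Z_{\max}\cong E_{S^2}\,\#\,\overline{\p^2}$, which is precisely row {\bf (I-2)} of Table~\ref{table_I_2}. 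Finally $b_2(M)=m+2=3$ from the Morse polynomial of $H$, and Lemma~\ref{lemma_I_Chern_number} applied with $b_{\min}=-1$, $m=1$, $c_1(TM_0)=3x+2y-E_1$, $e=-x-y+E_1$ yields $\int_M c_1(TM)^3=24-4-1+27=46$.

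\emph{Main obstacle.} The only genuine work is the bookkeeping needed to identify the basis $\{x,y,E_1,\dots,E_m\}$ adapted to $Z_{\min}$ (the ruling of $M_{-2+\epsilon}$ over $Z_{\min}$, together with the blow-up along $Z_{-1}$) with a standard basis of $H^2(X_{m+1})$, so as to recognize which classes of Lemma~\ref{lemma_list_exceptional} degenerate along the Duistermaat--Heckman path $t\mapsto[\omega_t]$; once those classes are located, each pairing used above is a one-line computation with the intersection form.
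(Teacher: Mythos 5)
Your proposal is correct and follows essentially the same route as the paper's proof: the same case split according to whether $M_{-2+\epsilon}$ is $E_{S^2}$ or $S^2\times S^2$, the same Duistermaat--Heckman formula for $[\omega_t]$, and the same positivity-of-area argument applied to the exceptional classes $y$ (resp. $y-E_1$) and $x+y-E_1-E_2$, merely determining $k=-1$ before $m=1$ rather than after; the final data and the Chern number $46$ agree with Table \ref{table_I_2}.
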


	\begin{proof}
		Similar to the proof of Theorem \ref{theorem_I_1}, we divide into two cases: $M_0 \cong E_{S^2}$ and $M_0 \cong S^2 \times S^2$. \vs{0.3cm}
		
	\noindent	
	{\bf Case (1)}: $M_0 \cong E_{S^2}$. \vs{0.3cm}
	
	\noindent
	Let $b_{\mathrm{min}} = 2k + 1$ for some $k \in \Z$ (by Lemma \ref{lemma_Euler_extremum}). If $m > 1$, then $x + y - E_1 - E_2$ becomes an exceptional class by Lemma \ref{lemma_list_exceptional}
	and the volume of the corresponding exceptional divisor $C$ on $M_1$ equals 
	\[
		\langle [\omega_1], [C] \rangle = \langle (x + y - E_1 - E_2) \cdot ((3-k)x + 3y - 2E_1 - 2E_2), [M_1] \rangle  = -1 - k \leq 0,
	\]
	see \eqref{equation_I_symplectic_form_level_t}.
	That is, the blow-down occurs at some level $t \leq 1$. (Indeed, the blow-down occurs at level $t = \frac{1}{2+k}$ by \eqref{equation_I_symplectic_form_level_t}.)
	This is impossible since $M_0 \cong M_1$ and so we have $m = 1$. (Note that $m > 0$ by our assumption.) Moreover, the symplectic volume of the exceptional class $y$ on $M_1$ 
	should be positive and is equal to 
	\[
		\langle y \cdot ((3-k)x + 3y - 2E_1 - 2E_2), [M_1] \rangle = -k >0. 
	\]
	Thus we get $k = -1$ by \eqref{equation_I_k}. See Table \ref{table_I_2}: {\bf (I-2)}.\vs{0.3cm}
	
	\noindent
	{\bf Case (2)}: $M_0 \cong S^2 \times S^2$. \vs{0.3cm}
	
	\noindent
	Let $b_{\mathrm{min}} = 2k$ (for some $k \geq 0$ by \eqref{equation_I_k}) by Lemma \ref{lemma_Euler_extremum}.  
	In this case, we observe that 
	\[
		\langle (y - E_1) \cdot [\omega_1], [M_1] \rangle = \langle (y - E_1) \cdot ((2 - k)x + 3y - \sum_{i=1}^m 2 E_i), [M_1] \rangle = -k \leq 0
	\]
	so that the exceptional divisor representing $y - E_1$ vanishes at $M_t$ where $t = \frac{1}{k+1} \leq 1$. So, no such manifold exists.

	\end{proof}

	\begin{example}[Fano variety of type {\bf (I-2)}]\label{example_I_2}\cite[No.26 in Section 12.4]{IP}
	Consider the standard $T^3$-action on $\p^3$ and let $M$ be the blow-up of $\p^3$ along a fixed point $p$ and a $T^3$-invariant sphere not containing $p$. The induced $T^3$-action
	on $M$ admits a moment map whose image is given in Figure \ref{figure_I_2}. Taking a circle subgroup of $T^3$ generated by $\xi = (-1,0,0)$, we obtain a semifree Hamiltonian $S^1$-action 
	on $m$ whose fixed point set is depicted by red faces as in Figure \ref{figure_I_2}. 
	
	The volume of $Z_{\min}$ equals the length of the red edge, 1, which also equals $b_{\min} + 2$  by Corollary \ref{corollary_volume_extremum}. So, one can see that the fixed point data
	coincides with {\bf (I-2)} in Table \ref{table_I_2}. See also \cite[Example 8.13]{Cho1}.
			           	 
		\begin{figure}[H]
	           	\scalebox{0.8}{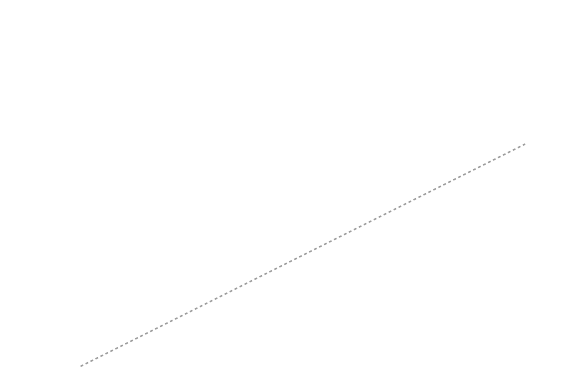}
			\caption{\label{figure_I_2} Blow-up of $\p^3$ along a point and a sphere}
		\end{figure}
			
	\end{example}

	\begin{theorem}[Case {\bf (I-3-1)}]\label{theorem_I_3_1}
		Let $(M,\omega)$ be a six-dimensional closed monotone semifree Hamiltonian $S^1$-manifold such that $\mathrm{Crit} H = \{ 1, 0, -2\}$. 
		If $M_0 \cong E_{S^2}$, then the list of all possible topological fixed point data is given in the Table \ref{table_I_3_1}.
		\begin{table}[h]		
		\begin{adjustbox}{max width=.9\textwidth}		
			\begin{tabular}{|c|c|c|c|c|c|c|c|}
				\hline
				    & $(M_0, [\omega_0])$ & $e(P_{-2}^+)$ & $Z_{-2}$ & $Z_0$ & $Z_1$ & $b_2(M)$ & $c_1^3(M)$ \\ \hline \hline
				    {\bf (I-3-1.1)} & $(E_{S^2}, 3x + 2y)$ & $-y$  & $S^2$ & \makecell{$Z_0 \cong S^2$ \\ $\mathrm{PD}(Z_0) = y$} & $E_{S^2}$ & $3$ & $52$ \\ \hline    
				    {\bf (I-3-1.2)} & $(E_{S^2}, 3x + 2y)$ & $-x-y$  & $S^2$ & \makecell{$Z_0 \cong S^2$ \\ $\mathrm{PD}(Z_0) = y$} 
				    															& $E_{S^2}$ & $3$ & $50$ \\ \hline    
				    {\bf (I-3-1.3)} & $(E_{S^2}, 3x + 2y)$ & $-y$  & $S^2$ &  
				    				\makecell{$Z_0 \cong S^2 ~\dot\cup ~S^2$ \\ $\mathrm{PD}(Z_0^1) = x+y$ \\ $\mathrm{PD}(Z_0^2) = y$} 				    
				    															& $E_{S^2}$ & $4$ & $44$ \\ \hline    
				    {\bf (I-3-1.4)} & $(E_{S^2}, 3x + 2y)$ & $-x-y$  & $S^2$ & \makecell{$Z_0 \cong S^2$ \\ $\mathrm{PD}(Z_0) = x + y$} 
				    								& $E_{S^2}$ & $3$ & $44$ \\ \hline    
				    {\bf (I-3-1.5)} & $(E_{S^2}, 3x + 2y)$ & $-x-y$  & $S^2$ & 
				    							\makecell{$Z_0 \cong S^2 ~\dot\cup ~S^2$ \\ $\mathrm{PD}(Z_0^1) = x+y$ \\ $\mathrm{PD}(Z_0^2) = y$} 
				    								& $E_{S^2}$ & $4$ & $40$ \\ \hline    
				    {\bf (I-3-1.6)} & $(E_{S^2}, 3x + 2y)$ & $-x-y$  & $S^2$ & \makecell{$Z_0 \cong S^2$ \\ $\mathrm{PD}(Z_0) = 2x + 2y$} & $E_{S^2}$ & $3$ & $36$ \\ \hline    
			\end{tabular}		
			\end{adjustbox}
			\vs{0.5cm}			
			\caption{\label{table_I_3_1} Topological fixed point data for $\mathrm{Crit} H = \{1, 0, -2\}$ with $M_0 \cong E_{S^2}$.}
		\end{table}				   
	\end{theorem}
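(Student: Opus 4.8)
The plan is to reduce the classification to a short combinatorial problem governed by the Duistermaat--Heckman formula and the adjunction formula on the rational surface $M_0$, exactly as in the proofs of Theorems \ref{theorem_I_1} and \ref{theorem_I_2}. Since $M_0\cong E_{S^2}$, Lemma \ref{lemma_Euler_extremum} gives $b_{\min}=2k+1$ and $e(P_{-2}^+)=kx-y$ for some $k\in\Z$, with $k\ge -1$ by \eqref{equation_I_k}; and since $-1\notin\mathrm{Crit}\,H$ we have $m=|Z_{-1}|=0$, so $M_0=M_{-2+\epsilon}$ carries no exceptional divisor and $[\omega_0]=3x+2y$ by \eqref{equation_I_symplectic_form}. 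The only non-extremal critical value is $0$, whose fixed components are surfaces; since blowing up a complex surface along a divisor is an isomorphism, Proposition \ref{proposition_GS} yields $M_0\cong M_{0-\epsilon}\cong M_{0+\epsilon}\cong M_{1-\epsilon}$, and since the reduced space just below a codimension-two maximal component is diffeomorphic to that component, we get $Z_1=Z_{\max}\cong E_{S^2}$ with $[\omega_1]$ a genuine symplectic class on it.

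I would then introduce the unknowns $\mathrm{PD}(Z_0^i)=a_ix+b_iy$ for the components $Z_0=\bigsqcup_{i=1}^{k_0}Z_0^i$, and set $a=\sum a_i$, $b=\sum b_i$, so that $\mathrm{PD}(Z_0)=ax+by$, $e(P_0^+)=(a+k)x+(b-1)y$ by Lemma \ref{lemma_Euler_class}, and $[\omega_1]=(3-a-k)x+(3-b)y$ by \eqref{equation_I_symplectic_form_level_1}. The constraints to be collected are: (a) each $Z_0^i$ is a connected embedded symplectic surface in $(M_0,\omega_0)$, so $2a_i+b_i=\langle[\omega_0],[Z_0^i]\rangle>0$ and adjunction reads $2g(Z_0^i)-2=(2a_ib_i-b_i^2)-(2a_i+b_i)$; (b) distinct components are disjoint, hence $\mathrm{PD}(Z_0^i)\cdot\mathrm{PD}(Z_0^j)=0$ for $i\ne j$; (c) along the affine path $[\omega_t]=[\omega_0]-t\,e(P_0^+)$ of \eqref{equation_I_symplectic_form_level_t} the exceptional class $y$ must keep positive area on $(0,1]$ (a blow-down there would contradict $M_0\cong M_1$), and $[\omega_1]$, being symplectic on $E_{S^2}$, must pair positively with the fiber class $x$; together these give $b\ge a+k+1$ and $b\le 2$.

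The enumeration is then short. Adjunction together with $b_i\le b\le 2$ forces $(a_i,b_i)$ to be one of the genus-zero classes $(1,0)$, $(n,1)$ with $n\ge 0$, or $(2,2)$ --- positive genus is impossible, since it would require $a_i$ larger than the bound $b\ge a+k+1$ (with $k\ge -1$) allows --- and in particular $a_i,b_i\ge 0$, so $0\le a+k+1\le b\le 2$. If $b=0$ every component is a fiber $(1,0)$, whence $a=k_0\ge 1$, contradicting $b\ge a+k+1$; thus $Z_0$ has no fiber component and every $b_i\in\{1,2\}$. For $b=1$ there is a single component $(n,1)$ with $n\le -k$, so $(k,n)\in\{(0,0),(-1,0),(-1,1)\}$, giving \textbf{(I-3-1.1)}, \textbf{(I-3-1.2)}, \textbf{(I-3-1.4)}. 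For $b=2$, either $Z_0$ is a single $(2,2)$-sphere, forcing $k\le -1$ hence $k=-1$ (case \textbf{(I-3-1.6)}), or $Z_0$ has two components whose classes disjointness pins to $y$ and $x+y$ (so $a=1$, hence $k\le 0$): $k=0$ gives \textbf{(I-3-1.3)} and $k=-1$ gives \textbf{(I-3-1.5)}. Finally I would read $b_2(M)$ off the Morse polynomial $\sum_Z t^{\mathrm{ind}(Z)}P_Z(t)$ and compute $c_1^3(M)$ in each case from Lemma \ref{lemma_I_Chern_number}, recovering Table \ref{table_I_3_1}.

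The main obstacle is making the case analysis airtight: one must check that the admissible list of classes $(a_i,b_i)$ really is complete, handle the two-component configuration correctly (disjointness does most of the work there), and be sure positive-genus fixed surfaces are excluded; the last point is subtle in isolation but becomes automatic once the bound $b\ge a+k+1$ is available --- for instance the only genus-one class with $b_i\le 2$ is the anticanonical class $3x+2y$, whose $a_i=3$ already violates it. A secondary point requiring care is the identity $M_0\cong M_{0\pm\epsilon}$ and the assertion that every intermediate class $[\omega_t]$ is symplectic, both of which follow from Proposition \ref{proposition_GS} together with the affineness of the Duistermaat--Heckman path and the convexity of the symplectic cone of $E_{S^2}$.
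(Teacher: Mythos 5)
Your proposal is correct, and it rests on exactly the same ingredients as the paper's proof: Lemma \ref{lemma_Euler_extremum} and \eqref{equation_I_k} giving $e(P_{-2}^+)=kx-y$ with $k\ge-1$, the Duistermaat--Heckman formula \eqref{equation_I_symplectic_form_level_t} together with Lemma \ref{lemma_Euler_class}, and the adjunction formula combined with positivity of the classes $y$ and $x$ on the reduced spaces. The organization is genuinely different, though: the paper first enumerates the nine admissible total triples $(a,b,k)$, discards $(a,b)=(0,2)$ by playing the volume bound against adjunction, and then determines the component structure case by case (detailing only {\bf (I-3-1.3)}), whereas you classify the admissible homology classes of individual components up front and let disjointness dictate the multi-component configurations; this makes the exclusion of $(a,b)=(0,2)$ and of positive-genus components automatic rather than ad hoc, and yields a more uniform completeness argument than the paper's ``the other five cases are similar''. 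Two places should be tightened so the per-component route is airtight: the inference ``$b=0$ is impossible, hence $Z_0$ has no fiber component'' needs the disjointness constraint (a fiber meets every class $nx+y$ once and $2x+2y$ twice, so it cannot coexist with a component having $b_i\ge1$), not just the $b=0$ computation; and the bound $b_i\le b\le 2$ presupposes $b_j\ge0$ for all components, which should be recorded first --- it does follow from adjunction and area positivity alone, since $b_j\le-1$ forces $a_j\le0$ and hence nonpositive symplectic area --- so that the appeal to $b_i\le b$ is not circular.
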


	\begin{proof}
		Let $\mathrm{PD}(Z_0) = ax + by \in H^2(M_0; \Z)$. \vs{0.3cm}
		Observe that, since $[\omega_1] = (3 - a - k)x + (3-b)y$ by \eqref{equation_I_symplectic_form_level_1}, 
		\[
			\underbrace{a+k \leq 2 \text{\hs{0.1cm} and \hs{0.1cm}} b \leq 2}_{\langle [\omega_1]^2,  [M_1] \rangle > 0}, \quad 
			\underbrace{b \geq a + k + 1}_{\langle [\omega_1]\cdot y, [M_1] \rangle \geq 1}, \quad \underbrace{2a + b \geq 1}_{\mathrm{Vol}(Z_0) \geq 1}, \quad 
			\underbrace{k \geq -1}_{\text{by \eqref{equation_I_k}}}.
		\]
		From the inequalities above, we have $a + k \leq 1$, $0 \leq a \leq 2$, and $k \geq -1$. Thus we have 9 solutions: 
		\[
			a = 0 \quad \begin{cases}
				k = 1 & b = 2 \\
				k = 0 & b = 1, 2 \\
				k = -1 & b = 1,2 \\
			\end{cases},
			\quad \quad 
			a = 1 \quad \begin{cases}
				k = 0 & b = 2 \\
				k = -1 & b = 1, 2 \\
			\end{cases},
			\quad \quad 
			a = 2 \quad \begin{cases}
				k = -1 & b = 2.
			\end{cases}						
		\]
		On the other hand, for the case of $a = 0$ and $b=2$, we have 
		\begin{itemize}
			\item $\langle c_1(TM_0), [Z_0] \rangle = \mathrm{Vol}(Z_0) = 2$ and so there are at most two connected components in $Z_0$, and 
			\item $[Z_0] \cdot [Z_0] = -4$ (so that there are at least three sphere components by the adjunction formula)
		\end{itemize}
		which contradict to each other. Therefore, there are 6 possible cases: 
			\begin{table}[H]
				\begin{tabular}{|c|c|c|c|c|c|c|}
					\hline
					    & {\bf (I-3-1.1)} & {\bf (I-3-1.2)} & {\bf (I-3-1.3)} & {\bf (I-3-1.4)} & {\bf (I-3-1.5)} & {\bf (I-3-1.6)} \\ \hline \hline
					    $(a,b,k)$ & $(0,1,0)$ & $(0,1,-1)$  & $(1,2,0)$ & $(1,1,-1)$ & $(1,2,-1)$ & $(2,2,-1)$ \\ \hline
					    $[Z_0]\cdot[Z_0]$ & $-1$ & $-1$ & $0$ & $1$ & $0$ & $4$ \\ \hline
					    \makecell{$\langle c_1(TM_0), [Z_0] \rangle$ \\ $= ~\mathrm{Vol}(Z_0)$}  & $1$ & $1$ & $4$ & $3$ & $4$ & $6$ \\ \hline
				\end{tabular}		
			\end{table}
		\noindent
		Applying the adjunction formula and Lemma \ref{lemma_list_exceptional} to each cases, we obtain Table \ref{table_I_3_1}. 
		We only provide the proof for the case {\bf (I-3-1.3)} where the other five cases can be proved in a similar way and so we left it to the reader.
		
		In case of {\bf (I-3-1.3)}, there are at least two sphere components, namely $Z_0^1$ and $Z_0^2$, such that $\mathrm{Vol}(Z_0^1) + \mathrm{Vol}(Z_0^2) \leq 4$.
		Then,
		\begin{itemize}
			\item  $Z_0^1$ and $Z_0^2$ cannot have volume one simultaneously (otherwise $Z_0^1$ and $Z_0^2$ represent $-1$ spheres so that 
			$\mathrm{PD}(Z_0^1) = \mathrm{PD}(Z_0^2) = y$ by Lemma \ref{lemma_list_exceptional}
				and this violates the fact that $[Z_0^1] \cdot [Z_0^2] = 0$),
			\item  any $Z_0^i$ cannot have volume two. Otherwise, $[Z_0^i] \cdot [Z_0^i]  = 0$ by the adjunction formula and so one would obtain 
				either $\mathrm{PD}(Z_0^i) = c(x + 2y)$ for some integer $c$ (where the volume of $Z_0^i$ becomes $4c$) or 
				$\mathrm{PD}(Z_0^i) = x$ (so that $(\mathrm{PD}(Z_0) - \mathrm{PD}(Z_0^i)) \cdot \mathrm{PD}(Z_0^i) = 2y \cdot x \neq 0$).
		\end{itemize}
		Therefore, the only possibility is that  $\mathrm{Vol}(Z_0^1) = 1$ and  $\mathrm{Vol}(Z_0^2) = 3$
		and hence $\mathrm{PD}(Z_0^1) = y$ and $\mathrm{PD}(Z_0^2) = x + y$. (See Table \ref{table_I_3_1} {\bf (I-3-1.3)}.)

	\end{proof}

	\begin{example}[Fano variety of type {\bf (I-3-1)}]\label{example_I_3_1}  We describe Fano varieties of type {\bf (I-3-1)} in Theorem \ref{theorem_I_3_1} as follows. 
		
		\begin{itemize}
	           	 \item {\bf (I-3-1.1)} \cite[No.31 in Section 12.4]{IP}  : Let $M = \mathbb{P}(\mcal{O} \oplus \mcal{O}(1,1))$ equipped with a $T^3$-action whose moment polytope is given 
	           	 in Figure \ref{figure_I_3_1_1}. If we consider a circle subgroup of $T^3$ generated by $\xi = (1,0,1)$, then the $S^1$-action becomes semifree and has a fixed point set 
	           	 where its image is colored by red. The volume of $Z_{\min}$ equals three as in Figure \ref{figure_I_3_1_1} and so we have $b_{\min} = 1$ (i.e., $k = 0$). Also, $Z_0 \cong S^2$
	           	 has volume $1$, and therefore the fixed point data equals {\bf (I-3-1.1)} in Table \ref{table_I_3_1}.  See also \cite[Example 6.9]{Cho1}. 	           	 	
	           	 
	           		 \begin{figure}[H]
	           	 		\scalebox{0.8}{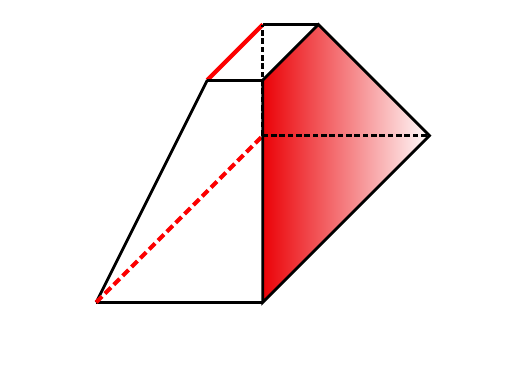}
		           	 	\caption{\label{figure_I_3_1_1} $M = \mathbb{P}(\mcal{O} \oplus \mcal{O}(1,1))$}
		           	 \end{figure}	 	
	           	 	
			\item {\bf (I-3-1.2)} \cite[No.30 in Section 12.4]{IP} : Consider $V_7$, the blow-up of $\p^3$ at a fixed point and let $M$ be the blow-up of $V_7$ along a $T^3$-invariant sphere 
			$C$ (depicted in Figure \ref{figure_I_3_1_2} (a)) passing through the exceptional divisor of $V_7 \rightarrow \p^3$. A moment polytope associated to the induced $T^3$-action on $M$ 
			is given by Figure \ref{figure_I_3_1_2} (b). 
			
			Take a circle subgroup generated by $\xi = (1,1,1)$. The fixed point set for the $S^1$-action corresponds to the faces painted by red. The volume of $Z_{\min}$ and $Z_0$ are both
			one (and hence $b_{\min} = -1$). So, the fixed point data for the $S^1$-action should coincides with {\bf (I-3-1.2)} in Table \ref{table_I_3_1}. See also 
			\cite[Example 8.13]{Cho1} and \cite[Example 8.5]{Cho2}.
	           	 
	           		 \begin{figure}[H]
	           	 		\scalebox{0.8}{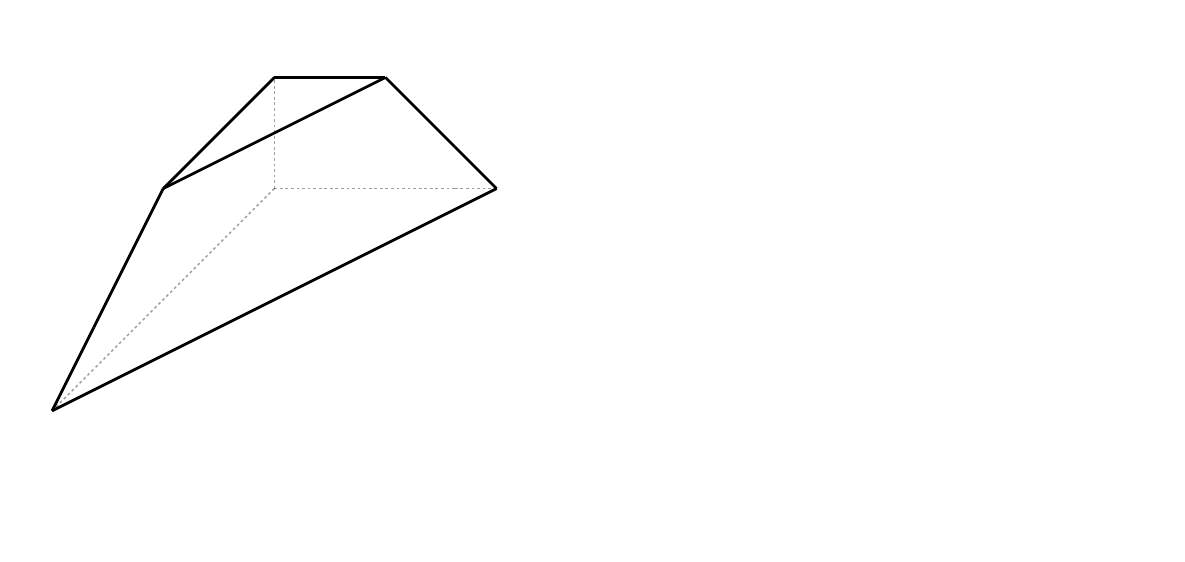}
		           	 	\caption{\label{figure_I_3_1_2} Blow-up of $V_7$ along a $T^2$-invariant sphere passing through the exceptional divisor}
		           	 \end{figure}					
		           
			\item {\bf (I-3-1.3)} \cite[No.11 in Section 12.5]{IP} : Consider the toric variety $\p^1 \times X_1$ and let $M$ be the blow-up of $\p^1 \times X_1$ along a 
			$T^3$-invariant sphere $t \times E$ where $t \in \p^1$ is a fixed point (for the $S^1$-action on $\p^1$) and $E$ is the exceptional curve in $X_1$. Then Figure \ref{figure_I_3_1_3}
			is a moment polytope for the induced $T^3$-action on $M$. 

	           		 \begin{figure}[H]
	           	 		\scalebox{1}{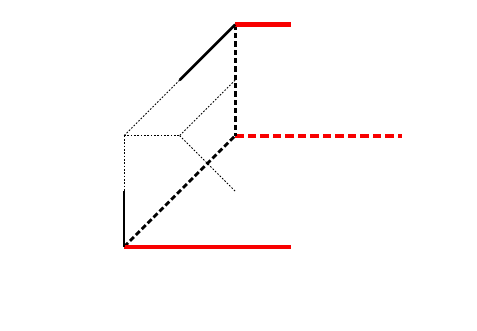}
		           	 	\caption{\label{figure_I_3_1_3} Blow-up of $\p^1 \times X_1$ along $t \times E$}
		           	 \end{figure}					
			
			Now, consider the circle subgroup of $T^3$ generated by $\xi = (0,1,1)$. Then the moment map image of the fixed point set for the $S^1$-action is red faces
			as in Figure \ref{figure_I_3_1_3} (where $Z_0^1$ and $Z_0^2$ correspond to $\overline{(0,0,2)~(1,0,2)}$ and $\overline{(0,2,0)~(3,2,0)}$, respectively).
			Also, we have $\mathrm{Vol}(Z_{\min}) = 3$ (i.e., $b_{\min} =1$ and $k=0$), $\mathrm{Vol}(Z_0^1) = 1$, and $\mathrm{Vol}(Z_0^2) = 3$ so that the fixed point data is exactly the same as 
			{\bf (I-3-1-2)} in Table \ref{table_I_3_1}. See also \cite[Example 7.12]{Cho1}. \vs{0.3cm}

	           	 \item {\bf (I-3-1.4)} \cite[No.25 in Section 12.4]{IP} : Consider $\p^3$ with the standard $T^3$-action and let $Y$ be the blow-up of $\p^3$ with two disjoint $T^3$-invariant spheres.
	           	 Then the induced $T^3$-action has a moment polytope illustrated in Figure \ref{figure_I_3_1_4}. 

	           		 \begin{figure}[H]
	           	 		\scalebox{0.8}{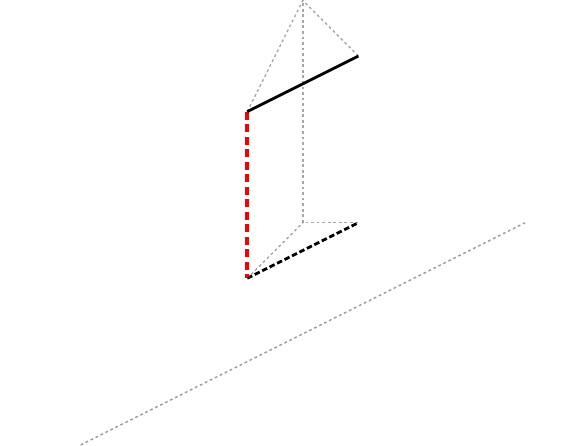}
		           	 	\caption{\label{figure_I_3_1_4} $M$ : blow-up of $\p^3$ along two disjoint curves}
		           	 \end{figure}

	           	 Take a circle subgroup generated by $\xi = (0,-1,0)$. Then the fixed point set for the $S^1$-action corresponds to red faces in Figure \ref{figure_I_3_1_4}. 
	           	 One can easily see that $\mathrm{Vol}(Z_{\min}) = 1$ (so that $b_{\min} = -1$ and $k = -1$) and also $\mathrm{Vol}(Z_0) = 3$. Thus the fixed point data 
	           	 coincides with {\bf (I-3-1.4)} in Table \ref{table_I_3_1}. 
	           	 See also \cite[Example 7.2]{Cho2}.\vs{0.3cm}

	           	 \item {\bf (I-3-1.5)} \cite[No.9 in Section 12.5]{IP} : Let $M$ be the blow-up of $\p^2$ along two disjoint $T^3$-invariant spheres given in {\bf (I-3-1.4)}. For the induced $T^3$-action, 
	           	 let $\widetilde{M}$ be the blow-up of $M$ along a $T^3$-invariant curve (labeled by $A$ in Figure \ref{figure_I_3_1_5})
	           	 lying on an exceptional divisor where the corresponding moment polytope is given in Figure \ref{figure_I_3_1_5}. 
	      
		     		 \begin{figure}[H]
	           	 		\scalebox{0.8}{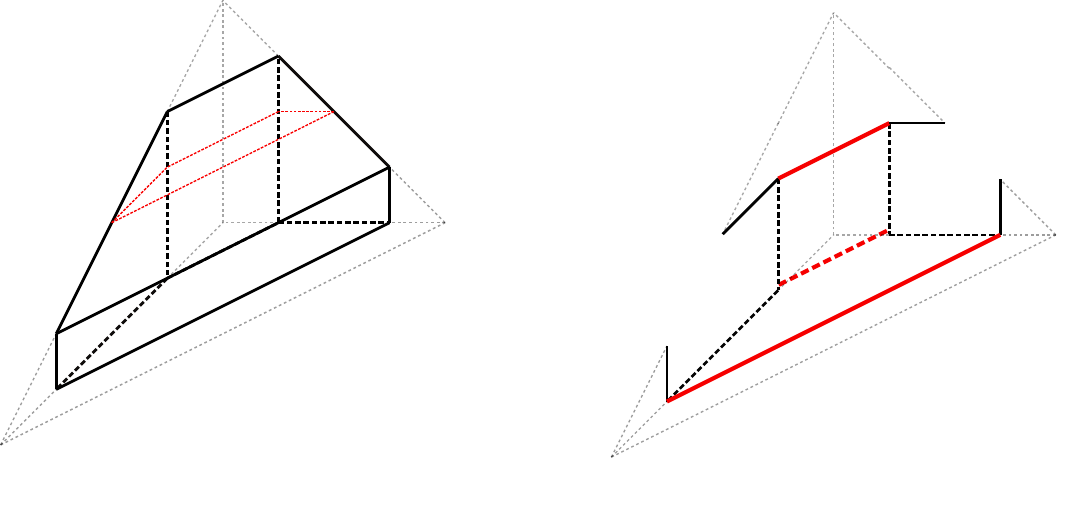}
		           	 	\caption{\label{figure_I_3_1_5} Blow-up of $M$ (in {\bf (I-3-1.4)}) along a $T^3$-invariant exceptional sphere}
		           	 \end{figure}
	      	            \vs{-0.5cm}	 
	           	 Take a circle subgroup of $T^3$ generated by $\xi = (1,1,1)$. Then the minimal fixed component $Z_{\min}$ has volume 1 so that $b_{\min} = -1$ and $k = -1$. Also the volume of 
	           	 $Z_0^1$ and $Z_0^2$ corresponding to $\overline{(0,1,0)~(1,0,0)}$ and $\overline{(3,0,0)~(0,3,0)}$ are respectively $1$ and $3$. Thus the fixed point data for the $S^1$-action
	           	 should be equal to {\bf (I-3-1.5)} in Table \ref{table_I_3_1}.
	           	 See \cite[Example 8.3]{Cho2}
	           	 \vs{0.3cm}
	      
	           	 \item {\bf (I-3-1.6)} \cite[No.18 in Section 12.4]{IP} : Consider the closed monotone semifree Hamiltonian $S^1$-manifold $(M,\omega)$ given in 
	           	 Example \ref{example_I_1}  {\bf (I-1-1.1)}. Then the maximal fixed component $Z_{\max}$ is the one-point blow-up of $\p^2$. 
	           	 
	           	 Let $C$ be a conic in $Z_{\max}$ disjoint from the exceptional divisor and $\widetilde{M}$ be the $S^1$-equivariant blow-up of $M$ along $C$. Then one can check that the induced 
	           	 $S^1$-action is semifree. Since the blow-up does not affect outside the neighborhood of the exceptional divisor, we have $\mathrm{Vol}(\widetilde{Z}_{\min}) = 1$ and so 
	           	 $b_{\min} = -1$ and $k = -1$. Also, since $\widetilde{Z}_0$ is a conic in $M_0 \cong \p^2$, the volume of $\widetilde{Z}_0$ equals six. Thus the fixed point data for the $S^1$-action
	           	 coincides with {\bf (I-3-1.6)} in Table \ref{table_I_3_1}. See figure \ref{figure_I_3_1_6}.
	           	 
	           		 \begin{figure}[H]
	           	 		\scalebox{0.8}{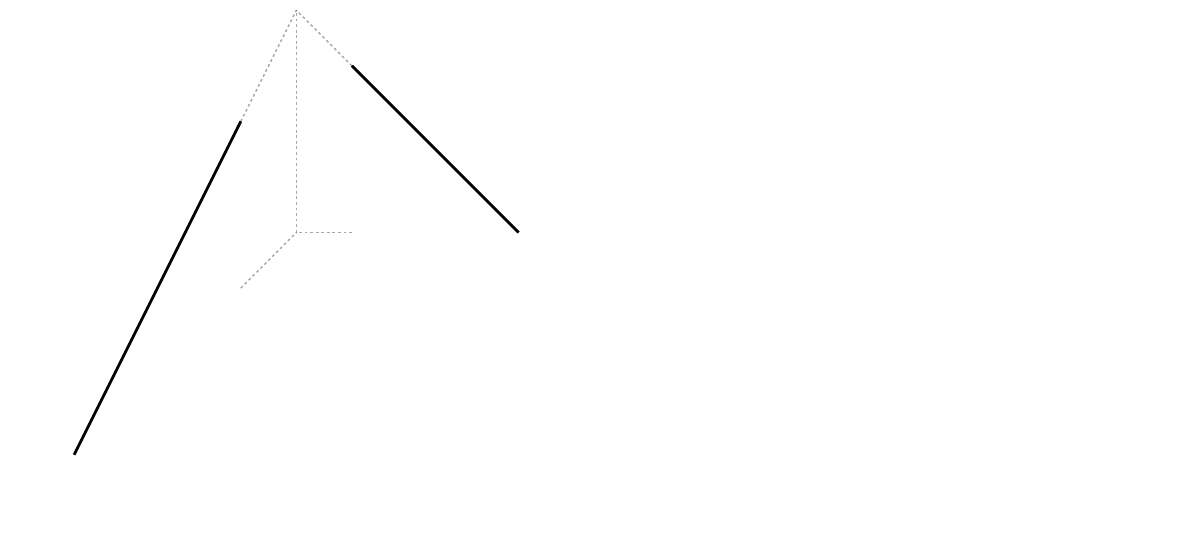}
		           	 	\caption{\label{figure_I_3_1_6} $M$ : blow-up of $\p^3$ along a disjoint union of a line and a conic}
		           	 \end{figure}

		\end{itemize}		
	\end{example}

	\begin{theorem}[Case {\bf (I-3-2)}]\label{theorem_I_3_2}
		Let $(M,\omega)$ be a six-dimensional closed monotone semifree Hamiltonian $S^1$-manifold such that $\mathrm{Crit} H = \{ 1, 0, -2\}$. 
		If $M_0 \cong S^2 \times S^2$, then the list of all possible topological fixed point data is given in the Table \ref{table_I_3_2}.
		\begin{table}[h]
			\begin{tabular}{|c|c|c|c|c|c|c|c|}
				\hline
				    & $(M_0, [\omega_0])$ & $e(P_{-2}^+)$ & $Z_{-2}$ & $Z_0$ & $Z_1$ & $b_2(M)$ & $c_1^3(M)$ \\ \hline \hline
				    {\bf (I-3-2.1)} & $(S^2 \times S^2, 2x + 2y)$ & $-y$  & $S^2$ & \makecell{$Z_0 \cong S^2$ \\ $\mathrm{PD}(Z_0) = y$} & $S^2 \times S^2$ & $3$ & $48$ \\ \hline    
				    {\bf (I-3-2.2)} & $(S^2 \times S^2, 2x + 2y)$ & $x-y$  & $S^2$ & \makecell{$Z_0 \cong S^2$ \\ $\mathrm{PD}(Z_0) = y$} 
				    															&$S^2 \times S^2$ & $3$ & $50$ \\ \hline    
				    {\bf (I-3-2.3)} & $(S^2 \times S^2, 2x + 2y)$ & $-y$  & $S^2$ &  
				    				\makecell{$Z_0 \cong S^2 ~\dot\cup ~S^2$ \\ $\mathrm{PD}(Z_0^1) = y$ \\ $\mathrm{PD}(Z_0^2) = y$} 				    
				    															&$S^2 \times S^2$ & $4$ & $42$ \\ \hline    
				    {\bf (I-3-2.4)} & $(S^2 \times S^2, 2x + 2y)$ & $x-y$  & $S^2$ & 
				    								\makecell{$Z_0 \cong S^2 ~\dot\cup ~S^2$ \\ $\mathrm{PD}(Z_0^1) = y$ \\ $\mathrm{PD}(Z_0^2) = y$} 
				    								& $S^2 \times S^2$ & $4$ & $46$ \\ \hline    
				    {\bf (I-3-2.5)} & $(S^2 \times S^2, 2x + 2y)$ & $-y$  & $S^2$ & 
				    							\makecell{$Z_0 \cong S^2$ \\ $\mathrm{PD}(Z_0) = x$}
				    								& $S^2 \times S^2$ & $3$ & $46$ \\ \hline    
				    {\bf (I-3-2.6)} & $(S^2 \times S^2, 2x + 2y)$ & $-y$  & $S^2$ & 
				    							\makecell{$Z_0 \cong S^2$ \\ $\mathrm{PD}(Z_0) = x + y$}
				    								& $S^2 \times S^2$ & $3$ & $42$ \\ \hline    
				    {\bf (I-3-2.7)} & $(S^2 \times S^2, 2x + 2y)$ & $-y$  & $S^2$ & 
				    							\makecell{$Z_0 \cong S^2$ \\ $\mathrm{PD}(Z_0) = x + 2y$}
				    								& $S^2 \times S^2$ & $3$ & $38$ \\ \hline    				    								
			\end{tabular}		
			\vs{0.5cm}			
			\caption{\label{table_I_3_2} Topological fixed point data for $\mathrm{Crit} H = \{1, 0, -2\}$ with $M_0 \cong S^2 \times S^2$.}
		\end{table}				   
	\end{theorem}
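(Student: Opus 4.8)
The plan is to argue exactly as in the proof of Theorem~\ref{theorem_I_3_1}, working now inside $H^*(S^2\times S^2)$. Since $-1\notin\mathrm{Crit}\,H$ we have $m=|Z_{-1}|=0$, so that $M_0\cong M_{-2+\epsilon}\cong S^2\times S^2$, $[\omega_0]=2x+2y$ by \eqref{equation_I_symplectic_form}, $b_{\min}$ is even, and $e(P_{-2}^+)=kx-y$ for some $k\ge 0$ by Lemma~\ref{lemma_Euler_extremum} and \eqref{equation_I_k}. I would write $\mathrm{PD}(Z_0)=ax+by$ with $a,b\in\Z$ (no exceptional classes occur); then $e(P_0^+)=(a+k)x+(b-1)y$ by Lemma~\ref{lemma_Euler_class}, and $[\omega_1]=(2-a-k)x+(3-b)y$ by \eqref{equation_I_symplectic_form_level_1}.

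First I would collect the numerical constraints on $(a,b,k)$. Since $[\omega_1]$ is a symplectic class on $M_1\cong S^2\times S^2$, its pairings with the fibre and base classes are positive, which forces $b\le 2$ and $a+k\le 1$; the volume estimate \eqref{equation_I_volume} reads $\mathrm{Vol}(Z_0)=2(a+b)\ge 1$, i.e.\ $a+b\ge 1$; and since $Z_0$ is an embedded $\omega_0$-symplectic surface, standard positivity-of-intersection considerations in $S^2\times S^2$ give $a,b\ge 0$. Together with $k\ge 0$ these leave exactly the seven triples $(0,1,0),(0,1,1),(0,2,0),(0,2,1),(1,0,0),(1,1,0),(1,2,0)$, in bijection with the rows of Table~\ref{table_I_3_2}. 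For each of them I would then read off $Z_0$ from the adjunction formula: a connected component with Poincar\'e dual $a_ix+b_iy$ has genus $(a_i-1)(b_i-1)$ and self-intersection $2a_ib_i$, two distinct (hence disjoint) components satisfy $a_ib_j+a_jb_i=0$, and each has positive volume $2(a_i+b_i)$. When $(a,b)\ne(0,2)$ the class $ax+by$ is represented by a single embedded sphere — it has genus $0$, and every conceivable splitting either contains a component of non-positive volume or a pair of components with positive mutual intersection — whereas for $(a,b)=(0,2)$ the connected class $2y$ has genus $-1$, so $Z_0$ must be disconnected, and then $a_i\ge 0$, $\sum a_i=0$, $\sum b_i=2$ together with positivity of volume force $Z_0=Z_0^1\,\dot\cup\,Z_0^2$ with $\mathrm{PD}(Z_0^1)=\mathrm{PD}(Z_0^2)=y$. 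Finally, the (perfect) Morse--Bott decomposition of $H$ gives $b_2(M)=2+(\text{number of components of }Z_0)$, and Lemma~\ref{lemma_I_Chern_number} with $m=0$, $b_{\min}=2k$ and $e=e(P_0^+)$ gives, after a short computation, $c_1^3(M)=54+2k-6(a+b)+2(a+k)(b-1)$; these match the last two columns of Table~\ref{table_I_3_2}.

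The main obstacle is the identification of $Z_0$ in the two cases $(a,b)=(0,2)$, i.e.\ the entries {\bf (I-3-2.3)} and {\bf (I-3-2.4)}: one must rule out a connected genus-one representative (excluded by the adjunction formula, since $(0-1)(2-1)=-1$) as well as any splitting involving a component of zero symplectic volume or a component $Z_0^i$ with $a_i<0$, so that only a pair of disjoint spheres each in class $y$ survives. For the remaining five triples the analogous step is to exclude all splittings — for instance a union of two spheres in the classes $x$ and $y$ when $(a,b,k)=(1,1,0)$, which cannot be disjoint since $\langle xy,[M_0]\rangle=1$ — leaving $Z_0$ connected. This bookkeeping is the exact counterpart of the argument given for {\bf (I-3-1.3)} in the proof of Theorem~\ref{theorem_I_3_1}, and the routine verifications can be left to the reader.
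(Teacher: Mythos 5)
Your proposal is correct and follows essentially the same route as the paper: bound $(a,b,k)$ by positivity of the reduced symplectic classes, the volume of $Z_0$, and \eqref{equation_I_k}, then pin down $Z_0$ in each of the seven cases via the adjunction formula together with disjointness/evenness of component volumes, and compute $b_2$ and $c_1^3$ from perfect Morse--Bott theory and Lemma \ref{lemma_I_Chern_number} (your formula $c_1^3 = 54 + 2k - 6(a+b) + 2(a+k)(b-1)$ reproduces the table). The only cosmetic difference is that you discard negative coefficients at the outset by positivity of intersections with the two rulings, whereas the paper first lists the ten solutions of its inequalities and then eliminates $(a,b)=(-1,2)$ by the adjunction formula; both steps are valid and equivalent in effect.
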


	\begin{proof}
		In this case, we have $b_{\min} = 2k$ for some $k \in \Z$ by Lemma \ref{lemma_Euler_extremum}. Let $\mathrm{PD}(Z_0) = ax + by$ for some $a,b \in \Z$
		Since $[\omega_1] = (2 - a - k)x + (3-b)y$ by \eqref{equation_I_symplectic_form_level_1}, we have 
		\[
			\underbrace{a+k \leq 1 \text{\hs{0.1cm} and \hs{0.1cm}} b \leq 2}_{\langle [\omega_1]^2,  [M_1] \rangle > 0}, \quad 
			\underbrace{2a + 2b \geq 2}_{\mathrm{Vol}(Z_0)}, \quad 
			\underbrace{k \geq 0}_{\text{by \eqref{equation_I_k}}}.
		\]
		From the inequalities above, we have 10 solutions: 
		\[
			a = -1 \quad \begin{cases}
				k = 0 & b = 2 \\
				k = 1 & b = 2 \\
				k = 2 & b = 2 \\
			\end{cases},
			\quad \quad 
			a = 0 \quad \begin{cases}
				k = 0 & b = 1,2 \\
				k = 1 & b = 1,2 \\
			\end{cases},
			\quad \quad 
			a = 1 \quad \begin{cases}
				k = 0 & b = 0,1,2.
			\end{cases}						
		\]		 
		On the other hand, in case of $(a,b) = (-1,2)$ (i.e., $\mathrm{PD}(Z_0) = -x + 2y$), we have 
		\begin{itemize}
			\item $[Z_0] \cdot [Z_0] = -4$,
			\item $\mathrm{Vol}(Z_0) = 2$
		\end{itemize}	
		so that it violates the adjunction formula. Thus we have 7 possibilities:
			\begin{table}[H]
				\begin{tabular}{|c|c|c|c|c|c|c|c|}
					\hline
					    & {\bf (I-3-2.1)} & {\bf (I-3-2.2)} & {\bf (I-3-2.3)} & {\bf (I-3-2.4)} & {\bf (I-3-2.5)} & {\bf (I-3-2.6)} & {\bf (I-3-2.7)}\\ \hline \hline
					    $(a,b,k)$ & $(0,1,0)$ & $(0,1,1)$  & $(0,2,0)$ & $(0,2,1)$ & $(1,0,0)$ & $(1,1,0)$ & (1,2,0) \\ \hline
					    $[Z_0]\cdot[Z_0]$ & $0$ & $0$ & $0$ & $0$ & $0$ & $2$  & $4$\\ \hline
					    \makecell{$\langle c_1(TM_0), [Z_0] \rangle$ \\ $= ~\mathrm{Vol}(Z_0)$}  & $2$ & $2$ & $4$ & $4$ & $2$ & $4$ & $6$\\ \hline
				\end{tabular}		
			\end{table}
		\noindent
		Similar to the proof of Theorem \ref{theorem_I_3_1}, one can check, by the adjunction formula and the fact that the volume of each fixed component of $Z_0$ is even,
		that each $(a,b,k)$ in the table above determines the corresponding topological fixed point data 
		in Table \ref{table_I_3_2} uniquely (where each Chern numbers are obtained from Lemma \ref{lemma_I_Chern_number}).
	\end{proof}
	
	\begin{example}[Fano varieties of type {\bf (I-3-2)}]\label{example_I_3_2}  We describe Fano varieties of type {\bf (I-3-2)} in Theorem \ref{theorem_I_3_2} as follows. 
		
		\begin{itemize}
	           	 \item {\bf (I-3-2.1)} \cite[No.28 in Section 12.4]{IP} : Let $M = \p^1 \times X_1$ equipped with the $T^3$-action induced from the standard toric action on 
	           	 $\p^1 \times \p^2$. The moment polytope is described in Figure \ref{figure_I_3_2_1}. If we take a circle subgroup of $T^3$ generated by $\xi = (1,0,1)$, then
	           	 the $S^1$-action is semifree and the fixed point components corresponds to the faces colored by red in Figure \ref{figure_I_3_2_1}. 
	           	 One can check at a glance from the figure that $\mathrm{Vol}(Z_{\min}) = \mathrm{Vol}(Z_0) = 2$ (and so $b_{\min} = 0$ and $k=0$) so that the fixed point data 
	           	 for the $S^1$-action coincides with {\bf (I-3-2.1)} in Table \ref{table_I_3_2}.
	           	 See also \cite[Example 6.5]{Cho2}.
	           	 
	           		 \begin{figure}[H]
	           	 		\scalebox{1}{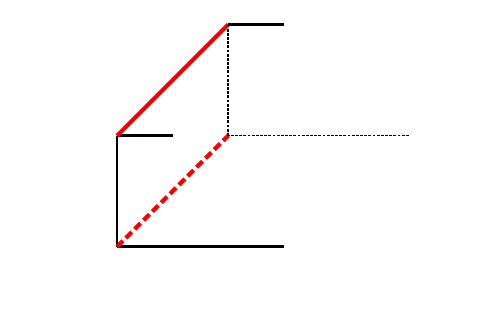}
		           	 	\caption{\label{figure_I_3_2_1} $M = \p^1 \times X_1$}
		           	 \end{figure}
	           	 	
	           	 \item {\bf (I-3-2.2)} \cite[No.30 in Section 12.4]{IP} : Consider $M$ in {\bf (I-3-1.2)}, the blow-up of $V_7$\footnote{Note that $V_7$ denotes the $T^3$-equivariant 
	           	 blow-up of $\p^3$ at a fixed point.} along a $T^3$-invariant sphere passing through the exceptional divisor. In this case, we take another circle subgroup of $T^3$
	           	 generated by $\xi = (-1,-1,0)$. Then we see from Figure \ref{figure_I_3_2_2} that $\mathrm{Vol}(Z_{\min}) = 4$ (and so $b_{\min} = 2$ and $k=1$) and 
	           	 $\mathrm{Vol}(Z_0) = 2$. So, the corresponding fixed point data should equal {\bf (I-3-2.2)}.
	           	 See also \cite[Example 8.13]{Cho1}, \cite[Example 8.5]{Cho2}, and {\bf (I-3-1.2)}.

	           		 \begin{figure}[H]
	           	 		\scalebox{0.8}{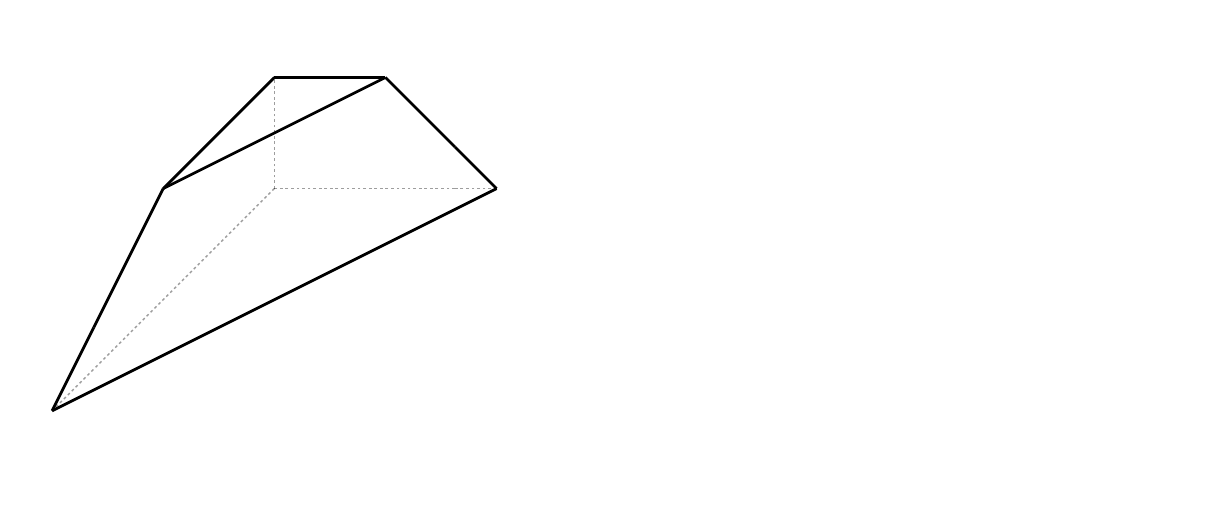}
		           	 	\caption{\label{figure_I_3_2_2} Blow up of $V_7$ along a $T^3$-invariant sphere passing through the exceptional divisor}
		           	 \end{figure}

	           	 \item {\bf (I-3-2.3)} \cite[No.10 in Section 12.5]{IP} : Let $M = \p^1 \times X_2$ with the $T^3$-action induced from the standard $T^3$-action on 
	           	 $\p^1 \times \p^2$. We then take a circle subgroup generated by $\xi = (1,0,1)$ where the moment map image of the fixed point set is depicted by red
	           	 in Figure \ref{figure_I_3_2_3}. One can immediately check that $\mathrm{Vol}(Z_{\min}) = 2$ (so that $b_{\min} = 0$ and $k=0$) and 
	           	 $\mathrm{Vol}(Z_0^1) = \mathrm{Vol}(Z_0^2) = 2$. Thus the fixed point data coincides with {\bf (I-3-2.2)} in Table \ref{table_I_3_2}.
	           	 See also \cite[Example 7.12]{Cho1} and \cite[Example 8.5]{Cho2}.
	           	 
	           		 \begin{figure}[H]
	           	 		\scalebox{1}{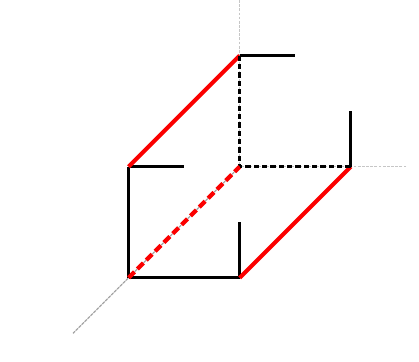}
		           	 	\caption{\label{figure_I_3_2_3} $\p^1 \times X_2$}
		           	 \end{figure}

	           	 \item {\bf (I-3-2.4)} \cite[No.12 in Section 12.5]{IP} : Let $M$ be the $T^3$-equivariant blow-up of $\p^3$ along a $T^3$-invariant line given in {\bf (I-1-1.1)}
	           	 (and {\bf (I-1-2.2)}). Let $\widetilde{M}$ denotes the $T^3$-equivariant blow-up of $M$ along two $T^3$-invariant exceptional lines (that is, lines lying on the exceptional 
	           	 divisor). Then the induced $T^3$-action on $\widetilde{M}$ has a moment map whose image is illustrated in Figure \ref{figure_I_3_2_4}.
	           	 	           	 
	           		 \begin{figure}[H]
	           	 		\scalebox{0.8}{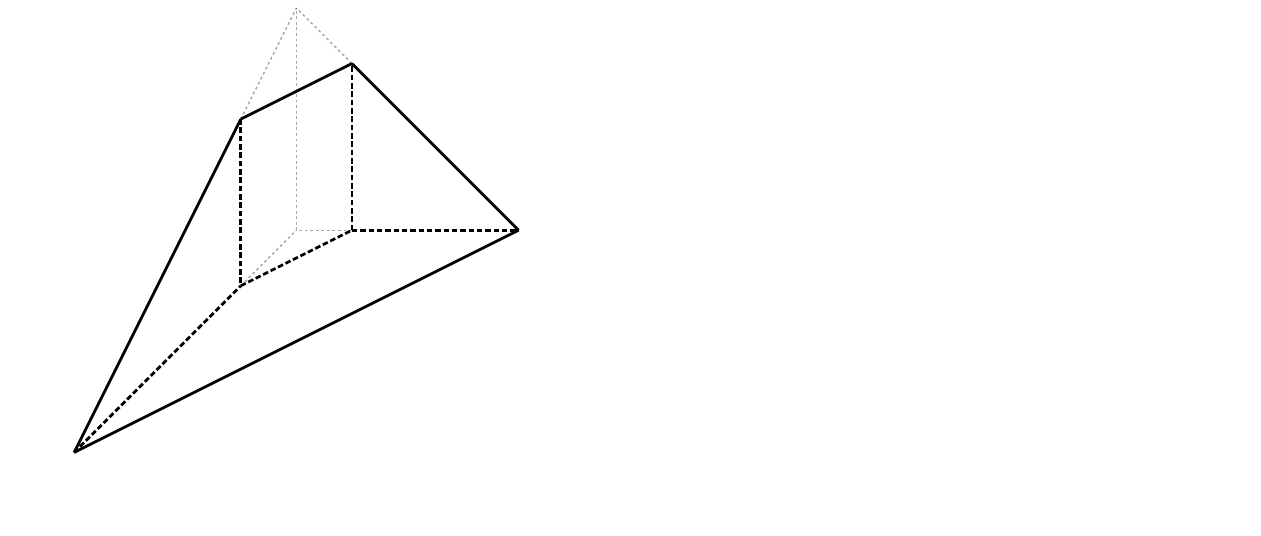}
		           	 	\caption{\label{figure_I_3_2_4} Blow-up of $Y$ along two exceptional lines}
		           	 \end{figure}
		           	 \noindent
	           	 Take a circle subgroup of $T^3$ generated by $\xi = (-1,-1,0)$. Then the action becomes semifree and we can immediately check that 
	           	 $\mathrm{Vol}(Z_{\min}) = 4$ (so that $b_{\min} = 2$ and $k=1$) and $\mathrm{Vol}(Z_0^1) = \mathrm{Vol}(Z_0^2) = 2$. So, the fixed point data of the action
	           	 is equal to {\bf (I-3-2.4)} in Table \ref{table_I_3_2}. See also \cite[Example 8.13]{Cho1} and \cite[Example 8.5]{Cho2}.\vs{0.3cm}

	           	 \item {\bf (I-3-2.5)} \cite[No.26 in Section 12.4]{IP} : Let $M$ be the $T^3$-equivariant blow-up of $\p^3$ along a disjoint union of a fixed point and a $T^3$-invariant line.
	           	 If we take a circle subgroup generated by $\xi = (-1,-1,0)$, then the action is semifree and the moment map image of the fixed point set can be described as in 
	           	 Figure \ref{figure_I_3_2_5}. The volume of the minimal fixed component $Z_{\min}$ is 2 so that $b_{\min} = 2k = 0$ and also $\mathrm{Vol}(Z_0) = 2$. 
	           	 Therefore the fixed point data for the $S^1$-action equals {\bf (I-3-2.5)} in Table \ref{table_I_3_2}.
	           	 See \cite[Example 8.13]{Cho1} and \cite[Example 8.5]{Cho2}
	           	 
	           		 \begin{figure}[H]
	           	 		\scalebox{0.8}{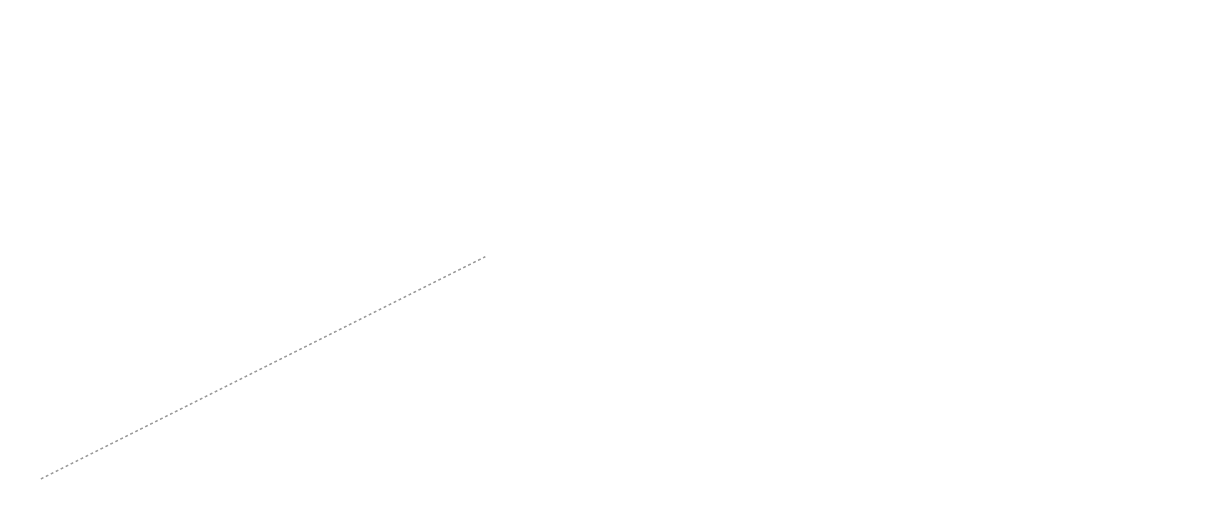}
		           	 	\caption{\label{figure_I_3_2_5} Blow-up of $\p^3$ along a disjoint union of a point and a line}
		           	 \end{figure}

	           	 \item {\bf (I-3-2.6)} \cite[No.24 in Section 12.4]{IP} : Consider the complete flag manifold $\mcal{F}(3)$ with the $T^2$-action induced from the standard $T^3$-action 
	           	 on $\C^3$ where the moment map image, together with the images of $T^2$-invariant spheres, is described on the left of Figure \ref{figure_I_3_2_6}. Let $M$ be the 
	           	 $T^2$-equivariant blow-up of $\mcal{F}(3)$ along a $T^2$-invariant curve $C$ where the corresponding edge is $\overline{(0,0)~(0,2)}$. Then the image of the 
	           	 moment map for the induced $T^2$-action on $M$ is given on the right of Figure \ref{figure_I_3_2_6}. 
	           	 (Note that the red edge $\overline{(1,0)~(1,3)}$ corresponds to the exceptional divisor of the blow-up.) 

	           		 \begin{figure}[H]
	           	 		\scalebox{1}{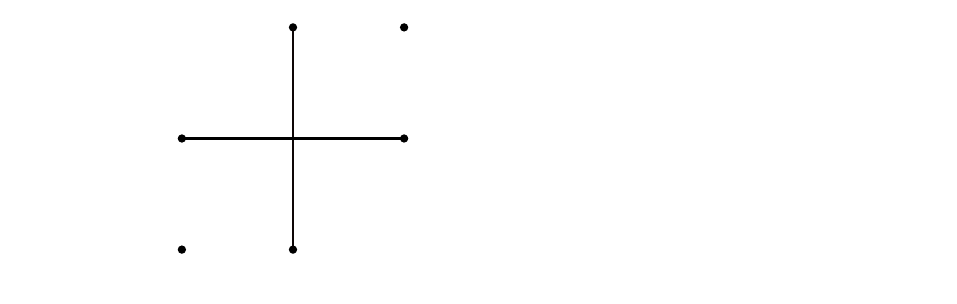}
		           	 	\caption{\label{figure_I_3_2_6} Blow up of $\mcal{F}(3)$ along $C$}
		           	 \end{figure}
	           	 
	           	 Take a circle subgroup of $T^2$ generated by $\xi = (-1,0)$. Then $\mathrm{Vol}(Z_{\min}) = 2$ (so that $b_{\min} = 2k = 0$) and $\mathrm{Vol}(Z_0) = 4$. 
	           	 Therefore the fixed point data for the $S^1$-action should be equal to {\bf (I-3-2.6)} in Table \ref{table_I_3_2}.
	           	 See \cite[Example 8.5]{Cho2}. 
			\vs{0.3cm}	           	 

	           	 \item {\bf (I-3-2.7)} \cite[No.21 in Section 12.4]{IP} : Consider $\p^1 \times \p^2$ with the standard $T^3$-action and consider the $S^1$ subgroup of $T^3$ 
	           	 generated by $\xi = (0,-1,0)$. Then the fixed point set for the $S^1$-action
	           	 can be described as in the first of Figure \ref{figure_I_3_2_7}. The maximal fixed component $Z_{\max}$ is diffeomorphic to  $S^2 \times S^2$ and we take 
	           	 a curve $C$ of bidegree $(1,2)$. 
	
           		 \begin{figure}[H]
	           	 		\scalebox{1}{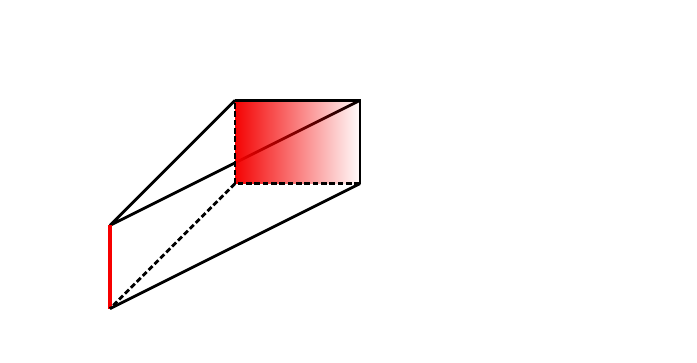}
		           	 	\caption{\label{figure_I_3_2_7} blow-up of $\p^1 \times \p^2$ along a curve of bidegree $(2,1)$}
	           	 \end{figure}
		            \noindent	 
	           	 Let $M$ be the $S^1$-equivariant blow-up of $\p^1 \times \p^2$ along the curve $C$. Since $C$ has volume six in $M_0$ and the blow-up operation does not 
	           	 affect the neighborhood of the minimal fixed component $Z_{\min}$, the volume of $Z_{\min}$ is still 2 and so $b_{\min} = 2k = 0$. This coincides with 
	           	 {\bf (I-3-2.7)} in Table \ref{table_I_3_2}. 
	           	 	           	 	
		\end{itemize}
	\end{example}

	\begin{theorem}[Case {\bf (I-4)}]\label{theorem_I_4}
		Let $(M,\omega)$ be a six-dimensional closed monotone semifree Hamiltonian $S^1$-manifold such that $\mathrm{Crit} H = \{ 1, 0, -1, -2\}$. 
		Then the list of all possible topological fixed point data is given in the Table \ref{table_I_4}
		\begin{table}[h]
			\begin{tabular}{|c|c|c|c|c|c|c|c|c|}
				\hline
				    & $(M_0, [\omega_0])$ & $e(P_{-2}^+)$ & $Z_{-2}$ & $Z_{-1}$ & $Z_0$ & $Z_1$ & $b_2(M)$ & $c_1^3(M)$ \\ \hline \hline
				    {\bf (I-4-1.1)} & \makecell{$(E_{S^2} \# \overline{\p^2},$ \\ $3x + 2y - E_1)$} & $-x-y$  & $S^2$ & $\mathrm{pt}$ & \makecell{ $Z_0 \cong S^2$  \\ $\mathrm{PD}(Z_0) = x + y - E_1$} & 
				    	$X_2$
					     & $4$ & $40$ \\ \hline    
				    {\bf (I-4-1.2)} & \makecell{$(E_{S^2} \# 2\overline{\p^2},$ \\ $3x + 2y - E_1 - E_2)$} & $-x-y$  & $S^2$ & $\mathrm{2 ~pts}$ & \makecell{ $Z_0 \cong S^2$  \\ 
				    $\mathrm{PD}(Z_0) = x + y - E_1 - E_2$} & 
				    	$X_3$
					     & $5$ & $36$ \\ \hline    					     
				    {\bf (I-4.2)} & \makecell{$(S^2 \times S^2 \# \overline{\p^2},$ \\  $2x + 2y - E_1)$} & $-y$  & $S^2$ & $\mathrm{pt}$ & \makecell{ $Z_0 \cong S^2$  \\ $\mathrm{PD}(Z_0) = y - E_1$} & 
				    	$X_2$
					     & $4$ & $44$ \\ \hline    					     					     
			\end{tabular}		
			\vs{0.5cm}			
			\caption{\label{table_I_4} Topological fixed point data for $\mathrm{Crit} H = \{1, 0, -1, -2\}$.}
		\end{table}				   
	\end{theorem}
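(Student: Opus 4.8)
The plan is to proceed exactly as in Theorems~\ref{theorem_I_1}--\ref{theorem_I_3_2}. In Case~{\bf (I-4)} the extremal data are $Z_{\min}\cong S^2$ at level $-2$ and $Z_{\max}=M_1$, a del Pezzo surface, at level $1$, while the two non-extremal values carry, respectively, $m=|Z_{-1}|$ isolated index-two points at level $-1$ and two-dimensional fixed components at level $0$. By Proposition~\ref{proposition_GS}, $M_0$ is the $m$-point blow-up of the $S^2$-bundle $M_{-2+\epsilon}$, hence $M_0\cong M_1$ equals $E_{S^2}\#m\overline{\p^2}$ when $b_{\min}$ is odd ({\bf (I-4-1)}) and $(S^2\times S^2)\#m\overline{\p^2}$ when $b_{\min}$ is even ({\bf (I-4-2)}); in either case this is $X_{m+1}$ for $m\ge 1$. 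I would introduce the unknowns $e(P_{-2}^+)=kx-y$ and $\mathrm{PD}(Z_0)=ax+by+\sum_{i=1}^m c_iE_i$ and record the induced expressions for $[\omega_0]$, $e(P_0^+)=kx-y+\sum E_i+\mathrm{PD}(Z_0)$, $[\omega_t]$ and $[\omega_1]=[\omega_0]-e(P_0^+)$ from \eqref{equation_I_symplectic_form}--\eqref{equation_I_symplectic_form_level_1}, together with the bound \eqref{equation_I_k} on $k$.

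The next step is to assemble the numerical constraints. Since $Z_0$ is a union of symplectic surfaces, $\mathrm{Vol}(Z_0)=\langle[\omega_0],\mathrm{PD}(Z_0)\rangle\ge 1$; and because $\mathrm{Crit}\,H$ contains no value in $(0,1)$, the classes $[\omega_t]$ $(t\in[0,1])$ are all symplectic on the \emph{same} manifold $M_0\cong M_t\cong M_1$, so $\langle[\omega_1]^2,[M_1]\rangle>0$ and, crucially, no blow-down may occur along this path: for every exceptional class $C$ of $X_{m+1}$ listed in Lemma~\ref{lemma_list_exceptional} (in particular $y$ and each $E_i$, as well as $x-E_i$, and $x+y-E_i-E_j$ when $m\ge 2$) one needs $\langle[\omega_1],C\rangle>0$, which by linearity in $t$ and the positivity of $\langle[\omega_0],C\rangle$ becomes an explicit linear inequality in $(a,b,k,c_i)$. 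As in Theorem~\ref{theorem_I_3_1}, the inequalities $\langle[\omega_1],E_i\rangle=c_i+2>0$ force $c_i\ge -1$; feeding this back into the inequalities coming from the classes $x-E_i$ and $x+y-E_i-E_j$ bounds $a+k$ from above, limits the $c_i$, and ultimately forces $m\le 2$. One is then left with finitely many tuples $(m;a,b,k,c_1,\dots,c_m)$, which can be enumerated.

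For each surviving tuple I would determine $\mathrm{PD}(Z_0)$ exactly by applying the adjunction formula to $Z_0$ in $M_0$ — and, when $Z_0$ is disconnected, Lemma~\ref{lemma_list_exceptional} to discard spurious splittings — precisely as in the cases {\bf (I-3-1.x)}. This should leave exactly the three entries {\bf (I-4-1.1)} ($m=1$), {\bf (I-4-1.2)} ($m=2$) and {\bf (I-4.2)} ($m=1$) of Table~\ref{table_I_4}, for which one checks $e(P_0^+)=0$; finally $b_2(M)$ is read off from equivariant formality and $c_1^3(M)$ from Lemma~\ref{lemma_I_Chern_number} with $e=e(P_0^+)$ and $c_1(TM_0)=[\omega_0]$. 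I expect the bookkeeping of the second step to be the main obstacle: because the $m$ exceptional divisors are produced at the intermediate level $-1$ rather than on $M_{-2+\epsilon}$, one must keep the blow-up description of $M_0$, the identification $M_0\cong M_1$, and the del Pezzo structure of $Z_{\max}$ mutually consistent, while verifying that none of the (increasingly many) exceptional classes of $X_{m+1}$ is annihilated by $[\omega_t]$ for $t\in(0,1)$; organizing this casework so as to obtain the sharp bound $m\le 2$ is where the real effort lies.
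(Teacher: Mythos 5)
Your setup is exactly the paper's: the same unknowns $m=|Z_{-1}|$, $e(P_{-2}^+)=kx-y$, $\mathrm{PD}(Z_0)=ax+by+\sum c_iE_i$, the split according to the parity of $b_{\min}$ (i.e.\ $M_{-2+\epsilon}\cong E_{S^2}$ or $S^2\times S^2$), the formulas \eqref{equation_I_symplectic_form}--\eqref{equation_I_symplectic_form_level_1} together with \eqref{equation_I_k}, the constraints ``$\mathrm{Vol}(Z_0)\ge 1$, $\langle[\omega_1]^2,[M_1]\rangle>0$, no exceptional class is killed along $[\omega_t]$,'' adjunction to pin down $Z_0$, and Lemma \ref{lemma_I_Chern_number} for $c_1^3$. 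The three surviving tuples you name (and your observation that $e(P_0^+)=0$ for them) are correct, so the skeleton matches the paper's proof.

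The genuine gap is the step you only assert: that these constraints ``ultimately force $m\le 2$.'' That exclusion of $3\le m\le 7$ (and of $m\ge 2$ in the $S^2\times S^2$ case, where the correct conclusion is $m=1$, not $m\le 2$) is the bulk of the paper's argument, and the low-degree classes you single out ($E_i$, $y$, $x-E_i$, $x+y-E_i-E_j$) only dispose of $m=3$ in the odd case and $m=2$ in the even case. For larger $m$ one needs the higher-degree exceptional classes of Lemma \ref{lemma_list_exceptional}, translated from the $u,E_i$ basis of blow-ups of $\p^2$ into the $x,y,E_i$ basis of the blown-up $S^2$-bundle: in case {\bf (I-4-1)} the classes $2x+y-E_{i_1}-E_{i_2}-E_{i_3}-E_{i_4}$ (for $4\le m\le 6$) and $6x+3y-2(E_1+\cdots+E_7)$ (for $m=7$), and in case {\bf (I-4-2)} the classes $x+y-E_i-E_j-E_k$ (for $3\le m\le 6$) and $4x+3y-E_1-2(E_2+\cdots+E_7)$ (for $m=7$); moreover the resulting inequalities must be summed over all index subsets and combined with the volume inequality and $k\ge-1$ (resp.\ $k\ge 0$) to reach a contradiction. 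Your plan mentions ``every exceptional class'' in passing, but as written the claim that the named low-degree constraints force the bound is unsubstantiated, and supplying this averaging argument over the higher exceptional classes is precisely where the proof lives; the rest of your outline (the $m=1,2$ enumerations, adjunction, $b_2$ and the Chern numbers) then goes through as in the paper.
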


	\begin{proof}
		 Let $m = |Z_{-1}|$ so that $M_0$ is the $m$-times blow-up of $M_{-2+\epsilon}$ with the exceptional divisors whose dual classes are respectively 
		 denoted by $E_1, \cdots, E_m$. 
		 Let
		 \[
		 	\mathrm{PD}(Z_0) = ax + by + \sum_{i=1}^m c_i E_i, \quad \quad a,b,c_1, \cdots, c_m \in \Z.
		 \]
		 We divide the proof into two cases for $M_{-2 + \epsilon}$: $E_{S^2}$ and $S^2 \times S^2$.\vs{0.3cm}
		 
  		 \noindent
    		 {\bf (I-4-1) :} $M_{-2 + \epsilon} \cong E_{S^2}$ \vs{0.3cm}
		
		\noindent 
		In this case, we have 
		\[
			b_{\min} = 2k + 1, \quad \quad [\omega_1] = (3 - a - k)x + (3 - b)y - \sum_{i=1}^m (c_i + 2) E_i.
		\]
		Then we have the following inequalities:
		\begin{enumerate}
			\item $2a + b + \sum_{i=1}^m c_i \geq 1$ \quad ($\because$ ~ $\mathrm{Vol}(Z_0) \geq 1$)
			\item $a + k \leq 2$ \quad ($\because$ ~ $\int_{M_t} \omega_t^2 > 0$)
			\item $b \leq 2$ \quad ($\because$ ~ $\int_{M_t} \omega_t^2 > 0$)
			\item $c_i \geq -1$ \quad ($\because$ ~ $E_i$ does not vanish on $M_t$)
			\item $b+c_i \leq 0$ \quad ($\because$ ~ $x - E_i$ does not vanish on $M_t$)
			\item $k+a-b \leq -1$ \quad ($\because$ ~ $y$ does not vanish on $M_t$)
			\item $k \geq -1$ \quad (by \eqref{equation_I_k})
		\end{enumerate}
		Combining these inequalities, we have 
		\begin{equation}\label{equation_I_4}
			a \leq b \leq 1
		\end{equation}
		where the first one comes from (6) and (7), and the latter one follows from (4),(5).
		
		We claim that $m \leq 2$. If $m=3$, then (1) and (5) imply that 
		\[
			2a - 2b \geq 2a - 2b + 3b + \sum_{i=1}^3 c_i \geq 1
		\]
		which contradicts that $a - b \leq 0$ in \eqref{equation_I_4}.
		Also if $m \geq 4$, a new exceptional divisor $C$ appears where $\mathrm{PD}(C) = 2x + y - E_{i_1} - E_{i_2} - E_{i_3} - E_{i_4}$
		for any distinct $i_1, i_2, i_3, i_4$ as in Lemma \ref{lemma_list_exceptional}.
		Then 
		\[
			\mathrm{Vol}(C) = \langle (2x + y - E_{i_1} - E_{i_2} - E_{i_3} - E_{i_4}) \cdot [\omega_1], [M_1] \rangle = -2 - a - b - k - \sum_{j=1}^4 c_{i_j} \geq 1
		\]
		which implies that 
		\[
			-3 \geq a + b + k + \sum_{j=1}^4 c_{i_j}. 
		\]		
		Summing those equations for all possible quadruples $(i_1, i_2, i_3, i_4)$, we obtain
		\[
			-3 {m \choose 4} \geq {m \choose 4}a + {m \choose 4}b + {m \choose 4}k + {{m-1} \choose 3} \sum_{i=1}^m c_i		
		\]
		which can be simplified as 
		\begin{equation}\label{equation_imsi}
			\begin{array}{ccl}\vs{0.1cm}
				-3m & \geq & ma + mb + mk + 4\sum_{i=1}^m c_i \\ \vs{0.1cm}
					& = &  4(2a + b + k + \sum_{i=1}^m c_i) + (m-8)a + (m-4)b + (m-4)k \\ \vs{0.1cm}
					& \geq & \ds (m-4)(b-a) + (2m - 12)a - (m-4) \\ \vs{0.1cm}
					& \geq & \ds (2m - 12)a - (m-4).
			\end{array}
		\end{equation}
		by (1) and (7). So, $-2m - 4 \geq (2m - 12)a$ and it is impossible when $4\leq m\leq 6$ since $a \geq 1$ by \eqref{equation_I_4}. 
		Therefore the only possible case is when $m=7$. However in this case, a new divisor appears, namely $D$, where 
		$\mathrm{PD}(D) = 6x + 3y - 2(E_1 + \cdots + E_7)$ by Lemma \ref{lemma_list_exceptional}.
		 Applying the inequality $\langle \mathrm{PD}(D) \cdot [\omega_1], [M_1] \rangle \geq 1$,(1),(7), and \eqref{equation_I_4},
		we get 
		\[
			-11 \geq 3(a+k+b) + 2\sum_{i=1}^7 c_i \geq -a + b + k \geq -1
		\]
		which leads to a contradiction.

		Consequently, we have $m = 1$ or $2$. If $m=1$, then by (1), (5), and \eqref{equation_I_4}, we have $a=1, b=1$ (and hence $k = -1$ by (6) and (7)). Also $c = -1$ by (4) and (5).
		So, 
		\[
			\mathrm{PD}(Z_0) = x + y - E_1, \quad Z_0 \cong S^2 ~\text{by the adjunction formula.}
		\]
		See Table \ref{table_I_4}: {\bf (I-4-1.1)}. 
		
		Finally, when $m=2$, we similarly obtain $a=b=1$ by (1),(5), and \eqref{equation_I_4}, and hence $c_1 = c_2 = -1$ 
		by (4),(5), and $k=-1$ by (6),(7). Thus
		\[
			\mathrm{PD}(Z_0) = x+ y - E_1 - E_2, \quad Z_0 \cong S^2
		\]
		by the adjunction formula again. See Table \ref{table_I_4}: {\bf (I-4-1.2)}.
		\vs{0.3cm}
		
		\noindent
		 {\bf (I-4-2) :} $M_{-2 + \epsilon} \cong S^2 \times S^2$ \vs{0.3cm}
		
		\noindent 
		In this case, we have 
		\[
			b_{\min} = 2k \quad (k \geq 0 ~\text{by \eqref{equation_I_k}}), \quad \quad [\omega_1] = (2 - a - k)x + (3 - b)y - \sum_{i=1}^m (c_i + 2) E_i.
		\]		
		We similarly have the following inequalities :
		\begin{enumerate}
			\item $2a + 2b + \sum_{i=1}^m c_i \geq 1$ \quad ($\mathrm{Vol}(Z_0) \geq 1$.)
			\item $1\geq a+k$ \quad ($\int_{M_t} [\omega_t]^2 > 0$.)
			\item $2 \geq b$ \quad ($\int_{M_t} [\omega_t]^2 > 0$.)
			\item $c_i  \geq -1$ \quad($E_i$ does not vanish on $M_t$)
			\item $b+c_i \leq 0$ \quad ($x - E_i$ does not vanish on $M_t$)
			\item $k+a+c_i \leq -1$ \quad ($y-E_i$ does not vanish on $M_t$)
			\item $k \geq 0$ 
		\end{enumerate}
		Note that by (4),(5),(6),(7), we have 
		\begin{equation}\label{equation_I_4_2}
			a \leq 0, \quad b \leq 1, \quad \underbrace{a+b \leq 0}_{\Leftrightarrow ~(a,b) \neq (0,1)} ~(\text{in case of}~ m \geq 2)
		\end{equation}
		otherwise it contradicts the equation (1),(4),(5).

		We claim that $m=1$. If $m=2$, then (1) and (5) imply that 
		\[
			1 \leq 2a + 2b + c_1 + c_2 \leq 2a 
		\]
		which contradicts that $a \leq 0$ in \eqref{equation_I_4_2}.
		
		Now we assume that $m \geq 3$ (and $m \leq 7$ by \eqref{equation_number_of_points}). 
		Then we have an exceptional divisor $C$ with 
		$\mathrm{PD}(C) = x + y - E_i - E_j - E_k$ for distinct $i,j,k$ by Lemma \ref{lemma_list_exceptional}. 
		The condition $\langle [\omega_1], C \rangle > 0$ implies that 
		\[
			k + a + b + c_i + c_j + c_k \leq -2
		\] for each triple $(i,j,k)$. By summing these inequalities, we obtain
		\[
			{{m} \choose {3}} k+ {{m} \choose {3}} a + {{m} \choose {3}} b + {{m-1} \choose {2}} \sum_{i=1}^m c_i \leq -2 {{m} \choose {3}}
		\] 
		or equivalently, $mk + ma + mb + 3\sum_{i=1}^m c_i \leq -2m$.
		Then, 
		\[
			\begin{array}{ccl}\vs{0.2cm}
				-2m & \geq & 3(2a + 2b + \sum_{i=1}^m c_i) + (m-6)a + (m-6)b +mk  \\ \vs{0.2cm}
					& \geq & 3 + (m-6)(a+b) 
			\end{array}
		\]
		which is possible unless $m=7$ (since $a + b \leq 0$ by \eqref{equation_I_4_2}). When $m=7$, we have an exceptional divisor $D$ with 
		$\mathrm{PD}(D) = 4x + 3y - E_1 - 2(E_2 + \cdots + E_7)$. Then 
		\[
			\langle \mathrm{PD}(D) \cdot [\omega_1], [M_1] \rangle = -8 - 3(a+k) - 4b + c_1 - 2\sum_{i=1}^7 c_i \geq 1.
		\]
		So, by (1), (6), and (7), we have 
		\[
			-9 \geq -c_1 + 3(a+k) + 4b + 2\sum_{i=1}^7 c_i \geq -c_1 - a + 2 \geq k + 1 + 2 \geq 3
		\] which leads to a contradiction.
		
		Therefore, we have $m=1$. By (1),(5), and \eqref{equation_I_4_2}, we obtain
		\[
			1 - c \leq 2a + 2b \leq 2a - 2c \leq -2c, \quad \quad (\Rightarrow ~c \leq -1)
		\]
		which implies that $c = -1$ by (4). Then (1) and \eqref{equation_I_4_2} induces $a = 0, b=1$ (and hence $k=0$ by (6) and (7)). So, 
		\[
			\mathrm{PD}(Z_0) = y - E_1, \quad Z_0 \cong S^2
		\]
		by the adjunction formula. See Table \ref{table_I_4}: {\bf (I-4-2)}. This completes the proof.
		
	\end{proof}
			
	\begin{example}[Fano varieties of type {\bf (I-4)}]\label{example_I_4}  We describe Fano varieties of type {\bf (I-4)} in Theorem \ref{theorem_I_4} as follows. 
		
		\begin{itemize}
	           	 \item {\bf (I-4-1.1)} \cite[No.9 in Section 12.5]{IP}: Let $M$ be the toric variety given in {\bf (I-3-1.4)}; it is the 
	           	 $T^3$-equivariant blow-up of $\p^3$ along two $T^3$-invariant spheres whose moment polytope is given on the left of Figure \ref{figure_I_4_1_1}.
	           	 Let $\widetilde{M}$ be the $T^3$-equivariant blow-up of $M$ along a $T^3$-equivariant exceptional sphere. Then the corresponding moment polytope
	           	 can be described as on the right of Figure \ref{figure_I_4_1_1}. 

	           		 \begin{figure}[H]
	           	 		\scalebox{0.8}{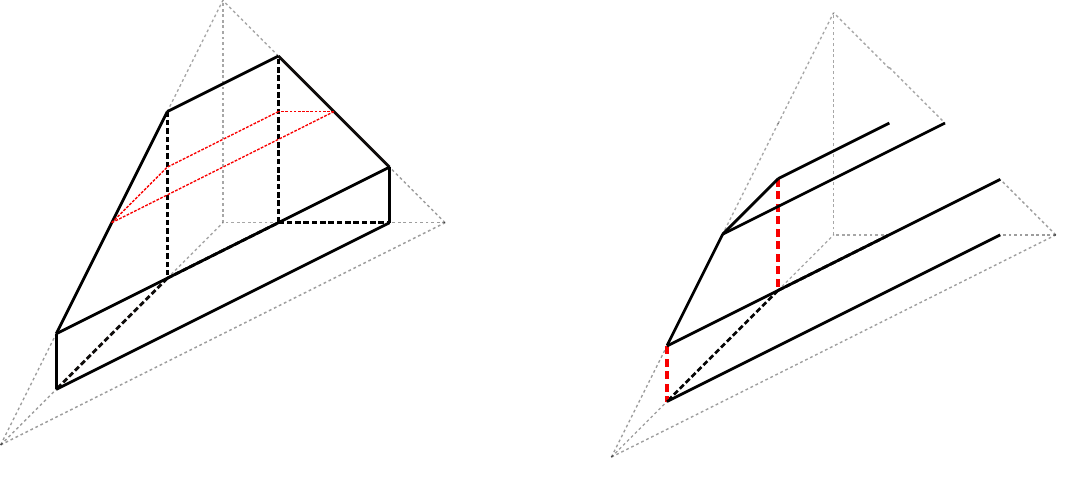}
		           	 	\caption{\label{figure_I_4_1_1} Blow-up of $M$ (in {\bf (I-3-1.4)}) along an exceptional curve}
		           	 \end{figure}
	           	 \noindent
	           	 Take a circle subgroup of $T^3$ generated by $\xi = (0,-1,0)$. The red faces correspond to the fixed components of the action and we have 
	           	 $\mathrm{Vol}(Z_{\min}) = 1$ (and so $b_{\min} = -1$ and $k = -1$). Also, there is a one isolated fixed point of index two and $Z_0$ is a sphere 
	           	 having volume 2. Therefore, the fixed point data should be equal to {\bf (I-4-1.1)} in Table \ref{table_I_4}.
	           	 See \cite[Example 8.3]{Cho2}
			\vs{0.3cm}	           	 
	           	 	
	           	 \item {\bf (I-4-1.2)} \cite[No.2 in Section 12.6]{IP} : Let $M$ be the same as in {\bf (I-3-1.4)}. Now, we let $\widetilde{M}$ be the $T^3$-equivariant 
	           	 blow-up of $M$ along two $T^3$-invariant exceptional spheres lying on the same exceptional component where the moment polytope for the induced $T^3$-action
	           	 is described in Figure \ref{figure_I_4_1_2}.
	           	 
	           		 \begin{figure}[H]
	           	 		\scalebox{0.8}{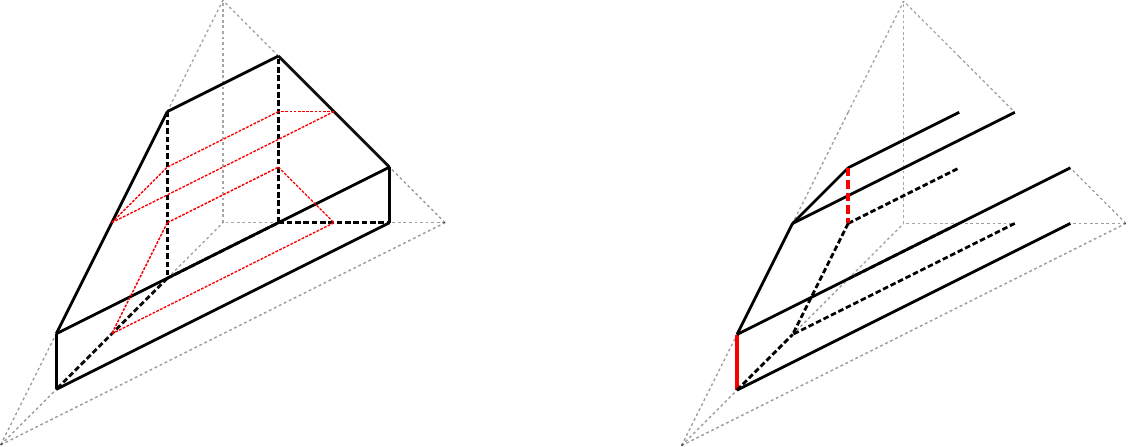}
		           	 	\caption{\label{figure_I_4_1_2} Blow-up of $M$ along two invariant spheres on the same exceptional components}
		           	 \end{figure}
			\noindent
			Take a circle subgroup generated by $\xi = (0,-1,0)$. One can check that the action is semifree and the fixed components corresponds to the red faces 
			in Figure \ref{figure_I_4_1_2}. We can check at a glance that the volume of $Z_{\min}$ and $Z_0$ are both 1 and there are two isolated fixed points 
			of index two. Thus the fixed point data for the $S^1$-action coincides with {\bf (I-4-1.2)} in Table \ref{table_I_4}. 
			See \cite[Example 8.3]{Cho2}. \vs{0.3cm}
	           	 	
	           	 \item {\bf (I-4-2)} \cite[No.11 in Section 12.5]{IP} :  $M$ be given in {\bf (I-3-1.3)} in Example \ref{example_I_3_1}, the $T^3$-equivariant blow-up of $\p^1 \times X_1$
	           	 along $t \times E$ where $t \in \p^1$ is the fixed point for the $S^1$-action and $E$ is the exceptional curve in $X_1$. In this case, we take another circle subgroup
	           	 generated by $\xi = (-1,0,0)$. Then the action is semifree and the fixed point set can be illustrated by the red faces in Figure \ref{figure_I_4_2}. One can immediately
	           	 check that $\mathrm{Vol}(Z_{\min}) = 2$ (so that $b_{\min} = 2k = 0$)
	           	  and $\mathrm{Vol}(Z_0) = 1$. Also there is exactly one isolated fixed point of index two. Thus the fixed point data for the action
	           	 should be equal to {\bf (I-4-2)} in Table \ref{table_I_4}. 
			See also \cite[Example 7.12]{Cho1} and {\bf (I-3-1.3)}. \vs{0.3cm}
	           	 
	           		 \begin{figure}[H]
	           	 		\scalebox{0.8}{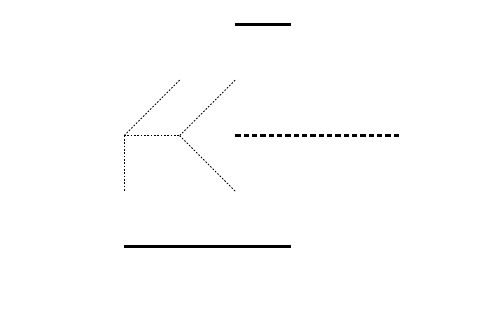}
		           	 	\caption{\label{figure_I_4_2} Blow-up of $\p^1 \times X_1$ along $t \times E$}
		           	 \end{figure}

		\end{itemize}
	
	\end{example}			
	
\section{Topological fixed point data : $\dim Z_{\min} = 4$}
\label{secTopologicalFixedPointDataDimZMax4}

	In this section we classify all topological fixed point data for a six dimensional closed monotone semifree Hamiltonian $S^1$-manifold in case of 
	$\dim Z_{\min} = \dim Z_{\max} = 4$. 
	Since the only possible interior critical value is $0$, there is no isolated fixed point and $Z_{\min} \cong Z_0 \cong Z_{\max}$.
	Also, all possible candidates of $M_0$ are $\C P^2, S^2 \times S^2,$ or $X_k$ (blow-up of $\p^2$ at $k$  generic points) for $1 \leq k \leq 8$
	by Lemma \ref{lemma_monotonicity}. \vs{0.1cm}

	We begin with the following lemma about Chern numbers of Fano varieties of type {\bf (II)}.
		
	\begin{lemma}\label{lemma_II_Chern_number}
		Suppose that $\dim Z_{\min} = 4$ and $\dim Z_{\max} = 4$. Then
		\[
			\int_M c_1(TM)^3 =  \langle 2e^2 + 6c_1^2 - 3c_1[Z_0] + 2e[Z_0] + [Z_0]^2, [M_0] \rangle
		\]
		where $e := e(P_{-1 + \epsilon}^+)$, $c_1 := c_1(TM_0)$, and $[Z_0] := \mathrm{PD}(Z_0)$. 
	\end{lemma}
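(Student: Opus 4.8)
The plan is to compute $\int_M c_1(TM)^3$ by the Atiyah--Bott--Berline--Vergne localization theorem applied to $c_1^{S^1}(TM)^3\in H^6_{S^1}(M)$, exactly along the lines of the proof of Lemma \ref{lemma_I_Chern_number}, using that $\int_M c_1^{S^1}(TM)^3$ is a constant equal to $\int_M c_1(TM)^3$. Under the hypothesis $\dim Z_{\min}=\dim Z_{\max}=4$ the fixed set consists of the minimum $Z_{\min}$ at level $-1$ (index $0$), the maximum $Z_{\max}$ at level $1$ (co-index $2$), and a (possibly empty) disjoint union of $2$-dimensional fixed components $Z_0=\bigsqcup_j Z_0^j$ at level $0$ (index $2$); moreover $Z_{\min}$ and $Z_{\max}$ are canonically the reduced spaces $M_{-1+\epsilon}$ and $M_{1-\epsilon}$, both of which are identified with $M_0$ (no critical value, resp.\ a codimension-two blow-up, is crossed). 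So the localization formula reads
\[
\int_M c_1(TM)^3=\int_{Z_{\min}}\frac{\bigl(c_1^{S^1}(TM)|_{Z_{\min}}\bigr)^3}{e^{S^1}(\nu_{Z_{\min}})}+\sum_j\int_{Z_0^j}\frac{\bigl(c_1^{S^1}(TM)|_{Z_0^j}\bigr)^3}{e^{S^1}(\nu_{Z_0^j})}+\int_{Z_{\max}}\frac{\bigl(c_1^{S^1}(TM)|_{Z_{\max}}\bigr)^3}{e^{S^1}(\nu_{Z_{\max}})}.
\]

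First I would dispose of the level-$0$ terms. Each $Z_0^j$ is a surface whose normal bundle in $M$ splits equivariantly as a sum of two $S^1$-line bundles of weights $+1$ and $-1$, so the two $\lambda$-contributions cancel and $c_1^{S^1}(TM)|_{Z_0^j}=c_1(TZ_0^j)+c_1(\nu_{Z_0^j})$ is a pure degree-$2$ cohomology class with no $\lambda$; its cube is then a degree-$6$ class on a surface, hence zero, and the whole $Z_0$-contribution vanishes. This is the same phenomenon already observed in Lemma \ref{lemma_I_Chern_number}.

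Next I would evaluate the two extremal terms. At $Z_{\min}$ the normal bundle is a line bundle of weight $+1$ with ordinary first Chern class equal to the Euler class $e:=e(P_{-1+\epsilon}^+)=e(P_{-1}^+)=e(P_0^-)$ of the circle bundle near the minimum, and $TZ_{\min}\cong TM_0$, so $c_1^{S^1}(TM)|_{Z_{\min}}=c_1+e+\lambda$ and $e^{S^1}(\nu_{Z_{\min}})=e+\lambda$ with $c_1:=c_1(TM_0)$. At $Z_{\max}$ the normal bundle has weight $-1$ with ordinary first Chern class $-e_{\max}$, where $e_{\max}:=e(P_1^-)=e(P_0^+)$; since crossing the index-two divisor $Z_0$ at level $0$ adds $\mathrm{PD}(Z_0)$ to the Euler class by Lemma \ref{lemma_Euler_class}, one has $e_{\max}=e+\mathrm{PD}(Z_0)$. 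Hence $c_1^{S^1}(TM)|_{Z_{\max}}=c_1-e_{\max}-\lambda$ and $e^{S^1}(\nu_{Z_{\max}})=-e_{\max}-\lambda$. To turn $\int_{M_0}\frac{(c_1+e+\lambda)^3}{e+\lambda}$ and $\int_{M_0}\frac{(c_1-e_{\max}-\lambda)^3}{-e_{\max}-\lambda}$ into genuine cohomology pairings on $M_0$, I would multiply numerator and denominator by $e^2-e\lambda+\lambda^2$ (resp.\ $e_{\max}^2-e_{\max}\lambda+\lambda^2$), use $e^3=e_{\max}^3=0$ in $H^*(M_0)$ (degree-$6$ classes on a $4$-manifold) to collapse the denominators to $\pm\lambda^3$, expand the numerators modulo cohomological degree $\ge 6$ and terms $\int_{M_0}(H^{<4})\cdot\lambda^k=0$, and read off the $\lambda^3$-coefficient. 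This gives $\int_{Z_{\min}}=\langle 3c_1^2+3c_1e+e^2,[M_0]\rangle$ and $\int_{Z_{\max}}=\langle 3c_1^2-3c_1e_{\max}+e_{\max}^2,[M_0]\rangle$.

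Finally, summing the three contributions and substituting $e_{\max}=e+\mathrm{PD}(Z_0)$, the $c_1e$-terms cancel and the square completes to $6c_1^2-3c_1[Z_0]+2e^2+2e[Z_0]+[Z_0]^2$, which is the asserted identity. The only genuinely delicate points are bookkeeping: pinning down the weights $+1$ at $Z_{\min}$ versus $-1$ at $Z_{\max}$ (so that $c_1(\nu_{Z_{\min}})=+e$ but $c_1(\nu_{Z_{\max}})=-e_{\max}$), and correctly propagating the Euler class through the level-$0$ critical value via Lemma \ref{lemma_Euler_class}; once these are fixed, the remaining manipulation is formally identical to the one in Lemma \ref{lemma_I_Chern_number}.
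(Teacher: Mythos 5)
Your proposal is correct and follows essentially the same route as the paper: ABBV localization applied to $c_1^{S^1}(TM)^3$, with the level-zero contribution vanishing because $c_1^{S^1}(TM)|_{Z_0}$ has no $\lambda$-term, the minimum contributing $\int_{M_0}(c_1+e+\lambda)^3/(e+\lambda)$, and the maximum contributing the analogous term with Euler class $e+\mathrm{PD}(Z_0)$ obtained from Lemma \ref{lemma_Euler_class}. Your algebraic evaluation of the two extremal integrals and the final substitution $e_{\max}=e+\mathrm{PD}(Z_0)$ reproduce the stated formula exactly.
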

	
	\begin{proof}
		It is straightforward from the ABBV localization theorem:
		\[
			\begin{array}{ccl}\vs{0.3cm}
				\ds \int_M c_1^{S^1}(TM)^3 & = &  \ds  
							\int_{Z_{\min}} \frac{\left(c_1^{S^1}(TM)|_{Z_{\min}}\right)^3}{e_{Z_{\min}}^{S^1}} 
							+ \int_{Z_0} \frac{\overbrace{\left(c_1^{S^1}(TM)|_{Z_0}\right)^3}^{= 0}}{e_{Z_0}^{S^1}}
							 + \int_{Z_{\max}} \frac{\left(c_1^{S^1}(TM)|_{Z_{\max}}\right)^3}
							 {e_{Z_{\max}}^{S^1}} \\ \vs{0.2cm}
							 & = & \ds \int_{Z_{\min}} \frac{(\lambda + e + c_1)^3}{\lambda + e}  + 
							 \int_{Z_{\max}} \frac{(-\lambda - (e + [Z_0]) + c_1)^3} {-\lambda - (e + [Z_0]) }  \\ \vs{0.2cm}
							& = &  \langle 2e^2 + 6c_1^2 - 3c_1[Z_0] + 2e[Z_0] + [Z_0]^2, [M_0] \rangle
			\end{array}			
		\]
	\end{proof}

	We divide the classification into two cases: $\mathrm{Crit} \mathring{H} = \emptyset$ {\bf (II-1)} and $\mathrm{Crit} \mathring{H} = \{0\}$ {\bf (II-2)}.
	Note that, since the moment map can be reversed by taking $-\omega$ instead of $\omega$, we may assume that
	\begin{equation}\label{equation_II_1_assumption}
		\mathrm{Vol}(Z_{\min}) \geq \mathrm{Vol}(Z_{\max}).
	\end{equation}
	We first consider the case that $\mathrm{Crit} \mathring{H} = \emptyset$. 		
	
	\begin{theorem}[Case {\bf (II-1)}]\label{theorem_II_1}
		Let $(M,\omega)$ be a six-dimensional closed monotone semifree Hamiltonian $S^1$-manifold such that $\mathrm{Crit} H = \{ 1,-1\}$. 
		Then the list of all possible topological fixed point data is given in the Table \ref{table_II_1}
		\begin{table}[h]
			\begin{tabular}{|c|c|c|c|c|c|c|c|c|}
				\hline
				    & $(M_0, [\omega_0])$ & $e(P_{-1}^+)$ & $Z_{-1}$  & $Z_0$ &  $Z_1$ & $b_2(M)$ & $c_1^3(M)$ \\ \hline \hline
				    {\bf (II-1-1.1)} & $(\p^2, 3u)$ & $0$  & $\p^2$  & & 
				    	$\p^2$
					     & $2$ & $54$ \\ \hline    
				    {\bf (II-1-1.2)} & $(\p^2, 3u)$ & $u$  & $\p^2$  && 
				    	$\p^2$
					     & $2$ & $56$ \\ \hline    
				    {\bf (II-1-1.3)} & $(\p^2, 3u)$ & $2u$  & $\p^2$  && 
				    	$\p^2$
					     & $2$ & $62$ \\ \hline    
					     					     					     
				    {\bf (II-1-2.1)} & $(S^2 \times S^2, 2x + 2y)$ & $0$  & $S^2 \times S^2$  && 
				    	$S^2 \times S^2$
					     & $3$ & $48$ \\ \hline    					     
				    {\bf (II-1-2.2)} & $(S^2 \times S^2, 2x + 2y)$ & $x$  & $S^2 \times S^2$ && 
				    	$S^2 \times S^2$
					     & $3$ & $48$ \\ \hline    					     
				    {\bf (II-1-2.3)} & $(S^2 \times S^2, 2x + 2y)$ & $x+y$  & $S^2 \times S^2$ && 
				    	$S^2 \times S^2$
					     & $3$ & $52$ \\ \hline    					     
				    {\bf (II-1-2.4)} & $(S^2 \times S^2, 2x + 2y)$ & $x-y$  & $S^2 \times S^2$ && 
				    	$S^2 \times S^2$
					     & $3$ & $44$ \\ \hline    					     
					     					     					     					     
				    {\bf (II-1-3.1)} & $(E_{S^2}, 3x + 2y)$ & $0$  & $E_{S^2}$ && 
				    	$E_{S^2}$
					     & $3$ & $48$ \\ \hline    					     					     
				    {\bf (II-1-3.2)} & $(E_{S^2}, 3x + 2y)$ & $x + y$  & $E_{S^2}$ && 
				    	$E_{S^2}$
					     & $3$ & $50$ \\ \hline    					     					     
					     					     
				    \makecell{{\bf (II-1-4.k)} \\ {\bf k = 2$\sim$8}} & $(X_k, 3u - \sum_{i=1}^k E_i)$ & $0$  & $X_k$ && 
				    	$X_k$
					     & $k+2$ & $54-6k$ \\ \hline    					     					     					     
			\end{tabular}		
			\vs{0.5cm}			
			\caption{\label{table_II_1} Topological fixed point data for $\mathrm{Crit} H = \{1,-1\}$.}
		\end{table}				   
	\end{theorem}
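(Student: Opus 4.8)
Since $\mathrm{Crit}\,H=\{1,-1\}$ has no non-extremal critical value, the reduced space $M_t$ is independent of $t$ for $t\in(-1,1)$, and the equivariant normal form near a minimal (resp.\ maximal) fixed component gives $M_{-1+\epsilon}\cong Z_{\min}$ (resp.\ $M_{1-\epsilon}\cong Z_{\max}$); hence $Z_{\min}$ and $Z_{\max}$ are symplectic submanifolds of $M$, both diffeomorphic to $M_0$. By Lemma \ref{lemma_monotonicity}, $(M_0,\omega_0)$ is a monotone symplectic four-manifold with $[\omega_0]=c_1(TM_0)$, so by \cite{OO2} and \cite{McD3} the surface $M_0$ is one of $\p^2$, $S^2\times S^2$, $E_{S^2}\,(=X_1)$, or $X_k\,(2\le k\le8)$, and $[\omega_0]$ is the corresponding anticanonical class. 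Since nothing lies on the level $0$, the only remaining datum of the topological fixed point data is the Euler class $e:=e(P_{-1}^+)=e(P_1^-)\in H^2(M_0;\Z)$ (the two agree because no critical value lies in $(-1,1)$), so the task is to list the admissible $e$ for each $M_0$.

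To constrain $e$, the Duistermaat--Heckman formula \eqref{equation_DH} gives $[\omega_t]=[\omega_0]-t\,e$ for $t\in[-1,1]$, and letting $\epsilon\downarrow0$ identifies the symplectic classes of $Z_{\min}$ and $Z_{\max}$ with $[\omega_0]+e$ and $[\omega_0]-e$. Each $[\omega_t]$ ($t\in[-1,1]$) is the class of a genuine symplectic form on the rational surface $M_0$ --- a reduced form for $t\in(-1,1)$, and $\omega|_{Z_{\min}},\omega|_{Z_{\max}}$ at the endpoints --- hence lies in the symplectic cone of $M_0$, a fixed convex open cone; by convexity this amounts to requiring that both $[\omega_0]+e$ and $[\omega_0]-e$ lie in that cone. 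Finally, replacing $\omega$ by $-\omega$ reverses the moment map and interchanges $(Z_{\min},e)$ with $(Z_{\max},-e)$, so by \eqref{equation_II_1_assumption} we may assume $\mathrm{Vol}(Z_{\min})\ge\mathrm{Vol}(Z_{\max})$, i.e.\ $[\omega_0]\cdot e\ge0$.

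The enumeration is now a short case check, one surface at a time. For $\p^2$, with $e=cu$ the cone $\{au:a>0\}$ forces $|c|<3$, and $[\omega_0]\cdot e=3c\ge0$ forces $c\ge0$, so $e\in\{0,u,2u\}$ --- cases \textbf{(II-1-1.1)}--\textbf{(II-1-1.3)}. For $S^2\times S^2$, with $e=\alpha x+\beta y$, positivity of $[\omega_0]\pm e$ on the two ruling curves forces $|\alpha|,|\beta|\le1$, and after imposing $[\omega_0]\cdot e\ge0$ and quotienting by the factor-swap automorphism $x\leftrightarrow y$ one is left with $e\in\{0,x,x+y,x-y\}$ --- cases \textbf{(II-1-2.1)}--\textbf{(II-1-2.4)}. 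For $E_{S^2}=X_1$, with $e=\alpha x+\beta y$, positivity of $[\omega_0]\pm e$ on the fiber class $x$ and on the $(-1)$-section $y$ forces $|\beta|\le1$ and $\alpha=\beta$, and $[\omega_0]\cdot e\ge0$ leaves $e\in\{0,x+y\}$ --- cases \textbf{(II-1-3.1)}--\textbf{(II-1-3.2)}. For $X_k$ with $2\le k\le8$: since $[\omega_0]\cdot E_i=1$ for each exceptional class $E_i$ of Lemma \ref{lemma_list_exceptional} and $e\cdot E_i\in\Z$, positivity of $[\omega_0]\pm e$ on $E_i$ forces $e\cdot E_i=0$; likewise $u-E_i-E_j$ is exceptional (as $k\ge2$) with $[\omega_0]\cdot(u-E_i-E_j)=1$, so $e\cdot(u-E_i-E_j)=0$ and hence $e\cdot u=0$; by non-degeneracy of the intersection form $e=0$ --- case \textbf{(II-1-4.k)}. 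It remains to fill in the last two columns: $b_2(M)=b_2(M_0)+1$ by Morse--Bott theory ($Z_{\min}$ has index $0$ and $Z_{\max}$ index $2$), and $c_1^3(M)=\langle2e^2+6[\omega_0]^2,[M_0]\rangle$ by Lemma \ref{lemma_II_Chern_number} with $[Z_0]=0$; substituting each pair $(M_0,e)$ reproduces Table \ref{table_II_1}. Realizability of every listed entry is checked separately --- the $e=0$ rows by the monotone product $M_0\times S^2$ with $S^1$ rotating the $S^2$-factor, the others by suitable toric Fano models.

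The step I expect to be the real obstacle is the $X_k$ ($k\ge2$) case: one must invoke the \emph{complete} classification of exceptional classes (Lemma \ref{lemma_list_exceptional}) to be certain that testing $[\omega_0]\pm e$ against the $E_i$ and against $u-E_i-E_j$ alone already forces $e=0$, and the bookkeeping must be done carefully --- $X_1$ must be identified with $E_{S^2}$ so that it is not counted twice (its single exceptional class $E_1$ does not by itself bound the symplectic cone of $X_1$, which is why the $x,y$ description is needed there), and the $S^2\times S^2$ list must be reduced modulo the involution exchanging the two factors. A secondary point needing care is the reduction ``$[\omega_t]$ symplectic for all $t\in[-1,1]$'' $\Longleftrightarrow$ ``$[\omega_0]\pm e$ in the symplectic cone'', which rests on convexity of the cone together with the absence of a critical value --- hence of a blow-down --- on $(-1,1)$.
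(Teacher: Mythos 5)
Your proposal is correct and follows essentially the same route as the paper: Duistermaat--Heckman gives $[\omega_t]=[\omega_0]-te$, positivity of area/volume on the (exceptional and ruling) curve classes at $t=\pm1$ pins down $e$ case by case for $\p^2$, $S^2\times S^2$, $E_{S^2}$ and $X_k$, the normalization \eqref{equation_II_1_assumption} removes the $\omega\mapsto-\omega$ ambiguity, and the Chern numbers come from Lemma \ref{lemma_II_Chern_number} with $Z_0=\emptyset$. The only differences are cosmetic (the symplectic-cone phrasing, testing against the fiber and $(-1)$-section for $E_{S^2}$ instead of volume plus $E_1$, and the integrality-plus-nondegeneracy shortcut forcing $e=0$ on $X_k$), and they yield exactly the list in Table \ref{table_II_1}.
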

	
	\begin{proof}
		Let $e := e(P_{-1 + \epsilon}^+) = e(P_{1 - \epsilon}^-)$. \vs{0.3cm}
	
	\noindent
	{\bf (II-1-1)} $M_0 \cong \p^2$. \vs{0.3cm}
	
	Note that $[\omega_t] = 3u - te$ for $t \in [-1, 1]$. Since $\int_{M_0} \omega_t^2 > 0$, we get $e = 0, u,$ or $2u$ (by our assumption \eqref{equation_II_1_assumption}).
	Each cases can be described as
	\begin{table}[H]
		\begin{tabular}{|c|c|c|c|}
			\hline
				& {\bf (II-1-1.1)} & {\bf (II-1-1.2)} & {\bf (II-1-1.3)} \\ \hline \hline
				$e$ & $0$ & $u$  & $2u$  \\ \hline
				$\mathrm{Vol}(Z_{\min})$ & $9$ & $16$ & $25$ \\ \hline
				$\mathrm{Vol}(Z_{\max})$ & $9$ & $4$ & $1$ \\ \hline
		\end{tabular}		
	\end{table}
	\noindent	
	and those are listed in Table \ref{table_II_1}.
	\vs{0.3cm}

	\noindent
	{\bf (II-1-2)} $M_0 \cong S^2 \times S^2$. \vs{0.3cm}
	
	Letting $e = ax + by$ for $a,b \in \Z$, 
	we have $[\omega_t] = (2-at)x + (2-bt)y$ for $t \in [-1,1]$. Again by $\int_{M_t} \omega_t^2 > 0$, we obtain $2 \pm a \geq 1$ and $2 \pm b \geq 1$. 
	So, up to permutation on $\{x, y\}$ and by the condition \eqref{equation_II_1_assumption}, 
	we have $(a,b) = (0,0), (1,0), (1,1),$ and $(1,-1)$. So, we obtain
	\begin{table}[H]
		\begin{tabular}{|c|c|c|c|c|}
			\hline
				& {\bf (II-1-2.1)} & {\bf (II-1-2.2)} & {\bf (II-1-2.3)} & {\bf (II-1-2.4)} \\ \hline \hline
				$e$ & $0$ & $x$  & $x+y$ & $x - y$  \\ \hline
				$\mathrm{Vol}(Z_{\min})$ & $8$ & $12$ & $18$ & $6$ \\ \hline
				$\mathrm{Vol}(Z_{\max})$ & $8$ & $4$ & $2$ & $6$ \\ \hline
		\end{tabular}		
	\end{table}
	\vs{0.3cm}

	\noindent
	{\bf (II-1-3)} $M_0 \cong E_{S^2}$. \vs{0.3cm}
	
	Let $u = x + y$ and $E_1 = y$ so that $\langle u^2, [M_0] \rangle = 1$, $\langle u\cdot E_1, [M_0] \rangle = 0$, and $\langle E_1^2, [M_0] \rangle = -1$.
	Set $e = au + bE_1$ ($a,b \in \Z$). 
	Since $\int_{M_t} \omega_t^2 > 0$ for every $t \in [-1,1]$ where $[\omega_t] = (3-at)u - (1 + bt)E_1$ for $t \in [-1,1]$, we have
	\begin{enumerate}
		\item $3-a > 1+b \geq 1$ \quad ($\langle [\omega_1]^2, [M_1] \rangle >0$ and $\langle [\omega_1] \cdot E_1, [M_1] \rangle > 0$),  
		\item $3+a > 1-b \geq 1$ \quad ($\langle [\omega_{-1}]^2, [M_{-1}] \rangle >0$ and $\langle [\omega_{-1}] \cdot E_1, [M_{-1}] \rangle > 0$) . 
	\end{enumerate}
	From (1) and (2), we obtain $b=0$ and hence $(a,b) = (1,0)$, $(0,0)$ (where the case $(a,b) = (-1,0)$ can be recovered by \eqref{equation_II_1_assumption}). Therefore,
	\begin{table}[H]
		\begin{tabular}{|c|c|c|}
			\hline
				& {\bf (II-1-3.1)} & {\bf (II-1-3.2)} \\ \hline \hline
				$e$ & $0$ & $x + y$   \\ \hline
				$\mathrm{Vol}(Z_{\min})$ & $8$ & $15$  \\ \hline
				$\mathrm{Vol}(Z_{\max})$ & $8$ & $3$ \\ \hline
		\end{tabular}		
	\end{table}
	\vs{0.3cm}

	\noindent
	{\bf (II-1-4)} $M_0 \cong X_k$ (for $2 \leq k \leq 8$). \vs{0.3cm}
	
	In this case, we have $[\omega_0] = 3u - \sum_{i=1}^{k} E_i$. Let $\mathrm{PD}(Z_0) = au + \sum_{i=1}^k b_i E_i$ for some $a, b_1, \cdots, b_k \in \Z$.
	Then, as $\int_{M_t} \omega_t^2 > 0$ for every $t \in [-1,1]$ where $[\omega_t] = (3-at)u - \sum_{i=1}^k (1 + b_it)E_i$ for $t \in [-1,1]$, we get
	\begin{enumerate}
		\item $3-a > 1+b_i \geq 1$, \quad ($\langle [\omega_1]^2, [M_1] \rangle >0$ and $\langle [\omega_1] \cdot E_i, [M_1] \rangle > 0$)  
		\item $3+a > 1-b_i \geq 1$, \quad ($\langle [\omega_{-1}]^2, [M_{-1}] \rangle >0$ and $\langle [\omega_{-1}] \cdot E_i, [M_{-1}] \rangle > 0$)  
	\end{enumerate}
	for every $i=1,\cdots,k$. From (1) and (2), we see that every $b_i$ vanishes and $-1 \leq a \leq 1$. On the other hand, since $u - E_i - E_j$ is an exceptional class for any $i \neq j$, we also have
	\[
		\langle [\omega_1] \cdot (u - E_i - E_j), [M_1] \rangle = 1 - a \geq 1, \quad \langle [\omega_{-1}] \cdot (u - E_i - E_j), [M_{-1}] \rangle = 1 + a \geq 1, 
	\]
	i.e., $a = 0$ (and hence $e = 0$). We denote by {\bf (II-1-4.k)} for each case : $M_0 \cong X_k$ where $2 \leq k \leq 8$. See Table \ref{table_II_1}.

	The Chern number computation for each case can be easily obtained from Lemma \ref{lemma_II_Chern_number}.

	\end{proof}

	\begin{example}[Fano varieties of type {\bf (II-1)}]\label{example_II_1}  We describe Fano varieties of type {\bf (II-1)} in Theorem \ref{theorem_II_1} as follows. \vs{0.3cm}
		
		\begin{itemize}
	           	 \item {\bf (II-1-1)} \cite[No. 34, 35, 36 in Section 12.3]{IP} 
			For {\bf (II-1-1.1)}, let $M = \p^1 \times \p^2$ with the standard $T^3$-action where the moment polytope is described in the first of Figure \ref{figure_II_1_1}. 
			Then the circle subgroup generated by $\xi = (0,0,1)$ acts on the first factor of $\p^1 \times \p^2$ and is semifree. The fixed components for the action 
			correspond to the red faces. It immediately follows from the figure that $\mathrm{Vol}(Z_{\min} = \mathrm{Vol}(Z_{\max} = 16$. Thus the fixed point data 
			coincides with {\bf (II-1-1.1)} in Table \ref{table_II_1}. See also {\bf (I-1-2.1)} and \cite[Example 7.9]{Cho1}. \vs{0.3cm}

			For {\bf (II-1-1.2)}, Let $M = V_7$, the $T^3$-equivariant blow-up of $\p^3$ at a fixed point. The induced $T^3$-action has a moment polytope as in the middle 
			of Figure \ref{figure_II_1_1}. If we take a circle subgroup generated by $\xi = (0,0,1)$, then the action becomes semifree and there are two fixed components
			(colored by red in the figure) such that $\mathrm{Vol}(Z_{\min}) = 16$ and $\mathrm{Vol}(Z_{\max}) = 4$. So, this corresponds to {\bf (II-1-1.2)} in Table \ref{table_II_1}.
			See also \cite[Example 8.5]{Cho1} and \cite[Example 6.3]{Cho2}.
			\vs{0.3cm}

			For {\bf (II-1-1.3)}, we consider the polytope given on the right of Figure \ref{figure_II_1_1}. The polytope corresponds to the toric variety 
			$M = \mathbb{P}(\mcal{O} \oplus \mcal{O}(2))$. Take a circle subgroup generated by $\xi = (0,0,1)$. Then two fixed components appears as in the figure such that 
			$\mathrm{Vol}(Z_{\min}) = 25$ and$\mathrm{Vol}(Z_{\max}) = 1$. Thus the fixed point data for the action shuold equal {\bf (II-1-1.3)} in Table \ref{table_II_1}.
			See also \cite[Example 7.9]{Cho1}.
	           	 
	           		 \begin{figure}[H]
	           	 		\scalebox{0.8}{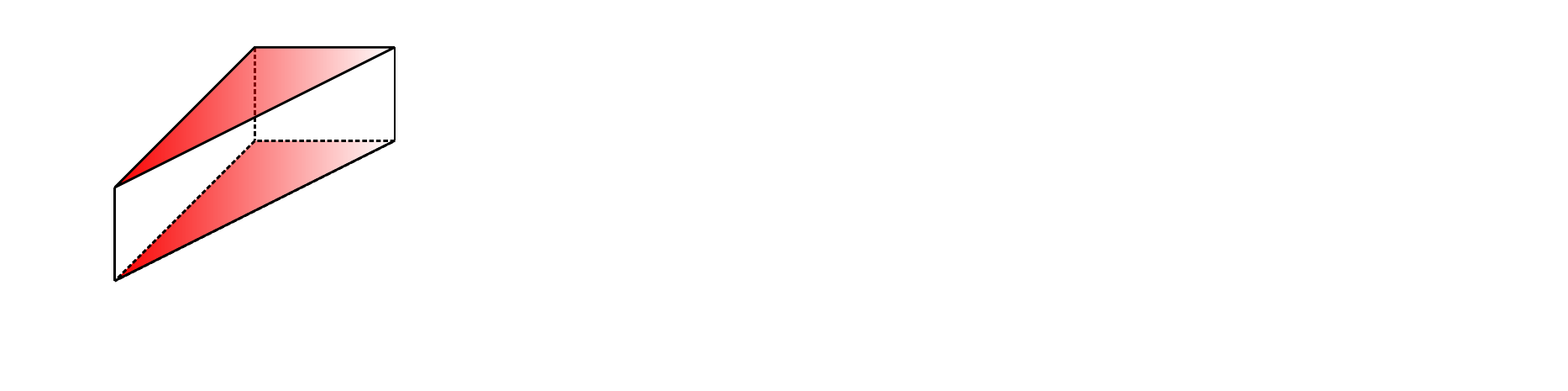}
		           	 	\caption{\label{figure_II_1_1} Fano varieties of type {\bf (II-1-1)}}
		           	 \end{figure}
		           	 
	           	 \item {\bf (II-1-2)} \cite[No. 27,28,31,25 in Section 12.4]{IP}  
	           	 For {\bf (II-1-2.1)}, we consider $M = \p^1 \times \p^1 \times \p^1$ with the standard $T^3$-action whose moment polytope is given in the first of Figure 
	           	 \ref{figure_II_1_2} and a circle subgroup generated by $\xi = (0,0,1)$. Then the action is semifree and there are two copies of $\p^1 \times \p^1$ as 
	           	 the fixed components whose volumes are both $8$. Thus the fixed point data is equal to {\bf (II-1-2.1)} in Table \ref{table_II_1}. 
	           	 See also \cite[Example 6.6]{Cho1} and \cite[Example 6.3]{Cho2}. \vs{0.3cm}

	           		 \begin{figure}[H]
	           	 		\scalebox{0.8}{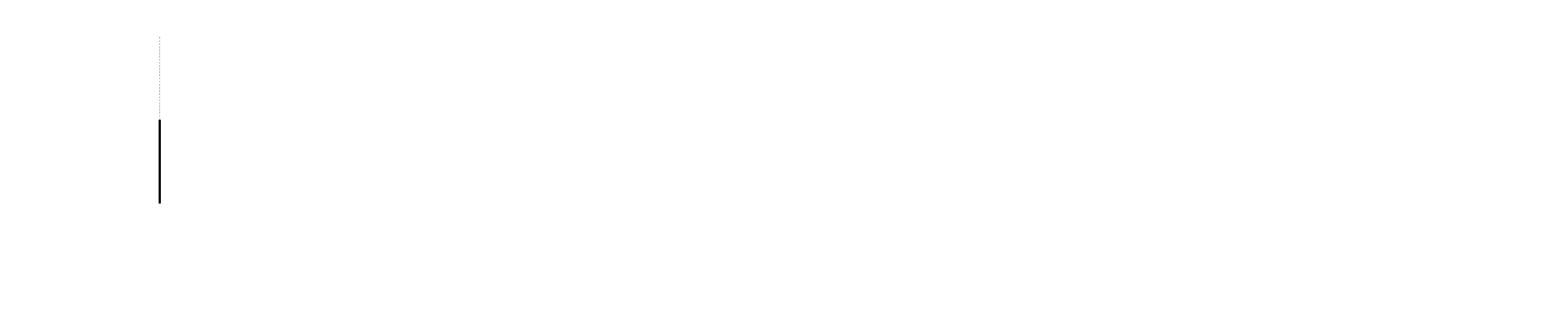}
		           	 	\caption{\label{figure_II_1_2} Fano varieties of type {\bf (II-1-2)}}
		           	 \end{figure}
	           	 
	           	 In case of {\bf (II-1-2.2)}, let $M = \p^1 \times X_1$ equipped with the standard $T^3$-action where the $T^2$-action on $X_1$ is induced from the standard 
	           	 $T^2$-action on $\p^2$. The moment polytope for the action is given in the second of Figure \ref{figure_II_1_2}. Take a circle subgroup of $T^3$ generated 
	           	 by $\xi = (0,0,1)$ so that the action is semifree and there are two fixed  components corresponding to red faces of the figure. One can immediately check that 
	           	 $\mathrm{Vol}(Z_{\min}) = 12$ and $\mathrm{Vol}(Z_{\max}) = 4$. So, the corresponding fixed point data should be {\bf (II-1-2.2)} in Table \ref{table_II_1}. 
	           	 See also \cite[Example 6.5]{Cho2} and {\bf (I-3-2.1)}. \vs{0.3cm}
	           	 
	           	  For {\bf (II-1-2.3)}, consider the polytope given in the third of Figure \ref{figure_II_1_2} where the corresponding toric variety is 
	           	  $M = \mathbb{P}(\mcal{O} \oplus \mcal{O}(1,1))$. Then the circle subgroup of $T^3$ generated by $\xi = (0,0,1)$ acts on $M$ semifreely where the fixed point set 
	           	  consists of two extremal components such that $\mathrm{Vol}(Z_{\min}) = 18$ and $\mathrm{Vol}(Z_{\max}) = 2$. 
	           	  Therefore the fixed point data should be equal to {\bf (II-1-2.3)} in Table \ref{table_II_1}. 
	           	  See also \cite[Example 6.9]{Cho1} and {\bf (I-3-1.1)}. \vs{0.3cm}

			  Finally in case of {\bf (II-1-2.4)}, let $M$ be the $T^3$-equivariant blow-up of $\p^3$ along two $T^3$-invariant lines. Then the corresponding moment polytope is 
			  depicted in the last of Figure \ref{figure_II_1_2}. Take a circle subgroup of $T^3$ generated by $\xi = (1,1,0)$ so that the fixed point set for the $S^1$-action consists
			  of two extremal components. One can check that both of two fixed components have volume 6, and hence the fixed point data is exactly {\bf (II-1-2.4)} in Table 
			  \ref{table_II_1}. See also	 \cite[Example 7.2]{Cho2} and {\bf (I-3-1.4)}.\vs{0.3cm}

	           	 \item {\bf (II-1-3)} \cite[No. 28, 30 in Section 12.4]{IP}  For {\bf (II-1-3.1)}, we let $M = \p^1 \times X_1$ with the standard $T^3$-action as in the case of 
	           	 {\bf (II-1-2.2)}. We take a circle subgroup of $T^3$ generated by $\xi = (0,1,0)$. Then the $S^1$-action is semifree and the fixed components are two copies 
	           	 of $X_1$ where the corresponding faces in the moment polytope are colored by red in Figure \ref{figure_II_1_3}. It immediately follows that the volume of the both
	           	 fixed components are 8, and so the fixed point data equals {\bf (II-1-3.1)} in Table \ref{table_II_1}. 
	           	 See also \cite[Example 6.5]{Cho2}, {\bf (I-3-2.1)}, and {\bf (II-1-2.2)}. \vs{0.3cm}
	           	 
	           	 In case that {\bf (II-1-3.2)}, we consider $V_7$, the $T^3$-equivariant blow-up of $\p^3$ at a fixed point, and let $M$ be the $T^3$-equivariant blow-up of 
	           	 $V_7$ along a $T^3$-invariant line passing through the exceptional divisor of $V_7 \rightarrow \p^3$. Then the moment polytope can be illustrated as on the right 
	           	 of Figure \ref{figure_II_1_3}. Take a circle subgroup of $T^3$ generated by $\xi = (0,0,1)$. One can easily see that $\mathrm{Vol}(Z_{\min}) = 15$ and 
	           	 $\mathrm{Vol}(Z_{\max}) = 3$, and therefore the fixed point data is exactly {\bf (II-1-3.2)} in Table \ref{table_II_1}.
	           	 See also \cite[Example 8.13]{Cho1}, \cite[Example 8.5]{Cho2}, {\bf (I-3-1.2)}, and {\bf (I-3-2.2)}. 
	           	 
	           		 \begin{figure}[H]
	           	 		\scalebox{0.8}{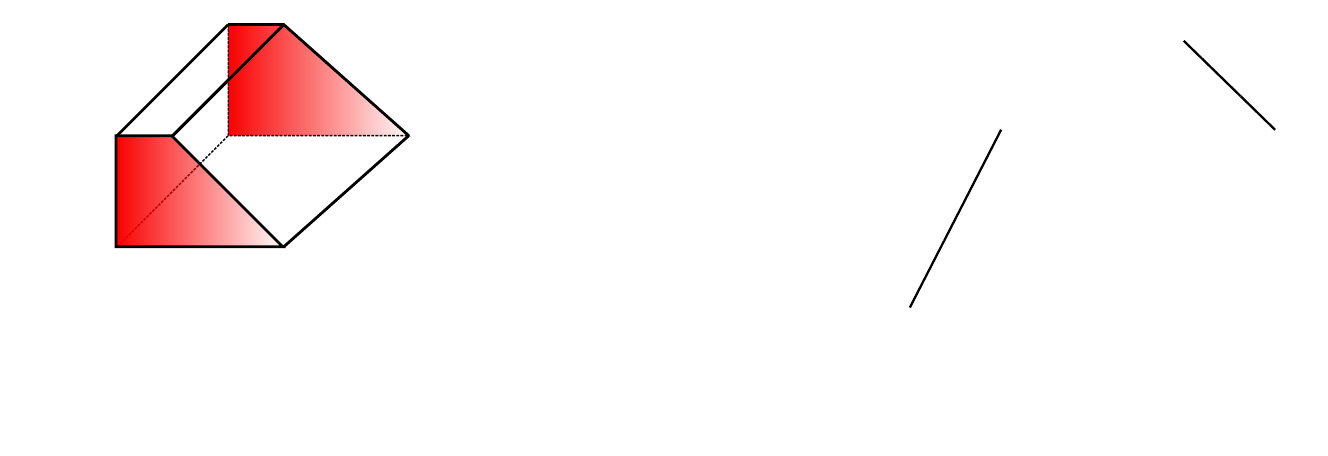}
		           	 	\caption{\label{figure_II_1_3} Fano varieties of type {\bf (II-1-3)}}
		           	 \end{figure}
	           	 
	           	 \item {\bf (II-1-4)} \cite[No. 10 in Section 12.5, No. 3,4,5,6,7,8 in Section 12.6]{IP} : For each $2 \leq k \leq 8$, let $M = X_k \times \p^1$ 
	           	 where the $S^1$-action is given on the second factor. Then the Euler class of each level set of a moment map is trivial. 
	           	 In particular two fixed components $X_k \times t_1$ and $X_k \times t_2$ have the volume $9-k$. Therefore, the fixed point data coincides with
	           	 {\bf (II-1-4.k)} in Table \ref{table_II_1}.
		\end{itemize}
	\end{example}

	Before to proceed the classification for remaining cases, we prove that $M_0$ cannot be $X_k$ for $k > 1$ whenever $Z_0$ is non-empty as follows.

	\begin{proposition}\label{proposition_II_2}
		Let $(M,\omega)$ be a six-dimensional closed monotone semifree Hamiltonian $S^1$-manifold such that $\mathrm{Crit} H = \{ 1,0,-1\}$. Then 
		$M_0 \not \cong X_k$ for any $k \geq 2$. 
	\end{proposition}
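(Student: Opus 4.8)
The plan is to assume $M_0\cong X_k$ with $2\le k\le 8$ and derive a contradiction by converting the geometry into numerical inequalities in $H^2(X_k;\Z)$ that turn out to have no solution.

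\emph{Setup.} Since $\dim Z_{\min}=\dim Z_{\max}=4$ and the only interior critical value $0$ carries a fixed surface $Z_0$ of index (hence also co-index) two, Proposition \ref{proposition_GS} gives $Z_{\min}\cong M_{-1+\epsilon}\cong M_0\cong M_{1-\epsilon}\cong Z_{\max}$, all diffeomorphic to $X_k$, and the reduced forms constitute a single continuous family $\{\omega_t\}_{t\in[-1,1]}$. Fix the standard basis $u,E_1,\dots,E_k$ of $H^2(X_k;\Z)$; monotonicity (Lemma \ref{lemma_monotonicity}) gives $[\omega_0]=c_1(TM_0)=3u-\sum_i E_i$. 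Write $e:=e(P_{-1}^+)=e(P_0^-)=au+\sum_i b_iE_i$ and $[Z_0]:=\mathrm{PD}(Z_0)=a'u+\sum_i c_iE_i$. Using the Duistermaat--Heckman relation \eqref{equation_DH} and the Euler-class jump at level $0$ (Lemma \ref{lemma_Euler_class}; the blow-down map is the identity since $Z_0$ is a divisor in the surface $M_0$), $e(P_0^+)=e(P_0^-)+[Z_0]$, one computes
\[
  [\omega_{-1}]=[\omega_0]+e,\qquad [\omega_{1}]=[\omega_0]-e-[Z_0].
\]
Both $[\omega_{-1}]$ and $[\omega_1]$ are cohomology classes of genuine symplectic forms on $Z_{\min}$, resp.\ $Z_{\max}$, each diffeomorphic to the del Pezzo surface $X_k$, so by the rigidity of symplectic forms on rational surfaces (Theorem \ref{theorem_uniqueness}, McDuff) they pair \emph{strictly} positively with every exceptional class of $X_k$ from Lemma \ref{lemma_list_exceptional}. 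Moreover $\mathrm{Vol}(Z_0)=\langle[Z_0]\cdot[\omega_0],[M_0]\rangle>0$ since $Z_0$ is a nonempty symplectic surface.

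\emph{The inequalities.} Pairing $[\omega_{\pm1}]$ with $E_i$ forces $b_i\le 0$ and $b_i+c_i\ge 0$, hence $c_i\ge -b_i\ge 0$. Pairing $[\omega_{\pm1}]$ with the exceptional class $u-E_i-E_j$ (available because $k\ge 2$) forces $a+b_i+b_j\ge 0$ and $a+a'+(b_i+c_i)+(b_j+c_j)\le 0$; subtracting gives $a'+c_i+c_j\le 0$ for all $i\ne j$, so $a'\le 0$ and $c_i+c_j\le|a'|$ for all $i\ne j$. Combining the bound $\sum_i c_i\le k\max_i c_i$ with the one obtained by fixing the largest $c_i$ and letting the other index range yields $\sum_i c_i\le \tfrac{k}{2}|a'|$, hence $\mathrm{Vol}(Z_0)=3a'+\sum_i c_i\le\bigl(\tfrac{k}{2}-3\bigr)|a'|\le 0$ whenever $2\le k\le 6$, contradicting $\mathrm{Vol}(Z_0)>0$.

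\emph{The cases $k=7,8$.} Here the elementary estimate above is no longer negative, and this is the step I expect to be the main obstacle: one must bring in the longer exceptional classes of Lemma \ref{lemma_list_exceptional}. For $k=7$, pair $[\omega_{\pm1}]$ with $3u-2E_1-(E_2+\dots+E_7)$ after relabelling indices so that the doubled slot carries $c_{[1]}:=\max_i c_i$; the same subtraction yields $c_{[1]}+\sum_i c_i\le 3|a'|$, whence $\mathrm{Vol}(Z_0)=3a'+\sum_i c_i\le -c_{[1]}\le 0$. For $k=8$, the analogous computation with $6u-3E_1-2(E_2+\dots+E_8)$ gives $c_{[1]}+2\sum_i c_i\le 6|a'|$, so $\mathrm{Vol}(Z_0)\le -\tfrac{1}{2}c_{[1]}\le 0$. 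This exhausts $2\le k\le 8$, so $M_0\not\cong X_k$ for any $k\ge 2$. Beyond the bookkeeping with exceptional classes, the one non-elementary ingredient is the strict positivity of $[\omega_{\pm1}]$ on exceptional classes --- equivalently, that no exceptional sphere in a reduced space degenerates at an extremal level --- which is exactly where the rational-surface rigidity is invoked.
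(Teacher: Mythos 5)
Your proposal is correct and follows essentially the same route as the paper's proof: the same Duistermaat--Heckman expressions for $[\omega_{\pm1}]$, the same integrality-plus-positivity inequalities obtained by pairing with $E_i$ and $u-E_i-E_j$ to kill $2\le k\le 6$, and the longer exceptional classes of Lemma \ref{lemma_list_exceptional} to kill $k=7,8$. The differences are only cosmetic --- you eliminate $a$ and the $b_i$ by subtracting the level-$(-1)$ inequalities from the level-$1$ ones, and you invoke $6u-3E_1-2(E_2+\cdots+E_8)$ for $k=8$ where the paper reuses $3u-2E_1-(E_2+\cdots+E_7)$ with a maximal-coefficient relabelling --- though note that the strict positivity of $[\omega_{\pm1}]$ on exceptional classes is more accurately justified by the existence of symplectic representatives of these classes (equivalently, that a vanishing exceptional class would force a blow-down, impossible since the reduced spaces have constant diffeomorphism type), rather than by Theorem \ref{theorem_uniqueness}.
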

	
	\begin{proof}
		Suppose that $M_0 \cong X_k$ for some $k \geq 2$ with $[\omega_0] = 3u - \sum_{i=1}^{k} E_i$ where $u \in H^2(X_k; \Z)$ such that 
		$\langle u^2, [X_k] \rangle = 1$ and $u \cdot E_i  = 0$ for every $i=1,\cdots,k$.
		
		Denote by $\mathrm{PD}(Z_0) = xu + \sum_{i=1}^k y_iE_i$ and $e = au + \sum_{i=1}^k b_iE_i$ for some $a, b_1, \cdots, b_k, x, y_1, \cdots, y_k \in \Z$.
		By the Duistermaat-Heckman theorem \eqref{equation_DH}, we get
		\[
			[\omega_{-1}] = (3+a)u - \sum (1 - b_i)E_i \quad \text{and} \quad [\omega_{1}] = (3-a-x)u - \sum (1 + b_i + y_i)E_i.
		\]			
		Thus we have the following inequalities.
		\begin{enumerate}
			\item $3x + \sum_{i=1}^k y_i \geq 1$ \quad ($\mathrm{Vol}(Z_0) \geq 1$):
			\item $b_i \leq 0$ \quad ($\langle [\omega_{-1}] \cdot E_i, [M_{-1}] \rangle \geq 1$),
			\item $a + b_i + b_j \geq 0$ \quad ($\langle [\omega_{-1}] \cdot (u - E_i - E_j), [M_{-1}] \rangle \geq 1$),
			\item $b_i + y_i \geq 0$ \quad ($\langle [\omega_{1}] \cdot E_i, [M_{1}] \rangle \geq 1$),
			\item $a+x+b_i+b_j+y_i+y_j \leq 0$ \quad ($\langle [\omega_1] \cdot (u-E_i-E_j), [M_1] \rangle \geq 1$).
		\end{enumerate}
		Note that $\underbrace{a \geq 0}_{=: (6)}$ from (2) and (3), and $\underbrace{a + x \leq 0}_{=: (7)}$ by (4) and (5).
		By summing all equations of the form (5), we obtain
		\[
			{k \choose 2} (a + x) + (k-1) \left( \sum_{i=1}^k b_i + \sum_{i=1}^k y_i \right) \leq 0 \quad \Rightarrow \quad \underbrace{\frac{k}{2}a + \frac{k}{2}x + 
			\sum_{i=1}^k b_i + \sum_{i=1}^k y_i \leq 0.}_{=: (5')}. 
		\]	
		Similarly, from (3), we have
		\[
			\underbrace{\frac{k}{2}a + \sum_{i=1}^k b_i \geq 0.}_{=: (3')}.
		\]
		So, by (3') and (5'), we get $\underbrace{\ds \frac{k}{2}x + \sum_{i=1}^k y_i \leq 0}_{=: (8)}$. 
		Since $x \leq 0$ by (6) and (7), we have $k > 6$ (otherwise (1) and (8) contradict to each other).
		
		For $k \geq 7$, there are exceptional divisors of the form $3u - 2E_1 - E_{234567}$ by Lemma \ref{lemma_list_exceptional} where 
		$E_{234567} := E_{2} + \cdots + E_{7}$. Then 
		\begin{itemize}
			\item $\langle [\omega_{-1}] \cdot (3u - 2E_1 - E_{234567}), [M_{-1}] \rangle \geq 1$ \quad $\Rightarrow$ 
			\quad $3a + 2b_1 + b_{234567} \geq 0$.
			\item $\langle [\omega_1] \cdot (3u - 2E_1 - E_{234567}), [M_1] \rangle \geq 1$ \quad $\Rightarrow$ 
			\quad $3a + 3x + 2b_1 + 2y_1 + b_{234567} + y_{234567} \leq 0$
		\end{itemize}
		where $b_{234567} =  b_{2} + \cdots + b_{7}$ and $y_{234567} = y_{2} + \cdots + y_{7}$, respectively.
		Then we obtain 
		\begin{equation}\label{equation_k_7}
			3x  + 2y_1 + y_{234567} \leq 0 
		\end{equation}
		Since the inequality \eqref{equation_k_7} holds up to permutation on $\{1,\cdots,8\}$, we may assume that $y_1$ is  the maximal among all $y_i$'s. Then, 
		\[
			1 \leq 3x  + (y_1 + y_8) + y_{234567} \leq 3x  + 2y_1 + y_{234567} \leq 0 
		\]
		by (1) and this leads  to a contradiction. Therefore, no such manifold exists. 
	\end{proof}
				
	By Proposition \ref{proposition_II_2}, we only to consider the three cases $M_0 \cong \p^2$ {\bf (II-2-1)}, $S^2 \times S^2$ {\bf (II-2-2)}, and $X_1$ {\bf (II-2-3)}. 			

	\begin{theorem}[Case {\bf (II-2-1)}]\label{theorem_II_2_1}
		Let $(M,\omega)$ be a six-dimensional closed monotone semifree Hamiltonian $S^1$-manifold such that $\mathrm{Crit} H = \{ 1,0,-1\}$. 
		Suppose that $M_0 \cong \p^2$. Then the list of all possible topological fixed point data is given in the Table \ref{table_II_2_1}
		\begin{table}[H]
			\begin{tabular}{|c|c|c|c|c|c|c|c|c|}
				\hline
				    & $(M_0, [\omega_0])$ & $e(P_{-1}^+)$ & $Z_{-1}$  & $Z_0$ &  $Z_1$ & $b_2(M)$ & $c_1^3(M)$ \\ \hline \hline
				    {\bf (II-2-1.1)} & $(\p^2, 3u)$ & $0$  & $\p^2$  & \makecell{$Z_0 \cong S^2$, \\ $\mathrm{PD}(Z_0) = u$} & 
				    	$\p^2$
					     & $3$ & $46$ \\ \hline    
				    {\bf (II-2-1.2)} & $(\p^2, 3u)$ & $u$  & $\p^2$  & \makecell{$Z_0 \cong S^2$, \\ $\mathrm{PD}(Z_0) = u$} & 
				    	$\p^2$
					     & $3$ & $50$ \\ \hline    
				    {\bf (II-2-1.3)} & $(\p^2, 3u)$ & $-u$  & $\p^2$  & \makecell{$Z_0 \cong S^2$, \\ $\mathrm{PD}(Z_0) = 2u$} & 
				    	$\p^2$
					     & $3$ & $38$ \\ \hline    
					     					     					     
				    {\bf (II-2-1.4)} & $(\p^2, 3u)$ & $0$  & $\p^2$  & \makecell{$Z_0 \cong S^2$, \\ $\mathrm{PD}(Z_0) = 2u$} & 
				    	$\p^2$
					     & $3$ & $40$ \\ \hline    					     
				    {\bf (II-2-1.5)} & $(\p^2, 3u)$ & $-u$  & $\p^2$ & \makecell{$Z_0 \cong T^2$, \\ $\mathrm{PD}(Z_0) = 3u$} & 
				    	$\p^2$
					     & $3$ & $32$ \\ \hline    					     
				    {\bf (II-2-1.6)} & $(\p^2, 3u)$ & $-2u$  & $\p^2$ & \makecell{$Z_0 \cong \Sigma_3$, \\ $\mathrm{PD}(Z_0) = 4u$} & 
				    	$\p^2$
					     & $3$ & $26$ \\ \hline    					     
			\end{tabular}		
			\vs{0.5cm}			
			\caption{\label{table_II_2_1} Topological fixed point data for $\mathrm{Crit} H = \{1,0,-1\}$ with $M_0 = \p^2$}
		\end{table}				   
	\end{theorem}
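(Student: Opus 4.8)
The plan is to reduce the classification to a finite system of inequalities in two integer parameters and then enumerate. Case {\bf (II-2)} is defined by $\mathrm{Crit}\,\mathring H=\{0\}$, so the fixed surface $Z_0$ is nonempty, and since $Z_{\min}$ and $Z_{\max}$ have complex codimension one in $M$, the extremal reduced spaces $M_{-1}$ and $M_1$ are both diffeomorphic to $M_0\cong\p^2$; moreover, as blowing up the surface $M_0$ along the divisor $Z_0$ is an isomorphism, every intermediate reduced space $M_t$ ($-1<t<1$) is diffeomorphic to $\p^2$ as well (cf.\ Proposition~\ref{proposition_GS}). Writing $u$ for the positive generator of $H^2(\p^2;\Z)$, I would set $e:=e(P_{-1+\epsilon}^+)=au$ and $\mathrm{PD}(Z_0)=xu$ for integers $a,x$, with $x\ge 1$ because $Z_0$ is a symplectic submanifold of $(M_0,\omega_0)$. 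Exactly as in the proof of Lemma~\ref{lemma_I_Chern_number}, Lemma~\ref{lemma_Euler_class} applies when crossing level $0$, with the fixed surface $Z_0$ in the role of the ``exceptional divisor'', giving $e(P_{0+\epsilon}^+)=e+\mathrm{PD}(Z_0)=(a+x)u$. Combining the Duistermaat--Heckman formula \eqref{equation_DH} with $[\omega_0]=c_1(TM_0)=3u$ (Lemma~\ref{lemma_monotonicity}) then yields $[\omega_t]=(3-at)u$ for $t\in[-1,0]$ and $[\omega_t]=\bigl(3-(a+x)t\bigr)u$ for $t\in[0,1]$; in particular $[\omega_{-1}]=(3+a)u$ and $[\omega_1]=(3-a-x)u$.

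Next I would record the constraints: $[\omega_{-1}]$ and $[\omega_1]$ are Kähler classes on $\p^2$, so $3+a>0$ and $3-a-x>0$; $\mathrm{Vol}(Z_0)=\langle[\omega_0],\mathrm{PD}(Z_0)\rangle=3x>0$ holds automatically; and the normalization \eqref{equation_II_1_assumption}, namely $\mathrm{Vol}(Z_{\min})=(3+a)^2\ge(3-a-x)^2=\mathrm{Vol}(Z_{\max})$, reduces (using positivity of both sides) to $2a+x\ge 0$. Enumerating the integer solutions of $a\ge -2$, $a+x\le 2$, $x\ge 1$, $2a+x\ge 0$ produces exactly the six pairs $(a,x)\in\{(0,1),(1,1),(-1,2),(0,2),(-1,3),(-2,4)\}$, which are precisely the rows of Table~\ref{table_II_2_1}. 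Each such pair determines the whole topological fixed point data: $(M_c,[\omega_c])$ for $c=-1,0,1$ as above, the extremal components $Z_{\pm1}$ being the entire reduced spaces $\p^2$, $\mathrm{PD}(Z_0)=xu$, and $e(P_{-1}^+)=e(P_0^-)=au$, $e(P_0^+)=e(P_1^-)=(a+x)u$.

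Finally I would identify $Z_0$ and compute the invariants. Because any two symplectic curves in $\p^2$ meet with strictly positive intersection number and the components of a fixed set are pairwise disjoint, $Z_0$ is connected; the adjunction formula in $\p^2$ then gives genus $g(Z_0)=\tfrac12(x-1)(x-2)$, so $Z_0\cong S^2$ when $x=1,2$, $Z_0\cong T^2$ when $x=3$, and $Z_0\cong\Sigma_3$ when $x=4$. Morse--Bott theory for the moment map gives $b_2(M)=3$ throughout ($Z_{\min}$ contributes $b_2(\p^2)=1$, and the two index-two components $Z_0$ and $Z_{\max}$ each contribute one degree-two class), and substituting $c_1(TM_0)=3u$, $e=au$, $\mathrm{PD}(Z_0)=xu$ into Lemma~\ref{lemma_II_Chern_number} gives $\int_M c_1(TM)^3=2a^2+2ax+x^2-9x+54$, which equals $46,50,38,40,32,26$ on the six pairs. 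The one place that needs real care is the Euler-class jump across the fixed \emph{surface} $Z_0$ (a blow-up along a divisor, hence degenerate) together with the attendant Duistermaat--Heckman computation of $[\omega_{\pm1}]$; once these are correct, everything else — the finiteness, the explicit list, and the geometric identifications — is elementary.
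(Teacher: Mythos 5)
Your proposal is correct and takes essentially the same route as the paper's proof: the same parametrization $e=au$, $\mathrm{PD}(Z_0)=pu$, the same Duistermaat--Heckman, positivity and normalization \eqref{equation_II_1_assumption} inequalities producing exactly the six pairs $(p,a)$, and the same use of the adjunction formula and Lemma \ref{lemma_II_Chern_number} to fill in the genus and Chern numbers. The additional details you spell out (the Euler-class jump $e(P_0^+)=e(P_0^-)+\mathrm{PD}(Z_0)$ via Lemma \ref{lemma_Euler_class}, connectedness of $Z_0$ from disjointness versus positive homological intersection in $\p^2$, and $b_2(M)=3$ from Morse--Bott theory) are correct and merely make explicit what the paper leaves implicit.
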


	\begin{proof}
		Let $e = au$ and $\mathrm{PD}(Z_0) = pu$ for $a, p \in \Z$. Then the Duistermaat-Heckman theorem yields
		\[
			[\omega_{-1}] = (3+a)u, \quad [\omega_1] = (3 - a - p)u \quad \text{so that} \quad 
			\begin{cases}
				3 + a \geq 1 \\
				3 - a - p \geq 1.
			\end{cases}
		\]
		Note that $p$ is positive by $\langle [\omega_0], [Z_0] \rangle  > 0$ and $3 + a \geq 3 - a - p$ by the assumption \eqref{equation_II_1_assumption}.
		Therefore all possible solutions of $(p,a)$
		are listed as follows.
		\begin{table}[H]
			\begin{tabular}{|c|c|c|c|c|c|c|}
				\hline
					& {\bf (II-2-1.1)} & {\bf (II-2-1.2)} & {\bf (II-2-1.3)} & {\bf (II-2-1.4)} & {\bf (II-2-1.5)} & {\bf (II-2-1.6)} \\ \hline \hline
					$(p,a)$ & $(1,0)$ & $(1,1)$  & $(2,-1)$ & $(2,0)$ & $(3,-1)$ & $(4,-2)$ \\ \hline
					$\mathrm{Vol}(Z_{\min})$ & $9$ & $16$ & $4$ & $9$ & $4$ & $1$\\ \hline
					$\mathrm{Vol}(Z_{\max})$ & $4$ & $1$ & $4$ & $1$ & $1$& $1$\\ \hline
					\makecell{$\langle c_1(TM_0), [Z_0] \rangle$ \\ $= \mathrm{Vol}(Z_0)$} & $3$ & $3$ & $6$ & $6$ & $9$& $12$\\ \hline
					$[Z_0] \cdot [Z_0]$ & $1$ & $1$ & $4$ & $4$ & $9$& $16$\\ \hline
					$g$ : genus of $Z_0$ & $0$ & $0$ & $0$ & $0$ & $1$& $3$\\ \hline					
			\end{tabular}		
		\end{table}
		\noindent
		Thus we obtain Table \ref{table_II_2_1} using the adjunction formula and Lemma \ref{lemma_II_Chern_number}.
		\vs{0.3cm}		
	\end{proof}

	\begin{example}[Fano varieties of type {\bf (II-2-1)}]\label{example_II_2_1}
	  We describe Fano varieties of type {\bf (II-2-1)} in Theorem \ref{theorem_II_2_1} as follows. \vs{0.3cm}
	  
	  \begin{itemize}
	  	\item {\bf (II-2-1.1), (II-2-1.4)} \cite[No. 26, 22 in Section 12.4]{IP} : 
	  	Let $M = \p^1 \times \p^2$ with the standard $T^3$-action and we consider the circle subgroup generated by $\xi = (0,0,-1)$.
	  	Then the maximal and minimal fixed component are both $\p^2$. Take (any) line $C_1$ and a conic $C_2$ in $Z_{\max}$ and denote the $S^1$-equivariant 
	  	blow-up of $M$ along $C_i$ by $M_i$ for $i=1,2$. 
	  	
	  		 \begin{figure}[H]
	           	 		\scalebox{0.8}{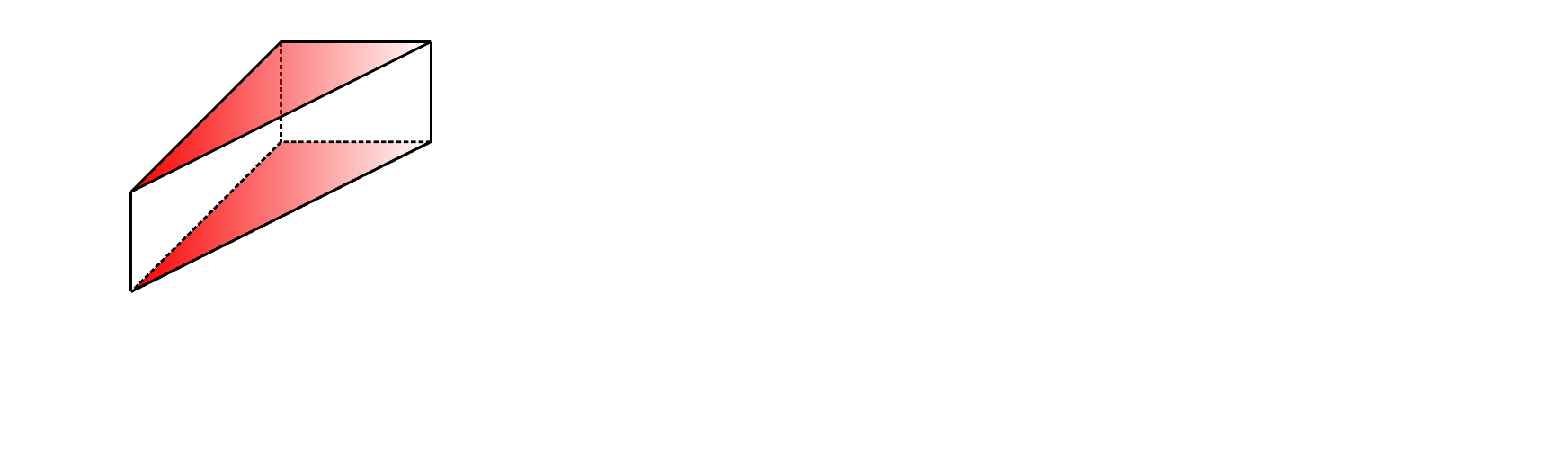}
		           	 	\caption{\label{figure_II_2_1_1} Fano varieties of type {\bf (II-2-1.1)} and {\bf (II-2-1.4)}}
		           \end{figure}
	  	\noindent
	  	Then the induced $S^1$-action on each $M_i$ is semifree and the fixed point data is described in Figure \ref{figure_II_2_1_1}. (The first one is a moment polytope of 
	  	$\p^1 \times \p^2$, the second one is for $M_1$ where $C_1$ is taken to be $T^3$-equivariant, and  the third one is a {\em conceptual images} of the fixed point set
	  	where the {\em blue ellipse} indicates the fixed component on the zero level set in $M_2$. If we denote by $Z_{\min}^i$ and $Z_0^i$ 
	  	the minimal fixed component and the interior fixed component of $M_i$ respectively, then one can 
	  	immediately check that $\mathrm{Vol}(Z_{\min}^1) = \mathrm{Vol}(Z_{\min}^2) = 9$ and $\mathrm{Vol}(Z_0^1) = 3$ and $\mathrm{Vol}(Z_0^2) = 6$ (since $Z_0^2$ is a 
	  	conic in $M_0$).
	  	Thus the corresponding fixed point data coincide with {\bf (II-2-1.1)} and {\bf (II-2-1.4)} in Table \ref{table_II_2_1}, respectively. 
	  	
	  	See also Example \ref{example_I_2} {\bf (I-2)}, Example \ref{example_I_3_2} {\bf (I-3-2.5)}, \cite[Example 8.13]{Cho1}, and \cite[Example 8.5]{Cho2} for {\bf (II-2-1.1)}.
	  	\vs{0.3cm}
	  		  	
	  	\item {\bf (II-2-1.2), (II-2-1.3), (II-2-1.5)} \cite[No. 29, 19, 14 in Section 12.4]{IP} : 
	  	Consider $V_7$, the $T^3$-equivariant blow-up of $\p^3$ at a fixed point, and the $S^1$-action on $V_7$
	  	generated by $\xi = (0,0,1)$ where the fixed components for the action are colored by red in the first of Figure \ref{figure_II_2_1_2}
	  	where $\mathrm{Vol}(Z_{\min}) = 4$ and $\mathrm{Vol}(Z_{\min}) = 16$.
	  	
	  		 \begin{figure}[H]
	           	 		\scalebox{0.8}{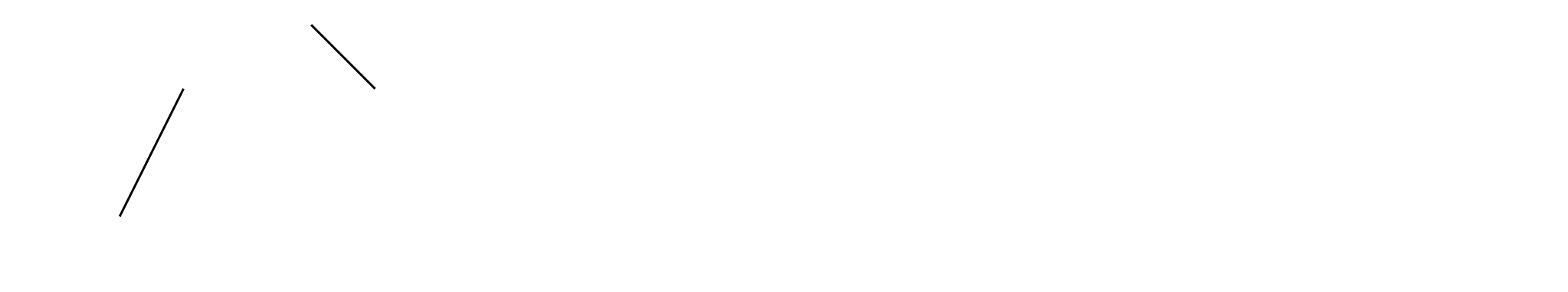}
		           	 	\caption{\label{figure_II_2_1_2} Fano varieties of type {\bf (II-2-1.2)}, {\bf (II-2-1.3)} and {\bf (II-2-1.5)}}
		           \end{figure}
	  	\noindent
	  	There are three ways of obtaining a semifree $S^1$-Fano variety by blowing-up $V_7$ as follows. 
		Let $C_1$ be degree-one curve lying on the exceptional divisor $Z_{\max} \cong \p^2$. Also, we let $C_2$ be a conic, and $C_3$ be a cubic in $Z_{\min}$. 
		Denote by $M_i$ the $S^1$-equivariant blow-up of $V_7$ along $C_i$ for each $i=1,2,3$. If we denote by $Z_{\min}^i$ and $Z_{\max}^i$ the minimal and maximal
		fixed components on $M_i$, respectively, then we can easily see that 
		\begin{itemize}
			\item $\mathrm{Vol}(Z_{\min}^1) = 16$, $\mathrm{Vol}(Z_{\max}^1) = 1$,
			\item $\mathrm{Vol}(Z_{\min}^2) = 9$, $\mathrm{Vol}(Z_{\max}^2) = 4$,
			\item $\mathrm{Vol}(Z_{\min}^3) = 4$, $\mathrm{Vol}(Z_{\max}^3) = 4$.
		\end{itemize}
		Therefore, the fixed point data for each case coincides with one described in Table \ref{table_II_2_1}. See also \cite[Example 8.13]{Cho1} and
		\cite[Example 8.5]{Cho2} for {\bf (II-2-1.2)}.
		
		\begin{remark}
			If we consider a line $C$ in $Z_{\min}$ in $V_7$ and take the $S^1$-equivariant blow-up of $V_7$ along $C$, then the resulting manifold $\widetilde{M}$ is also Fano 
			and $\mathrm{Vol}(\widetilde{Z}_{\min}) = 9$, $\mathrm{Vol}(\widetilde{Z}_{\max}) = 4$. This model can be taken as a toric model and so one can check that 
			$\widetilde{M}$ is isomorphic to the blow-up of $\p^1 \times \p^2$ along a line, see {\bf (II-2-1.1)}. 
			
			We also note that, in Mori-Mukai's classification, the manifold $M_2$ is described as the $T^2$-equivariant blow-up of a smooth quadric $Q$ in $\p^4$ at two isolated
			extrema. See in Figure \ref{figure_II_2_1_2_1}.
	  	
  		 \begin{figure}[H]
	           	 \scalebox{0.8}{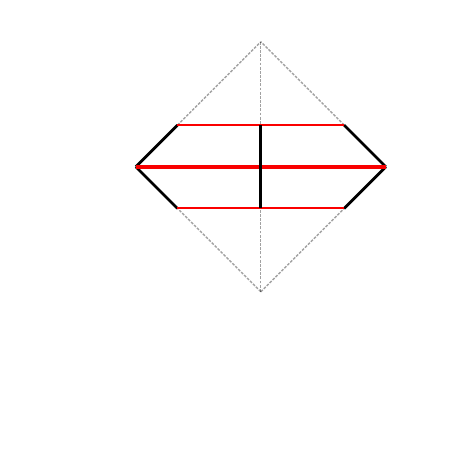}
		           \caption{\label{figure_II_2_1_2_1} Another description of {\bf (II-2-1.3)}}
	           \end{figure}
		\end{remark}

		\vs{0.3cm}
	  	
	  	\item {\bf (II-2-1.6)} \cite[No. 9 in Section 12.4]{IP} : Let $M = \mathbb{P}(\mcal{O} \oplus \mcal{O}(1,1))$ with the semifree $S^1$-action 
	  	as we have seen in Example \ref{example_II_1} {\bf (II-1-1.3)}.
	  	It is a toric variety whose moment polytope is given in the first of Figure \ref{figure_II_2_1_3}. Take $C$ a quartic in $Z_{\min} \cong \p^2$ and denote the $S^1$-equivariant 
	  	blow-up of $M$ along $C$ by $\widetilde{M}$. Then we may easily check that $\mathrm{Vol}(\widetilde{Z}_{\min}) = \mathrm{Vol}(\widetilde{Z}_{\max}) = 1$ and this 
	  	coincides with {\bf (II-2-1.6)} in Table \ref{table_II_2_1}.

	           		 \begin{figure}[H]
	           	 		\scalebox{0.8}{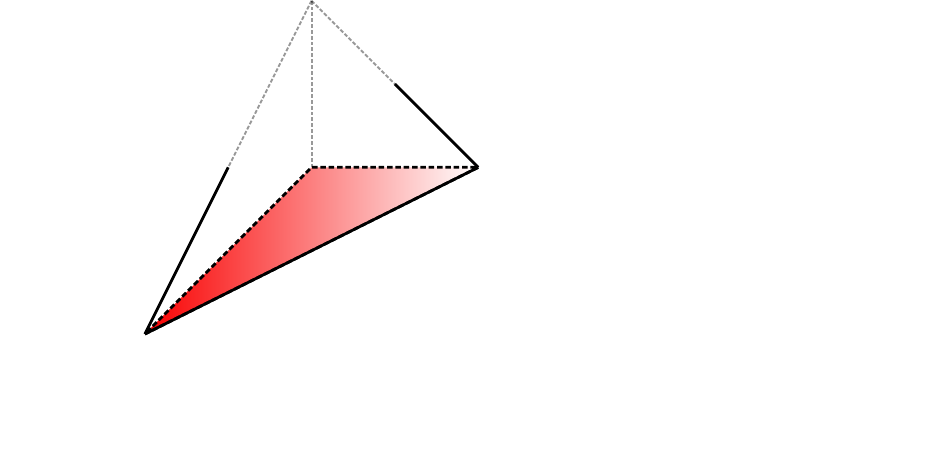}
		           	 	\caption{\label{figure_II_2_1_3} Fano varieties of type {\bf (II-2-1.6)}}
		           	 \end{figure}
		           	 
		\begin{remark}
			It is also possible that we take any curve $C$ of degree less than four in $Z_{\min}$ and take the $S^1$-equivariant blow-up of $M$ along $C$. Namely, if we let
			$C_i$ be any smooth curve of degree $i$ in $Z_{\min}$, then the $S^1$-equivariant blow-up, denoted by $M_i$, is also a semifree $S^1$-Fano variety where the fixed 
			point data
			is given by 
		\begin{itemize}
			\item $\mathrm{Vol}(Z_{\min}^1) = 16$, $\mathrm{Vol}(Z_{\max}^1) = 1$,
			\item $\mathrm{Vol}(Z_{\min}^2) = 9$, $\mathrm{Vol}(Z_{\max}^2) = 1$,
			\item $\mathrm{Vol}(Z_{\min}^3) = 4$, $\mathrm{Vol}(Z_{\max}^3) = 1$.
		\end{itemize}
		Those fixed point data exactly match up with {\bf (II-2-1.2), (II-2-1.4), (II-2-1.5)}, respectively. We will see later that 
		each $M_i$' are $S^1$-equivariantly symplectomorphic to the Fano varieties of type {\bf (II-2-1.2), (II-2-1.4), (II-2-1.5)} given above.

		\end{remark}
		  \end{itemize}
	\end{example}
				
	\begin{theorem}[Case {\bf (II-2-2)}]\label{theorem_II_2_2}
		Let $(M,\omega)$ be a six-dimensional closed monotone semifree Hamiltonian $S^1$-manifold such that $\mathrm{Crit} H = \{ 1,0,-1\}$. 
		Suppose that $M_0 \cong S^2 \times S^2$. Then the list of all possible topological fixed point data is given in the Table \ref{table_II_2_2}
		\begin{table}[H]
			\begin{tabular}{|c|c|c|c|c|c|c|c|c|}
				\hline
				    & $(M_0, [\omega_0])$ & $e(P_{-1}^+)$ & $Z_{-1}$  & $Z_0$ &  $Z_1$ & $b_2(M)$ & $c_1^3(M)$ \\ \hline \hline
				    {\bf (II-2-2.1)} & $(S^2 \times S^2, 2x + 2y)$ & $-x -y$  & $S^2 \times S^2$  & \makecell{$Z_0 \cong T^2$, \\ $\mathrm{PD}(Z_0) = 2x+2y$} & 
				    	$S^2 \times S^2$
					     & $4$ & $28$ \\ \hline    
				    {\bf (II-2-2.2)} & $(S^2 \times S^2, 2x + 2y)$ & $-x$  & $S^2 \times S^2$  & \makecell{$Z_0 \cong S^2$, \\ $\mathrm{PD}(Z_0) = 2x + y$} & 
				    	$S^2 \times S^2$
					     & $4$ & $32$ \\ \hline    
				    {\bf (II-2-2.3)} & $(S^2 \times S^2, 2x + 2y)$ & $-x$  & $S^2 \times S^2$  & \makecell{$Z_0 \cong S^2 ~\dot \cup ~S^2$, \\ 
				    $\mathrm{PD}(Z_0^1) = \mathrm{PD}(Z_0^2) = x$} & 
				    	$S^2 \times S^2$
					     & $5$ & $36$ \\ \hline    			     					     
				    {\bf (II-2-2.4)} & $(S^2 \times S^2, 2x + 2y)$ & $-x$  & $S^2 \times S^2$  & \makecell{$Z_0 \cong S^2$, \\ $\mathrm{PD}(Z_0) = x+y$} & 
				    	$S^2 \times S^2$
					     & $4$ & $36$ \\ \hline    					     
				    {\bf (II-2-2.5)} & $(S^2 \times S^2, 2x + 2y)$ & $-x + y$  & $S^2 \times S^2$  & \makecell{$Z_0 \cong S^2 ~\dot \cup ~S^2$, \\ 
				    $\mathrm{PD}(Z_0^1) = \mathrm{PD}(Z_0^2) = x$} & 
				    	$S^2 \times S^2$
					     & $5$ & $36$ \\ \hline    			     					     
				    {\bf (II-2-2.6)} & $(S^2 \times S^2, 2x + 2y)$ & $-x + y$  & $S^2 \times S^2$  & \makecell{$Z_0 \cong S^2$, \\ $\mathrm{PD}(Z_0) = x$} & 
				    	$S^2 \times S^2$
					     & $4$ & $40$ \\ \hline    					     					     
				    {\bf (II-2-2.7)} & $(S^2 \times S^2, 2x + 2y)$ & $0$  & $S^2 \times S^2$ & \makecell{$Z_0 \cong S^2$, \\ $\mathrm{PD}(Z_0) = x+y$} & 
				    	$S^2 \times S^2$
					     & $4$ & $38$ \\ \hline    					     
				    {\bf (II-2-2.8)} & $(S^2 \times S^2, 2x + 2y)$ & $0$  & $S^2 \times S^2$ & \makecell{$Z_0 \cong S^2$, \\ $\mathrm{PD}(Z_0) = x$} & 
				    	$S^2 \times S^2$
					     & $4$ & $42$ \\ \hline    					     
				    {\bf (II-2-2.9)} & $(S^2 \times S^2, 2x + 2y)$ & $y$  & $S^2 \times S^2$ & \makecell{$Z_0 \cong S^2$, \\ $\mathrm{PD}(Z_0) = x$} & 
				    	$S^2 \times S^2$
					     & $4$ & $44$ \\ \hline    					     					     					     
			\end{tabular}		
			\vs{0.5cm}			
			\caption{\label{table_II_2_2} Topological fixed point data for $\mathrm{Crit} H = \{1,0,-1\}$ with $M_0 = S^2 \times S^2$}
		\end{table}				   
	\end{theorem}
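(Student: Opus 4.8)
The plan is to run the same scheme as in the proof of Theorem~\ref{theorem_II_2_1}, with the single generator $u$ of $H^2(\p^2;\Z)$ replaced by the pair $x,y$ satisfying $\langle x^2\rangle=\langle y^2\rangle=0$, $\langle xy\rangle=1$. First I set $e:=e(P_{-1+\epsilon}^+)=e(P_{1-\epsilon}^-)=ax+by$ and $\mathrm{PD}(Z_0)=px+qy$ with $a,b,p,q\in\Z$. Since $Z_0$ consists of index-two fixed components (Table~\ref{table_fixed}) and $M_{0-\epsilon}\cong M_{0+\epsilon}$, Lemma~\ref{lemma_Euler_class} gives $e(P_0^+)=e+\mathrm{PD}(Z_0)=(a+p)x+(b+q)y$, so the Duistermaat--Heckman theorem~\eqref{equation_DH} yields
\[
	[\omega_t]=(2-ta)x+(2-tb)y \ \ (t\in[-1,0]),\qquad [\omega_t]=(2-t(a+p))x+(2-t(b+q))y \ \ (t\in[0,1]),
\]
and in particular $[\omega_{-1}]=(2+a)x+(2+b)y$, $[\omega_1]=(2-a-p)x+(2-b-q)y$.

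Next I would harvest the constraints. Since the only critical value in $(-1,1)$ is $0$ and crossing it does not change the diffeomorphism type (Proposition~\ref{proposition_GS}), every $M_t$ with $t\in[-1,1]$ is diffeomorphic to $S^2\times S^2$, so each $[\omega_t]$ lies in the symplectic cone $\{\alpha x+\beta y:\alpha,\beta>0\}$; as $t\mapsto[\omega_t]$ is piecewise linear it suffices to impose positivity at $t=-1,0,1$, giving $a,b\ge-1$ and $a+p\le1$, $b+q\le1$. From $\mathrm{Vol}(Z_0)=\langle[\omega_0],\mathrm{PD}(Z_0)\rangle\ge1$ we get $p+q\ge1$, and since each connected component of $Z_0$ is an embedded symplectic surface, the adjunction formula together with positivity of area forces its class to be effective, hence $p,q\ge0$. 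Finally the normalization~\eqref{equation_II_1_assumption}, $\mathrm{Vol}(Z_{\min})\ge\mathrm{Vol}(Z_{\max})$, reads $(2+a)(2+b)\ge(2-a-p)(2-b-q)$. Solving this finite system modulo the symmetry $x\leftrightarrow y$ (applied simultaneously to $e$ and $\mathrm{PD}(Z_0)$) should leave exactly the nine quadruples $(a,b,p,q)$ appearing in Table~\ref{table_II_2_2}.

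For each surviving quadruple I would then determine $Z_0$. Writing $\mathrm{PD}(Z_0^i)=p_ix+q_iy$ for the components, adjunction gives genus $g_i=(p_i-1)(q_i-1)$, disjointness gives $p_iq_j+p_jq_i=0$ for $i\ne j$, and $\mathrm{Vol}(Z_0^i)=2(p_i+q_i)>0$; together with $\sum_i(p_i,q_i)=(p,q)$ these pin down $Z_0$ in every case. It is connected, with genus read off from adjunction, except when $(p,q)=(2,0)$ (rows {\bf (II-2-2.3)} and {\bf (II-2-2.5)}), where $g=(2-1)(0-1)=-1<0$ is impossible, so $Z_0$ splits, and the intersection and volume constraints then force exactly two copies of $S^2$ in class $x$. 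The Chern number $c_1^3(M)$ in each row follows by substituting $e$, $c_1(TM_0)=2x+2y$, and $\mathrm{PD}(Z_0)$ into Lemma~\ref{lemma_II_Chern_number}, and $b_2(M)=3+\#\{\text{components of }Z_0\}$ follows from the equalities in the Morse--Bott relations for the perfect Morse--Bott function $H$ ($Z_{\min}$ contributing $b_2(S^2\times S^2)=2$, $Z_{\max}$ contributing $b_0=1$, and $Z_0$ contributing $b_0(Z_0)$, all fixed components having even index).

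I expect the main obstacle to be not the inequality bookkeeping but the last step: checking that the class $px+qy$ (or its split into two copies of $x$) is genuinely realized by an embedded symplectic surface of the asserted topology, and that no spurious integer solution survives. Importing effectiveness $p,q\ge0$ from the outset keeps the solution list small and clean; the points demanding care are the disconnection analysis for $(p,q)=(2,0)$ and applying the $x\leftrightarrow y$ reduction consistently to $e$ and $\mathrm{PD}(Z_0)$ at once.
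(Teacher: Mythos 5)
Your proposal is correct and follows essentially the same route as the paper: the same parametrization $e=ax+by$, $\mathrm{PD}(Z_0)=px+qy$, Duistermaat--Heckman at $t=\pm1$, the normalization \eqref{equation_II_1_assumption} with the $x\leftrightarrow y$ symmetry, adjunction/disjointness to pin down $Z_0$ (including the forced splitting into two spheres of class $x$ when $(p,q)=(2,0)$), and Lemma \ref{lemma_II_Chern_number} for $c_1^3$. The only difference is that you impose per-component effectiveness $p,q\ge 0$ (via adjunction plus positivity of area) at the outset, which is legitimate and simply replaces the paper's a posteriori elimination of the spurious solution $(a,b,p,q)=(-1,1,2,-1)$ by the adjunction formula; your enumeration then yields exactly the nine rows of Table \ref{table_II_2_2}.
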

				
	\begin{proof}
		Let $e = ax + by$ and $\mathrm{PD}(Z_0) = px + qy$ for $a,b,p,q \in \Z$. Then we have 
		\[
			[\omega_{-1}] = (2+a)x + (2 + b)y \quad \text{and} \quad [\omega_1] = (2 - (a+p))x + (2 - (b + q))y
		\]
		and so 
		\[
			2 + a \geq 1, \quad 2 + b \geq 1, \quad 2 - a - p \geq 1, \quad 2 - b - q \geq 1
		\]
		by $\int_{M_t} \omega_t^2 > 0$. We also have $\mathrm{Vol}(Z_0) = 2p + 2q \geq 2$ and $(2+a)(2+b) \geq (2 - a - p)(2 - b - q)$ by \eqref{equation_II_1_assumption}. 
		Furthermore, we may assume that 
		\begin{equation}\label{equation_II_2_2}
			\begin{cases}
				p > q, ~\text{or}  \\
				p = q ~\text{and}~ b \geq a
			\end{cases}
		\end{equation}
		as there are two choices of $\{x,y\}$, i.e., other cases ($p < q$ or $p=q$ and $a > b$) can be recovered by swapping $x$ and $y$.
		Summing up, we get
		\[
			-2 \leq a + b \leq 1, \quad -1 \leq a,b, \quad a+p, b+q \leq 1 \quad p + q \geq 1, \quad p \geq q, \quad (2+a)(2+b) \geq (2 - a - p)(2 - b - q).
		\]
		Using the adjunction formula, we obtain the following table. (Note that, two solutions $(a,b,p,q) = (0,-1,1,1)$ and $(-1,1,2,-1)$ are dropped from the table below
		due to \eqref{equation_II_2_2} and the adjunction formula.)
		
		\begin{table}[H]
		\begin{adjustbox}{max width=\textwidth}
			\begin{tabular}{|c|c|c|c|c|c|c|c|c|c|c|}
				\hline
					& {\bf (II-2-2.1)} & {\bf (II-2-2.2)} & {\bf (II-2-2.3)} & {\bf (II-2-2.4)} & {\bf (II-2-2.5)} & {\bf (II-2-2.6)} & {\bf (II-2-2.7)} & 
					{\bf (II-2-2.8)} & {\bf (II-2-2.9)} \\ \hline \hline
					$e = (a,b)$ & $(-1,-1)$ & $(-1,0)$  & $(-1,0)$ & $(-1,0)$ & $(-1,1)$ & $(-1,1)$ &$(0,0)$ & $(0,0)$  & $(0,1)$\\ \hline
					$\mathrm{PD}(Z_0)= (p,q)$ & $(2,2)$ & $(2,1)$  & $(2,0)$ & $(1,1)$ & $(2,0)$  & $(1,0)$ &$(1,1)$ & $(1,0)$ & (1, 0) \\ \hline
					$\mathrm{Vol}(Z_{\min})$ & $2$ & $4$ & $4$ & $4$ & $6$ & $6$ &$8$ & $8$ & $12$ \\ \hline
					$\mathrm{Vol}(Z_{\max})$ & $2$ & $2$ & $4$ & $4$ & $2$ & $4$ &$2$ & $4$ & $2$\\ \hline
					\makecell{$\langle c_1(TM_0), [Z_0] \rangle$ \\ $= \mathrm{Vol}(Z_0)$} & $8$ & $6$ & $4$ & $4$ & $4$ & $2$ &$4$ & $2$ & $2$\\ \hline
					$[Z_0] \cdot [Z_0]$ & $8$ & $4$ & $0$ & $2$ & $0$ & $0$ &$2$ & $0$ & $0$\\ \hline
					\# components of $Z_0$ & $1$ & $1$ & $2$ & $1$ & $2$ & $1$ &$1$ & $1$ & $1$\\ \hline
					$g$ : genus of $Z_0$ & $1$ & $0$ & $(0,0)$ & $0$ & $(0,0)$ &$0$ &$0$ & $0$  & $0$\\ \hline					
			\end{tabular}		
		\end{adjustbox}
		\end{table}
		\noindent
		The Chern numbers in Table \ref{table_II_2_2} directly follows from simple computation using Lemma \ref{lemma_II_Chern_number}.
	\end{proof}				
				
	\begin{example}[Fano varieties of type {\bf (II-2-2)}]\label{example_II_2_2}\cite[No. 2,5,7,8,9,10,11 in Section 12.5, No. 2,3 in Section 12.6]{IP} 
	  We describe Fano varieties of type {\bf (II-2-2)} in Theorem \ref{theorem_II_2_2} as follows. 
			           	 
	\begin{itemize}
		\item {\bf (II-2-2.1), (II-2-2.2), (II-2-2.7)} \cite[No. 2,5,11 in Section 12.5]{IP} : Consider the toric variety $M = \pp(\mcal{O} \oplus \mcal{O}(1,1))$ whose moment polytope 
		is described in the first of Figure \ref{figure_II_2_2_1}. For the $S^1$-action generated by $\xi = (0,0,1)$, the minimal fixed component $Z_{\min}$ is diffeomorphic to 
		$\p^1 \times \p^1$ and it has volume 18, see also 
		Example \ref{example_II_1} {\bf (II-1-2.3)}.
		
	           		 \begin{figure}[H]
	           	 		\scalebox{0.8}{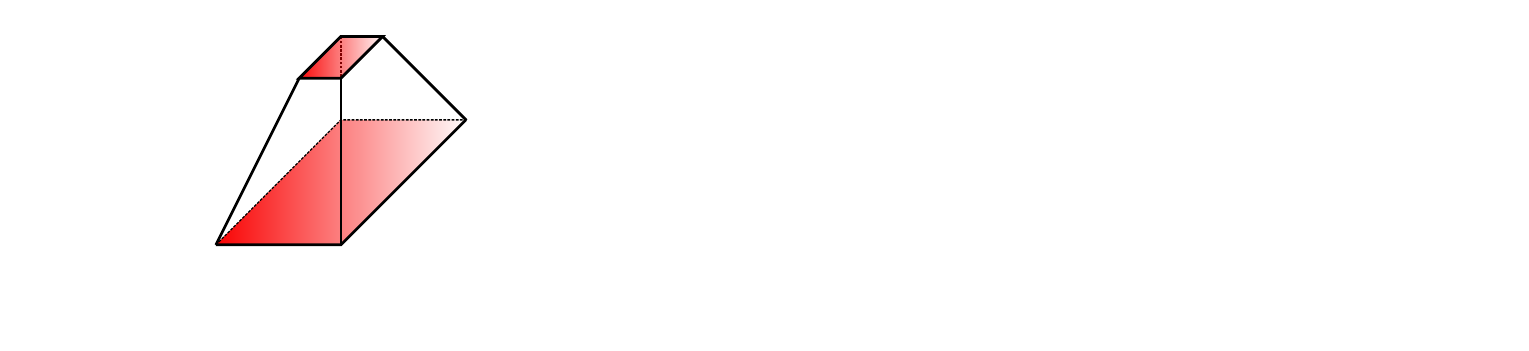}
		           	 	\caption{\label{figure_II_2_2_1} Fano varieties of types {\bf (II-2-2.1), (II-2-2.2), (II-2-2.7)}}
		           	 \end{figure}
		\noindent
		Let $C_{(a,b)}$ be a smooth curve of bidegree $(a,b)$ on $Z_{\min}$. Since $[\omega_1] = 3x + 3y$ as in Example \ref{example_II_1} {\bf (II-1-2.3)}, we may blow-up 
		of $M$ along $C_{(a,b)}$ where $(a,b) = (1,0), (2,0), (1,1),(2,1)$ and $(2,2)$. Denote by $M_{(a,b)}$ the $S^1$-equivariant blow-up of $M$ along $C_{(a,b)}$. Then
		each $M_{(a,b)}$ has three fixed components and one can check that 
		
		\begin{table}[H]
		\begin{adjustbox}{max width=\textwidth}
			\begin{tabular}{|c|c|c|c|c|c|}
				\hline
					$(a,b)$ & $(1,0)$ & $(1,1)$ & $(2,0)$ & $(2,1)$ & $(2,2)$ \\ \hline \hline
					$\mathrm{Vol}(Z_{\min})$ & $12$ & $8$ & $6$ & $4$ & $2$ \\ \hline
					$\mathrm{Vol}(Z_{\max})$ & $2$ & $2$ & $2$ & $2$ & $2$ \\ \hline
					$\mathrm{Vol}(Z_0)$ & $2$ & $4$ & $4$ & $6$ & $8$ \\ \hline
					TFD type & {\bf (II-2-2.9)} & {\bf (II-2-2.7)} & {\bf (II-2-2.5)} & {\bf (II-2-2.2)} & {\bf (II-2-2.1)} \\ \hline
			\end{tabular}		
		\end{adjustbox}
		\end{table}
		\noindent
		We visualize each cases in Figure \ref{figure_II_2_2_1} except for the cases {\bf (II-2-2.5)} and {\bf (II-2-2.9)}, where the latter two types can be 
		represented by toric Fano varieties as we will see below.
		
		\begin{remark}
			Let $M = \p^1 \times \p^1 \times \p^1$ with the standard $T^3$-action and consider the $S^1$-action generated by $\xi = (0,0,1)$. 
			Then there are two fixed components that are both diffeomorphic to $S^2 \times S^2$ with the same volume 8. If $C$ is a curve of bidegree $(1,1)$
			on $Z_{\max}$, then one can take the $S^1$-equivariant blow-up of $M$ along $C$ where we denote the resulting manifold by $\widetilde{M}$. 
			Then we have 
			\[
				\mathrm{Vol}(\widetilde{Z}_{\max}) = 2, \quad \mathrm{Vol}(\widetilde{Z}_{\min}) = 8, \quad \mathrm{Vol}(\widetilde{Z}_{0}) = 4
			\]
			and so the fixed point data is of type {\bf (II-2-2.7)}. See Figure \ref{figure_II_2_2_1_1}. We will see in Section \ref{secMainTheorem} that 
			$\widetilde{M}$ is $S^1$-equivariantly symplectomorphic to the blow-up of $\pp(\mcal{O} \oplus \mcal{O}(1,1))$ along a curve of bidegree $(1,1)$
			described in Figure \ref{figure_II_2_2_1}.
			
	           		 \begin{figure}[H]
	           	 		\scalebox{0.8}{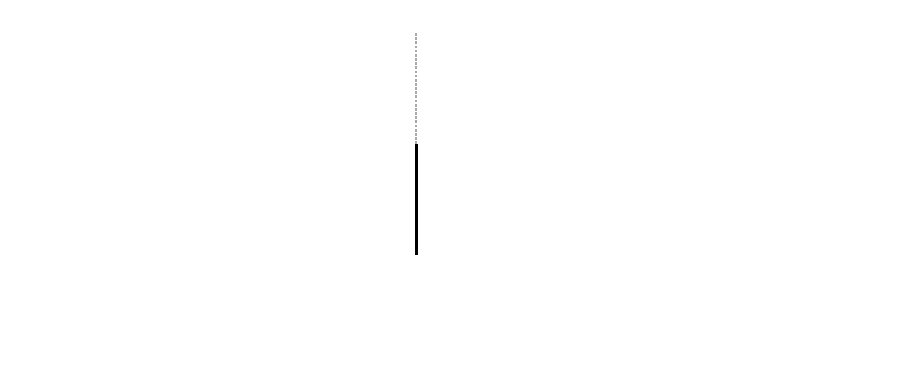}
		           	 	\caption{\label{figure_II_2_2_1_1} Fano varieties of types {\bf (II-2-2.7)}}
		           	 \end{figure}
			
		\end{remark}
		\vs{0.3cm}
		
		\item {\bf (II-2-2.3), (II-2-2.4), (II-2-2.8), (II-2-2.9)} \cite[No. 7,8 in Section 12.5, No. 2,3 in Section 12.6]{IP} : 			
		Now we let $M = \p^1 \times X_1$ admitting the standard $T^3$-action where the moment polytope is given in Figure \ref{figure_II_2_2_2}. With respect to 
		the $S^1$-action generated by $\xi = (0,0,1)$, we have two fixed components such that $\mathrm{Vol}(Z_{\min}) = 12$ and $\mathrm{Vol}(Z_{\max}) = 4$.
		
	           		 \begin{figure}[H]
	           	 		\scalebox{0.8}{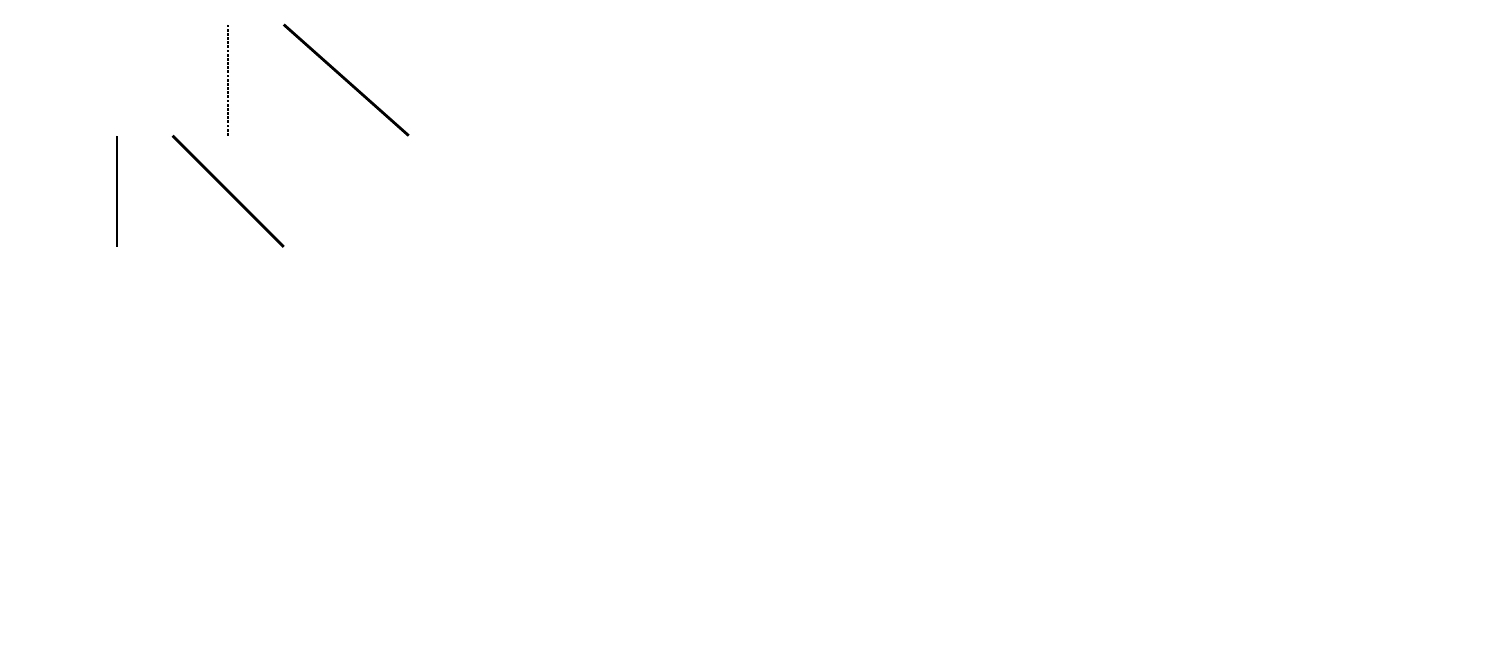}
		           	 	\caption{\label{figure_II_2_2_2} Fano varieties of types {\bf (II-2-2.3), (II-2-2.4), (II-2-2.8), (II-2-2.9)}}
		           	 \end{figure}
		
		\noindent
		Note that $[\omega_{-1}] = x + 2y$ and $[\omega_1] = 3x + 2y$. So, we may obtain a Fano variety by blowing up $M$ along 
		\begin{itemize}
			\item a curve of bidegree $(a,b)$ on $Z_{\min}$ where $(a,b) = (1,0), (2,0), (1,1),$ or $(2,1)$, or 
			\item a curve of bidegree $(0,1)$ on $Z_{\max}$.
		\end{itemize}
		and each of them has fixed point set as follows.
		\begin{table}[H]
		\begin{adjustbox}{max width=\textwidth}
			\begin{tabular}{|c|c|c|c|c|c|}
				\hline
					$(a,b)$ & $(1,0)$ & $(2,0)$ & $(1,1)$ & $(2,1)$ & $(0,1)$ \\ \hline \hline
					$\mathrm{Vol}(Z_{\min})$ & $8$ & $4$ & $4$ & $2$ & $12$ \\ \hline
					$\mathrm{Vol}(Z_{\max})$ & $4$ & $4$ & $4$ & $4$ & $2$ \\ \hline
					$\mathrm{Vol}(Z_0)$ & $2$ & $4$ & $4$ & $6$ & $2$ \\ \hline
					TFD type & {\bf (II-2-2.8)} & {\bf (II-2-2.3)} & {\bf (II-2-2.4)} & {\bf (II-2-2.2)} & {\bf (II-2-2.9)} \\ \hline
			\end{tabular}		
		\end{adjustbox}
		\end{table}
		\noindent
		
		\begin{remark}
			Note that 
			\begin{itemize}
				\item the blow-up of $\pp(\mcal{O} \oplus \mcal{O}(1,1))$ along a curve of bidegree $(2,1)$ and
				\item the blow-up of $\p^1 \times X_1$ along a curve of bidegree $(2,1)$
			\end{itemize}
			have the same fixed point data {\bf (II-2-2.2)}. We will see in Section \ref{secMainTheorem} 
			that they are $S^1$-equivariantly symplectormophic to each other.
		\end{remark}

		\begin{remark}
			Consider the complete flag variety $\mcal{F}(3)$ equipped with the $T^2$-action where the corresponding moment polytope is given 
			on the bottom-right in Figure \ref{figure_II_2_2_2}. By taking $T^2$-equivariant blow-up along two $T^2$-invariant spheres $C_{\min}$ and $C_{\max}$ corresponding
			the edges $\overline{(0,0) ~(2,0)}$ and $\overline{(2,4) ~(4,4)}$, we obtain a Fano variety and it has three fixed components such that 
			\begin{itemize}
				\item $Z_{\min} \cong Z_{\max} \cong S^2 \times S^2$ (since $\mathrm{Vol}(C_{\min}) = \mathrm{Vol}(C_{\max})  = 2$ and by Lemma \ref{lemma_Euler_extremum})
				\item $\mathrm{Vol}(Z_0) = 4$ and $\mathrm{Vol}(Z_{\min}) = \mathrm{Vol}(Z_{\max}) = 4$. 
			\end{itemize}
			So, the fixed point data is of type {\bf (II-2-2.4)}. Consequently, two semifree $S^1$-Fano varieties 
			\begin{itemize}
				\item the blow-up of $\p^1 \times X_1$ along a curve of bidegree $(1,1)$ on the mnimum, and
				\item the blow-up of $\mcal{F}(3)$ along the disjoint union of two spheres
			\end{itemize}
			have the same fixed point data. We will see in Section \ref{secMainTheorem} that they are $S^1$-equivariantly symplectomorphic. 
		\end{remark}

		\item {\bf (II-2-2.5), (II-2-2.6)} \cite[No. 9,10 in Section 12.5]{IP} : In this case, we consider $M$, the $T^3$-equivariant blow-up of $\p^3$ along two disjoint $T^3$-invariant curves
		which we described in Example \ref{example_I_3_1} {\bf (I-3-1.4)}. We fix one exceptional divisor $D$ on $M$ and let $C_1$, $C_2$ be two disjoint $T^3$-invariant curves. Then, 
		we obtain two Fano varieties
		\begin{itemize}
			\item $M_1$ : the $T^3$-equivariant blow-up of $M$ along $C_1 ~\dot \sqcup ~C_2$.
			\item $M_2$ : the $T^3$-equivariant blow-up of $M$ along $C_1$,
		\end{itemize}
		One can check that the $S^1$-action on each $M_i$ generated by $\xi = (-1,-1,0)$ is semifree. Then it immediately follows from Figure \ref{figure_II_2_2_3} that   
		\begin{table}[H]
		\begin{adjustbox}{max width=\textwidth}
			\begin{tabular}{|c|c|c|}
				\hline
							& $M_1$ & $M_2$ \\ \hline \hline
					$\mathrm{Vol}(Z_{\min})$ & $6$ & $6$ \\ \hline
					$\mathrm{Vol}(Z_{\max})$ & $2$ & $4$ \\ \hline
					$\mathrm{Vol}(Z_0)$ & $(2,2)$ & $2$ \\ \hline
					TFD type & {\bf (II-2-2.5)} & {\bf (II-2-2.6)} \\ \hline
			\end{tabular}		
		\end{adjustbox}
		\end{table}
		\noindent
		where the fixed component $Z_0$ in $M_1$ consists of two spheres corresponding to the edges $\overline{(0,1,1) ~(1,0,1)}$ and $\overline{(0,2,0) ~(2,0,0)}$ in the middle of 
		Figure \ref{figure_II_2_2_3} whose affine distances are both two. See also {\bf (I-3-1.5)} and \cite[Example 8.3]{Cho2}.
	\end{itemize}					           	 
	
	           		 \begin{figure}[H]
	           	 		\scalebox{0.8}{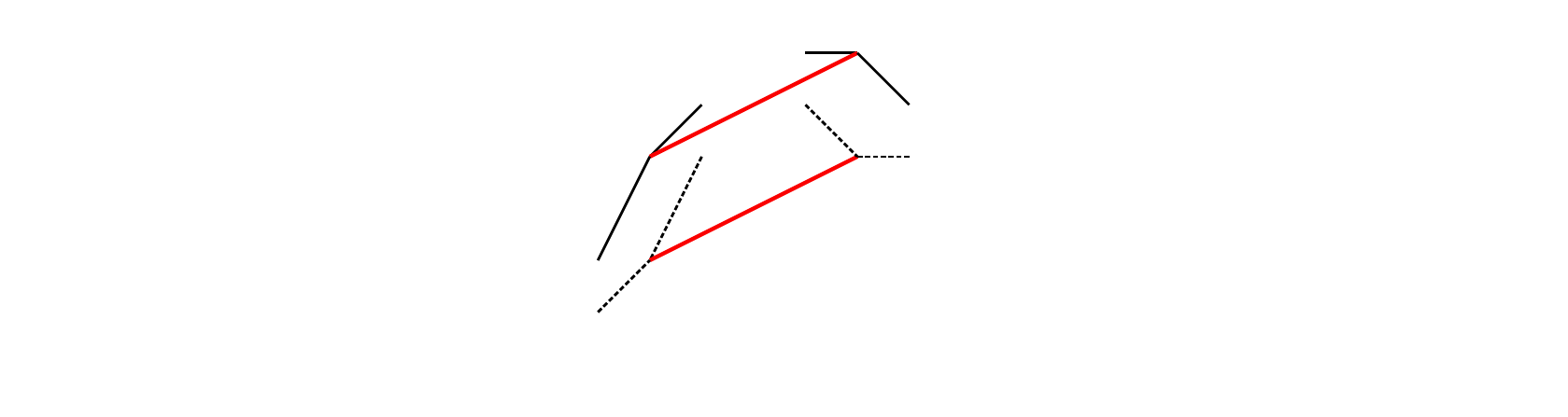}
		           	 	\caption{\label{figure_II_2_2_3} Fano varieties of type {\bf (II-2-2.5) and (II-2-2.6)}}
		           	 \end{figure}

	\end{example}
								
	\begin{theorem}[Case {\bf (II-2-3)}]\label{theorem_II_2_3}
		Let $(M,\omega)$ be a six-dimensional closed monotone semifree Hamiltonian $S^1$-manifold such that $\mathrm{Crit} H = \{ 1,0,-1\}$. 
		Suppose that $M_0 \cong X_1$. Then the list of all possible topological fixed point data is given in the Table \ref{table_II_2_3}
		\begin{table}[H]
			\begin{tabular}{|c|c|c|c|c|c|c|c|c|}
				\hline
				    & $(M_0, [\omega_0])$ & $e(P_{-1}^+)$ & $Z_{-1}$  & $Z_0$ &  $Z_1$ & $b_2(M)$ & $c_1^3(M)$ \\ \hline \hline
				    {\bf (II-2-3.1)} & $(X_1, 3u - E_1)$ & $-u$  & $X_1$  & \makecell{$Z_0 \cong S^2$, \\ $\mathrm{PD}(Z_0) = 2u$} & 
				    	$X_1$
					     & $4$ & $32$ \\ \hline    
				    {\bf (II-2-3.2)} & $(X_1, 3u - E_1)$ & $-E_1$  & $X_1$  & \makecell{$Z_0 \cong S^2 ~\dot \cup ~ S^2$, \\ $\mathrm{PD}(Z_0^1) = u$ \\ $\mathrm{PD}(Z_0^2) = E_1$} & 
				    	$X_1$
					     & $5$ & $36$ \\ \hline    
				    {\bf (II-2-3.3)} & $(X_1, 3u - E_1)$ & $0$  & $X_1$  & \makecell{$Z_0 \cong S^2 $, \\ 
				    $\mathrm{PD}(Z_0) = u$} & 
				    	$X_1$
					     & $4$ & $40$ \\ \hline    			     					     
				    {\bf (II-2-3.4)} & $(X_1, 3u - E_1)$ & $u - E_1$  & $X_1$  & \makecell{$Z_0 \cong S^2$, \\ $\mathrm{PD}(Z_0) = E_1$} & 
				    	$X_1$
					     & $4$ & $46$ \\ \hline    					     
				    {\bf (II-2-3.5)} & $(X_1, 3u - E_1)$ & $0$  & $X_1$  & \makecell{$Z_0 \cong S^2 $, \\ 
				    $\mathrm{PD}(Z_0) = E_1$} & 
				    	$X_1$
					     & $4$ & $44$ \\ \hline    			     					     
			\end{tabular}		
			\vs{0.5cm}			
			\caption{\label{table_II_2_3} Topological fixed point data for $\mathrm{Crit} H = \{1,0,-1\}$ with $M_0 = X_1$}
		\end{table}				   
	\end{theorem}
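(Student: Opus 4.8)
The plan is to reuse the mechanism of Theorems \ref{theorem_II_2_1} and \ref{theorem_II_2_2}: encode the data as a short list of Diophantine inequalities forced by positivity of the reduced symplectic forms, enumerate the finitely many solutions, and then refine each one with the adjunction formula together with positivity of intersections.

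First I would set up notation. Since $\mathrm{Crit}\,\mathring H=\{0\}$, there are no isolated fixed points, the reduced spaces $M_t$ for $t\in(-1,1)$ are all diffeomorphic to $X_1$ by Proposition \ref{proposition_GS}, and $Z_{-1}=Z_{\min}$ and $Z_1=Z_{\max}$ are four-dimensional, each diffeomorphic to the adjacent reduced space and hence to $X_1$; in particular $[\omega_{-1}]$ and $[\omega_1]$ are genuine symplectic classes on $X_1$. Let $u,E_1$ be the standard generators of $H^2(X_1;\Z)$, so that $[\omega_0]=c_1(TM_0)=3u-E_1$ by Lemma \ref{lemma_monotonicity}, and write $e:=e(P_{-1}^+)=au+bE_1$ and $\mathrm{PD}(Z_0)=pu+qE_1$. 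Because $Z_0$ is balanced, the blow-down across level $0$ is an isomorphism, so Lemma \ref{lemma_Euler_class} gives $e(P_1^-)=e(P_0^+)=e+\mathrm{PD}(Z_0)$, and the Duistermaat--Heckman theorem \eqref{equation_DH} yields
\[
[\omega_{-1}]=(3+a)u-(1-b)E_1,\qquad [\omega_1]=(3-a-p)u-(1+b+q)E_1.
\]

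Next I would extract the constraints. A class $\alpha u-\beta E_1$ on $X_1$ can be symplectic only if $\alpha>\beta>0$ (pair it with the exceptional sphere $E_1$ and with a ruling fibre $u-E_1$); since such classes form a convex cone and $[\omega_t]$ is affine in $t$ on $[-1,0]$ and on $[0,1]$, it is enough to impose this at $t=-1,0,1$. Combining this with $\langle[\omega_0],[Z_0^i]\rangle>0$ for every component $Z_0^i$ and with the normalization \eqref{equation_II_1_assumption}, namely $\mathrm{Vol}(Z_{\min})\ge\mathrm{Vol}(Z_{\max})$, I get
\[
b\le 0,\quad a+b\ge-1,\quad b+q\ge 0,\quad a+b+p+q\le 1,\quad 3p+q\ge 1,
\]
\[
(3+a)^2-(1-b)^2\ \geq\ (3-a-p)^2-(1+b+q)^2.
\]
From $p+q\le 1-(a+b)\le 2$ together with $3p+q\ge 1$ one gets $p\ge 0$, hence $q\ge 0$, and then $(p,q)\in\{(0,1),(0,2),(1,0),(1,1),(2,0)\}$.

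Finally, for each of these five total classes I would (i) list the ways to write $pu+qE_1$ as a sum of classes of pairwise disjoint embedded symplectic surfaces, using the adjunction formula (each genus is nonnegative, i.e. $[Z_0^i]^2-\langle c_1(TM_0),[Z_0^i]\rangle\ge -2$) together with the fact that two disjoint symplectic surfaces, made $J$-holomorphic for a common compatible $J$, have vanishing homological intersection; this kills $(p,q)=(0,2)$ outright, forces $Z_0$ connected for $(1,0)$ and $(2,0)$, and forces $Z_0\cong S^2\ \dot\cup\ S^2$ with classes $u$ and $E_1$ for $(1,1)$; (ii) solve the displayed inequalities for $(a,b)$, keeping only the solutions allowed by \eqref{equation_II_1_assumption}; (iii) read off $b_2(M)=b_2(X_1)+\#\{\text{components of }Z_0\}+1$ from Morse theory and compute $c_1^3(M)$ from Lemma \ref{lemma_II_Chern_number}. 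The output is exactly the five rows {\bf (II-2-3.1)}--{\bf (II-2-3.5)} of Table \ref{table_II_2_3}. The only genuinely delicate point is step (i): one has to rule out the spurious disconnected configurations sharing a prescribed total class — e.g. $Z_0$ of total class $u$ cannot split as $(u-E_1)\ \dot\cup\ E_1$, since $(u-E_1)\cdot E_1=1\neq 0$ — and it is precisely this homological-disjointness test, combined with adjunction, that pins down the component structure in each case.
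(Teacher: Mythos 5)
Your proposal is correct and follows essentially the same route as the paper's proof: Duistermaat--Heckman plus the wall-crossing of the Euler class at level $0$, positivity constraints on $[\omega_{\pm1}]$ (your pairing with $E_1$ and $u-E_1$ is equivalent to the paper's $\langle[\omega_t],E_1\rangle>0$ and $\int_{M_t}\omega_t^2>0$), the normalization $\mathrm{Vol}(Z_{\min})\ge\mathrm{Vol}(Z_{\max})$, enumeration of $(p,q)\in\{(2,0),(1,1),(1,0),(0,1),(0,2)\}$, and then adjunction together with homological disjointness to kill $(0,2)$ and fix the component structure, exactly as the paper does (it merely leaves that last refinement implicit). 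Your solutions for $(a,b)$ in each case, including the two Euler classes for $(p,q)=(0,1)$, match Table \ref{table_II_2_3}.
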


	\begin{proof}
		Let $e = au + bE_1$ and $\mathrm{PD}(Z_0) = pu + qE_1$ for some $a,b,p,q \in \Z$. By the Dustermaat-Heckman theorem, we get 
		\[
			[\omega_{-1}] = (3+a)u + (b-1)E_1 \quad \text{and} \quad [\omega_1] = (3 - (a+p))u + (-1 - (b + q))E_1. 
		\]
		By the following inequalities $\int_{M_t} \omega_t^2 > 0$, $\langle [\omega_t], E_1 \rangle > 0$, and $\mathrm{Vol}(Z_0) > 0$, we obtain
		\[
			3+ a > 1 - b \geq 1, \quad 3- a - p > 1 + b + q \geq 1, \quad 3p + q \geq 1.
		\]
		Moreover, the condition \eqref{equation_II_1_assumption} allows us to assume that
		\[
			(3+a)^2 - (b-1)^2 \geq (3 - a - p)^2 - (1 + b + q)^2.
		\]
		Combining those equations, we obtain 
		\[
			p+q \leq 2, \quad 3p + q \geq 1, \quad q \geq 0
		\]
		whose integral solutions are $(2,0), (1,0), (0,1), (0,2), (1,1)$. Using the adjunction formula, we obtain the following table. 
		which coincides with Table \ref{table_II_2_3}. 
		\begin{table}[H]
		\begin{adjustbox}{max width=\textwidth}
			\begin{tabular}{|c|c|c|c|c|c|c|c|}
				\hline
					& {\bf (II-2-2.1)} & {\bf (II-2-2.2)} & {\bf (II-2-2.3)} & {\bf (II-2-2.4)} & {\bf (II-2-2.5)} & {\bf (II-2-2.6)} & {\bf (II-2-2.7)}\\ \hline \hline
					$\mathrm{PD}(Z_0)= (p,q)$ & $(2,0)$ & $(1,1)$ & $(1,0)$ & $(0,1)$  & $(0,1)$ & $(0,2)$ & $(0,2)$  \\ \hline					
					$e = (a,b)$ & $(-1,0)$ & $(0, -1)$ & $(0,0)$ & $(1,-1)$ & $(0,0)$ & $(1,-2)$ & $(0,-1)$ \\ \hline
					$\mathrm{Vol}(Z_{\min})$ & $3$ & $5$ & $8$ & $12$ & $8$ & $7$ & $5$ \\ \hline
					$\mathrm{Vol}(Z_{\max})$ & $3$ & $3$ & $3$ & $3$ & $5$ & $3$ & $5$ \\ \hline
					\makecell{$\langle c_1(TM_0), [Z_0] \rangle$ \\ $= \mathrm{Vol}(Z_0)$} & $6$ & $4$ & $3$ & $1$ & $1$ & $2$ & $2$ \\ \hline
					$[Z_0] \cdot [Z_0]$ & $4$ & $0$ & $1$ & $-1$ & $-1$ & $-4$ & $-4$ \\ \hline
					\# components & $1$ & $2$ & $1$ & $1$ & $1$ & $\times$ & $\times$ \\ \hline
					$g$ : genus of $Z_0$ & $0$ & $(0,0)$ & $0$ & $0$ & $0$ & $\times$ & $\times$ \\ \hline					
			\end{tabular}		
		\end{adjustbox}
		\end{table}
		\noindent
		Note that the Chern numbers in Table \ref{table_II_2_3} can be obtained from Lemma \ref{lemma_II_Chern_number} by direct computation.
	\end{proof}
				
	\begin{example}[Fano varieties of type {\bf (II-2-3)}]\label{example_II_2_3}\cite[No. 4,9,11,12 in Section 12.5, No. 2 in Section 12.6]{IP} 
	  We describe Fano varieties of type {\bf (II-2-3)} in Theorem \ref{theorem_II_2_3} as follows. 
			           	 
	\begin{itemize}
		\item {\bf (II-2-3.1), (II-2-3.2), (II-2-3.3), (II-2-3.4)} \cite[No. 4,9,12 in Section 12.5, No. 2 in Section 12.6]{IP} : 		
		Let $M$ be the $T^3$-equivariant blow-up of $V_7$ along a $T^3$-invariant line passing through the exceptional divisor of $V_7 \rightarrow \p^3$
		where the moment polytope of $M$ is depicted on the left of Figure \ref{figure_II_2_3_1}. 
		Then the $S^1$-subgroup of $T^3$ generated by $\xi = (0,0,1)$ acts on $M$ semifreely such that $\mathrm{Vol}(Z_{\min}) = 15$ and $\mathrm{Vol}(Z_{\max}) = 3$.		
		See also Example \ref{example_II_1} {\bf (II-1-3.2)}. 			
		
			 \begin{figure}[H]
	           	 		\scalebox{0.8}{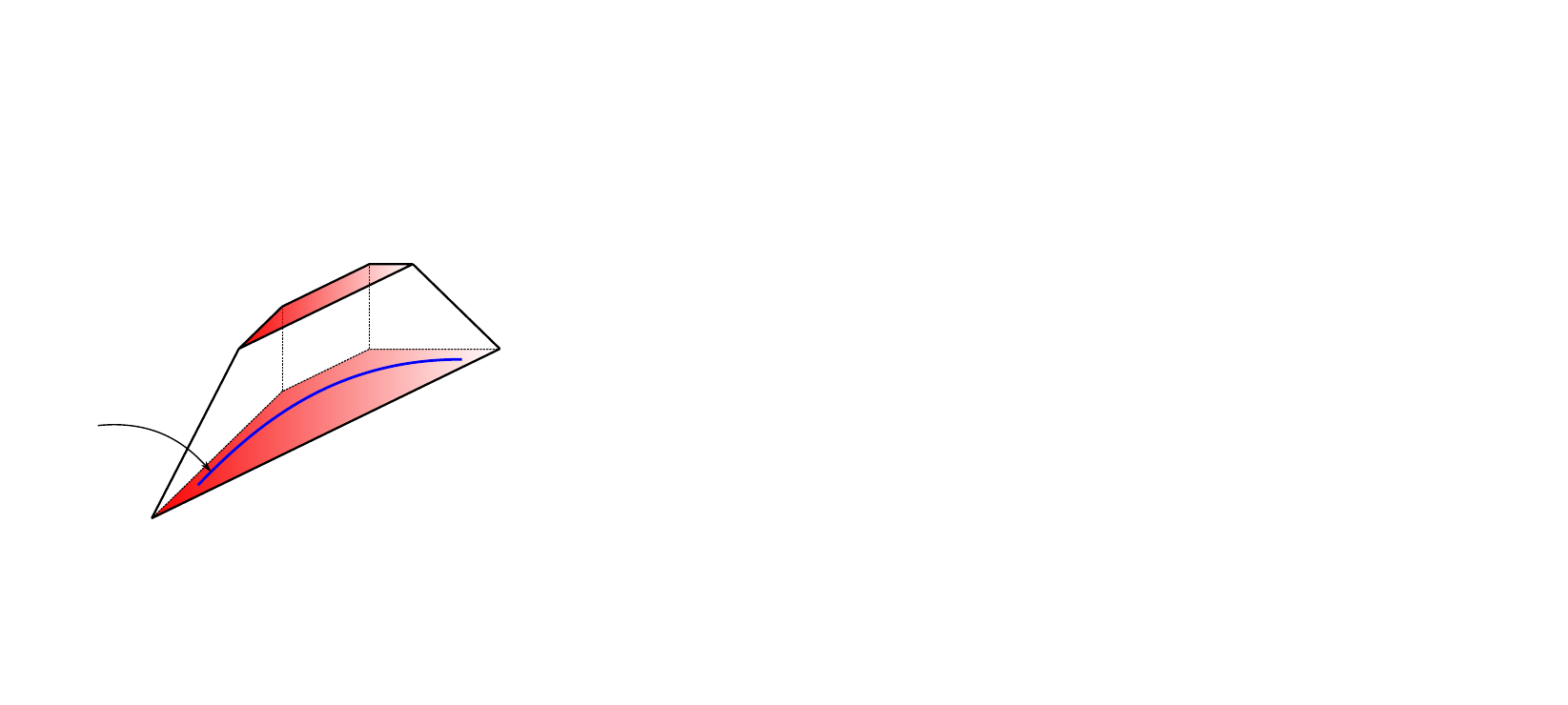}
		           	 	\caption{\label{figure_II_2_3_1} {\bf (II-2-3.1)} : Blow-up of $M$ along the conic}
	           	 \end{figure}
	           	 
		\noindent
		There are four possible ways of blowing $M$ up to obtain a semifree $S^1$-Fano variety. Let $C_{(a,b)}$ be a smooth (possibly disconnected) curve 
		of bidegree $(a,b)$ in $Z_{\min} \cong X_1$. Since $[\omega_{-1}] = 4u - E_1$ and by the adjunction formula, we can easily check that all possible $(a,b)$'s are $(1,0), (2,0), (0,1), (1,1)$.
		(Note that $(4u - E_1) - (au  + bE_1)$ becomes the symplectic form on the minimal fixed component of the blow-up of $M$ along $C_{(a,b)}$.) Denote by $M_{(a,b)}$ the $S^1$-equivariant 
		blow-up of $M$ along $C_{(a,b)}$. Then
		\begin{itemize}
			\item $M_{(1,0)}$ is the blow-up of $M$ along a line $H$ (corresponding to the edge $\overline{(4,0,0)~(0,4,0)}$),
			\item $M_{(2,0)}$ is the blow-up of $M$ along a conic,
			\item $M_{(0,1)}$ is the blow-up of $M$ along an exceptional line $E$ (corresponding to the edge $\overline{(1,0,0)~(0,1,0)}$),
			\item $M_{(1,1)}$ is the blow-up of $M$ along a disjoint union of an exceptional line $E$ and a line $H$.
		\end{itemize}
		Moreover, the volume of the extremal fixed component for each $M_{(a,b)}$ can be computed directly from Figure \ref{figure_II_2_3_1} and therefore we obtain the following.
		\vs{0.3cm}	      
		\begin{table}[H]
		\begin{adjustbox}{max width=\textwidth}
			\begin{tabular}{|c|c|c|c|c|}
				\hline
							& $M_{(1,0)}$ & $M_{(2,0)}$ &  $M_{(0,1)}$ & $M_{(1,1)}$ \\ \hline \hline
					$\mathrm{Vol}(Z_{\min})$ & $8$ & $3$ & $12$ & $5$ \\ \hline
					$\mathrm{Vol}(Z_{\max})$ & $3$ & $3$ & $3$ & $3$  \\ \hline
					$\mathrm{Vol}(Z_0)$ & $3$ & $(3,3)$ & $1$ & $4$ \\ \hline
					TFD type & {\bf (II-2-3.3)} & {\bf (II-2-3.1)} & {\bf (II-2-3.4)} & {\bf (II-2-3.2)} \\ \hline
			\end{tabular}		
		\end{adjustbox}
		\end{table}
		\noindent
	           	 
		\item {\bf (II-2-3.5)} \cite[No. 11 in Section 12.5]{IP} : Consider $M = \p^1 \times X_1$ with the standard $T^3$-action and let $\widetilde{M}$ be the $T^3$-equivariant blow-up 
		along the $T^3$-invariant curve 	$t \times E$ where $t$ is a fixed point of the $S^1$-action on $\p^1$ and $E$ is the exceptional divisor of $X_1$. (Note that the curve $t \times E$
		corresponds to the edge $\overline{(0,2,2) ~(1,2,2)}$ in Figure \ref{figure_II_2_3_2}.)
		
			 \begin{figure}[H]
	           	 		\scalebox{0.8}{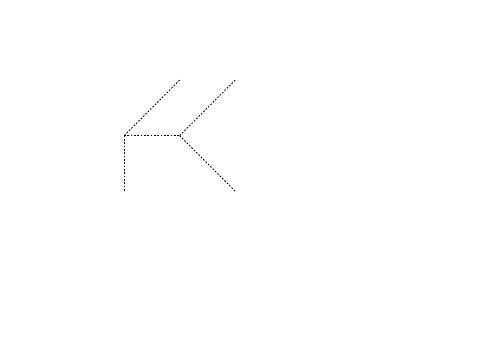}
		           	 	\caption{\label{figure_II_2_3_2} {\bf (II-2-3.5)} : Blow-up of $\p^1 \times X_1$ along $t \times E$}
	           	 \end{figure}		
	           \noindent
	           The $S^1$ subgroup of $T^3$ generated by $\xi = (0,1,0)$ is semifree and the fixed point set satisfies 
	           \[
	           	\mathrm{Vol}(Z_{\min}) = 8, \quad \mathrm{Vol}(Z_{0}) = 1, \quad \mathrm{Vol}(Z_{\max}) = 5
	           \]
	           and therefore the fixed point data coincides with {\bf (II-2-3.5)} in Table \ref{table_II_2_3}.
	           	 
	\end{itemize}			           	 

	\end{example}

\section{Main Theorem}
\label{secMainTheorem}

	In this section, we prove our main theorem as follows.
	
	\begin{theorem}[Theorem \ref{theorem_main}]
	Let $(M,\omega)$ be a six-dimensional closed monotone symplectic manifold equipped with a semifree Hamiltonian circle action such that $c_1(TM) = [\omega]$.
	Then $(M,\omega)$ is $S^1$-equivariantly symplectomorphic to some K\"{a}hler Fano manifold with a certain holomorphic Hamiltonian circle action. 
	Indeed, there are (18 + 21 + 56) types of such manifolds up to $S^1$-equivariant symplectomorphism.
	\end{theorem}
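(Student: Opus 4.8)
The plan is to carry out the three-step program of Subsection~\ref{ssecSketchOfTheProof} and feed its output into Gonzalez's rigidity theorem (Theorem~\ref{theorem_G}). First I would fix the balanced moment map $H$ of Lemma~\ref{lemma_balanced}; by Lemma~\ref{lemma_possible_critical_values} the extremal components $Z_{\min}$ and $Z_{\max}$ are isolated points, two-spheres, or four-manifolds, $\mathrm{Crit}\,H \subset \{\pm 3, \pm 2, \pm 1, 0\}$, and every critical level is simple. If at least one of $Z_{\min}, Z_{\max}$ is an isolated point the classification yields the $18$ types of \cite{Cho1}, and if every extremal component is two-dimensional it yields the $21$ types of \cite{Cho2}. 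Since $(M,-\omega)$ with the same action is again a monotone semifree Hamiltonian $S^1$-manifold (with $c_1 = [-\omega]$) and with $Z_{\min}$ and $Z_{\max}$ interchanged, after possibly replacing $\omega$ by $-\omega$ the only remaining case is $\dim Z_{\min} \ge 2$ and $\dim Z_{\max} = 4$, which splits into $\dim Z_{\min} = 2$ (Section~\ref{secTopologicalFixedPointDataDimZMin2}) and $\dim Z_{\min} = 4$ (Section~\ref{secTopologicalFixedPointDataDimZMax4}); it therefore suffices to produce the $56$ additional types coming from these two cases.

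For \textbf{Step 1}, I would organize the work by the possible sets $\mathrm{Crit}\,H$ and apply Theorems~\ref{theorem_I_1}, \ref{theorem_I_2}, \ref{theorem_I_3_1}, \ref{theorem_I_3_2}, \ref{theorem_I_4}, Proposition~\ref{proposition_II_2}, and Theorems~\ref{theorem_II_1}, \ref{theorem_II_2_1}, \ref{theorem_II_2_2}, \ref{theorem_II_2_3}. In each case the reduced space $M_0$ is a closed monotone symplectic four-manifold (Lemma~\ref{lemma_monotonicity}), hence diffeomorphic to $\p^2$, $S^2 \times S^2$, or $X_k$ with $k \le 8$ by Ohta--Ono; the Duistermaat--Heckman relation~\eqref{equation_DH}, the Euler-class formulas of Lemmas~\ref{lemma_Euler_class} and~\ref{lemma_Euler_extremum}, the adjunction formula, and the catalogue of exceptional classes of Lemma~\ref{lemma_list_exceptional} then cut the possible data $[\omega_c]$, $e(P_c^{\pm})$, $\mathrm{PD}(Z_c^i)$ down to the finite lists of Tables~\ref{table_I_1}--\ref{table_II_2_3}, giving $20$ topological fixed point data from Section~\ref{secTopologicalFixedPointDataDimZMin2} and $36$ from Section~\ref{secTopologicalFixedPointDataDimZMax4}, i.e.\ the $56$ entries recorded in Tables~\ref{table_list_1} and~\ref{table_list_2}.

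\textbf{Step 2} upgrades each entry to an honest fixed point data: every fixed component $Z_c^i$ is an embedded symplectic sphere, torus, or small-genus curve in a rational surface representing a homology class whose self-intersection and symplectic area are read off the table, so the known uniqueness, up to ambient Hamiltonian isotopy, of symplectic embeddings in these (small-degree) homology classes in rational surfaces forces a unique $Z_c^i \hookrightarrow M_c$, hence a unique fixed point data. \textbf{Step 3} realizes each entry by a smooth Fano threefold with a holomorphic semifree Hamiltonian $S^1$-action: for every table entry I would exhibit a toric or near-toric model --- blow-ups of $\p^3$, $\p^1 \times \p^2$, $V_7$, the flag manifold $\mcal{F}(3)$, $\mathbb{P}(\mcal{O} \oplus \mcal{O}(a,b))$, and products with $\p^1$ --- drawn from the Mori--Mukai list \cite{IP} and checked against the tables in Examples~\ref{example_I_1}--\ref{example_II_2_3}. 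Whenever every reduced space appearing in the fixed point data lies in $\{\p^2,\, S^2 \times S^2,\, X_k\ (k \le 4)\}$, these are symplectically rigid by Corollary~\ref{corollary_symplectically_rigid}, so Theorem~\ref{theorem_G} applies and shows that $(M,\omega)$ is $S^1$-equivariantly symplectomorphic to the Fano model with the same fixed point data.

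The main obstacle is the family {\bf (II-1-4.k)} with $5 \le k \le 8$, where $X_k$ occurs as a reduced space and is not known to be symplectically rigid, so Theorem~\ref{theorem_G} is unavailable. Here I would argue directly: by Theorem~\ref{theorem_II_1} we have $\mathrm{Crit}\,H = \{1,-1\}$, the Euler class $e(P_{-1}^+)$ vanishes, and every reduced space $M_t$ carries the fixed class $3u - \sum_{i=1}^k E_i$; hence the Duistermaat--Heckman measure is constant, all level sets $H^{-1}(t) \to M_t$ are trivial circle bundles, and the open region $H^{-1}((-1,1))$ is $S^1$-equivariantly identified with $X_k$ times an annulus, which caps off over the extremal fibres $Z_{-1} \cong Z_1 \cong X_k$ to give $M \cong X_k \times S^2$ with $S^1$ acting by rotation on the second factor; the monotone symplectic form together with the product action then determines its $S^1$-equivariant symplectomorphism type. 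Assembling the $18$ types of \cite{Cho1}, the $21$ types of \cite{Cho2}, and the $56$ types obtained in Steps~1--3 gives the asserted $(18 + 21 + 56)$ types, completing the proof.
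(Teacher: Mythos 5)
Your plan follows the same architecture as the paper: the same reduction to the case $\dim Z_{\max}=4$, $\dim Z_{\min}\geq 2$ after quoting \cite{Cho1, Cho2}, the same case-by-case classification of topological fixed point data (Theorems \ref{theorem_I_1}--\ref{theorem_I_4} and \ref{theorem_II_1}--\ref{theorem_II_2_3}), the same upgrade from TFD to FD, the same Fano models, and Gonzalez's Theorem \ref{theorem_G} for every type other than {\bf (II-1-4.k)}. One remark on your Step 2: the phrase ``known uniqueness, up to ambient Hamiltonian isotopy, of symplectic embeddings in these homology classes'' is doing real work there. In the paper this is Proposition \ref{proposition_TFD_FD_1}, and it rests on specific isotopy theorems — Siebert--Tian (Theorem \ref{theorem_ST}) and Li--Wu/Zhang (Theorem \ref{theorem_Z}) to make the fixed curves algebraic (note that $-1$-spheres and the genus-$3$ quartic of {\bf (II-2-1.6)} occur, so self-intersection and genus are not always small), \cite[Lemma 9.6]{Cho1} for the isotopy of algebraic curves, and \cite[Proposition 0.3]{ST} to move the isolated index-two points at level $-1$ by a symplectomorphism acting trivially on homology. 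Your proposal should name these, but the strategy is the same.

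The genuine gap is in {\bf (II-1-4.k)}. Your concluding sentence — ``the monotone symplectic form together with the product action then determines its $S^1$-equivariant symplectomorphism type'' — is exactly the statement that has to be proved, and it cannot be asserted, because for $k\geq 5$ the reduced space $X_k$ is \emph{not} known to be symplectically rigid: neither ``deformation implies isotopy'' nor path-connectedness of the relevant symplectomorphism group is available, so the fact that all reduced forms $\omega_t$ are cohomologous does not by itself let you straighten the invariant form on the trivial-bundle region $H^{-1}((-1,1))$ into a product form, nor identify $(M,\omega)$ with the monotone Kähler product $X_k\times S^2$. The paper closes this by a two-parameter Moser argument: given two semifree Hamiltonian $S^1$-manifolds with this TFD, one first arranges $\omega_{-1}=\omega_{-1}'$ using Theorem \ref{theorem_uniqueness} (cohomologous, deformation-equivalent forms on a blow-up of a rational surface are isotopic), then uses $e(P_{-1}^+)=0$ to see that the interpolating family $\omega_{s,t}$ has constant cohomology class, applies Moser to get symplectomorphisms $\varphi_t:(M_0,\omega_t)\to(M_0,\omega_t')$, promotes $\{\varphi_t\}$ to an $S^1$-equivariant symplectomorphism of $P\times[-1,1]$ via \cite[Proposition 5.8]{McS1}, and concludes by symplectic cutting; uniqueness plus the existence of the product model $X_k\times S^2$ then gives the claim. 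Your sketch correctly isolates the inputs (vanishing Euler class, constant Duistermaat--Heckman data), but the mechanism converting them into an $S^1$-equivariant symplectomorphism is missing; to complete your route you would need essentially this Moser-and-cut argument, together with McDuff's uniqueness theorem to identify $(X_k,\omega_{-1})$ with the standard monotone del Pezzo surface.
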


	Notice that we have already proved Theorem \ref{theorem_main} for the following cases:
	\begin{enumerate}
		\item \cite{Cho1} $Z_{\min}$ is isolated, and 
		\item \cite{Cho2} $\dim Z_{\min} = \dim Z_{\max} = 2$. 
	\end{enumerate} 
	In addition, we have classified all such manifolds up to $S^1$-equivariant symplectomorphism and seen that there are exactly 18 types and 21 types for each cases.
	See \cite[Table 9.1]{Cho1} and \cite[Table 9.1]{Cho2}. Therefore we only need to prove the case where $\dim Z_{\max} = 4$ and $\dim Z_{\min} \geq 2$.
	
	In Section \ref{secTopologicalFixedPointDataDimZMin2} and Section
	 \ref{secTopologicalFixedPointDataDimZMax4} we have classified all possible topological fixed point data (56 types) listed in Table \ref{table_list_1} and Table \ref{table_list_2}.
	 An important consequence of our classification is that any reduced space of $(M,\omega)$ in Theorem \ref{theorem_main} is diffeomorphic to one of the followings:
	\[
		\begin{cases}
			X_k \quad (2 \leq k \leq 8) & \text{\bf (II-1-4.k)} \\
			\p^2, S^2 \times S^2, ~\text{or} ~X_k (k \leq 3) & \text{otherwise}. 
		\end{cases}
	\]	
	Therefore in case that $(M,\omega)$ is not of type {\bf (II-1-4.k)}, every reduced of $(M,\omega)$ is symplectically rigid by Corollary \ref{corollary_symplectically_rigid}.
	Thus Gonzalez's theorem \ref{theorem_G} would be applied so that we obtain the following.
	
	\begin{lemma}\label{lemma_FD_M}
		Suppose that $(M,\omega)$ is not of type {\bf (II-1-4.k)}. Then the fixed point data determines $(M,\omega)$ uniquely. 
	\end{lemma}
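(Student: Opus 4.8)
The plan is to obtain Lemma \ref{lemma_FD_M} as a direct application of Gonzalez's theorem \ref{theorem_G}: its two hypotheses are that every critical level is simple and that every reduced space is symplectically rigid, and I would verify both from the classification established in Sections \ref{secTopologicalFixedPointDataDimZMin2} and \ref{secTopologicalFixedPointDataDimZMax4}, i.e. from Tables \ref{table_list_1} and \ref{table_list_2}.

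First I would check simplicity of the critical levels. Since throughout our situation $\dim Z_{\max}=4$ and $\dim Z_{\min}\ge 2$, Lemma \ref{lemma_possible_critical_values} forces the critical values into a subset of $\{1,0,-1,-2\}$, and for each admissible value Table \ref{table_fixed} lists the possible fixed components. The only value at which components of two different Morse--Bott indices could in principle occur is $c=1$, where Table \ref{table_fixed} allows a four-dimensional component (necessarily $Z_{\max}$, of index two) together with isolated points of index four; but an isolated index-four fixed point has a positive weight, hence admits nearby points with moment value larger than $1=\max H$, which is impossible. The mirror argument rules out isolated index-two points on the level of a four-dimensional $Z_{\min}$ when $\dim Z_{\min}=4$. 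At the remaining non-extremal values ($0$, and $-1$ when $\dim Z_{\min}=2$) Table \ref{table_fixed} allows only two-dimensional, respectively isolated, fixed components, all of index two. Hence every critical level is simple, and in particular each $M_c$ is a smooth symplectic four-manifold diffeomorphic to $M_{c\pm\epsilon}$ by Proposition \ref{proposition_GS}.

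Next I would enumerate the diffeomorphism types of all reduced spaces. By the Duistermaat--Heckman theorem \eqref{equation_DH} the reduced space is constant in diffeomorphism type between consecutive critical values, and across a (simple) critical level it changes by a blow-up or blow-down (Proposition \ref{proposition_GS}); thus the set of diffeomorphism types occurring is governed by the finitely many entries $(M_0,[\omega_0])$ and the fixed strata $Z_{-2},Z_0,Z_{\pm1}$ recorded in Tables \ref{table_list_1} and \ref{table_list_2}. Excluding the families {\bf (II-1-4.k)}, one reads off that every four-dimensional reduced space is diffeomorphic to $\p^2$, $S^2\times S^2$, or a blow-up $X_k$ of $\p^2$ at $k\le 4$ generic points (in fact $k\le 3$), using the identifications $E_{S^2}\cong X_1$, $E_{S^2}\#\overline{\p^2}\cong X_2$, $E_{S^2}\#2\overline{\p^2}\cong X_3$; the only two-dimensional reduced pieces are copies of $S^2$. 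By Corollary \ref{corollary_symplectically_rigid} each of $\p^2,S^2\times S^2,X_k$ $(k\le4)$ is symplectically rigid, and $S^2$ is trivially so. Therefore both hypotheses of Theorem \ref{theorem_G} hold, and $(M,\omega)$ is determined up to $S^1$-equivariant symplectomorphism by its fixed point data.

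The main obstacle is not conceptual but bookkeeping: one must actually pass through all $56$ topological fixed point data of Tables \ref{table_list_1}--\ref{table_list_2} and confirm, case by case, that no $X_k$ with $k\ge 5$ arises as a reduced space outside the families {\bf (II-1-4.k)} --- equivalently, that the blow-down relations of Proposition \ref{proposition_GS} combined with the Duistermaat--Heckman computations never force the appearance of a non-rigid rational surface. This is precisely the content already encoded in the classification of Sections \ref{secTopologicalFixedPointDataDimZMin2} and \ref{secTopologicalFixedPointDataDimZMax4}, so the remaining work is to cite those tables and assemble the three steps above.
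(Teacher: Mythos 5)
Your proposal is correct and follows essentially the same route as the paper: the paper likewise deduces the lemma by observing from Tables \ref{table_list_1} and \ref{table_list_2} that outside type {\bf (II-1-4.k)} every reduced space is $\p^2$, $S^2\times S^2$, or $X_k$ with $k\le 3$, and then invoking Corollary \ref{corollary_symplectically_rigid} together with Gonzalez's Theorem \ref{theorem_G}. Your explicit check that every critical level is simple is a detail the paper leaves implicit, but it is consistent with Lemma \ref{lemma_possible_critical_values} and the classification.
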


	\begin{proposition}\label{proposition_TFD_FD_1}
		Suppose that $(M,\omega)$ is not of type {\bf (II-1-4.k)}. Then the topological fixed point data of $(M,\omega)$ determines the fixed point data $(M,\omega)$ uniquely.
	\end{proposition}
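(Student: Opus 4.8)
The plan is to strip away, one critical level at a time, the information contained in the fixed point data $\frak{F}(M,\omega,H)$ but not in its topological shadow $\frak{F}_{\mathrm{top}}(M,\omega,H)$. Comparing Definitions \ref{definition_fixed_point_data} and \ref{definition_topological_fixed_point_data}, the two collections differ only in that the FD records (i) each reduced symplectic form $\omega_c$, not merely its class $[\omega_c]$, and (ii) the actual symplectic embeddings $Z_c^i\hookrightarrow M_c$ --- hence the normal bundles of the $Z_c^i$ in $M_c$ --- rather than the Poincar\'{e} duals $\mathrm{PD}(Z_c^i)$. For (i): by the classification carried out in Sections \ref{secTopologicalFixedPointDataDimZMin2} and \ref{secTopologicalFixedPointDataDimZMax4}, the assumption that $(M,\omega)$ is not of type {\bf (II-1-4.k)} forces every reduced space $M_c$ to be diffeomorphic to $\p^2$, $S^2\times S^2$, or $X_k$ with $k\le 3$ (see Tables \ref{table_list_1} and \ref{table_list_2}); these are rational four-manifolds, so by the uniqueness of the symplectic structure on a rational surface \cite{McD3} together with Theorem \ref{theorem_uniqueness}, the class $[\omega_c]$ recorded in the TFD determines $\omega_c$ up to diffeomorphism. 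For the normal bundle in (ii): a two-dimensional fixed component $Z_c^i$ has normal bundle a complex line bundle of degree $\mathrm{PD}(Z_c^i)^2$, so it is determined by the recorded class, while a four-dimensional (extremal) component is identified with all of $M_c$ under the reduction map and carries no extra data. Thus the proposition reduces to the assertion: \emph{for each $M_c$ in our list and each family of classes that occurs in Tables \ref{table_list_1} and \ref{table_list_2}, a collection of disjoint symplectic submanifolds of $(M_c,\omega_c)$ realizing those classes is unique up to symplectic isotopy, and (for isolated fixed points) an unordered tuple of points of $M_c$ is unique up to symplectomorphism.}

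I would then verify this assertion by running through the possible component types dictated by the tables. Isolated fixed points occur only at the non-extremal value $-1$, with multiplicity $m\le 2$; by Proposition \ref{proposition_GS} and Lemma \ref{lemma_Euler_class} their presence is recorded equivalently by exceptional classes $E_1,\dots,E_m$ realized in $M_c$ by pairwise disjoint embedded symplectic $(-1)$-spheres. By the uniqueness of symplectic blow-down (McDuff, e.g. \cite{McD2}) such a configuration --- and hence the blown-down reduced space together with the images of the isolated fixed points, up to symplectomorphism --- is determined by these classes, which settles the point components. Every remaining $Z_c^i$ is a two-dimensional symplectic submanifold, and by adjunction it is a sphere, a torus (only the cases $\mathrm{PD}(Z_0)=3u$ in $\p^2$ and $\mathrm{PD}(Z_0)=2x+2y$ in $S^2\times S^2$), or a genus-three curve (only $\mathrm{PD}(Z_0)=4u$ in $\p^2$). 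For symplectic spheres, the uniqueness up to symplectic isotopy of an embedded symplectic sphere in a prescribed homology class of a rational four-manifold is classical Gromov--McDuff theory (e.g. \cite{McD2}); for a disjoint union of spheres with the recorded (in every case at hand, vanishing) pairwise intersection numbers, the corresponding statement for configurations follows by the same pseudoholomorphic-curve argument after choosing a compatible almost complex structure for which all components are simultaneously holomorphic. The torus cases are a smooth cubic in $\p^2$ and a smooth $(2,2)$-curve in $S^2\times S^2$, and the genus-three case is a smooth quartic in $\p^2$; in these low bidegrees the symplectic isotopy problem is known, so each such symplectic curve is isotopic to an algebraic one and any two are mutually isotopic.

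Granting the assertion, the FD at each (simple) critical level is recovered from the TFD: the reduced space with its symplectic form from (i), the Euler classes $e(P_c^\pm)$ verbatim, and the embedded fixed components together with their normal bundles from (ii); assembling over $c\in\mathrm{Crit}\,H$ recovers $\frak{F}(M,\omega,H)$, which is the claim. The step I expect to be the real obstacle is the last one: pinning down the higher-genus fixed components --- and, to a lesser extent, the disjoint-sphere configurations --- requires genuine \emph{isotopy} (not merely existence) results for symplectic curves, so it rests on the known cases of the symplectic isotopy problem for cubics and quartics in $\p^2$ and for $(2,2)$-curves in $S^2\times S^2$; once the correct references are in place, the remaining verifications are routine applications of Gromov--McDuff theory and of the uniqueness of symplectic blow-down.
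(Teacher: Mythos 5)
Your proposal is correct and is essentially the paper's argument: both reduce the statement to the uniqueness, up to symplectic isotopy, of symplectic representatives of the classes recorded in the TFD inside the rigid reduced spaces, treating the isolated index-two points and the two-dimensional components of $Z_0$ separately, and resting the curve case on known symplectic isotopy results (the paper makes these precise via Theorem \ref{theorem_ST}, Theorem \ref{theorem_Z} and \cite[Lemma 9.6]{Cho1}, where you leave the references generic while correctly flagging them as the crux). The one sub-step you handle differently is the isolated points at level $-1$: you recover their position via exceptional classes and uniqueness of symplectic blow-down, whereas the paper moves the points directly by a symplectomorphism, combining symplectic rigidity of $M_{-1}$ with the transitivity statement of \cite[Proposition 0.3]{ST}; both mechanisms are sound, the paper's being the more direct.
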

	
	\begin{proof}
		It is obvious that TFD determines FD on an extremal level (since the reduced space is itself a fixed component). 
		
		For a non-extremal critical value $-1$, the fixed point data consists of the reduced space $M_{-1}$, the symplectic class $[\omega_{-1}]$, 
		and homology class of $Z_{-1}$, that is, isolated fixed points of index two. Suppose that two possible fixed point data are given as
		\[
		(M_{-1}, \omega_{-1}, p_1, \cdots, p_r) \quad \text{and} \quad (M_{-1}, \omega_{-1}' , q_1, \cdots, q_r), \quad \quad p_i, q_j : \text{points},\quad  [\omega_c] = [\omega_c'].
		\]	
		By the symplectic rigidity of $M_{-1}$, we have a symplectomorphism $\phi : (M_c, \omega_c) \rightarrow (M_c, \omega_c')$. Moreover, it can be chosen to be
		sending $p_i$ to $q_i$ for $i=1,\cdots,r$ by \cite[Proposition 0.3]{ST}. Thus the TFD determines the FD in case of level $-1$. 
		
		For level zero, the TFD is of the form
		\[
			(M_0, [\omega_0], [Z_0^1], \cdots, [Z_0^{k}]), \quad \quad k = \text{number of components of $Z_0$}.
		\]
		So, we need to prove that a homology class of each $Z_0^i$ determines a symplectic submanifold representing $[Z_0^1]$ uniquely (up to symplectomorphism).
		On the other hand, one can immediately check that $[Z_0^i]\cdot [Z_0^i] \geq -1$ and hence 
		\begin{itemize}
			\item $Z_0^i$ is algebraic (by Theorem \ref{theorem_ST} and Theorem \ref{theorem_Z}), 
			\item any two smooth algebraic curves in $M_0$ (where $H^1(M_0, \mcal{O}_{M_0})$) are symplectically isotopic to each other (by \cite[Lemma 9.6]{Cho1}).
		\end{itemize}
		Therefore, any two possible FD's representing the same TFD are symplectomorphic to each other and this finishes the proof.
	\end{proof}
	
	We state two theorems (due to Seibert-Tian,  Li-Wu, and Zhang) used in the proof of Proposition \ref{proposition_TFD_FD_1} as below.
	
	\begin{theorem}\cite[Theorem C]{ST}\label{theorem_ST}
		Any symplectic surface in $\p^2$ of degree $d \leq17$ is symplectically isotopic to an algebraic curve.
	\end{theorem}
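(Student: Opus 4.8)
\emph{Proof proposal (sketch of the Siebert--Tian strategy).} The plan is to run the pseudoholomorphic pencil and braid--monodromy argument. Let $C \subset \p^2$ be a closed connected symplectically embedded surface. Since $[\omega]$ pairs positively with $[C]$, we have $[C] = d[\p^1]$ for an integer $d \geq 1$, the \emph{degree}, and the symplectic adjunction formula forces $g(C) = \binom{d-1}{2}$. First I would pick an $\omega$-tame almost complex structure $J$ making $C$ a $J$-holomorphic curve; the space $\mathcal{J}_C$ of such $J$ is nonempty and contractible, so it suffices to classify $C$ up to isotopy through $J$-holomorphic curves as $J$ varies over $\mathcal{J}_C$. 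For a generic point $p \notin C$ and generic $J$, Gromov's theory produces a $\p^1$--family of embedded $J$-holomorphic lines through $p$ foliating $\p^2 \setminus \{p\}$; blowing up at $p$ turns this family into a $J$-holomorphic Lefschetz fibration over $\p^1$ whose fibers are the lines and whose only singular fibers are nodal.

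After a small perturbation of $(C,J)$ keeping $C$ symplectic, I would put $C$ in general position with respect to the pencil: $C$ avoids the nodes of the fibration, and every fiber is transverse to $C$ except for finitely many simple tangencies. Restricting the fibration to $C$ then presents $C$ as a simple branched cover $C \to \p^1$ of degree $d$, and Riemann--Hurwitz together with the genus computation above shows it has exactly $d(d-1)$ branch points. Encoding this cover by its braid monodromy gives a factorization
\[
	\Delta^2 = \tau_1 \cdots \tau_{d(d-1)} \in B_d ,
\]
where each $\tau_i$ is a positive half-twist (the local monodromy at a simple tangency) and $\Delta^2$ is the full twist, normalized as usual by the monodromy at infinity. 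This factorization is well defined up to global conjugation and Hurwitz moves, and deforming $(C,J)$ through the families above changes it only within its Hurwitz class. The degree-$d$ algebraic curve $C_0$ produces, by the same recipe, a reference factorization $w_0$.

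The combinatorial core is to show that \emph{every} factorization of $\Delta^2 \in B_d$ into $d(d-1)$ positive half-twists is Hurwitz-equivalent to $w_0$. I would argue by induction on $d$ through a destabilization step: analyze the word to locate a sub-configuration that, in a suitable nodal degeneration of the pencil, corresponds to splitting off a line (a degree-one component), use Hurwitz moves to bring the factorization into a normal form ``line plus degree $(d-1)$'', and invoke the inductive hypothesis on the remaining length-$(d-1)(d-2)$ sub-word. The obstruction to carrying out the destabilization is controlled by an Euler-characteristic, equivalently genus, estimate for an auxiliary branched covering attached to part of the word; chasing this estimate is exactly what limits the induction and produces the bound $d \leq 17$. \textbf{This is the main obstacle}: everything else is comparatively soft, but connectedness of the space of positive factorizations of the full twist is genuinely delicate and is only available in this range of $d$.

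Finally I would promote the combinatorial conclusion to an honest symplectic isotopy. Each elementary Hurwitz move, and each change of the auxiliary data $(p,J)$ or of the fibration, is realized by a path in the parametrized moduli of $J$-holomorphic pencils; lifting such a path to the branched cover produces a smooth family $\{C_t\}$ of symplectic surfaces, and a Moser-type argument converts the accompanying deformation of $(\p^2,\omega)$ into an ambient symplectic isotopy carrying $C$ to a surface with the same monodromy as $C_0$, hence to an algebraic curve; the finitely many nodal (and, in codimension one, cuspidal) fibers met along the way are handled by explicit local models. This yields Theorem \ref{theorem_ST}. Besides the combinatorial step above, the other technically demanding point is the transversality and gluing analysis needed to keep the pseudoholomorphic pencil picture valid, with only nodal degenerations, throughout the deformation.
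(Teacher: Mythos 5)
This statement is not proved in the paper at all: it is quoted verbatim from Siebert--Tian \cite[Theorem C]{ST} and used as a black box in the proof of Proposition \ref{proposition_TFD_FD_1}, so there is no in-paper argument to match your sketch against. Measured against the actual proof in \cite{ST}, your outline takes a genuinely different route, and the route has a gap at exactly the point you flag as ``the main obstacle.'' Your plan reduces the theorem to the purely combinatorial assertion that every factorization of the full twist $\Delta^2\in B_d$ into $d(d-1)$ positive half-twists is Hurwitz-equivalent to the standard one coming from a smooth algebraic curve, with the bound $d\leq 17$ supposedly emerging from an Euler-characteristic estimate in an inductive destabilization. No such classification of positive Hurwitz factorizations of $\Delta^2$ is known in anything like this range of $d$ (it is an open problem beyond very small degree), you give no argument for it, and it is not where the bound $17$ comes from; so as written the proposal assumes a statement at least as hard as the theorem itself. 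There are also softer unaddressed points: putting an arbitrary symplectic surface into positively braided position with respect to a pseudoholomorphic pencil, and showing that the Hurwitz class is the only invariant of the resulting family (i.e.\ that equal factorizations really force symplectic isotopy), each require substantial work that your sketch compresses into one sentence.

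For comparison, the argument of Siebert--Tian is analytic rather than braid-combinatorial. They prove an Isotopy Lemma for families of pseudoholomorphic curves in rational ruled surfaces fibered over $S^2$: deforming the almost complex structure to the integrable one, the difficulty is Gromov limits that are reducible or contain multiply covered components, and the lemma replaces such degenerate limits by nearby smooth pseudoholomorphic curves, by an induction that relies on unobstructedness of deformations of the limit configurations. The numerical hypotheses (fiber degree at most $7$ over the base for $S^2$-bundles, and degree $d\leq 17$ in $\p^2$ after reducing to the fibered situation) are exactly what make this control of multiple covers and of the genus of limit components possible; the braid-monodromy picture you describe plays no role. If you want to keep your write-up honest, either cite \cite{ST} as the paper does and drop the sketch, or rework the sketch around the Isotopy Lemma; the Hurwitz-factorization approach cannot be completed by the means you indicate.
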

	
	\begin{theorem}\cite[Proposition 3.2]{LW}\cite[Theorem 6.9]{Z}\label{theorem_Z}
		Any symplectic sphere $S$ with self-intersection $[S]\cdot[S] \geq 0$ in a symplectic four manifold $(M,\omega)$ is symplectically isotopic to an (algebraic) rational curve.
		Any two homologous spheres with self-intersection $-1$ are symplectically isotopic to each other.
	\end{theorem}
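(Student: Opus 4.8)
The plan is to prove both assertions through the deformation theory of $J$-holomorphic spheres, reducing each symplectic-isotopy question to a connectedness statement for moduli spaces of embedded rational curves as the almost complex structure varies. Given a symplectic sphere $S$ representing a class $A$ in $(M,\omega)$, I would first choose an $\omega$-tame $J$ for which $S$ is $J$-holomorphic; this is always possible since $S$ is a symplectic submanifold. The three analytic inputs to invoke are: existence of $J$-holomorphic representatives of $A$ for every tame $J$ (via Gromov--Taubes theory when $A\cdot A<0$, and trivially from $S$ itself when $A\cdot A\ge 0$); automatic transversality in real dimension four (Hofer--Lizan--Sikorav), which guarantees that an embedded $J$-holomorphic sphere $C$ with $c_1(A)\ge 1$ is an unobstructed, regular point of its moduli space; and Gromov compactness together with positivity of intersections and the adjunction formula to control degenerations. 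The adjunction identity $A\cdot A - c_1(A) = 2g-2+2\delta$, with $g$ the geometric genus and $\delta\ge 0$ the number of singular points counted with multiplicity, will be the recurring computational tool.

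The exceptional case $A\cdot A=-1$ is the cleaner one and I would treat it first. Here $c_1(A)=A\cdot A+2=1$, so $A$ is an exceptional class; by Taubes' identification of the Gromov and Seiberg--Witten invariants its Gromov invariant is $\pm 1$, whence for every tame $J$ there is a $J$-holomorphic curve in class $A$, and positivity of intersections together with the adjunction formula forces this representative to be a unique embedded sphere (two distinct irreducible representatives would intersect non-negatively, contradicting $A\cdot A=-1$). Because $c_1(A)=1$, automatic transversality applies to all of these representatives simultaneously. To connect two homologous spheres $S_0,S_1$ I would pick tame $J_0,J_1$ making $S_0,S_1$ holomorphic and join them by a generic path $J_t$; the unique embedded representatives then assemble into a smooth family over $[0,1]$, which is exactly a symplectic isotopy from $S_0$ to $S_1$. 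No bubbling can intervene, since any bubble component would absorb positive symplectic area and leave a residual class of strictly smaller energy and negative square that cannot support a holomorphic curve, contradicting the minimality of $A$.

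For the case $A\cdot A\ge 0$ I would again run a parametrized moduli argument, now aiming to land on an algebraic representative. When the ambient surface is rational, as for the reduced spaces here, one may take $J_1$ integrable and Kähler, so that the class $A$ (being one of the low-degree classes $u,2u,x,y,x+y,\dots$ appearing in the tables) is represented by a smooth rational curve by Riemann--Roch, and an embedded $J_1$-holomorphic representative is literally an algebraic rational curve. Joining the initial $J$ to $J_1$ by a path $J_t$ and considering the universal moduli space $\{(t,u): u \text{ is } J_t\text{-holomorphic in class } A\}$, automatic transversality (now $c_1(A)=A\cdot A+2\ge 2$) makes the generic fibre a single embedded sphere, and the projection to $[0,1]$ produces the sought isotopy of $S$ onto the algebraic curve. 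The isotopy between two homologous $S_0,S_1$ with $A\cdot A\ge 0$ follows from the same family argument applied to a path joining $J_0$ and $J_1$ directly.

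The hard part is compactness: ruling out, or correctly resolving, degenerations of the $J_t$-holomorphic spheres as $t$ crosses the non-generic walls. A Gromov limit is a priori a nodal configuration $\sum_i m_i C_i$ with $\sum_i m_i[C_i]=A$, and one must show it is in fact an embedded sphere. For a connected irreducible limit the adjunction identity $A\cdot A - c_1(A)=2g-2+2\delta=-2$ already forces $g=\delta=0$; the genuine difficulty is excluding reducible or multiply-covered limits. This is exactly where $A\cdot A\ge 0$ is essential: combining positivity of intersections $C_i\cdot C_j\ge 0$ with the arithmetic-genus bookkeeping of the nodal curve shows that any putative splitting would overspend the $(-2)$-adjunction budget, so at each wall the family degenerates at worst by a single absorbable bubble, preserving the isotopy class. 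Patching these local isotopies coherently across the finitely many walls, and matching the endpoints to $S$ and to the algebraic curve, is the technical core carried out in Li--Wu's Proposition 3.2 and Zhang's Theorem 6.9, which I would cite for the full details.
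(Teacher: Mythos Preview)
The paper does not prove this statement; it is quoted verbatim as a result from the literature (Li--Wu \cite{LW} and Zhang \cite{Z}) and is used only as a black box in the proof of Proposition~\ref{proposition_TFD_FD_1}. There is therefore no ``paper's own proof'' to compare against.

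That said, your sketch follows the standard strategy in those references and is essentially correct as an outline: make the sphere $J$-holomorphic, vary $J$ along a path, and use automatic transversality together with Gromov compactness to propagate an embedded representative. A couple of small cautions. In the $(-1)$ case your bubbling exclusion (``a residual class of strictly smaller energy and negative square that cannot support a holomorphic curve'') is not quite the right mechanism; the clean argument is that for a generic one-parameter family $\{J_t\}$ the only embedded $J_t$-holomorphic spheres of negative self-intersection are $(-1)$-spheres, and positivity of intersections then forces any nodal limit in class $A$ with $A^2=-1$ to be irreducible and embedded. In the $A^2\ge 0$ case your adjunction-budget heuristic is on the right track but hides real work: one must show that each component of a Gromov limit is itself a smooth rational curve of nonnegative self-intersection (or a $(-1)$-sphere absorbable by a local smoothing), which is where the hypotheses on $(M,\omega)$ and the specific classes enter in \cite{LW,Z}. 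You correctly flag this and defer to the cited papers, which is exactly what the present paper does as well.
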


	Now we are ready to prove Theorem \ref{theorem_main}
		
	\begin{proof}[Proof of Theorem \ref{theorem_main}]
		If $(M,\omega)$ is given in Theorem \ref{theorem_main} not of type {\bf (II-1-4.k)}, then $M$ is (algebraic) Fano since there exists 
		a Fano variety whose TFD equals the one of $(M,\omega)$ as we have seen in Section \ref{secTopologicalFixedPointDataDimZMin2} and
		 \ref{secTopologicalFixedPointDataDimZMax4} and so the Fano variety should be equivalent to $(M,\omega)$ by Proposition \ref{proposition_TFD_FD_1}.
		 
		On the other hand, if $(M,\omega)$ is of type {\bf (II-1-4.k)}, then the TFD has no extremal fixed component and the Euler class of each level set (as a principal $S^1$-bundle)
		vanishes, see Table \ref{table_list_2}. Moreover, since any reduced space of $M$ is $X_k$, which is in particular a rational surface, any two deformation equivalent cohomologous 
		symplectic forms are isotopic by Theorem \ref{theorem_uniqueness}.
		
		 If $(M', \omega')$ is another semifree Hamiltonian $S^1$-manifold having the same TFD of $(M,\omega)$'s, 
		we obtain cohomologous families of reduced symplectic forms $\{\omega_t\}$ from $M$ and $\{\omega_t'\}$ from $M'$ such that $[\omega_t] = [\omega_t']$ for every $t \in [-1,1]$. 
		Note that both $\{\omega_t\}$ and $\{\omega_t'\}$ can be regarded as families of symplectic forms on $M_0$ since $M_0 \cong M_t$ for every $t \in [-1,1]$. Moreover, 
		by the uniqueness of symplectic structure on $M_{-1}$ (Theorem \ref{theorem_uniqueness}), we may assume that $\omega_{-1} = \omega_{-1}'$. 
		
		Define a two-parameter family of symplectic forms on $M_0$:
		\[
			\omega_{s,t} := \begin{cases}
				\omega_{(1-2s)t} & \text{if $0 \leq s \leq \frac{1}{2}$} \\
				\omega_{(2s - 1)t}' & \text{if $\frac{1}{2} \leq s \leq 1$} \\
			\end{cases}
		\]
		Then $[\omega_{s,t}]$ is a constant class in $H^2(M_0)$ by the vanishing of the Euler class $e(P_{-1}^+)$, see Table \ref{table_list_2} {\bf (II-1-4.k)}. Then 
		$\{ \omega_{s,t} \}$ satisfies 
		\begin{itemize}
			\item $\omega_{0, t} = \omega_t$ and $\omega_{1,t} = \omega_t'$,
			\item $\frac{d}{ds} [\omega_{s,t}] = 0$ 
		\end{itemize}
		for any $0 \leq s \leq 1$ and $-1 \leq t \leq 1$. (Equivalently, $\{\omega_t\}$ and $\{\omega_t'\}$ are equivalent in the sense of \cite[Definition 3.3]{G}.)
		Therefore, we can apply the Moser type argument to show that there exists a family of symplectomorphisms 
		\[
			\varphi_t : (M_0, \omega_t) \rightarrow (M_0, \omega_t'), \quad \varphi_t^*  \omega_t' =  \omega_t
		\]
		for every $t \in [-1,1]$. This family $\{\varphi_t\}$ induces an $S^1$-equivariant symplectomorphism
		\[
			\varphi : P \times [-1,1]  \rightarrow P \times [-1,1], \quad \varphi^*\widetilde{\omega}' = \widetilde{\omega}
		\]
		where 
		\begin{itemize}
			\item $P$ is the principal $S^1$-bundle $S^1 \times M_0 \rightarrow M_0$,
			\item $\widetilde{\omega}$ and $\widetilde{\omega}'$ are two $S^1$-invariant symplectic forms on $P \times I$ determined by $\{ \omega_t\}$ and $\{\omega_t'\}$, respectively.
		\end{itemize}
		See \cite[Proposition 5.8]{McS1}.
		Then $\varphi$ implies the symplectomorphism from $(M,\omega)$ to $(M,\omega')$ induced by the symplectic cut of
		$(P \times I, \widetilde{\omega})$ and $(P \times I, \widetilde{\omega}')$ respectively. 
		This completes the proof.
	\end{proof}
	
	\newpage
	
	\begin{table}[h]
		\begin{adjustbox}{max width=\textwidth}	
		\begin{tabular}{|c|c|c|c|c|c|c|c|c|}
			\hline
			& $(M_0, [\omega_0])$ & $e(P_{-2}^+)$ & $Z_{-2}$ & $Z_{-1}$ &  $Z_0$ & $Z_1$ & $b_2$ & $c_1^3$ \\ \hline \hline
		{\bf (I-1-1.1)} & $(E_{S^2}, 3x + 2y)$  & $-x-y$  & $S^2$ & & & $E_{S^2}$ & $2$ & $54$ \\ \hline    
		{\bf (I-1-2.1)} & $(S^2 \times S^2, 2x + 2y)$ & $-y$  & $S^2$ & & & $S^2 \times S^2$ & $2$ & $54$ \\ \hline    
		{\bf (I-1-2.2)} & $(S^2 \times S^2, 2x + 2y)$ & $x-y$  & $S^2$ & & & $S^2 \times S^2$ & $2$ & $54$ \\ \hline    
		{\bf (I-2)} & $(E_{S^2}, 3x + 2y - E_1)$ & $-x-y$  & $S^2$ & $\mathrm{pt}$ &  & $E_{S^2} \# ~\overline{\p^2}$ & $3$ & $46$ \\ \hline    
		{\bf (I-3-1.1)} & $(E_{S^2}, 3x + 2y)$ & $-y$ & & $S^2$ & \makecell{$Z_0 \cong S^2$ \\ $\mathrm{PD}(Z_0) = y$} & $E_{S^2}$ & $3$ & $52$ \\ \hline    
		{\bf (I-3-1.2)} & $(E_{S^2}, 3x + 2y)$ & $-x-y$  & $S^2$ & & \makecell{$Z_0 \cong S^2$ \\ $\mathrm{PD}(Z_0) = y$} & $E_{S^2}$ & $3$ & $50$ \\ \hline    
		{\bf (I-3-1.3)} & $(E_{S^2}, 3x + 2y)$ & $-y$  & $S^2$ &  &
				    				\makecell{$Z_0 \cong S^2 ~\dot\cup ~S^2$ \\ $\mathrm{PD}(Z_0^1) = x+y$ \\ $\mathrm{PD}(Z_0^2) = y$} 				    
				    															& $E_{S^2}$ & $4$ & $44$ \\ \hline    
		{\bf (I-3-1.4)} & $(E_{S^2}, 3x + 2y)$ & $-x-y$  & $S^2$ & & \makecell{$Z_0 \cong S^2$ \\ $\mathrm{PD}(Z_0) = x + y$} 
				    								& $E_{S^2}$ & $3$ & $44$ \\ \hline    
		{\bf (I-3-1.5)} & $(E_{S^2}, 3x + 2y)$ & $-x-y$  & $S^2$ & &
				    							\makecell{$Z_0 \cong S^2 ~\dot\cup ~S^2$ \\ $\mathrm{PD}(Z_0^1) = x+y$ \\ $\mathrm{PD}(Z_0^2) = y$} 
				    								& $E_{S^2}$ & $4$ & $40$ \\ \hline    
		{\bf (I-3-1.6)} & $(E_{S^2}, 3x + 2y)$ & $-x-y$  & $S^2$ & &\makecell{$Z_0 \cong S^2$ \\ $\mathrm{PD}(Z_0) = 2x + 2y$} & $E_{S^2}$ & $3$ & $36$ \\ \hline    
		{\bf (I-3-2.1)} & $(S^2 \times S^2, 2x + 2y)$ & $-y$  & $S^2$ & &\makecell{$Z_0 \cong S^2$ \\ $\mathrm{PD}(Z_0) = y$} & $S^2 \times S^2$ & $3$ & $48$ \\ \hline    
		{\bf (I-3-2.2)} & $(S^2 \times S^2, 2x + 2y)$ & $x-y$  & $S^2$ & &\makecell{$Z_0 \cong S^2$ \\ $\mathrm{PD}(Z_0) = y$} 
				    															&$S^2 \times S^2$ & $3$ & $50$ \\ \hline    
		{\bf (I-3-2.3)} & $(S^2 \times S^2, 2x + 2y)$ & $-y$  & $S^2$ &  &
				    				\makecell{$Z_0 \cong S^2 ~\dot\cup ~S^2$ \\ $\mathrm{PD}(Z_0^1) = y$ \\ $\mathrm{PD}(Z_0^2) = y$} 				    
				    															&$S^2 \times S^2$ & $4$ & $42$ \\ \hline    
		{\bf (I-3-2.4)} & $(S^2 \times S^2, 2x + 2y)$ & $x-y$  & $S^2$ & &
				    								\makecell{$Z_0 \cong S^2 ~\dot\cup ~S^2$ \\ $\mathrm{PD}(Z_0^1) = y$ \\ $\mathrm{PD}(Z_0^2) = y$} 
				    								& $S^2 \times S^2$ & $4$ & $46$ \\ \hline    
		{\bf (I-3-2.5)} & $(S^2 \times S^2, 2x + 2y)$ & $-y$  & $S^2$ & &
				    							\makecell{$Z_0 \cong S^2$ \\ $\mathrm{PD}(Z_0) = x$}
				    								& $S^2 \times S^2$ & $3$ & $46$ \\ \hline    
		{\bf (I-3-2.6)} & $(S^2 \times S^2, 2x + 2y)$ & $-y$  & $S^2$ & &
				    							\makecell{$Z_0 \cong S^2$ \\ $\mathrm{PD}(Z_0) = x + y$}
				    								& $S^2 \times S^2$ & $3$ & $42$ \\ \hline    
		{\bf (I-3-2.7)} & $(S^2 \times S^2, 2x + 2y)$ & $-y$  & $S^2$ & &
				    							\makecell{$Z_0 \cong S^2$ \\ $\mathrm{PD}(Z_0) = x + 2y$}
				    								& $S^2 \times S^2$ & $3$ & $38$ \\ \hline    				    									{\bf (I-4-1.1)} & \makecell{$(E_{S^2} \# \overline{\p^2},$ \\ $3x + 2y - E_1)$} & $-x-y$  & $S^2$ & $\mathrm{pt}$ & 
				    								\makecell{ $Z_0 \cong S^2$  \\ $\mathrm{PD}(Z_0) = x + y - E_1$} & $X_2$ & $4$ & $40$ \\ \hline    
		{\bf (I-4-1.2)} & \makecell{$(E_{S^2} \# 2\overline{\p^2},$ \\ $3x + 2y - E_1 - E_2)$} & $-x-y$  & $S^2$ & $\mathrm{2 ~pts}$ & 
			\makecell{ $Z_0 \cong S^2$  \\  $\mathrm{PD}(Z_0) = x + y - E_1 - E_2$} & $X_3$ & $5$ & $36$ \\ \hline    					     
		{\bf (I-4.2)} & \makecell{$(S^2 \times S^2 \# \overline{\p^2},$ \\  $2x + 2y - E_1)$} & $-y$  & $S^2$ & $\mathrm{pt}$ & 
		\makecell{ $Z_0 \cong S^2$  \\ $\mathrm{PD}(Z_0) = y - E_1$} & $X_2$ & $4$ & $44$ \\ \hline    					     					     		
		\end{tabular}
		\end{adjustbox}		
		\vs{0.3cm}
		\caption {List of topological fixed point data for $\dim Z_{\min} = 2$ and $\dim Z_{\max} = 4$} \label{table_list_1} 
	\end{table}
	\newpage

	\begin{table}[H]
		\begin{adjustbox}{max width=.9\textwidth}
		\begin{tabular}{|c|c|c|c|c|c|c|c|}
			\hline
			& $(M_0, [\omega_0])$ & $e(P_{-1}^+)$  & $Z_{-1}$ &  $Z_0$ & $Z_1$ & $b_2$ & $c_1^3$ \\ \hline \hline
				    {\bf (II-1-1.1)} & $(\p^2, 3u)$ & $0$  & $\p^2$  & & 
				    	$\p^2$
					     & $2$ & $54$ \\ \hline    
				    {\bf (II-1-1.2)} & $(\p^2, 3u)$ & $u$  & $\p^2$  && 
				    	$\p^2$
					     & $2$ & $56$ \\ \hline    
				    {\bf (II-1-1.3)} & $(\p^2, 3u)$ & $2u$  & $\p^2$  && 
				    	$\p^2$
					     & $2$ & $62$ \\ \hline    
					     					     					     
				    {\bf (II-1-2.1)} & $(S^2 \times S^2, 2x + 2y)$ & $0$  & $S^2 \times S^2$  && 
				    	$S^2 \times S^2$
					     & $3$ & $48$ \\ \hline    					     
				    {\bf (II-1-2.2)} & $(S^2 \times S^2, 2x + 2y)$ & $x$  & $S^2 \times S^2$ && 
				    	$S^2 \times S^2$
					     & $3$ & $48$ \\ \hline    					     
				    {\bf (II-1-2.3)} & $(S^2 \times S^2, 2x + 2y)$ & $x+y$  & $S^2 \times S^2$ && 
				    	$S^2 \times S^2$
					     & $3$ & $52$ \\ \hline    					     
				    {\bf (II-1-2.4)} & $(S^2 \times S^2, 2x + 2y)$ & $x-y$  & $S^2 \times S^2$ && 
				    	$S^2 \times S^2$
					     & $3$ & $44$ \\ \hline    					     
					     					     					     					     
				    {\bf (II-1-3.1)} & $(E_{S^2}, 3x + 2y)$ & $0$  & $E_{S^2}$ && 
				    	$E_{S^2}$
					     & $3$ & $48$ \\ \hline    					     					     
				    {\bf (II-1-3.2)} & $(E_{S^2}, 3x + 2y)$ & $x + y$  & $E_{S^2}$ && 
				    	$E_{S^2}$
					     & $3$ & $50$ \\ \hline    					     					     
					     					     
				    \makecell{{\bf (II-1-4.k)} \\ {\bf k = 2$\sim$8}} & $(X_k, 3u - \sum_{i=1}^k E_i)$ & $0$  & $X_k$ && 
				    	$X_k$
					     & $k+2$ & $54-6k$ \\ \hline    					     					     					     	
				    {\bf (II-2-1.1)} & $(\p^2, 3u)$ & $0$  & $\p^2$  & \makecell{$Z_0 \cong S^2$, \\ $\mathrm{PD}(Z_0) = u$} & 
				    	$\p^2$
					     & $3$ & $46$ \\ \hline    
				    {\bf (II-2-1.2)} & $(\p^2, 3u)$ & $u$  & $\p^2$  & \makecell{$Z_0 \cong S^2$, \\ $\mathrm{PD}(Z_0) = u$} & 
				    	$\p^2$
					     & $3$ & $50$ \\ \hline    
				    {\bf (II-2-1.3)} & $(\p^2, 3u)$ & $-u$  & $\p^2$  & \makecell{$Z_0 \cong S^2$, \\ $\mathrm{PD}(Z_0) = 2u$} & 
				    	$\p^2$
					     & $3$ & $38$ \\ \hline    
					     					     					     
				    {\bf (II-2-1.4)} & $(\p^2, 3u)$ & $0$  & $\p^2$  & \makecell{$Z_0 \cong S^2$, \\ $\mathrm{PD}(Z_0) = 2u$} & 
				    	$\p^2$
					     & $3$ & $40$ \\ \hline    					     
				    {\bf (II-2-1.5)} & $(\p^2, 3u)$ & $-u$  & $\p^2$ & \makecell{$Z_0 \cong T^2$, \\ $\mathrm{PD}(Z_0) = 3u$} & 
				    	$\p^2$
					     & $3$ & $32$ \\ \hline    					     
				    {\bf (II-2-1.6)} & $(\p^2, 3u)$ & $-2u$  & $\p^2$ & \makecell{$Z_0 \cong \Sigma_3$, \\ $\mathrm{PD}(Z_0) = 4u$} & 
				    	$\p^2$
					     & $3$ & $26$ \\ \hline    					     
				    {\bf (II-2-2.1)} & $(S^2 \times S^2, 2x + 2y)$ & $-x -y$  & $S^2 \times S^2$  & \makecell{$Z_0 \cong T^2$, \\ $\mathrm{PD}(Z_0) = 2x+2y$} & 
				    	$S^2 \times S^2$
					     & $4$ & $28$ \\ \hline    
				    {\bf (II-2-2.2)} & $(S^2 \times S^2, 2x + 2y)$ & $-x$  & $S^2 \times S^2$  & \makecell{$Z_0 \cong S^2$, \\ $\mathrm{PD}(Z_0) = 2x + y$} & 
				    	$S^2 \times S^2$
					     & $4$ & $32$ \\ \hline    
				    {\bf (II-2-2.3)} & $(S^2 \times S^2, 2x + 2y)$ & $-x$  & $S^2 \times S^2$  & \makecell{$Z_0 \cong S^2 ~\dot \cup ~S^2$, \\ 
				    $\mathrm{PD}(Z_0^1) = \mathrm{PD}(Z_0^2) = x$} & 
				    	$S^2 \times S^2$
					     & $5$ & $36$ \\ \hline    			     					     
				    {\bf (II-2-2.4)} & $(S^2 \times S^2, 2x + 2y)$ & $-x$  & $S^2 \times S^2$  & \makecell{$Z_0 \cong S^2$, \\ $\mathrm{PD}(Z_0) = x+y$} & 
				    	$S^2 \times S^2$
					     & $4$ & $36$ \\ \hline    					     
				    {\bf (II-2-2.5)} & $(S^2 \times S^2, 2x + 2y)$ & $-x + y$  & $S^2 \times S^2$  & \makecell{$Z_0 \cong S^2 ~\dot \cup ~S^2$, \\ 
				    $\mathrm{PD}(Z_0^1) = \mathrm{PD}(Z_0^2) = x$} & 
				    	$S^2 \times S^2$
					     & $5$ & $36$ \\ \hline    			     					     
				    {\bf (II-2-2.6)} & $(S^2 \times S^2, 2x + 2y)$ & $-x + y$  & $S^2 \times S^2$  & \makecell{$Z_0 \cong S^2$, \\ $\mathrm{PD}(Z_0) = x$} & 
				    	$S^2 \times S^2$
					     & $4$ & $40$ \\ \hline    					     					     
				    {\bf (II-2-2.7)} & $(S^2 \times S^2, 2x + 2y)$ & $0$  & $S^2 \times S^2$ & \makecell{$Z_0 \cong S^2$, \\ $\mathrm{PD}(Z_0) = x+y$} & 
				    	$S^2 \times S^2$
					     & $4$ & $38$ \\ \hline    					     
				    {\bf (II-2-2.8)} & $(S^2 \times S^2, 2x + 2y)$ & $0$  & $S^2 \times S^2$ & \makecell{$Z_0 \cong S^2$, \\ $\mathrm{PD}(Z_0) = x$} & 
				    	$S^2 \times S^2$
					     & $4$ & $42$ \\ \hline    					     
				    {\bf (II-2-2.9)} & $(S^2 \times S^2, 2x + 2y)$ & $y$  & $S^2 \times S^2$ & \makecell{$Z_0 \cong S^2$, \\ $\mathrm{PD}(Z_0) = x$} & 
				    	$S^2 \times S^2$
					     & $4$ & $44$ \\ \hline    					     					     					     
				    {\bf (II-2-3.1)} & $(X_1, 3u - E_1)$ & $-u$  & $X_1$  & \makecell{$Z_0 \cong S^2$, \\ $\mathrm{PD}(Z_0) = 2u$} & 
				    	$X_1$
					     & $4$ & $32$ \\ \hline    
				    {\bf (II-2-3.2)} & $(X_1, 3u - E_1)$ & $-E_1$  & $X_1$  & 
				    \makecell{$Z_0 \cong S^2 ~\dot \cup ~ S^2$, \\ $\mathrm{PD}(Z_0^1) = u$ \\ $\mathrm{PD}(Z_0^2) = E_1$} & 
				    	$X_1$
					     & $5$ & $36$ \\ \hline    
				    {\bf (II-2-3.3)} & $(X_1, 3u - E_1)$ & $0$  & $X_1$  & \makecell{$Z_0 \cong S^2 $, \\ 
				    $\mathrm{PD}(Z_0) = u$} & 
				    	$X_1$
					     & $4$ & $40$ \\ \hline    			     					     
				    {\bf (II-2-3.4)} & $(X_1, 3u - E_1)$ & $u - E_1$  & $X_1$  & \makecell{$Z_0 \cong S^2$, \\ $\mathrm{PD}(Z_0) = E_1$} & 
				    	$X_1$
					     & $4$ & $46$ \\ \hline    					     
				    {\bf (II-2-3.5)} & $(X_1, 3u - E_1)$ & $0$  & $X_1$  & \makecell{$Z_0 \cong S^2 $, \\ 
				    $\mathrm{PD}(Z_0) = E_1$} & 
				    	$X_1$
					     & $4$ & $44$ \\ \hline    			     					     					     
	\end{tabular}
	\end{adjustbox}
		\vs{0.3cm}
		\caption {List of topological fixed point data for $\dim Z_{\min} = \dim Z_{\max} = 4$} \label{table_list_2} 
	\end{table}

\bibliographystyle{annotation}

\end{document}